\providecommand{\U}[1]{\protect\rule{.1in}{.1in}} \journal{\dots}
\newtheorem{theorem}{Theorem}
\newtheorem{corollary}[theorem]{Corollary}
\newtheorem{definition}[theorem]{Definition}
\newtheorem{example}[theorem]{Example}
\newtheorem{lemma}[theorem]{Lemma}
\newtheorem{proposition}[theorem]{Proposition}
\newtheorem{remark}[theorem]{Remark}
\newenvironment{proof}[1][Proof]{\noindent\textbf{#1.} }{\ \rule{0.5em}{0.5em}}
\renewcommand{\thefootnote}{\fnsymbol{footnote}}
\begin{document}

\begin{frontmatter}
\title{C\`{a}dl\`{a}g Skorokhod problem driven by a maximal monotone operator}
\author{Lucian Maticiuc$^{a,b}$, Aurel R\u{a}\c{s}canu$^{a,c}$, Leszek
S\l omi\'{n}ski$^{d}$, Mateusz Topolewski$^{d}$\medskip}
\address{$^{a}$ Faculty of Mathematics, \textquotedblleft Alexandru
Ioan Cuza\textquotedblright\ University of Ia\c{s}i\\ Carol 1 Blvd., no. 11,
Ia\c{s}i, Romania \medskip \\$^{b}$~Department of Mathematics,
\textquotedblleft Gheorghe Asachi\textquotedblright\ Technical University of Ia\c{s}i\\
Carol I Blvd., no. 11, 700506, Romania \medskip \\$^{c}$ \textquotedblleft Octav Mayer\textquotedblright\ Mathematics Institute of the Romanian Academy, Iasi branch \\ Carol I Blvd., no. 8, Iasi, 700506, Romania \medskip \\$^{d}$ Faculty of Mathematics and Computer
Science, Nicolaus Copernicus University \\ ul. Chopina 12/18, 87-100
Toru\'n, Poland}

\begin{abstract}
The article deals with the existence and uniqueness of the solution of
the follo\-wing diffe\-ren\-tial equation (a c\`{a}dl\`{a}g Skorokhod
problem) driven by a maxi\-mal monotone operator, and with singular
input generated by the
c\`{a}dl\`{a}g function $m$:%
\[
\left\{
\begin{array}
[c]{l}%
dx_{t}+A\left(  x_{t}\right)  \left(  dt\right)  +dk_{t}^{d}\ni dm_{t}%
\,,~t\geq0,\\
x_{0}=m_{0},
\end{array}
\right.
\]
where $k^{d}$ is a pure jump function.

The jumps outside of the constrained domain
$\overline{\mathrm{D}(A)}$ are
counteracted through the gene\-ra\-lized projection $\Pi$ by taking $x_{t}%
=\Pi(x_{t-}+\Delta m_{t})$, whenever $x_{t-}+\Delta m_{t}\notin\overline
{\mathrm{D}(A)}\,$.

Approximations of the solution based on discretization and Yosida
penalization are also considered.\bigskip
\end{abstract}
\end{frontmatter}

\textbf{AMS Classification subjects}: 34A60, 47N20.$\smallskip$

\textbf{Keywords or phrases}: Skorokhod problem; maximal monotone operators;
Yosida appro\-ximation.

\renewcommand{\thefootnote}{\fnsymbol{footnote}} \footnotetext{\textit%
{\scriptsize E-mail addresses:} {\scriptsize lucian.maticiuc@ymail.com
(Lucian Maticiuc), aurel.rascanu@uaic.ro (Aurel R\u{a}\c{s}canu),
leszeks@mat.umk.pl (Leszek S\l omi\'{n}ski), woland@mat.umk.pl (Mateusz
Topolewski).}}

\section{Introduction}

Let $\mathbb{H}$ be a separable real Hilbert space and $A:\mathbb{H}%
\rightrightarrows \mathbb{H}$ be a maximal monotone multivalued operator on $%
\mathbb{H}$ with the domain $\mathrm{D}(A)=\{z\in \mathbb{H}:A(z)\neq
\emptyset \}$ and its graph $\mathrm{Gr}(A)=\{(z,y)\in \mathbb{H\times H}%
:z\in \mathbb{H},y\in A(z)\}$.

Let $\Pi:\mathbb{H}\rightarrow\overline{\mathrm{D}(A)}$ be a generalized
projection on $\overline{\mathrm{D}(A)}$ (see Definition \ref{gener proj})
and $m$ a c\`{a}dl\`{a}g function.

The main topic of our paper is the existence and uniqueness of a solution $x$
of the following multivalued differential equation with singular input $%
dm_{t}:$%
\begin{equation}
\left\{
\begin{array}{l}
dx_{t}+A\left( x_{t}\right) \left( dt\right) +dk_{t}^{d}\ni dm_{t}\;,~~t\geq
0,\medskip \\
x_{0}=m_{0},%
\end{array}%
\right.  \label{eq1.1 a}
\end{equation}%
where $x_{t}=\Pi (x_{t-}+\Delta m_{t}),$ if $x_{t-}+\Delta m_{t}\notin
\overline{\mathrm{D}(A)}$ and $k^{d}$ is a pure jump function such that $%
\Delta k_{t}^{d}=\Delta m_{t}-\Delta x_{t}\,.$

The generalized projection $\Pi$ ensures that the jumps from $x_{t-}$ to $%
x_{t-}+\Delta m_{t}$ are counteracted whenever these jumps leave the domain.

By a solution of (\ref{eq1.1 a}) we understand a pair $(x,k)$ of c\`{a}dl%
\`{a}g functions such that%
\begin{equation*}
x_{t}+k_{t}=m_{t}\;,~~t\geq 0,
\end{equation*}%
such that $x_{t}\in \overline{\mathrm{D}(A)}$, for any $t\geq 0$, $%
k=k^{c}+k^{d}$ has locally bounded variation with $k^{c}$ being its
continuous part, $dk_{t}^{c}\in A\left( x_{t}\right) \left( dt\right) $ (see
Definition \ref{definition Annex 14}) and%
\begin{equation*}
k_{t}^{d}=\sum_{0\leq s\leq t}\Delta k_{s}=\sum_{0\leq s\leq t}\left( I-\Pi
\right) \left( x_{s-}+\Delta m_{s}\right) .
\end{equation*}%
We notice that in the particular case when $\Pi $ is the orthogonal
projection equation (\ref{eq1.1 a}) is equivalent with the problem%
\begin{equation}
\left\{
\begin{array}{l}
x_{t}+k_{t}=m_{t}\;,~~t\geq 0,\medskip \\
dk_{t}\in A\left( x_{t}\right) \left( dt\right) .%
\end{array}%
\right.  \label{eq1.1 b}
\end{equation}%
When $m$ has some smoothness properties, equation (\ref{eq1.1 b}) was
intensely studied in the frame of nonlinear analysis (see, e.g. Barbu \cite%
{ba/76, ba/10}, Brezis \cite{br/73} and their references). Strong and genera%
{\footnotesize \-}li{\footnotesize \-}zed solutions were defined. In the
case when the interior of the domain of $A$ is nonempty the generalized
solution can be reformulated in a similar manner as for the Skorohod
problem. In fact, the convex Skorohod problem is a particular case when $A$
is the subdifferential of the indicator function of a closed convex set (see
Subsection \ref{Subsect Introd} for more details). In the recent article
\cite{ga-ra/12} Gassous, R\u{a}\c{s}canu and Rotenstein focused on the
extension of the Skorohod problem to the situation when the multivalued
subdifferential operator is perturbed by considering oblique subgradients.
The study is continued in R\u{a}\c{s}canu, Rotenstein \cite{ra-ro/14}, where
the authors constructed a non-convex framework for the same problem.

Particular cases of the above type equations were considered earlier in many
papers. The existence and uniqueness of solution for (\ref{eq1.1 b}) was
proved independently by R\u{a}\c{s}canu in \cite{ra/96} (for the infinite
dimensional framework) and by C\'{e}pa in \cite{ce/98} (the finite
dimensional case). Barbu and R\u{a}\c{s}canu in \cite{ba-ra/97} studied
parabolic variational inequalities with singular inputs (equation formally
written as $dx_{t}+A\left( x_{t}\right) \left( dt\right) \ni
f_{t}dt+dm_{t}\, $) with $A$ of the form $A=A_{0}+\partial \varphi $, where $%
A_{0}$ is a linear positive defined operator and $\partial \varphi $ is the
subdifferential operator associated to a convex lower semicontinuous
function $\varphi $ (see Annexes \ref{Maximal monotone}). Moreover, using
the Fitzpatrick function, R\u{a}\c{s}canu and Rotenstein provided in \cite%
{ra-ro/11} a one-to-one correspondence between the solutions of multivalued
differential equations and the minimum points of some convex functionals.

We pointing out that the authors of all the mentioned above papers
restricted themselves to processes with continuous trajectories.\medskip

It is well known that for every nonempty closed convex set $\bar{D}\subset
\mathbb{R}^{d},$ its indicator function $\varphi \left( x\right) :={I}_{\bar{%
D}}\left( x\right) =\left\{
\begin{array}{rl}
0, & \text{if \ }x\in \bar{D},\medskip \\
+\infty , & \text{if \ }x\notin \bar{D}.%
\end{array}%
\right. $ is a convex lower semicontinuous function (see Annexes \ref%
{Maximal monotone}). This implies that deterministic equation with
reflecting boundary condition in convex domains are special cases of (\ref%
{eq1.1 a}). Such equations were introduced by Skorokhod \cite{sk/61, sk/62}
in the one-dimensional case for $D=\mathbb{R}^{+}:=[0,\infty )$. The
multidimensional version of Skorohod's equation was studied in detail in
Tanaka \cite{ta/79} in the case of a convex domain $D$. The reflection
problem in non-convex domain was studied by Lions and Sznitman in \cite%
{li-sn/83}.\medskip

We also mention the connection of the Skorohod problem with so called
sweeping process which are the solutions of the next particular problem%
\begin{equation*}
\left\{
\begin{array}{l}
x^{\prime }\left( t\right) +N_{E\left( t\right) }\left( x\left( t\right)
\right) \ni f\left( t,x\left( t\right) \right) ,\quad t\geq 0,\medskip \\
x\left( 0\right) =x_{0},%
\end{array}%
\right.
\end{equation*}%
where $N_{E\left( t\right) }$ is the external normal cone to the moving
domain $E\left( t\right) $ in $x\left( t\right) $ (in particular $N_{E\left(
t\right) }\left( x\left( t\right) \right) =\partial I_{E\left( t\right)
}\left( x\left( t\right) \right) $ if $E\left( t\right) $ is a closed convex
set). This problem was introduced by Moreau (see \cite{mo/77}) for the case $%
f\equiv 0$ and convex sets $E\left( t\right) ,\ t\geq 0$, and it has been
intensively studied since then by several authors; see, e.g., Castaing \cite%
{ca/83}, Monteiro Marques \cite{mo/93}. For the case of a sweeping process
without the assumption of convexity on the sets $E\left( t\right) $, we
refer to \cite{be/00}, \cite{be-ca/96}, \cite{co-go/99}, \cite{co-mo/03} and
\cite{th/03}. The extension to the case $f\not\equiv 0$ was considered,
e.g., in Castaing, Monteiro Marques \cite{ca-mo/96} and Edmond, Thibault
\cite{ed-th/06}. The study of sweeping processes with perturbations was
introduced by Castaing, D\'{u}c Ha and Valadier \cite{ca-du/93} and
Castaing, Monteiro Marques \cite{ca-mo/96}. The problem of sweeping
processes with stochastic perturbations was the subject of Castaing \cite%
{ca/73, ca/76}, Bernicot, Venel \cite{be-ve/11} and, more recently, of
Falkowski, S\l omi\'{n}ski \cite{fa-sl/15}.\medskip

Approximations of solutions of the Skorohod problem, considered in the c\`{a}%
dl\`{a}g case, were studied by \L aukajtys and S\l omi\'{n}ski in \cite%
{la-sl/13} where is involved the classical orthogonal projection on $%
\overline{\mathrm{D}(A)}$, i.e.,%
\begin{equation*}
\Pi _{\overline{\mathrm{D}(A)}}(x)\in \overline{\mathrm{D}(A)}\quad \text{and%
}\quad |\Pi _{\overline{\mathrm{D}(A)}}(x)-x|=\inf \{|z-x|:z\in \overline{%
\mathrm{D}(A)}\}.
\end{equation*}%
Skorohod problem with jumps and two time-depending obstacles was treated by S%
\l omi\'{n}ski and Wojciechowski \cite{sl-wo/10}.

In the present paper we study existence, uniqueness and approximations of
solutions of equation (\ref{eq1.1 a}), provided that%
\begin{equation}
\mathrm{Int}\left( \mathrm{D}\left( A\right) \right) \neq \emptyset
\label{assumpt int}
\end{equation}%
and $\Pi $ is a generalized projection on the $\overline{\mathrm{D}(A)}$.

The results concerning the multivalued deterministic equation with jumps,
considered in our work are new, even in the case of classical projection $%
\Pi _{\overline{\mathrm{D}(A)}}\,$. We should mention that assumption (\ref%
{assumpt int}) is essential for the proof. Although in the finite
dimensional case this assumption is not a strong restriction, in the Hilbert
space framework this assumption become quite restrictive. But the existence
results provided by this research are essential in order to obtain further
existence results for the same problem without condition (\ref{assumpt int})
(we recall article \cite{ra/96} where it was considered the continuous
version of (\ref{eq1.1 b}) and assumption (\ref{assumpt int}) is
suppressed). In addition, the infinite dimensional frame considered here
allows to consider infinite dimensional systems of type (\ref{eq1.1 a}).

We outline that the infinite systems of equations (with or without state
constraints) describe a various types of real problems which appear in
mechanics, in the theory of branching processes, in the theory of neural
networks, etc. (see Zautykov \cite{za/65}, Zautykov \& Valeev \cite{za-va/74}
or reference from \cite{mu-al/12} for a more complete literature scene of
applications). Infinite systems of differential equations also appear in
partial differential equations which are treated through semidiscretization.

Also very important applications of (\ref{eq1.1 a}) are in the stochastic
reflection problems, in which case $dm_{t}$ is interpreted as a random noise
on the system (see \cite%
{be-ra/97,ce/98,li-sn/83,la-sl/03,la-sl/13,re-wu/11,ra/96,si/00,sl-wo/10}%
).\medskip

The paper is organized as follows. In Section 2 we prove existence and
uniqueness of solutions and we give some convergence results in the uniform
norm and in the Skorokhod topology $J_{1}$. We also give examples of
projections $\Pi$.

Section 3 is devoted to the Yosida approximations of solutions. If we
consider the approximating equation of the form
\begin{equation*}
x_{t}^{\varepsilon }+\int_{0}^{t}A_{\varepsilon }(x_{s}^{\varepsilon
})ds=m_{t},\quad t\in \mathbb{R}^{+},
\end{equation*}%
where $A_{\varepsilon }(z)=\frac{1}{\varepsilon }\left( z-J_{\varepsilon
}\left( z\right) \right) $ and $J_{\varepsilon }(z)=(I+\varepsilon
A)^{-1}(z) $, then%
\begin{equation*}
x_{t}^{\varepsilon }\rightarrow x_{t}\mathbf{1}_{\left\vert \Delta
m_{t}\right\vert =0}+\left( x_{t-}+\Delta m_{t}\right) \mathbf{1}%
_{\left\vert \Delta m_{t}\right\vert >0}\,,~\forall t\in \mathbb{R}^{+},%
\text{ as }\varepsilon \searrow 0,
\end{equation*}%
where $x$ is the solution of (\ref{eq1.1 a}) associated to the classical
projection. Also other convergence results are obtained.

When the approximating equation has the form
\begin{equation*}
x_{t}^{\varepsilon }+\int_{0}^{t}A_{\varepsilon }(x_{s}^{\varepsilon
})ds+\sum_{0\leq s\leq t}\left( I-\Pi \right) (x_{s-}^{\varepsilon }+\Delta
m_{s})\mathbf{1}_{\left\vert \Delta m_{s}\right\vert >\varepsilon
}=m_{t},\quad t\in \mathbb{R}^{+},
\end{equation*}%
with a generalized projection $\Pi $, we prove that $x^{\varepsilon }$ is
uniformly convergent on every compact interval to the solution of (\ref%
{eq1.1 a}).\medskip

In the paper we use the following notations: $\mathbb{D}\left( \mathbb{R}%
^{+},\mathbb{H}\right) $ is the space of all c\`{a}dl\`{a}g mappings $y:%
\mathbb{R}^{+}\rightarrow\mathbb{H}$ (i.e., $y$ is right continuous and
admits left-hand limits), endowed with the Skorokhod topology $J_{1}$. For $%
y\in\mathbb{D}\left( \mathbb{R}^{+},\mathbb{H}\right) $, $\delta>0$, $T\in%
\mathbb{R}^{+}$ we denote by $\boldsymbol{\gamma}_{y}(\delta,T)$ the modulus
of c\`{a}dl\`{a}g continuity of $y$ (c\`{a}dl\`{a}g modulus of $y$) on $%
[0,T] $, i.e.%
\begin{equation*}
\boldsymbol{\gamma}_{y}(\delta,T)=\inf\{\max_{i\leq r}\mathcal{O}%
_{y}([t_{i-1},t_{i})):0=t_{0}<\ldots<t_{r}=T,\;\inf_{i<r}(t_{i}-t_{i-1})\geq
\delta\},
\end{equation*}
where $\mathcal{O}_{y}(I):=\sup_{s,t\in I}|y_{s}-y_{t}|$.

If $k\in \mathbb{D}\left( \mathbb{R}^{+},\mathbb{H}\right) $ is a function
with locally bounded variation then $\left\updownarrow {k}\right\updownarrow
{_{\left[ t,T\right] }}$ stands for its variation on $[t,T]$ and $%
\left\updownarrow {k}\right\updownarrow {_{T}}=\left\updownarrow {k}%
\right\updownarrow {_{\left[ 0,T\right] }}$,
\begin{equation*}
k_{t}^{c}:=k_{t}-\sum_{s\leq t}\Delta k_{s}\quad \text{and}\quad
k_{t}^{d}:=k_{t}-k_{t}^{c},\quad t,T\in \mathbb{R}^{+}.
\end{equation*}%
Set $\left\Vert x\right\Vert _{\left[ s,t\right] }:={\sup\limits_{r\in \left[
s,t\right] }|x_{r}|}$ and $\left\Vert x\right\Vert _{t}:=\left\Vert
x\right\Vert _{\left[ 0,t\right] }.$

\subsection{Survey on existence results for (\protect\ref{eq1.1 b}) in the
absolutely continuous case}

\label{Subsect Introd}We recall in this section some classical results
concerning the existence of a solution for problem (\ref{eq1.1 b}) without
jumps.

Throughout this section let $A:\mathbb{H}\rightrightarrows \mathbb{H}\;$be a
maximal monotone operator and%
\begin{equation*}
m_{t}:=m_{0}+\int_{0}^{t}f_{s}ds,
\end{equation*}%
where $m_{0}\in \overline{\mathrm{D}(A)}$ and $f\in L_{loc}^{1}\left(
\mathbb{R}^{+};\mathbb{H}\right) $.

We consider the next Cauchy problem%
\begin{equation}
\left\{
\begin{array}{l}
dx_{t}+A\left( x_{t}\right) \left( dt\right) \ni dm_{t}\;,\quad \text{a.e.}%
\;t>0,\medskip  \\
x_{0}=m_{0}\,.%
\end{array}%
\right.   \label{Cauchy problem 1}
\end{equation}

\begin{definition}
A continuous function $x:\mathbb{R}^{+}\rightarrow \mathbb{H}$ is a strong
solution of (\ref{Cauchy problem 1}) if:%
\begin{equation*}
\begin{array}{rl}
\left( i\right)  & x_{t}\in \mathrm{D}\left( A\right) ,~a.e.\;t>0,\medskip
\\
\left( ii\right)  & \exists h\in L_{loc}^{1}\left( \mathbb{R}^{+};\mathbb{H}%
\right) \text{ such that }h_{t}\in A\left( x_{t}\right) \text{,}\quad \text{%
a.e. }t>0,\text{ and}\medskip  \\
& \quad \displaystyle x_{t}+\int_{0}^{t}h_{s}ds=m_{t}\,,\;\forall ~t\geq 0,%
\end{array}%
\end{equation*}%
and we write $x=\mathcal{S}\left( A,m_{0};f\right) .$
\end{definition}

We introduce the notation%
\begin{equation*}
\begin{array}{l}
W_{loc}^{1,p}\left( \mathbb{R}^{+};\mathbb{H}\right) =\Big\{f:\exists ~a\in%
\mathbb{H},\;g\in L_{loc}^{p}\left( \mathbb{R}^{+};\mathbb{H}\right) \text{
such that}\medskip \\
\quad\quad\quad\quad\quad\quad\quad\quad\quad\quad\quad\quad\quad \quad%
\displaystyle f_{t}=a+\int_{0}^{t}g_{s}ds,\;\forall~t\geq0\Big\}.%
\end{array}%
\end{equation*}

The following three results are recall from Barbu \cite{ba/84} and Brezis
\cite{br/73} respectively. Let us first denote by $A_{\varepsilon }\;$the
Yosida approximation of the operator $A$, i.e.%
\begin{equation*}
A_{\varepsilon }\left( x\right) :=\frac{1}{\varepsilon }\big(%
x-J_{\varepsilon }\left( x\right) \big),\quad \text{where }J_{\varepsilon
}\left( x\right) :=\left( I+\varepsilon A\right) ^{-1}\left( x\right) .
\end{equation*}%
We mention that $J_{\varepsilon },A_{\varepsilon }:\mathbb{H}\rightarrow
\mathbb{H}$ are single valued operators and Lipschitz continuous functions, $%
A_{\varepsilon }$ is a maximal monotone operator operator with $\mathrm{D}%
\left( A_{\varepsilon }\right) =\mathbb{H}$, $\left\vert A_{\varepsilon
}\left( u\right) \right\vert \leq \left\vert v\right\vert $, for all $\left(
u,v\right) \in \mathrm{Gr}\left( A\right) $ and%
\begin{equation*}
\left\vert A_{\varepsilon }\left( x\right) -A_{\varepsilon }\left( y\right)
\right\vert \leq \frac{1}{\varepsilon }\left\vert x-y\right\vert ,\quad
\forall x,y\in \mathbb{H},\;\forall \varepsilon >0.
\end{equation*}%
Hence for any $T>0$ and any $\nu \in C\left( \left[ 0,T\right] ;\mathbb{H}%
\right) $ the approximating equation%
\begin{equation}
x_{t}^{\varepsilon }+\int_{0}^{t}A_{\varepsilon }\left( x_{s}^{\varepsilon
}\right) ds=\nu _{t}\;,\quad t\in \left[ 0,T\right] ,  \label{approxim eq}
\end{equation}%
admits a unique solution $x^{\varepsilon }\in C\left( \left[ 0,T\right] ;%
\mathbb{H}\right) .$

Moreover, if $m_{t}=m_{0}+\int_{0}^{t}f_{s}ds$ and $\hat{m}_{t}=\hat{m}%
_{0}+\int_{0}^{t}\hat{f}_{s}ds$ for $f,\hat{f}\in L_{1}\left( \left[ 0,T%
\right] ;\mathbb{H}\right) $, then, by the monotonicity of $A_{\varepsilon }$%
, we have%
\begin{equation*}
\left\vert x_{t}^{\varepsilon }-u\right\vert \leq \left\vert
m_{0}-u\right\vert +\int_{0}^{t}\left\vert f_{s}-A_{\varepsilon }\left(
u\right) \right\vert ds\,,\quad \forall u\in \mathbb{H}
\end{equation*}%
and%
\begin{equation*}
\left\vert x_{t}^{\varepsilon }-\hat{x}_{t}^{\varepsilon }\right\vert \leq
\left\vert m_{0}-\hat{m}_{0}\right\vert +\int_{0}^{t}|f_{s}-\hat{f}_{s}|ds\,.
\end{equation*}

\begin{proposition}[{see \protect\cite[Theorem 1.12]{ba/84}}]
\label{prop Barbu/84}Let $A$\ be a maximal monotone operator on \thinspace $%
\mathbb{H}$ and $m_{0}\in \mathrm{D}\left( A\right) $. If $f\in
W_{loc}^{1,1}\left( \mathbb{R}^{+};\mathbb{H}\right) ,$ then the Cauchy
problem (\ref{Cauchy problem 1}) has a unique strong solution $x\in
W_{loc}^{1,\infty }\left( \mathbb{R}^{+};\mathbb{H}\right) $ and if $%
x^{\varepsilon }$ is the solution of the approximating problem (\ref%
{approxim eq}), then for all $T\in \mathbb{R}^{+}$,%
\begin{equation*}
\lim_{\varepsilon \searrow 0}\left\Vert x^{\varepsilon }-x\right\Vert _{T}=0.
\end{equation*}%
Moreover, if $m_{0},\hat{m}_{0}\in \mathrm{D}(A)$, $f,\hat{f}\in
W_{loc}^{1,1}\left( \mathbb{R}^{+};\mathbb{H}\right) $ and $x,\hat{x}$ are
the corresponding strong solutions of (\ref{Cauchy problem 1}), then for all
$t\in \mathbb{R}^{+}$%
\begin{equation}
\left\vert x\left( t\right) -\hat{x}\left( t\right) \right\vert \leq
\left\vert m_{0}-\hat{m}_{0}\right\vert +\int_{0}^{t}|f_{s}-\hat{f}_{s}|ds.
\label{continuity}
\end{equation}%
In particular, for all $\left( u_{0},v_{0}\right) \in \mathrm{Gr}\left(
A\right) $ and $t\in \mathbb{R}^{+},$%
\begin{equation}
\left\vert x\left( t\right) -u_{0}\right\vert \leq \left\vert
m_{0}-u_{0}\right\vert +\int_{0}^{t}\left\vert f_{s}-v_{0}\right\vert ds.
\label{bound}
\end{equation}
\end{proposition}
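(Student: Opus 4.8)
The plan is to establish the result by a two-stage limiting argument: first pass $\varepsilon \searrow 0$ in the approximating equation (\ref{approxim eq}) to get existence of the strong solution together with the convergence statement, then derive the contraction estimate (\ref{continuity}) by monotonicity and deduce (\ref{bound}) as the special case $\hat m_t \equiv u_0 + v_0 t$. I would rely on the uniform bounds already recorded just above the statement, namely $|x_t^\varepsilon - u| \le |m_0 - u| + \int_0^t |f_s - A_\varepsilon(u)|\,ds$ for all $u\in\mathbb H$ and the pairwise estimate $|x_t^\varepsilon - \hat x_t^\varepsilon| \le |m_0 - \hat m_0| + \int_0^t |f_s - \hat f_s|\,ds$; the latter is exactly (\ref{continuity}) at the level of the approximants, so once convergence is in hand (\ref{continuity}) passes to the limit immediately.

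The core step is proving that $\{x^\varepsilon\}$ is Cauchy in $C([0,T];\mathbb H)$. Here I would use the standard Brezis-type argument for maximal monotone operators: from the equation, $\frac{d}{dt}(x_t^\varepsilon - x_t^\delta) = -A_\varepsilon(x_t^\varepsilon) + A_\delta(x_t^\delta)$, and testing against $x_t^\varepsilon - x_t^\delta$ gives
\begin{equation*}
\tfrac12\tfrac{d}{dt}|x_t^\varepsilon - x_t^\delta|^2 = -\big\langle A_\varepsilon(x_t^\varepsilon) - A_\delta(x_t^\delta),\, x_t^\varepsilon - x_t^\delta\big\rangle.
\end{equation*}
Writing $x^\varepsilon = J_\varepsilon(x^\varepsilon) + \varepsilon A_\varepsilon(x^\varepsilon)$ and using that $A_\varepsilon(x^\varepsilon)\in A(J_\varepsilon(x^\varepsilon))$ together with monotonicity of $A$, one bounds the right-hand side by a term controlled by $(\varepsilon + \delta)$ times products of the quantities $|A_\varepsilon(x_t^\varepsilon)|, |A_\delta(x_t^\delta)|$, which are uniformly bounded in $L^2(0,T)$ because $f\in W^{1,1}_{loc}$ and $m_0\in \mathrm D(A)$ (this is where the regularity hypothesis is used: it gives an $L^\infty$ bound on $A_\varepsilon(x^\varepsilon)$, or at least the requisite $L^2$ bound, via differentiating the equation and a Gronwall argument). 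Integrating in $t$ and applying Gronwall yields $\|x^\varepsilon - x^\delta\|_T \le C(T)(\varepsilon + \delta)^{1/2}$, so the limit $x := \lim_\varepsilon x^\varepsilon$ exists uniformly on compacts.

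It remains to verify that this $x$ is a strong solution and that $x\in W^{1,\infty}_{loc}$. The $W^{1,\infty}$ regularity follows from the uniform bound on $\|A_\varepsilon(x^\varepsilon)\|_{L^\infty(0,T)}$: along a subsequence $A_\varepsilon(x^\varepsilon) \rightharpoonup h$ weakly-$*$ in $L^\infty(0,T;\mathbb H)$, hence weakly in $L^2$, and passing to the limit in $x_t^\varepsilon + \int_0^t A_\varepsilon(x_s^\varepsilon)\,ds = m_t$ identifies $x_t + \int_0^t h_s\,ds = m_t$ with $x\in W^{1,\infty}_{loc}$. To see $h_t\in A(x_t)$ a.e., I would use the demiclosedness of (the graph of) the maximal monotone operator $A$: since $J_\varepsilon(x_t^\varepsilon) = x_t^\varepsilon - \varepsilon A_\varepsilon(x_t^\varepsilon) \to x_t$ strongly (the $\varepsilon A_\varepsilon$ term vanishes by the uniform bound) while $A_\varepsilon(x_t^\varepsilon)\in A(J_\varepsilon(x_t^\varepsilon))$ converges weakly to $h_t$, maximal monotonicity gives $(x_t, h_t)\in \mathrm{Gr}(A)$; the standard way to make this rigorous is to integrate the monotonicity inequality $\langle A_\varepsilon(x^\varepsilon) - w,\, J_\varepsilon(x^\varepsilon) - z\rangle \ge 0$ against an arbitrary $(z,w)\in\mathrm{Gr}(A)$ over $[0,T]$ and pass to the limit. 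Uniqueness is immediate from (\ref{continuity}) with $\hat m = m$. Finally (\ref{bound}) is (\ref{continuity}) applied to $\hat m_t := u_0 + v_0 t$, whose associated strong solution is the constant $\hat x \equiv u_0$ (since $v_0 \in A(u_0)$ makes $d\hat x_t + A(\hat x_t)(dt) \ni v_0\,dt = d\hat m_t$ hold with $\hat x_0 = u_0$). The main obstacle is the uniform $L^2$ (or $L^\infty$) estimate on $A_\varepsilon(x^\varepsilon)$ needed both for the Cauchy property and for the weak-limit identification — everything else is routine monotone-operator bookkeeping.
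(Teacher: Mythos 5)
The paper offers no proof of this proposition at all — it is quoted from Barbu \cite[Theorem 1.12]{ba/84} as a recalled classical result, so there is nothing internal to compare against, and your reconstruction is precisely the standard Yosida-approximation argument from that reference: the uniform bound on $A_{\varepsilon }(x^{\varepsilon })$ obtained from the translation/difference-quotient estimate (which is exactly where $m_{0}\in \mathrm{D}(A)$ and $f\in W_{loc}^{1,1}$ enter, via $|A_{\varepsilon }(m_{0})|\leq |A^{0}(m_{0})|$), the Cauchy property through the splitting $x^{\varepsilon }=J_{\varepsilon }(x^{\varepsilon })+\varepsilon A_{\varepsilon }(x^{\varepsilon })$, identification of the limit by demiclosedness of the maximal monotone graph, and (\ref{bound}) as the special case $\hat{m}_{t}=u_{0}+v_{0}t$ with $\hat{x}\equiv u_{0}$. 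The one point worth tightening is that (\ref{continuity}) inherited from the approximants applies only to the solutions you construct, so uniqueness (and the estimate for arbitrary strong solutions) should be closed off by applying monotonicity of $A$ directly to two given strong solutions, $\frac{1}{2}\frac{d}{dt}|x_{t}-\hat{x}_{t}|^{2}\leq |f_{t}-\hat{f}_{t}|\,|x_{t}-\hat{x}_{t}|$; this is routine and does not affect the correctness of your overall scheme.
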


\begin{proposition}[{see \protect\cite[Corollaire 3.2]{br/73}}]
\label{prop Introduction 1}Let $A$\ be a maximal monotone operator on
\thinspace $\mathbb{H}$ such that $\mathrm{Int}\left( \mathrm{D}\left(
A\right) \right) \neq \emptyset .$\ If $m_{0}\in \overline{\mathrm{D}\left(
A\right) }$\ and $f\in W_{loc}^{1,1}\left( \mathbb{R}^{+};\mathbb{H}\right) $%
, then the Cauchy problem (\ref{Cauchy problem 1}) has a unique strong
solution $x\in W_{loc}^{1,1}\left( \mathbb{R}^{+};\mathbb{H}\right) $ and
inequalities (\ref{continuity}) and (\ref{bound}) hold true for any $m_{0},%
\hat{m}_{0}\in \overline{\mathrm{D}(A)}.$
\end{proposition}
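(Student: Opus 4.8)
The plan is to construct the solution by approximating the initial condition from within $\mathrm{D}(A)$ and invoking Proposition \ref{prop Barbu/84}. Since $m_{0}\in\overline{\mathrm{D}(A)}$ we may pick $m_{0}^{n}\in\mathrm{D}(A)$ with $m_{0}^{n}\to m_{0}$ and set $m_{t}^{n}:=m_{0}^{n}+\int_{0}^{t}f_{s}\,ds$. By Proposition \ref{prop Barbu/84} the Cauchy problem with datum $(m_{0}^{n},f)$ has a unique strong solution $x^{n}=\mathcal{S}(A,m_{0}^{n};f)\in W_{loc}^{1,\infty}(\mathbb{R}^{+};\mathbb{H})$, and the contraction estimate (\ref{continuity}) applied to $x^{n}$ and $x^{n'}$ (same $f$) yields $\Vert x^{n}-x^{n'}\Vert_{T}\le|m_{0}^{n}-m_{0}^{n'}|$, so $(x^{n})$ is Cauchy in $C([0,T];\mathbb{H})$ for every $T$; call its limit $x$. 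Then $x$ is continuous, $x_{0}=m_{0}$, and $x_{t}\in\overline{\mathrm{D}(A)}$ for all $t$ since $x_{t}^{n}\in\mathrm{D}(A)$ and $\overline{\mathrm{D}(A)}$ is closed. Passing to the limit in (\ref{continuity}) and (\ref{bound}) for the approximants gives these inequalities for $m_{0},\hat{m}_{0}\in\overline{\mathrm{D}(A)}$, and uniqueness for the limit problem follows from (\ref{continuity}).

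It remains to show that $x$ is a \emph{strong} solution, i.e.\ that $h:=f-\dot{x}$ defines an $L_{loc}^{1}(\mathbb{R}^{+};\mathbb{H})$ function with $h_{t}\in A(x_{t})$ a.e.; this is where $\mathrm{Int}(\mathrm{D}(A))\neq\emptyset$ is used. First, $\sup_{n}\Vert x^{n}\Vert_{T}<\infty$ by (\ref{bound}) (its right-hand side is bounded uniformly in $n$ because $m_{0}^{n}$ converges). Next, by local boundedness of a maximal monotone operator at an interior point there are $u_{0}\in\mathrm{Int}(\mathrm{D}(A))$, $r_{0}>0$, $M_{0}>0$ with $\bar{B}(u_{0},r_{0})\subset\mathrm{D}(A)$ and $|v|\le M_{0}$ for $v\in A(u)$, $|u-u_{0}|\le r_{0}$. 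Writing $h_{s}^{n}:=f_{s}-\dot{x}_{s}^{n}\in A(x_{s}^{n})$, monotonicity of $A$ between $(x_{s}^{n},h_{s}^{n})$ and $(u_{0}+r_{0}w,v_{w})$ (with $|w|\le1$, $v_{w}\in A(u_{0}+r_{0}w)$, $|v_{w}|\le M_{0}$), followed by the choice $w=h_{s}^{n}/|h_{s}^{n}|$, gives $r_{0}|h_{s}^{n}|\le\langle h_{s}^{n},x_{s}^{n}-u_{0}\rangle+M_{1}$ with $M_{1}:=M_{0}(\sup_{n}\Vert x^{n}\Vert_{T}+|u_{0}|+r_{0})$. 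Integrating on $[0,T]$, substituting $h^{n}=f-\dot{x}^{n}$ and using $\int_{0}^{T}\langle\dot{x}_{s}^{n},x_{s}^{n}-u_{0}\rangle\,ds=\tfrac12|x_{T}^{n}-u_{0}|^{2}-\tfrac12|m_{0}^{n}-u_{0}|^{2}$ (the first term on the right is nonnegative and can be dropped), one obtains $\int_{0}^{T}|h_{s}^{n}|\,ds\le K_{T}$ with $K_{T}$ independent of $n$; hence also $\int_{0}^{T}|\dot{x}_{s}^{n}|\,ds$ is bounded uniformly in $n$.

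To conclude, I would pass to the limit. The $\mathbb{H}$-valued measures $h_{s}^{n}\,ds$ are bounded in total variation on $[0,T]$, so along a subsequence $h_{s}^{n}\,ds\rightharpoonup\mu$ weakly-$\ast$; since $\int_{0}^{t}h_{s}^{n}\,ds=m_{t}^{n}-x_{t}^{n}\to m_{t}-x_{t}$ uniformly we get $\mu([0,t])=m_{t}-x_{t}$, so $k:=m-x$ has locally bounded variation with $dk=\mu$. I expect the main obstacle to be upgrading this to absolute continuity — proving $\mu\ll dt$, so that $h:=d\mu/dt\in L_{loc}^{1}$ and $x\in W_{loc}^{1,1}$ — and identifying $h_{s}\in A(x_{s})$ a.e. This is precisely the regularizing-effect argument of Brezis \cite{br/73} under the hypothesis $\mathrm{Int}(\mathrm{D}(A))\neq\emptyset$: away from $t=0$ the family $(h^{n})$ is in fact equi-integrable, while its mass near $t=0$ is controlled by the contraction estimate, so $(h^{n})$ is equi-integrable on $[0,T]$; Dunford--Pettis then gives $h^{n}\rightharpoonup h$ in $L^{1}([0,T];\mathbb{H})$ along a further subsequence, and demiclosedness of the graph of $A$ (Mazur's lemma together with $x^{n}\to x$ uniformly and $h_{s}^{n}\in A(x_{s}^{n})$) yields $h_{s}\in A(x_{s})$ a.e. Then $x_{t}+\int_{0}^{t}h_{s}\,ds=m_{t}$, i.e.\ $x=\mathcal{S}(A,m_{0};f)\in W_{loc}^{1,1}(\mathbb{R}^{+};\mathbb{H})$, which completes the proof.
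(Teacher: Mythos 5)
The paper offers no proof of this proposition; it is quoted verbatim from Brezis \cite[Corollaire 3.2]{br/73}, so there is no in-paper argument to compare against, and your proposal must stand on its own. Its skeleton is the right one and matches the classical proof: approximation of $m_{0}$ from $\mathrm{D}(A)$, the contraction estimate to get a uniform limit $x$, and the interior-ball inequality $r_{0}|h_{s}^{n}|\leq\langle h_{s}^{n},x_{s}^{n}-u_{0}\rangle+M_{1}$ (this is exactly inequality (\ref{prop Anex 5_1}) of the paper) integrated against $h^{n}=f-\dot{x}^{n}$ to obtain the uniform bound $\int_{0}^{T}|\dot{x}_{s}^{n}|\,ds\leq K_{T}$. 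All of that is correct.

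The genuine gap is the step you yourself flag as ``the main obstacle'': you assert that $(h^{n})$ is equi-integrable on $[0,T]$ (``away from $t=0$ the family is in fact equi-integrable, while its mass near $t=0$ is controlled by the contraction estimate'') without proving either half, and this is precisely the point that separates a strong solution in $W^{1,1}_{loc}$ from a mere $\mathrm{BV}$ generalized solution -- a uniform total-variation bound alone only gives weak-$\ast$ convergence of $h^{n}_{s}\,ds$ to a measure $\mu$ that could a priori carry a singular part. To close it you need the hypothesis $f\in W^{1,1}_{loc}$ a second time, via the classical dissipativity estimate $\left|\dot{x}^{n}(t)\right|\leq\left|\dot{x}^{n}(s)\right|+\int_{s}^{t}|g_{r}|\,dr$ for a.e.\ $s\leq t$ (where $f_{t}=a+\int_{0}^{t}g_{s}ds$), obtained by applying (\ref{continuity}) to $x^{n}(\cdot)$ and $x^{n}(\cdot+h)$. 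Averaging over $s\in(0,t)$ gives $t\,|\dot{x}^{n}(t)|\leq K_{T}+t\int_{0}^{T}|g_{r}|\,dr$, i.e.\ a bound on $\|\dot{x}^{n}\|_{L^{\infty}(\delta,T)}$ uniform in $n$ for each $\delta>0$; and re-running your total-variation estimate on $[0,\eta]$ shows $r_{0}\int_{0}^{\eta}|h_{s}^{n}|\,ds\leq\int_{0}^{\eta}\langle f_{s},x_{s}^{n}-u_{0}\rangle ds+\tfrac{1}{2}\bigl(|m_{0}^{n}-u_{0}|^{2}-|x_{\eta}^{n}-u_{0}|^{2}\bigr)+C\eta$, whose right-hand side tends to $0$ as $\eta\to0$ uniformly in $n$ because the uniformly convergent sequence $\{x^{n}\}$ is equicontinuous at $0$. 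These two facts together give the equi-integrability, after which Dunford--Pettis and the demiclosedness of $\mathrm{Gr}(A)$ finish the proof as you indicate. Without this supplement the argument only produces $x\in\mathrm{BV}_{loc}$, not the asserted $W^{1,1}_{loc}$ strong solution.
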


By continuity property (\ref{continuity}) we can generalize the notion of
solution for equation (\ref{Cauchy problem 1}) as follows:

\begin{definition}
The function $x:\mathbb{R}^{+}\rightarrow \mathbb{H}$ is a generalized
solution of the Cauchy problem (\ref{Cauchy problem 1}) with $m_{0}\in
\overline{\mathrm{D}\left( A\right) },~f\in L_{loc}^{1}\left( \mathbb{R}^{+};%
\mathbb{H}\right) $ (and we write $x=\mathcal{GS}\left( A,m_{0};f\right) $)
if for all $T>0$,

\noindent $\left( \alpha \right) \quad x\in C\left( \left[ 0,T\right] ;%
\mathbb{H}\right) $ and

\noindent $\left( \beta \right) \quad $there exist $m_{0}^{n}\in \mathrm{D}%
\left( A\right) $, $f^{n}\in W^{1,1}\left( \left[ 0,T\right] ;\mathbb{H}%
\right) $ such that%
\begin{equation*}
\begin{array}{ll}
\left( a\right)  & m_{0}^{n}\rightarrow m_{0}\quad \text{in }\mathbb{H}%
,\medskip  \\
\left( b\right)  & f^{n}\rightarrow f\quad \text{in }L^{1}\left( 0,T;\mathbb{%
H}\right) ,\medskip  \\
\left( c\right)  & x^{n}=\mathcal{S}\left( A,m_{0}^{n};f^{n}\right)
\rightarrow x\quad \text{in\ }C\left( \left[ 0,T\right] ;\mathbb{H}\right) .%
\end{array}%
\end{equation*}
\end{definition}

We have:

\begin{proposition}[{see \protect\cite[Theoreme 3.4]{br/73}}]
If $A$\ is maximal monotone operator on \thinspace $\mathbb{H},$\ $m_{0}\in
\overline{\mathrm{D}\left( A\right) }$\ and $f\in L_{loc}^{1}\left( \mathbb{R%
}^{+};\mathbb{H}\right) ,$\ then the Cauchy problem (\ref{Cauchy problem 1})
has a unique generalized solution $x:\mathbb{R}^{+}\rightarrow \mathbb{H}$.\
If $x=\mathcal{GS}\left( A,m_{0};f\right) $ and $y=\mathcal{GS}(A,\hat{m}%
_{0};\hat{f})$ then%
\begin{equation}
\left\vert x_{t}-y_{t}\right\vert \leq \left\vert m_{0}-\hat{m}%
_{0}\right\vert +\int_{0}^{t}|f_{s}-\hat{f}_{s}|ds,~t\geq 0.
\label{cont-gen}
\end{equation}%
In particular, for all $\left( u_{0},v_{0}\right) \in \mathrm{Gr}\left(
A\right) $ and for all $T\in \mathbb{R}^{+},$%
\begin{equation*}
\left\Vert x-u_{0}\right\Vert _{T}\leq \left\vert m_{0}-u_{0}\right\vert
+\left\Vert f-v_{0}\right\Vert _{L^{1}(0,T;\mathbb{H})}
\end{equation*}%
and therefore%
\begin{equation}
\left\Vert x\right\Vert _{T}\leq \left( \left\vert u_{0}\right\vert
+\left\vert m_{0}-u_{0}\right\vert +\left\vert v_{0}\right\vert T\right)
+\left\Vert f\right\Vert _{L^{1}(0,T;\mathbb{H})}\,.  \label{bound-gen}
\end{equation}
\end{proposition}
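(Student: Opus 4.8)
The plan is to establish existence, uniqueness, and the continuity estimate \eqref{cont-gen} by an approximation argument that leverages the two preceding propositions. First I would fix $T>0$ and reduce to the case of smooth data. Given $m_0 \in \overline{\mathrm D(A)}$ and $f \in L^1_{loc}(\mathbb R^+;\mathbb H)$, choose sequences $m_0^n \in \mathrm D(A)$ with $m_0^n \to m_0$ in $\mathbb H$, and $f^n \in W^{1,1}([0,T];\mathbb H)$ with $f^n \to f$ in $L^1(0,T;\mathbb H)$; such sequences exist because $\mathrm D(A)$ is dense in $\overline{\mathrm D(A)}$ and $W^{1,1}$ is dense in $L^1$. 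By Proposition~\ref{prop Barbu/84} (or Proposition~\ref{prop Introduction 1} under the weaker hypothesis on $m_0^n$), the strong solutions $x^n = \mathcal S(A,m_0^n;f^n)$ exist on $[0,T]$, and by estimate \eqref{continuity} applied to the pair $(m_0^n,f^n)$, $(m_0^k,f^k)$ we get
\begin{equation*}
\|x^n - x^k\|_T \leq |m_0^n - m_0^k| + \int_0^T |f^n_s - f^k_s|\,ds,
\end{equation*}
so $(x^n)$ is Cauchy in $C([0,T];\mathbb H)$ and converges uniformly to some $x$. This $x$ is, by definition, the generalized solution $\mathcal{GS}(A,m_0;f)$, and by a diagonal/exhaustion argument over $T = 1,2,\dots$ it is defined on all of $\mathbb R^+$; one should check that the limit does not depend on the chosen approximating sequences, which again follows from \eqref{continuity}.

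Next I would prove the continuity estimate \eqref{cont-gen}. If $x = \mathcal{GS}(A,m_0;f)$ and $y = \mathcal{GS}(A,\hat m_0;\hat f)$, pick approximating data $(m_0^n,f^n) \to (m_0,f)$ and $(\hat m_0^n,\hat f^n) \to (\hat m_0,\hat f)$ with corresponding strong solutions $x^n,y^n$. Inequality \eqref{continuity} gives
\begin{equation*}
|x^n_t - y^n_t| \leq |m_0^n - \hat m_0^n| + \int_0^t |f^n_s - \hat f^n_s|\,ds, \qquad t \in [0,T],
\end{equation*}
and passing to the limit $n \to \infty$ — using uniform convergence on the left and $L^1$ convergence on the right — yields \eqref{cont-gen}. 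Uniqueness of the generalized solution is then immediate: if $x$ and $y$ both solve the problem with the same data, \eqref{cont-gen} forces $x \equiv y$. The two displayed consequences follow by specializing: taking $\hat m_0 = u_0$ and $\hat f \equiv v_0$ for $(u_0,v_0)\in\mathrm{Gr}(A)$, the constant function $y \equiv u_0$ is the strong (hence generalized) solution of \eqref{Cauchy problem 1} with that data since $v_0 \in A(u_0)$ means $0 \in A(u_0) - v_0$, i.e. $u_0 + \int_0^t v_0\,ds = u_0 + t v_0 = m_t$ with $h \equiv v_0$; therefore
\begin{equation*}
\|x - u_0\|_T \leq |m_0 - u_0| + \int_0^T |f_s - v_0|\,ds \leq |m_0 - u_0| + \|f - v_0\|_{L^1(0,T;\mathbb H)},
\end{equation*}
and \eqref{bound-gen} follows by the triangle inequality $\|x\|_T \leq \|x-u_0\|_T + |u_0|$ together with $\|f - v_0\|_{L^1(0,T;\mathbb H)} \leq \|f\|_{L^1(0,T;\mathbb H)} + |v_0|T$.

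The only genuinely delicate point is the well-definedness of the generalized solution: one must confirm that for arbitrary $m_0 \in \overline{\mathrm D(A)}$ and $f \in L^1_{loc}$, the Cauchy sequence $(x^n)$ actually produces the \emph{same} limit regardless of which admissible approximation is used, and that the limit satisfies clause $(\beta)$ of the definition on every $[0,T]$ simultaneously. Both follow cleanly from \eqref{continuity}, but the bookkeeping across growing horizons $T$ (ensuring a single function $x:\mathbb R^+ \to \mathbb H$ rather than a family of restrictions) requires a short consistency check: solutions on $[0,T']$ restrict to solutions on $[0,T]$ for $T<T'$ by uniqueness of the strong solutions, so the limits are consistent and glue. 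Everything else is a routine passage to the limit in inequalities, with no compactness or fixed-point machinery needed beyond the completeness of $C([0,T];\mathbb H)$.
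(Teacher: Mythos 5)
Your proposal is correct: the paper states this proposition without proof (it is quoted from Brezis, Th\'eor\`eme 3.4), and your density-plus-passage-to-the-limit argument --- approximating $(m_0,f)$ by data in $\mathrm{D}(A)\times W^{1,1}$, using the Lipschitz estimate \eqref{continuity} of Proposition~\ref{prop Barbu/84} to get a Cauchy sequence of strong solutions, and then passing to the limit in \eqref{continuity} to obtain \eqref{cont-gen} --- is exactly the standard route the cited reference takes. The specialization $\hat m_0=u_0$, $\hat f\equiv v_0$, for which $y\equiv u_0$ is the strong solution since $v_0\in A(u_0)$, correctly yields the two displayed consequences including \eqref{bound-gen}.
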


In the case when $\mathrm{Int}\left( \mathrm{D}\left( A\right) \right) \neq
\emptyset $ one can give an equivalent formulation for the generalized
solutions which is analogous to the Skorohod problem.

\begin{proposition}
\label{prop 4}Let $A:\mathbb{H}\rightrightarrows\mathbb{H}$ be a maximal
monotone operator such that $\mathrm{Int}\left( \mathrm{D}\left( A\right)
\right) \neq\emptyset$, $m_{0}\in\overline{\mathrm{D}\left( A\right) }$ and $%
f\in L_{loc}^{1}\left( \mathbb{R}^{+};\mathbb{H}\right) $. Then:

\noindent$1.$~there exists a unique couple of continuous functions $(x,k):%
\mathbb{R}^{+}\rightarrow\mathbb{H\times H}$ such that%
\begin{equation}
\left( SP\right) ~~%
\begin{array}{ll}
\left( a\right) & x(t)\in\overline{\mathrm{D}\left( A\right) }\,,\;\forall
t\geq0,\medskip \\
\left( b\right) & k\in\mathrm{BV}_{loc}(\mathbb{R}^{+};\mathbb{H}%
),\;k(0)=0,\medskip \\
\left( c\right) & x_{t}+k_{t}=m_{t},\;\forall t\geq0,\medskip \\
\left( d\right) & \displaystyle\int_{s}^{t}\left\langle x_{r}-\alpha
,dk_{r}-\beta dr\right\rangle \geq0,~\forall\,0\leq s\leq t,\;\forall\left(
\alpha,\beta\right) \in\mathrm{Gr}\left( A\right) .%
\end{array}
\label{SP}
\end{equation}
\noindent$2.$~$x=\mathcal{GS}\left( A,m_{0};f\right) $ if and only if $x$ is
solution of the problem $\left( SP\right) .$

\noindent$3.$~the following estimate holds: for all $T>0$,%
\begin{equation*}
\left\Vert x\right\Vert _{T}^{2}+\left\updownarrow {k}\right\updownarrow {%
_{T}}\leq C_{T}\left( 1+\left\vert m_{0}\right\vert ^{2}+\left\Vert
f\right\Vert _{L^{1}(0,T;\mathbb{H})}^{2}\right) ,
\end{equation*}
where $C_{T}$ is a positive constant independent of $m_{0}$ and $f.$
\end{proposition}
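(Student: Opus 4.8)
The plan is to obtain all three assertions from the corresponding facts for strong solutions (Propositions~\ref{prop Barbu/84} and~\ref{prop Introduction 1}) by a density-and-compactness argument, the essential new ingredient being an a priori bound on the variation of $k$ that exploits assumption~(\ref{assumpt int}). First I would record that \emph{strong solutions solve} $(SP)$: if $m_0\in\mathrm{D}(A)$, $f\in W_{loc}^{1,1}(\mathbb{R}^{+};\mathbb{H})$, $x=\mathcal{S}(A,m_0;f)$, $m_t=m_0+\int_0^t f_s\,ds$ and $k:=m-x$, then $k\in W_{loc}^{1,1}$, $k(0)=0$, $x+k=m$, $x_t\in\mathrm{D}(A)\subset\overline{\mathrm{D}(A)}$ and $\dot k_r=f_r-\dot x_r\in A(x_r)$ a.e.; by monotonicity of $A$ one has $\langle x_r-\alpha,\dot k_r-\beta\rangle\ge0$ for every $(\alpha,\beta)\in\mathrm{Gr}(A)$, and integration yields $(SP)(d)$. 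For a continuous $x$ valued in $\overline{\mathrm{D}(A)}$ and $k\in\mathrm{BV}_{loc}$, condition $(SP)(d)$ is exactly the relation $dk\in A(x)(dt)$ of Definition~\ref{definition Annex 14}.

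Next comes the a priori estimate, which will give part~$3$. I would fix $u_0\in\mathrm{Int}(\mathrm{D}(A))$ and $\rho>0$ with $\overline{B}(u_0,\rho)\subset\mathrm{D}(A)$; by local boundedness of a maximal monotone operator on the interior of its domain, $M:=\sup\{|v|:|u-u_0|\le\rho,\ v\in A(u)\}<\infty$. Given any $(x,k)$ satisfying $(SP)$ with data $(m_0,f)$, for a partition $0=t_0<\dots<t_N=t$ I apply $(SP)(d)$ on each $[t_{i-1},t_i]$ with $\alpha=u_0+\rho z_i$, $z_i=(k_{t_i}-k_{t_{i-1}})/|k_{t_i}-k_{t_{i-1}}|$, and a matching $\beta=v_i$, $|v_i|\le M$; summing, using $dk_r=f_r\,dr-dx_r$ and the identity $\int_0^t\langle x_r-u_0,dx_r\rangle=\tfrac12(|x_t-u_0|^2-|m_0-u_0|^2)$ (valid since $x=m-k$ is continuous and of locally bounded variation), one arrives at
\[
\rho\,\left\updownarrow k\right\updownarrow_{[0,t]}+\tfrac12|x_t-u_0|^2\le\tfrac12|m_0-u_0|^2+\int_0^t|x_r-u_0|\,|f_r|\,dr+(\|x-u_0\|_t+\rho)Mt,\qquad t\le T.
\]
Bounding the right-hand side by its value at $t=T$ and taking the supremum of the left-hand side over $t\in[0,T]$ produces a quadratic inequality for $\|x-u_0\|_T$, whose solution gives $\|x\|_T\le C_T(1+|m_0|+\|f\|_{L^1(0,T;\mathbb{H})})$; re-inserting this bound yields $\left\updownarrow k\right\updownarrow_T\le C_T(1+|m_0|^2+\|f\|_{L^1(0,T;\mathbb{H})}^2)$, i.e.\ part~$3$. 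Applied to strong solutions, the same estimate furnishes variation bounds uniform along any approximating sequence.

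For existence in part~$1$ and the ``only if'' implication of part~$2$, I would choose $m_0^n\in\mathrm{D}(A)$ with $m_0^n\to m_0$ and $f^n\in W_{loc}^{1,1}(\mathbb{R}^{+};\mathbb{H})$ with $f^n\to f$ in $L_{loc}^1(\mathbb{R}^{+};\mathbb{H})$, set $x^n=\mathcal{S}(A,m_0^n;f^n)$, $m^n_t=m_0^n+\int_0^t f^n_s\,ds$, $k^n=m^n-x^n$. By the contraction estimate~(\ref{continuity}), $(x^n)$ is uniformly Cauchy on compacts, so $x^n\to x:=\mathcal{GS}(A,m_0;f)$ and $k^n\to k:=m-x$ uniformly on compacts, while $\sup_n\left\updownarrow k^n\right\updownarrow_T<\infty$ by the previous step; hence $k\in\mathrm{BV}_{loc}$ with $k(0)=0$, and $x=m-k$ is continuous of locally bounded variation. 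Passing to the limit in $\int_s^t\langle x^n_r-\alpha,dk^n_r\rangle\ge\int_s^t\langle x^n_r-\alpha,\beta\rangle\,dr$ — splitting the left side as $\int_s^t\langle x^n_r-x_r,dk^n_r\rangle+\int_s^t\langle x_r-\alpha,dk^n_r\rangle$, estimating the first term by $\|x^n-x\|_T\sup_n\left\updownarrow k^n\right\updownarrow_T$ and handling the second by integration by parts (both $r\mapsto x_r-\alpha$ and $k^n$ are BV) together with dominated convergence with respect to $|dx|$ — gives $(SP)(d)$ for $(x,k)$; conditions $(SP)(a)$--$(c)$ are immediate because $\overline{\mathrm{D}(A)}$ is closed. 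Thus $\mathcal{GS}(A,m_0;f)$ solves $(SP)$.

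Finally, uniqueness and the ``if'' implication of part~$2$. If $(x,k)$ and $(\hat x,\hat k)$ satisfy $(SP)$ with data $(m_0,f)$ and $(\hat m_0,\hat f)$, then by the first step $dk\in A(x)(dt)$ and $d\hat k\in A(\hat x)(dt)$; since $x,\hat x$ are continuous of locally bounded variation, the integrated form of monotonicity gives $\int_0^t\langle x_r-\hat x_r,dk_r-d\hat k_r\rangle\ge0$, and combining this with $\tfrac12\,d|x_r-\hat x_r|^2=\langle x_r-\hat x_r,d(x_r-\hat x_r)\rangle$ and $x-\hat x+k-\hat k=m-\hat m$ one obtains $\tfrac12|x_t-\hat x_t|^2\le\tfrac12|m_0-\hat m_0|^2+\int_0^t|x_r-\hat x_r|\,|f_r-\hat f_r|\,dr$, hence $|x_t-\hat x_t|\le|m_0-\hat m_0|+\int_0^t|f_r-\hat f_r|\,dr$ by the usual Gronwall-type lemma. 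Taking $(\hat m_0,\hat f)=(m_0,f)$ yields $x=\hat x$ and then $k=\hat k$, proving uniqueness in part~$1$; in general it identifies the first component of any $(SP)$-solution with $\mathcal{GS}(A,m_0;f)$, completing part~$2$. The step I expect to be the main obstacle is the combination of the variation estimate with the limiting argument: one must justify the passage to the limit in $(SP)(d)$ with only uniform convergence of $x^n$ and $k^n$ at hand, and it is precisely the uniform variation bound — together with the integration-by-parts identity made available by $x^n$ being of bounded variation — that makes this work; the remaining ingredients (monotonicity, the chain rule for continuous BV paths, the elementary Gronwall lemma) are routine.
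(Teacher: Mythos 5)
The paper does not actually prove Proposition~\ref{prop 4}: it is stated in the survey subsection~\ref{Subsect Introd} as a known result (from \cite{ra/96}, \cite{ce/98}, \cite{br/73}), so there is no in-paper proof to compare against line by line. That said, your reconstruction is essentially the standard argument and is the same one the authors redeploy, in harder c\`adl\`ag form, in Section~\ref{Skorohod problem}: your interior-ball variation bound is exactly Proposition~\ref{prop Anex 15} (and Lemma~\ref{lem2.10}), your passage to the limit in $(SP)(d)$ under uniform convergence plus a uniform variation bound is the content of Lemma~\ref{lemma Annex 10} and Theorem~\ref{Helly-Bray}, and your Gronwall-type uniqueness step is Lemma~\ref{lem2.7} restricted to the continuous case. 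I see no gap of substance; the proof is correct.

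One step you state more casually than it deserves: the cross-monotonicity $\int_0^t\langle x_r-\hat x_r,dk_r-d\hat k_r\rangle\ge 0$ does not follow by merely ``integrating'' pointwise monotonicity, because $(SP)(d)$ gives the inequality only against \emph{constant} test pairs $(\alpha,\beta)\in\mathrm{Gr}(A)$, while here one needs it against the moving pair built from the other solution. The paper's Proposition~\ref{prop Anex 13} supplies exactly this, by first upgrading $(SP)(d)$ to c\`adl\`ag selections $(u_r,\hat u_r)\in A$ and then testing with the Yosida pair $u_r=J_{\varepsilon}\bigl(\tfrac{x_r+\hat x_r}{2}\bigr)$, $\hat u_r=A_{\varepsilon}\bigl(\tfrac{x_r+\hat x_r}{2}\bigr)$ and letting $\varepsilon\searrow 0$. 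Your phrase ``the integrated form of monotonicity'' should be read as a citation of that lemma; with that reference in place, your argument is complete.
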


The problem $\left( SP\right) $ will be called generalized Skorohod problem
associated to the maximal monotone operator $A.$

\begin{proposition}[{see Barbu \protect\cite[Chapter III, Section 1]{ba/76}}]

If $m_{t}\equiv m_{0}\in \overline{\mathrm{D}\left( A\right) }$ then the
solution of (\ref{SP}) is given by%
\begin{equation*}
x_{t}=S_{A}\left( t\right) m_{0},
\end{equation*}%
where $\left\{ S_{A}\left( t\right) \right\} _{t\geq 0}$ is the nonlinear
semigroup of contractions generated by $A:$%
\begin{equation*}
S_{A}\left( t\right) y=\lim_{n\rightarrow \infty }\left( I+\frac{t}{n}%
A\right) ^{-1}y,\;\;y\in \overline{\mathrm{D}\left( A\right) }
\end{equation*}%
and the limit is uniform with respect to $t\in \left[ 0,T\right] $, $\forall
T>0.$
\end{proposition}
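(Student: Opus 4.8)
The statement to prove is: if $m_t \equiv m_0 \in \overline{\mathrm{D}(A)}$, then the solution of the generalized Skorohod problem $(SP)$ is $x_t = S_A(t)m_0$, where $S_A$ is the Crandall–Liggett semigroup generated by $A$. The strategy is to combine the existence/uniqueness statement of Proposition~\ref{prop 4} (part~1) with the classical characterization of $S_A(t)m_0$ as the unique generalized solution of the homogeneous Cauchy problem $dx_t + A(x_t)(dt)\ni 0$, $x_0=m_0$, so that the real content is matching two already-established uniqueness statements.

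\textbf{Step 1 (reduce to the absolutely continuous theory).} Take $f\equiv 0$, so $m_t = m_0 + \int_0^t 0\,ds \equiv m_0$, and $m_0\in\overline{\mathrm{D}(A)}$. By the generalized-solution theory recalled above (Brezis, \cite{br/73}, Théorème 3.4, and its reformulation in Proposition~\ref{prop 4}, part~2), the Cauchy problem $dx_t + A(x_t)(dt)\ni 0$ has a unique generalized solution $x=\mathcal{GS}(A,m_0;0)$, and $x$ is the generalized solution if and only if it solves $(SP)$ with input $m_t\equiv m_0$. Since $\mathrm{Int}(\mathrm{D}(A))\neq\emptyset$ is assumed, Proposition~\ref{prop 4} applies verbatim.

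\textbf{Step 2 (identify $\mathcal{GS}(A,m_0;0)$ with the semigroup).} It remains to recall that $t\mapsto S_A(t)m_0$ is exactly this generalized solution. This is the Crandall–Liggett / Bénilan theory for the evolution equation governed by a maximal monotone (equivalently, $m$-accretive in a Hilbert space) operator: for $m_0\in\mathrm{D}(A)$ the map $t\mapsto S_A(t)m_0$ is the strong solution $\mathcal{S}(A,m_0;0)$ (it is locally Lipschitz, differentiable a.e., with $\tfrac{d}{dt}S_A(t)m_0 \in -A(S_A(t)m_0)$ a.e.), and for general $m_0\in\overline{\mathrm{D}(A)}$ one passes to the limit along $m_0^n\to m_0$, $m_0^n\in\mathrm{D}(A)$, using the contraction estimate $|S_A(t)m_0^n - S_A(t)m_0|\le|m_0^n - m_0|$ — which is precisely condition $(\beta)$ in the definition of $\mathcal{GS}$. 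Hence $S_A(\cdot)m_0 = \mathcal{GS}(A,m_0;0)$. The exponential formula $S_A(t)y=\lim_{n\to\infty}(I+\tfrac{t}{n}A)^{-1}y$, uniform on compacts, is the Crandall–Liggett generation theorem and may simply be cited (e.g. Barbu \cite{ba/76}, Chapter III).

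\textbf{Step 3 (conclude).} Combining Steps 1 and 2: $x_t := S_A(t)m_0$ is a continuous $\overline{\mathrm{D}(A)}$-valued function equal to $\mathcal{GS}(A,m_0;0)$, hence by Proposition~\ref{prop 4}(2) it solves $(SP)$ with input $m\equiv m_0$; and by Proposition~\ref{prop 4}(1) the solution of $(SP)$ is unique. Therefore the solution of $(SP)$ is $x_t = S_A(t)m_0$, and the accompanying $k$ is $k_t = m_0 - S_A(t)m_0$, which indeed lies in $\mathrm{BV}_{loc}$ with $k_0=0$. The main (and essentially only) obstacle is bookkeeping: making sure the sign conventions in $(SP)(d)$ and in the definition of $\mathcal{GS}$ match those in the semigroup literature, i.e. that $dk_t^c\in A(x_t)(dt)$ with $dx_t = -dk_t^c$ is the same as $\tfrac{d}{dt}S_A(t)m_0\in -A(S_A(t)m_0)$; once the monotonicity inequality $(SP)(d)$ is read against $\beta=v_0\in A(\alpha)$ with $f=0$, this is immediate.
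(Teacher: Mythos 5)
Your argument is correct, but it is worth noting that the paper does not actually prove this proposition: it is recalled verbatim from Barbu's monograph with only a citation, so there is no internal proof to match. What you have done is reconstruct a proof from the other results recalled in the same subsection, and the reconstruction is sound: with $f\equiv 0$ the equivalence in Proposition \ref{prop 4}(2) between $\mathcal{GS}(A,m_0;0)$ and the solution of $(SP)$ (valid because $\mathrm{Int}(\mathrm{D}(A))\neq\emptyset$ is in force throughout that subsection), together with the uniqueness from Proposition \ref{prop 4}(1), reduces everything to the identification $S_A(\cdot)m_0=\mathcal{GS}(A,m_0;0)$; and that identification is exactly the definition of $\mathcal{GS}$ once one knows that $t\mapsto S_A(t)m_0^n$ is the strong solution for $m_0^n\in\mathrm{D}(A)$ and uses the contraction property of the semigroup to pass to $m_0\in\overline{\mathrm{D}(A)}$. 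The one point you gloss over slightly is the membership $k=m_0-S_A(\cdot)m_0\in\mathrm{BV}_{loc}$: for a general $m_0\in\overline{\mathrm{D}(A)}$ the semigroup orbit need not be of bounded variation without further hypotheses, but here it is guaranteed either by Proposition \ref{prop Introduction 1} (under $\mathrm{Int}(\mathrm{D}(A))\neq\emptyset$ the generalized solution is in fact a strong solution in $W^{1,1}_{loc}$) or simply because the $k$ produced by Proposition \ref{prop 4}(1) is already known to be $\mathrm{BV}_{loc}$ and must coincide with $m_0-x$. So your route buys a self-contained derivation from the recalled propositions, at the cost of relying on the Crandall--Liggett exponential formula, which, as you say, must still be cited from the literature.
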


\begin{proposition}[{see Brezis \protect\cite[Lemma 3.8]{br/73}}]
\label{prop 5}Let $\alpha ^{\varepsilon },\alpha \in \mathbb{H}$ such that $%
\alpha ^{\varepsilon }\rightarrow \alpha $, as $\varepsilon \rightarrow 0$.
If $\alpha \in \overline{\mathrm{D}\left( A\right) },$ then for all $T>0,$%
\begin{equation*}
\lim_{\varepsilon \rightarrow 0}\sup_{t\in \left[ 0,T\right] }\left\vert
S_{A_{\varepsilon }}\left( t\right) \alpha ^{\varepsilon }-S_{A}\left(
t\right) \alpha \right\vert =0,
\end{equation*}%
and, if $\alpha \notin \overline{\mathrm{D}\left( A\right) },$ then%
\begin{equation*}
\lim_{\varepsilon \rightarrow 0}\int_{0}^{T}\left\vert S_{A_{\varepsilon
}}\left( t\right) \alpha ^{\varepsilon }-S_{A}\left( t\right) \Pi _{%
\overline{\mathrm{D}\left( A\right) }}\left( \alpha \right) \right\vert
^{2}dt=0.
\end{equation*}
\end{proposition}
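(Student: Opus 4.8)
The plan relies on the fact that $S_{A}$ and each $S_{A_{\varepsilon}}$ are semigroups of contractions, so that $|S_{A_{\varepsilon}}(t)\alpha^{\varepsilon}-S_{A_{\varepsilon}}(t)\alpha|\le|\alpha^{\varepsilon}-\alpha|\to0$ uniformly in $t$; hence in both assertions we may replace $\alpha^{\varepsilon}$ by $\alpha$. For the first assertion I would treat first $\alpha\in\mathrm{D}(A)$: with $m_{0}=\alpha$ and $f\equiv0$ the approximating equation (\ref{approxim eq}) for $\nu\equiv\alpha$ has the unique solution $x^{\varepsilon}_{t}=S_{A_{\varepsilon}}(t)\alpha$, and the strong solution of (\ref{Cauchy problem 1}) is $x_{t}=S_{A}(t)\alpha$ (representation for constant input recalled above), so Proposition~\ref{prop Barbu/84} gives $\|S_{A_{\varepsilon}}(\cdot)\alpha-S_{A}(\cdot)\alpha\|_{T}\to0$. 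For a general $\alpha\in\overline{\mathrm{D}(A)}$ one approximates $\alpha$ by $\alpha_{n}\in\mathrm{D}(A)$ and concludes by a three-$\varepsilon$ argument, using the contractions $|S_{A_{\varepsilon}}(t)\alpha-S_{A_{\varepsilon}}(t)\alpha_{n}|\le|\alpha-\alpha_{n}|$ and $|S_{A}(t)\alpha-S_{A}(t)\alpha_{n}|\le|\alpha-\alpha_{n}|$.

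For the second assertion set $\bar{\alpha}:=\Pi_{\overline{\mathrm{D}(A)}}(\alpha)$; recall that $\overline{\mathrm{D}(A)}$ is closed and convex, that $\bar{\alpha}$ is the nearest point of $\overline{\mathrm{D}(A)}$ to $\alpha$, and that $J_{\varepsilon}\alpha\to\bar{\alpha}$ as $\varepsilon\searrow0$. Write $p^{\varepsilon}:=S_{A_{\varepsilon}}(\cdot)\alpha$, so $\dot p^{\varepsilon}=-A_{\varepsilon}(p^{\varepsilon})$, $p^{\varepsilon}_{0}=\alpha$. Testing against $p^{\varepsilon}-z$ for $(z,v)\in\mathrm{Gr}(A)$ and using $A_{\varepsilon}(p^{\varepsilon})\in A(J_{\varepsilon}p^{\varepsilon})$ together with $J_{\varepsilon}p^{\varepsilon}=p^{\varepsilon}-\varepsilon A_{\varepsilon}(p^{\varepsilon})$, one obtains a uniform bound $\|p^{\varepsilon}\|_{T}\le M$, the regularising estimate $|A_{\varepsilon}(p^{\varepsilon}_{t})|\le|v|+|\alpha-z|/t$ (uniform in $\varepsilon$), and the energy inequality
\[
\tfrac12|p^{\varepsilon}_{t}-z|^{2}+\varepsilon\!\int_{0}^{t}\!|A_{\varepsilon}(p^{\varepsilon}_{s})|^{2}\,ds\le\tfrac12|\alpha-z|^{2}-\int_{0}^{t}\langle v,\,J_{\varepsilon}p^{\varepsilon}_{s}-z\rangle\,ds .
\]
In particular $|p^{\varepsilon}_{t}-J_{\varepsilon}p^{\varepsilon}_{t}|=\varepsilon|A_{\varepsilon}(p^{\varepsilon}_{t})|\to0$ for every fixed $t>0$, while $|p^{\varepsilon}_{t}|\le M$ for all $t$; so, by dominated convergence, it is enough to prove the pointwise convergence $p^{\varepsilon}_{t}\to S_{A}(t)\bar{\alpha}$ for each $t>0$.

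For that, the regularising estimate gives $p^{\varepsilon}$ bounded in $L^{\infty}(0,T;\mathbb{H})$ and uniformly Lipschitz on every $[\delta,T]$, $\delta>0$; a subsequence then converges to some $\bar{u}$ on $(0,T]$, and since $|p^{\varepsilon}-J_{\varepsilon}p^{\varepsilon}|\to0$ there, $J_{\varepsilon}p^{\varepsilon}\to\bar{u}$ as well, so $\bar{u}_{t}\in\overline{\mathrm{D}(A)}$. By Minty's argument for the maximal monotone $A$ on $L^{2}(\delta,T;\mathbb{H})$ (using $A_{\varepsilon}(p^{\varepsilon})\in A(J_{\varepsilon}p^{\varepsilon})$, $J_{\varepsilon}p^{\varepsilon}\to\bar{u}$ strongly and $A_{\varepsilon}(p^{\varepsilon})$ weakly convergent) the limit satisfies $\dot{\bar{u}}+A(\bar{u})\ni0$ on $(0,T)$, i.e.\ $\bar{u}_{t}=S_{A}(t-\delta)\bar{u}_{\delta}$; finally, letting $\varepsilon\searrow0$ and then $t\searrow0$ in the energy inequality and using that $\bar{\alpha}$ is the projection of $\alpha$ onto the convex set $\overline{\mathrm{D}(A)}$, one identifies $\bar{u}_{0+}=\bar{\alpha}$, so $\bar{u}=S_{A}(\cdot)\bar{\alpha}$. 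The limit being uniquely determined, the whole family $p^{\varepsilon}_{t}$ converges, which together with the first assertion completes the proof.

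The step I expect to be the main obstacle is this last one: making the convergences in the compactness/Minty step strong (not merely weak) in the infinite-dimensional space $\mathbb{H}$, and controlling $p^{\varepsilon}$ in the thin initial layer near $t=0$. The mechanism to be made quantitative is that, outside $\overline{\mathrm{D}(A)}$, $A_{\varepsilon}$ behaves essentially as $\varepsilon^{-1}(I-\Pi_{\overline{\mathrm{D}(A)}})$, so that the excess $\mathrm{dist}(p^{\varepsilon}_{t},\overline{\mathrm{D}(A)})$ relaxes at an $\varepsilon^{-1}$-exponential rate; this is exactly why the convergence holds in $L^{2}(0,T)$ but in general not uniformly, and it is what is hidden in the energy inequality and in the identification of $\bar{u}_{0+}$. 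Everything else (the contraction and regularising estimates, $J_{\varepsilon}\alpha\to\Pi_{\overline{\mathrm{D}(A)}}\alpha$, convexity of $\overline{\mathrm{D}(A)}$, Proposition~\ref{prop Barbu/84}) is classical.
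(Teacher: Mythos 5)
The paper does not actually prove this proposition: it is recalled verbatim from Brezis \cite[Lemma 3.8]{br/73} as part of the survey in Subsection \ref{Subsect Introd}, so there is no internal argument to compare yours with. Your treatment of the first assertion is correct and is the standard route: the contraction property of $S_{A_{\varepsilon}}$ reduces matters to $\alpha^{\varepsilon}=\alpha$; for $\alpha\in\mathrm{D}(A)$ the constant--input case of Proposition \ref{prop Barbu/84} (with $f\equiv 0$, $\nu\equiv\alpha$) gives $\|S_{A_{\varepsilon}}(\cdot)\alpha-S_{A}(\cdot)\alpha\|_{T}\to 0$; density and the two contraction semigroups then handle $\alpha\in\overline{\mathrm{D}(A)}$.

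The second assertion is where your argument has a genuine gap. The ``regularising estimate'' $|A_{\varepsilon}(p^{\varepsilon}_{t})|\le |v|+|\alpha-z|/t$ is the smoothing effect of \emph{gradient} flows ($A=\partial\varphi$) and is false for a general maximal monotone operator: for the skew--adjoint rotation $A(x_{1},x_{2})=(-x_{2},x_{1})$ on $\mathbb{R}^{2}$ one computes $|A_{\varepsilon}(x)|=|x|/\sqrt{1+\varepsilon^{2}}$ and $|p^{\varepsilon}_{t}|=|\alpha|e^{-\varepsilon t/(1+\varepsilon^{2})}$, so $|A_{\varepsilon}(p^{\varepsilon}_{t})|$ stays of order $|\alpha|$ on all of $[0,T]$ and violates the bound with $(z,v)=(0,0)\in\mathrm{Gr}(A)$ as soon as $t>2\sqrt{2}$, say. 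The only general facts available are that $t\mapsto|A_{\varepsilon}(p^{\varepsilon}_{t})|$ is non-increasing (with initial value $|A_{\varepsilon}(\alpha)|\uparrow+\infty$ when $\alpha\notin\overline{\mathrm{D}(A)}$, by Proposition \ref{prop Anex 4}) and that your energy inequality gives $\varepsilon\int_{0}^{T}|A_{\varepsilon}(p^{\varepsilon}_{s})|^{2}ds\le C$, hence only $\|p^{\varepsilon}-J_{\varepsilon}p^{\varepsilon}\|_{L^{2}(0,T;\mathbb{H})}\to 0$, not the pointwise convergence for fixed $t$ nor the equi-Lipschitz bound on $[\delta,T]$ that your compactness step needs. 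On top of this, even granting equi-Lipschitz bounds, in an infinite-dimensional $\mathbb{H}$ Arzel\`{a}--Ascoli yields only weak limits, whereas your Minty step requires $J_{\varepsilon}p^{\varepsilon}\to\bar{u}$ strongly; you flag this yourself as ``the main obstacle'' but do not close it. So the proof of the second assertion is incomplete, and the honest options are either to cite \cite{br/73} as the paper does, or to redo the identification of the limit with an argument that does not rely on $1/t$-smoothing.
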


We mention that the classical Skorohod problem corresponds to the case $%
A=\partial I_{\bar{D}}(x)$ where $I_{\bar{D}}$ is the indicator function of
a convex and closed $\bar{D}\subset \mathbb{H}$, i.e. $I_{\bar{D}%
}(x)=\left\{
\begin{array}{rl}
0, & \text{if \ }x\in \bar{D},\medskip \\
+\infty , & \text{if \ }x\notin \bar{D}.%
\end{array}%
\right. $ In this case%
\begin{equation*}
A=\left\{
\begin{array}{ll}
0, & \text{if \ }x\in D,\medskip \\
\mathcal{N}_{\bar{D}}\left( x\right) , & \text{if \ }x\in \mathrm{Bd}\left(
D\right) ,\medskip \\
\emptyset , & \text{if \ }x\in \mathbb{R}^{d}\backslash \bar{D},%
\end{array}%
\right.
\end{equation*}%
where $\mathcal{N}_{\bar{D}}\left( x\right) $ is the outward normal cone to $%
\bar{D}$ at $x\in \mathrm{Bd}\left( D\right) $. In this case the term $%
-\partial I_{\bar{D}}(x_{t})\left( dt\right) $, which is added to the input $%
dm_{t}$, acts, in a minimal way, as an \textit{inward push}\ that prevents $%
x_{t}$ from exiting the domain $\bar{D}$.

The following result can be found in R\u{a}\c{s}canu \cite{ra/96}.

\begin{theorem}
If $A:\mathbb{H}\rightrightarrows\mathbb{H}$ is a maximal monotone operator
such that $\mathrm{Int}\left( \mathrm{D}\left( A\right) \right)
\neq\emptyset $ and $m:\mathbb{R}^{+}\rightarrow\mathbb{H}$ is a continuous
function with $m_{0}\in\overline{\mathrm{D}\left( A\right) }$, then the
convex Skorohod problem (\ref{SP}) has a unique solution $\left( x,k\right) $%
.
\end{theorem}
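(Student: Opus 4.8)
The plan is to bootstrap from the absolutely continuous case recorded in Proposition~\ref{prop 4}: I would approximate the continuous input $m$, uniformly on compact intervals, by smooth inputs, solve the Skorohod problem for each of them, derive $n$-uniform a priori bounds, and pass to the limit. Uniqueness I settle first. If $(x,k)$ and $(\hat{x},\hat{k})$ both solve $\left(SP\right)$ with input $m$, then $x_{t}-\hat{x}_{t}=\hat{k}_{t}-k_{t}$, so $x-\hat{x}$ is continuous, of locally bounded variation and vanishes at $0$; by the chain rule for continuous BV functions, $\tfrac12|x_{t}-\hat{x}_{t}|^{2}=-\int_{0}^{t}\langle x_{r}-\hat{x}_{r},d(k_{r}-\hat{k}_{r})\rangle$. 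Condition $(d)$ of $\left(SP\right)$ is exactly the inclusion $dk\in A(x)(dr)$, and likewise for $(\hat{x},\hat{k})$ (Definition~\ref{definition Annex 14}); by the measure-theoretic meaning of this inclusion together with the continuity of $x,\hat{x}$, monotonicity of $A$ gives $\int_{0}^{t}\langle x_{r}-\hat{x}_{r},d(k_{r}-\hat{k}_{r})\rangle\ge0$, so $x\equiv\hat{x}$ and then $k=m-x=\hat{k}$.

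For existence, fix — thanks to $(\ref{assumpt int})$ — a ball $\overline{B}(u_{0},\rho)\subset\mathrm{D}(A)$ and $M:=\sup\{|v|:(u,v)\in\mathrm{Gr}(A),\,|u-u_{0}|\le\rho\}<\infty$ (finite by local boundedness of maximal monotone operators), and pick $m^{n}\in C^{\infty}(\mathbb{R}^{+};\mathbb{H})$ with $m^{n}_{0}=m_{0}$ (mollifications of $m$, the value at $0$ reset to $m_{0}$), so that $m^{n}\to m$ and $\{m^{n}\}$ is uniformly bounded and equicontinuous on every $[0,T]$. By Propositions~\ref{prop Introduction 1} and~\ref{prop 4} there is a unique solution $(x^{n},k^{n})$ of $\left(SP\right)$ for $m^{n}$, with $x^{n},k^{n}$ locally absolutely continuous. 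Testing condition $(d)$ with the pairs $(u_{0}+\rho h,\beta_{h})$, $|h|\le1$, $\beta_{h}\in A(u_{0}+\rho h)$, and choosing $h$ pointwise aligned with $\dot{k}^{n}$, I get the basic inequality $\rho\,|\dot{k}^{n}_{r}|\le\langle x^{n}_{r}-u_{0},\dot{k}^{n}_{r}\rangle+M|x^{n}_{r}-u_{0}|+M\rho$. Integrating, integrating by parts the term $\int_{s}^{t}\langle x^{n}_{r}-u_{0},\dot{m}^{n}_{r}\rangle\,dr$ (using smoothness of $m^{n}$), and working on a subinterval $[s,t]$ with $\mathcal{O}_{m^{n}}([s,t])<\rho/2$, one absorbs $\left\updownarrow{k^{n}}\right\updownarrow_{[s,t]}$ into the left side; telescoping over such a partition of $[0,T]$ (the quadratic boundary terms cancel) and closing the coupled bound by a discrete Gronwall argument yields, uniformly in $n$,
\[
\|x^{n}\|_{T}^{2}+\left\updownarrow{k^{n}}\right\updownarrow_{T}\le C_{T},
\]
with $C_{T}$ depending only on $T,\rho,M$, on $\sup_{n}\|m^{n}\|_{T}$ and on a common modulus of continuity of the $m^{n}$ on $[0,T]$ — not on the variation of $m^{n}$.

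With these bounds, repeating the uniqueness computation for two indices $n,p$ (now $x^{n}-x^{p}=(m^{n}-m^{p})-(k^{n}-k^{p})$ has bounded variation, and after an integration by parts in $\int_{0}^{t}\langle x^{n}_{r}-x^{p}_{r},d(m^{n}_{r}-m^{p}_{r})\rangle$) gives
\[
\|x^{n}-x^{p}\|_{T}^{2}\le C\,\|m^{n}-m^{p}\|_{T}\Big(\|m^{n}-m^{p}\|_{T}+\left\updownarrow{k^{n}}\right\updownarrow_{T}+\left\updownarrow{k^{p}}\right\updownarrow_{T}\Big)\le C'_{T}\,\|m^{n}-m^{p}\|_{T}\to 0,
\]
so $(x^{n})$ is Cauchy in $C([0,T];\mathbb{H})$ for every $T$; let $x$ be its limit. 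Then $k^{n}=m^{n}-x^{n}\to m-x=:k$ uniformly on compacts, $k$ is continuous with $k_{0}=0$, and $\left\updownarrow{k}\right\updownarrow_{T}\le\liminf_{n}\left\updownarrow{k^{n}}\right\updownarrow_{T}\le C_{T}$ by lower semicontinuity of the variation under pointwise convergence, so $k\in\mathrm{BV}_{loc}(\mathbb{R}^{+};\mathbb{H})$; moreover $x_{t}\in\overline{\mathrm{D}(A)}$ (closed) and $x_{0}=m_{0}$. For condition $(d)$ in the limit, the uniform variation bound lets me extract a subsequence along which $dk^{n}\rightharpoonup dk$ weakly-$*$ as $\mathbb{H}$-valued measures on $[0,T]$ (the limit is $dk$ since $k^{n}\to k$ pointwise); since $x^{n}-\alpha\to x-\alpha$ uniformly, the product of uniform and weak-$*$ convergence gives $\int_{s}^{t}\langle x_{r}-\alpha,dk_{r}-\beta\,dr\rangle=\lim_{n}\int_{s}^{t}\langle x^{n}_{r}-\alpha,dk^{n}_{r}-\beta\,dr\rangle\ge0$ for all $(\alpha,\beta)\in\mathrm{Gr}(A)$ and $0\le s\le t$ (continuity of $k$ makes the endpoints harmless). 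Hence $(x,k)$ solves $\left(SP\right)$, which with uniqueness completes the proof.

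The step I expect to be the real obstacle is the $n$-uniform a priori bound on $\left\updownarrow{k^{n}}\right\updownarrow_{T}$: one cannot afford the dependence on $\|f^{n}\|_{L^{1}}$ present in Proposition~\ref{prop 4}(3), since smooth approximations of a general continuous $m$ have unbounded variation, so assumption $(\ref{assumpt int})$ must be exploited — via the interior ball and local boundedness of $A$ — to control the variation of the reflecting term purely through the sup-norm and modulus of continuity of the input. The remaining ingredients (the Cauchy estimate for $x^{n}$, lower semicontinuity of variation, and the weak-$*$ passage to the limit in $(d)$) are then routine.
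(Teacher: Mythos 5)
Your argument is correct, and its skeleton coincides with the route the paper itself takes for the general c\`adl\`ag result (Theorem \ref{thm2.13} combined with Proposition \ref{prop2.14} and Remark \ref{rem2.6} contains the present theorem as the continuous special case; for this statement the paper only cites \cite{ra/96}). The one genuine difference is the approximation scheme: you mollify $m$ into smooth inputs and solve the approximating problems by the Brezis theory for $W^{1,1}$ inputs (Propositions \ref{prop Introduction 1} and \ref{prop 4}), whereas the paper discretizes $m$ into step functions and solves each piece explicitly via the nonlinear semigroup $S_{A}(t)$ (Lemma \ref{lem2.9}). Everything downstream is the same in substance: your pointwise estimate $\rho|\dot k^{n}_{r}|\le\langle x^{n}_{r}-u_{0},\dot k^{n}_{r}\rangle+M|x^{n}_{r}-u_{0}|+M\rho$ is Proposition \ref{prop Anex 5} (i.e.\ Lemma \ref{lem2.10}); absorbing a fraction of $\left\updownarrow k^{n}\right\updownarrow _{T}$ on subintervals where the oscillation of the input is below $\rho/2$ is exactly the computation in step $(\mathbf{I-}j)$ of Theorem \ref{prop2.11}; the Cauchy estimate is Lemma \ref{lem2.7}$(ii)$; and your weak-$*$ limit passage in condition $(d)$ is the Helly--Bray Theorem \ref{Helly-Bray}. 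You also correctly isolated the crux, namely that the BV bound must not see $\|\dot m^{n}\|_{L^{1}}$. What the step-function route buys is that it extends verbatim to c\`adl\`ag inputs (the paper's actual goal) and needs no smoothness bookkeeping; what your route buys is that the approximants are classical strong solutions, so no semigroup formula is required. Two small points to tighten: the cross inequality $\int_{0}^{t}\langle x_{r}-\hat x_{r},d(k_{r}-\hat k_{r})\rangle\ge0$ in your uniqueness step does not follow by substituting $\hat x_{r}$ into condition $(d)$, since $(\hat x_{r},\cdot)$ need not lie in $\mathrm{Gr}(A)$; it requires the Yosida-regularization argument of Proposition \ref{prop Anex 13}, which the paper supplies. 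And ``resetting the value at $0$'' of a mollification should be the global shift $m^{n}\mapsto m^{n}-m^{n}_{0}+m_{0}$, so as not to destroy continuity at the origin.
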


\section{The Skorokhod problem with maximal monotone operators}

\label{Skorohod problem}

Let $\mathbb{H}$ be Hilbert space with the inner product $\left\langle \cdot
,\cdot \right\rangle $ and the norm induced $\left\vert \cdot \right\vert $.
Let $A:\mathbb{H}\rightrightarrows \mathbb{H}$ be a maximal monotone
operator (notation $\rightrightarrows $ means that $A$ is a multivalued
operator). We denote by $\mathrm{D}\left( A\right) $ the domain of $A$, i.e.
$\mathrm{D}\left( A\right) :=\left\{ x\in \mathbb{H}:\,Ax\neq \emptyset
\right\} .$

\begin{definition}
\label{gener proj}A mapping $\Pi:\mathbb{H}\rightarrow\mathbb{H}$ is a
generalized projection on $\overline{\mathrm{D}(A)}$ if

\begin{enumerate}
\item[$\left( i\right) $] $\Pi\left( \mathbb{H}\right) \subset \overline{%
\mathrm{D}(A)}$ and $\Pi(x)=x,$ for all $x\in\overline {\mathrm{D}(A)}$%
\medskip

\item[$\left( ii\right) $] $|\Pi(x)-\Pi(y)|\leq|x-y|,$ for all $x,y\in%
\mathbb{H}$ (i.e. $\Pi$ is a nonexpansive map).
\end{enumerate}
\end{definition}

The (classical) orthogonal projection $\Pi _{\overline{\mathrm{D}(A)}}:%
\mathbb{H}\rightarrow \overline{\mathrm{D}(A)}$ is defined by: for all $x\in
\mathbb{H}$
\begin{equation}
|x-\Pi _{\overline{\mathrm{D}(A)}}\left( x\right) |=\inf \big\{\left\vert
x-a\right\vert :a\in \overline{\mathrm{D}(A)}\big\}  \label{eq2.1a}
\end{equation}%
$(\Pi _{\overline{\mathrm{D}(A)}}\left( x\right) $ is well defined by (\ref%
{eq2.1a}) since $\overline{\mathrm{D}\left( A\right) }$ is a closed convex
set; see Proposition \ref{prop Anex 2} from the Annexes).

It is easily to prove that $\hat{x}=\Pi _{\overline{\mathrm{D}(A)}}(x)$ if
and only%
\begin{equation}
\hat{x}\in \overline{\mathrm{D}(A)}\quad \text{and}\quad \left\langle \hat{x}%
-x,\hat{x}-a\right\rangle \leq 0,\text{ for all }a\in \overline{\mathrm{D}(A)%
}.  \label{eq2.2'}
\end{equation}%
Hence for all $x,y\in \mathbb{H}$ we have%
\begin{equation*}
\langle x-\Pi _{\overline{\mathrm{D}(A)}}(x),\Pi _{\overline{\mathrm{D}(A)}%
}(y)-\Pi _{\overline{\mathrm{D}(A)}}(x)\rangle \leq 0\quad \text{and}\quad
\langle y-\Pi _{\overline{\mathrm{D}(A)}}(y),\Pi _{\overline{\mathrm{D}(A)}%
}(x)-\Pi _{\overline{\mathrm{D}(A)}}(y)\rangle \leq 0
\end{equation*}%
which yields%
\begin{equation}
|\Pi _{\overline{\mathrm{D}(A)}}(x)-\Pi _{\overline{\mathrm{D}(A)}%
}(y)|^{2}\leq \langle \Pi _{\overline{\mathrm{D}(A)}}(x)-\Pi _{\overline{%
\mathrm{D}(A)}}(y),x-y\rangle ,\quad \text{ for all }x,y\in \mathbb{H}.
\label{eq2.3}
\end{equation}%
If follows that $\Pi _{\overline{\mathrm{D}(A)}}$ is a generalized
projection on $\overline{\mathrm{D}(A)}$.

There exist important examples of projections on $\overline{\mathrm{D}(A)}$
connected with the so-called \textit{elasticity condition} introduced in
one-dimensional case in \cite{ch-ka/80} and \cite{sl-wo/10} (see also \cite%
{si/00}).

Let $\delta \in \lbrack 0,1]$. We will consider the map $\Pi ^{\delta }:%
\mathbb{H}\rightarrow \mathbb{H}$ defined by
\begin{equation*}
\Pi ^{\delta }(z)=\Pi _{\overline{\mathrm{D}(A)}}(z)-\delta \big(z-\Pi _{%
\overline{\mathrm{D}(A)}}(z)\big),\quad z\in \mathbb{H}
\end{equation*}%
and its compositions $\Pi ^{\delta ,n}:\mathbb{H}\rightarrow \mathbb{H}$ of
the form%
\begin{equation*}
\Pi ^{\delta ,n}(z)=\Pi _{n}\circ ...\circ \Pi _{1}(z),\quad z\in \mathbb{H}%
,\;\text{where }\Pi _{1}=...=\Pi _{n}=\Pi ^{\delta },\,\,n\in \mathbb{N}%
^{\ast }.
\end{equation*}

\begin{proposition}
\label{prop2.2}\ \

\begin{enumerate}
\item[$\left( j\right) $] For any $z\in \mathbb{H}$, $a\in \mathrm{Int}%
\left( \mathrm{D}\left( A\right) \right) $ and $\overline{B\left(
a,r_{0}\right) }\subset \overline{\mathrm{D}\left( A\right) }$ with $%
r_{0}>0, $%
\begin{equation*}
|\Pi ^{\delta }(z)-a|^{2}+\left( 1-\delta ^{2}\right) |z-\Pi _{\overline{%
\mathrm{D}(A)}}(z)|^{2}+2r_{0}(1+\delta )|z-\Pi _{\overline{\mathrm{D}(A)}%
}(z)|\;\leq \;|z-a|^{2}.
\end{equation*}

\item[$\left( jj\right) $] The map $\Pi ^{\delta ,n}:\mathbb{H}\rightarrow
\mathbb{H}$ is a nonexpansive map and $\Pi ^{\delta ,n}\left( z\right) =z$,
for any $z\in \overline{\mathrm{D}(A)}$.\medskip

\item[$\left( jjj\right) $] For any $z\in \mathbb{H}$ there exists the limit
$\displaystyle{\lim_{n\rightarrow \infty }\Pi ^{\delta ,n}(z)}$ and $\Pi _{%
\overline{\mathrm{D}(A)}}^{\delta }\left( z\right) :={\lim_{n\rightarrow
\infty }\Pi ^{\delta ,n}(z)}$ is a generalized projection on $\overline{%
\mathrm{D}(A)}.$
\end{enumerate}
\end{proposition}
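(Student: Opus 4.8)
The plan is to establish the three items in order, deriving the quantitative estimate $(j)$ first since it is what forces convergence in $(jjj)$. Throughout I write $\hat z:=\Pi_{\overline{\mathrm{D}(A)}}(z)$ and use the elementary identity $\Pi^{\delta}(z)=(1+\delta)\hat z-\delta z$, so that $\Pi^{\delta}(z)-z=(1+\delta)(\hat z-z)$. For $(j)$: if $z\in\overline{\mathrm{D}(A)}$ then $\hat z=z=\Pi^{\delta}(z)$ and $(j)$ holds with equality, so suppose $z\notin\overline{\mathrm{D}(A)}$ and put $\rho:=|z-\hat z|>0$. I would expand both $|\Pi^{\delta}(z)-a|^{2}$ and $|z-a|^{2}$ around $\hat z$ and eliminate $|\hat z-a|^{2}$ between them to get
\begin{equation*}
|\Pi^{\delta}(z)-a|^{2}=|z-a|^{2}-(1-\delta^{2})\rho^{2}-2(1+\delta)\langle\hat z-a,\,z-\hat z\rangle .
\end{equation*}
It then remains to prove $\langle\hat z-a,\,z-\hat z\rangle\ge r_{0}\rho$; here the ball condition enters. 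Feeding the admissible point $a+r_{0}\rho^{-1}(z-\hat z)\in\overline{B(a,r_{0})}\subset\overline{\mathrm{D}(A)}$ into the variational inequality (\ref{eq2.2'}) characterising $\hat z=\Pi_{\overline{\mathrm{D}(A)}}(z)$ yields exactly $\langle z-\hat z,\,a-\hat z\rangle+r_{0}\rho\le 0$, i.e. the required bound; substituting proves $(j)$.

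For $(jj)$ I would use the same identity: with $w$ the second argument, set $p:=\Pi_{\overline{\mathrm{D}(A)}}(z)-\Pi_{\overline{\mathrm{D}(A)}}(w)$ and $q:=z-w$, so that $\Pi^{\delta}(z)-\Pi^{\delta}(w)=(1+\delta)p-\delta q$. Expanding the square and invoking $|p|^{2}\le\langle p,q\rangle$ (inequality (\ref{eq2.3})), $|p|\le|q|$, and $0\le\delta\le1$, one obtains $|(1+\delta)p-\delta q|^{2}\le|q|^{2}$ after dividing through by $1+\delta>0$; hence $\Pi^{\delta}$, and therefore every finite composition $\Pi^{\delta,n}$, is nonexpansive. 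Since $\hat z=z$ for $z\in\overline{\mathrm{D}(A)}$ gives $\Pi^{\delta}(z)=z$ there, an immediate induction yields $\Pi^{\delta,n}(z)=z$ on $\overline{\mathrm{D}(A)}$.

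For $(jjj)$, fix $z\in\mathbb{H}$, put $z_{n}:=\Pi^{\delta,n}(z)$ with $z_{0}=z$, and $\widehat{z_{n}}:=\Pi_{\overline{\mathrm{D}(A)}}(z_{n})$; from the identity, $|z_{n+1}-z_{n}|=(1+\delta)|z_{n}-\widehat{z_{n}}|$. Using (\ref{assumpt int}), choose $a\in\mathrm{Int}(\mathrm{D}(A))$ and $r_{0}>0$ with $\overline{B(a,r_{0})}\subset\overline{\mathrm{D}(A)}$, and apply $(j)$ with $z$ replaced by $z_{n}$ (dropping the nonnegative term $(1-\delta^{2})|z_{n}-\widehat{z_{n}}|^{2}$):
\begin{equation*}
|z_{n+1}-a|^{2}+2r_{0}(1+\delta)\,|z_{n}-\widehat{z_{n}}|\le|z_{n}-a|^{2}.
\end{equation*}
Summing this telescoping inequality gives $\sum_{n\ge0}|z_{n}-\widehat{z_{n}}|\le|z-a|^{2}/\bigl(2r_{0}(1+\delta)\bigr)<\infty$, hence $\sum_{n\ge0}|z_{n+1}-z_{n}|<\infty$ and $|z_{n}-\widehat{z_{n}}|\to0$. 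Thus $(z_{n})$ is Cauchy and converges to a limit $z^{*}=:\Pi^{\delta}_{\overline{\mathrm{D}(A)}}(z)$, which lies in $\overline{\mathrm{D}(A)}$ because the distance of $z_{n}$ to that closed set tends to $0$. Letting $n\to\infty$ in $|\Pi^{\delta,n}(z)-\Pi^{\delta,n}(w)|\le|z-w|$ shows $\Pi^{\delta}_{\overline{\mathrm{D}(A)}}$ is nonexpansive, and $\Pi^{\delta,n}(z)=z$ on $\overline{\mathrm{D}(A)}$ passes to the limit, so $\Pi^{\delta}_{\overline{\mathrm{D}(A)}}$ meets Definition \ref{gener proj}.

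I expect the only genuinely delicate step to be pinning down the exact constants in $(j)$ — in particular choosing the test point in (\ref{eq2.2'}) so as to extract the \emph{linear} term $2r_{0}(1+\delta)|z-\widehat z|$ rather than merely a nonnegative remainder. Once $(j)$ is in hand, the telescoping/Cauchy argument in $(jjj)$ is automatic and works uniformly in $\delta\in[0,1]$ (since $1+\delta$ never vanishes), while $(jj)$ reduces to routine Hilbert-space algebra.
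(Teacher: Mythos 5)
Your proof is correct and follows essentially the same route as the paper's: the same expansion of $|\Pi^{\delta}(z)-a|^{2}$ for $(j)$, the same use of $|p|^{2}\le\langle p,q\rangle$ for $(jj)$, and the same telescoping/Cauchy argument for $(jjj)$. The only (harmless) differences are that you rederive the key inequality $\langle \hat z-a,\,z-\hat z\rangle\ge r_{0}|z-\hat z|$ directly from the variational characterization (\ref{eq2.2'}) with the test point $a+r_{0}|z-\hat z|^{-1}(z-\hat z)$, where the paper instead invokes the Annex inequality (\ref{prop Anex 5_3}), and that you use the exact identity $|z_{n+1}-z_{n}|=(1+\delta)|z_{n}-\widehat{z_{n}}|$ in place of the paper's cruder bound $|z_{n+1}-z_{n}|\le 2|z_{n}-\widehat{z_{n}}|$.
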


\begin{proof}
$(j)$ Clearly,%
\begin{equation*}
\begin{array}{l}
|\Pi ^{\delta }(z)-a|^{2}=|(1+\delta )(\Pi _{\overline{\mathrm{D}(A)}%
}(z)-z)+(z-a)|^{2}\medskip \\
=(1+\delta )^{2}|\Pi _{\overline{\mathrm{D}(A)}}(z)-z|^{2}+|z-a|^{2}+2(1+%
\delta )\langle \Pi _{\overline{\mathrm{D}(A)}}(z)-z,z-a\rangle .%
\end{array}%
\end{equation*}%
Since $\overline{B\left( a,r_{0}\right) }\subset \overline{\mathrm{D}\left(
A\right) }$, from inequality (\ref{prop Anex 5_3}) from the Annexes, we have
for all $z\in \mathbb{H}$,%
\begin{equation*}
\langle \Pi _{\overline{\mathrm{D}(A)}}(z)-z,z-a\rangle \leq -r_{0}|z-\Pi _{%
\overline{D\left( A\right) }}\left( z\right) |-|z-\Pi _{\overline{D\left(
A\right) }}\left( z\right) |^{2}.
\end{equation*}%
Consequently,%
\begin{equation*}
|\Pi ^{\delta }(z)-a|^{2}\leq \big((1+\delta )^{2}-2(1+\delta )\big)|\Pi _{%
\overline{\mathrm{D}(A)}}(z)-z|^{2}+|z-a|^{2}-2(1+\delta )r_{0}|\Pi _{%
\overline{\mathrm{D}(A)}}(z)-z|,
\end{equation*}%
which implies the conclusion of $(j)$.\medskip

\noindent $(jj)$ By (\ref{eq2.3}) and Lipschitz property of $\Pi $ we see
that, for all $x,y\in \mathbb{H},$%
\begin{equation*}
\begin{array}{l}
|\Pi ^{\delta }(x)-\Pi ^{\delta }(y)|^{2}=|(1+\delta )(\Pi _{\overline{%
\mathrm{D}(A)}}(x)-\Pi _{\overline{\mathrm{D}(A)}}(y))-\delta \left(
x-y\right) |^{2}\medskip \\
=(1+\delta )^{2}|\Pi _{\overline{\mathrm{D}(A)}}(x)-\Pi _{\overline{\mathrm{D%
}(A)}}(y)|^{2}+\delta ^{2}|x-y|^{2}-2\delta (1+\delta )\langle \Pi _{%
\overline{\mathrm{D}(A)}}(x)-\Pi _{\overline{\mathrm{D}(A)}}(y),x-y\rangle
\medskip \\
\leq \big((1+\delta )^{2}-2\delta (1+\delta )\big)|\Pi _{\overline{\mathrm{D}%
(A)}}(x)-\Pi _{\overline{\mathrm{D}(A)}}(y)|^{2}+\delta ^{2}|x-y|^{2}\leq
|x-y|^{2}.%
\end{array}%
\end{equation*}%
Hence for all $n\in \mathbb{N}$ and $x,y\in \mathbb{H}$
\begin{align*}
|\Pi ^{\delta ,n}(x)-\Pi ^{\delta ,n}(y)|& =|{\Pi }_{n}\circ ...\circ {\Pi }%
_{1}(x)-{\Pi }_{n}\circ ...\circ {\Pi }_{1}(y)| \\
& \leq |{\Pi }_{n-1}\circ ...\circ {\Pi }_{1}(x)-{\Pi }_{n-1}\circ ...\circ {%
\Pi }_{1}(y)| \\
& \leq |x-y|.
\end{align*}%
$(jjj)$ Set $z_{0}=z$, $z_{n}=\Pi ^{n,\delta }(z)$, $n\in \mathbb{N}$.
Clearly, $z_{n}=\Pi ^{\delta }(z_{n-1})$, $n\in \mathbb{N}$. Fix $a\in
\mathrm{Int}\left( \mathrm{D}(A)\right) $ and $r_{0}>0$ such that $\overline{%
B\left( a,r_{0}\right) }\subset \overline{\mathrm{D}\left( A\right) }$. By $%
(j),$ for any $i\in \mathbb{N},$%
\begin{equation*}
|z_{i}-a|^{2}+2r_{0}(1+\delta )|\Pi _{\overline{\mathrm{D}(A)}%
}(z_{i-1})-z_{i-1}|\leq |z_{i-1}-a|^{2}
\end{equation*}%
and consequently%
\begin{equation*}
|z_{n}-a|^{2}+2r_{0}(1+\delta )\sum_{i=0}^{n-1}|\Pi _{\overline{\mathrm{D}(A)%
}}(z_{i})-z_{i}|\leq |z_{0}-a|^{2}.
\end{equation*}%
Hence%
\begin{equation*}
\sum_{i=0}^{\infty }|\Pi _{\overline{\mathrm{D}(A)}}(z_{i})-z_{i}|\leq \frac{%
1}{2r_{0}(1+\delta )}|z_{0}-a|^{2}.
\end{equation*}%
By $(jj),$ for any $n\geq m,$%
\begin{align*}
|z_{n}-z_{m}|& \leq \sum_{k=m+1}^{n}|z_{k}-z_{k-1}|=\sum_{k=m+1}^{n}|\Pi
^{\delta }(z_{k-1})-z_{k-1}| \\
& \leq \sum_{k=m+1}^{n}\big(|\Pi ^{\delta }(z_{k-1})-\Pi _{\overline{\mathrm{%
D}(A)}}(z_{k-1})|+|\Pi _{\overline{\mathrm{D}(A)}}(z_{k-1})-z_{k-1}|\big) \\
& \leq 2\sum_{k=m+1}^{n}|\Pi _{\overline{\mathrm{D}(A)}}(z_{k-1})-z_{k-1}|.
\end{align*}%
Consequently $\{z_{n}\}$ is a Cauchy sequence. Hence there exists the limit $%
z_{\infty }=\lim_{n\rightarrow \infty }z_{n}$. Since
\begin{equation*}
\lim_{n\rightarrow \infty }|\Pi _{\overline{\mathrm{D}(A)}}(z_{n})-z_{n}|=0,
\end{equation*}%
$z_{\infty }\in \overline{\mathrm{D}(A)}$ and the proof is complete.\hfill
\end{proof}

\begin{example}
\label{ex2.3}\noindent $(a)$ If $\overline{\mathrm{D}(A)}$ satisfies the
interior uniform ball condition, then there exists $n_{0}=n_{0}\left(
z\right) \in \mathbb{N}$ such that $\Pi ^{\delta ,n_{0}}(z)\in \overline{%
\mathrm{D}(A)}$ and, in this case,
\begin{equation*}
\Pi _{\overline{\mathrm{D}(A)}}^{\delta }(z)=\left\{
\begin{array}{ll}
z, & \text{if }\,z\in \overline{\mathrm{D}(A)},\medskip \\
\Pi _{n_{0}}\circ ...\circ \Pi _{1}(z), & \text{otherwise,}%
\end{array}%
\right.
\end{equation*}%
where $\Pi _{1}=...=\Pi _{n_{0}}=\Pi ^{\delta }$ and $n_{0}=n_{0}\left(
z\right) =\min \{k:\Pi _{k}\circ ...\circ \Pi _{1}(z)\in \overline{\mathrm{D}%
(A)}\}$.\medskip

We recall first that the set $\overline{\mathrm{D}(A)}\subset \mathbb{H}$
satisfies the interior uniform ball condition if there exists $r_{0}>0$ such
that for any $z\not\in \overline{\mathrm{D}(A)}$,%
\begin{equation*}
B(\Pi _{\overline{\mathrm{D}(A)}}\left( z\right) -r_{0}u_{z},r_{0})\subset
\overline{\mathrm{D}(A)},
\end{equation*}%
where $u_{z}:=\frac{z-\Pi _{\overline{\mathrm{D}(A)}}\left( z\right) }{%
|z-\Pi _{\overline{\mathrm{D}(A)}}\left( z\right) |}\,.$

If $z\in \overline{\mathrm{D}(A)}$ then $n=0$. Let now $z\not\in \overline{%
\mathrm{D}(A)}$ and $u_{z}:=\frac{z-\Pi _{\overline{\mathrm{D}(A)}}\left(
z\right) }{|z-\Pi _{\overline{\mathrm{D}(A)}}\left( z\right) |}\,.$ Since $%
\overline{\mathrm{D}(A)}$ satisfies the $r_{0}-$interior uniform ball
condition,%
\begin{equation*}
\Pi _{\overline{\mathrm{D}(A)}}\left( z\right) -2r_{0}u_{z}\in \bar{B}(\Pi _{%
\overline{\mathrm{D}(A)}}\left( z\right) -r_{0}u_{z},r_{0})\subset \overline{%
\mathrm{D}(A)}.
\end{equation*}%
Let $z_{0}=z$, $z_{1}:=\Pi ^{\delta ,1}\left( z\right) =\Pi _{\overline{%
\mathrm{D}(A)}}\left( z\right) -\delta (z-\Pi _{\overline{\mathrm{D}(A)}%
}\left( z\right) )$. Therefore%
\begin{align*}
|z_{1}-\Pi _{\overline{\mathrm{D}(A)}}\left( z_{1}\right) |& \leq
|z_{1}-(\Pi _{\overline{\mathrm{D}(A)}}\left( z\right)
-2r_{0}u_{z})|=|z_{1}-\Pi _{\overline{\mathrm{D}(A)}}\left( z\right) |-2r_{0}
\\
& =\delta |z-\Pi _{\overline{\mathrm{D}(A)}}\left( z\right) |-2r_{0}
\end{align*}%
and subsequently%
\begin{equation*}
|z_{2}-\Pi _{\overline{\mathrm{D}(A)}}\left( z_{2}\right) |\leq \delta
|z_{1}-\Pi _{\overline{\mathrm{D}(A)}}\left( z_{1}\right) |-2r_{0}\leq
\delta ^{2}|z-\Pi _{\overline{\mathrm{D}(A)}}\left( z\right) |-2\delta
r_{0}-2r_{0}.
\end{equation*}%
Obviously,%
\begin{equation*}
|z_{n}-\Pi _{\overline{\mathrm{D}(A)}}\left( z_{n}\right) |\leq \delta
^{n}|z-\Pi _{\overline{\mathrm{D}(A)}}\left( z\right) |-2r_{0}\left(
1+\delta +\cdots +\delta ^{n-1}\right) =:\alpha _{n}
\end{equation*}%
Now we should consider two cases $\delta =1$ and $\delta \in \left(
0,1\right) $. In the both cases it is easy to check that there exists $%
n_{0}=n_{0}\left( z\right) \in \mathbb{N}^{\ast }$ such that%
\begin{equation*}
\alpha _{n_{0}-1}>0\text{ and }\alpha _{n_{0}}\leq 0,
\end{equation*}%
which yields $z_{n_{0}}=\Pi ^{\delta ,n_{0}}(z)\in \overline{\mathrm{D}(A)}$
and consequently $z_{n}=z_{n_{0}}\in \overline{\mathrm{D}(A)}\,,$ for all $%
n\geq n_{0}.$ Therefore $\Pi _{\overline{\mathrm{D}(A)}}^{\delta }\left(
z\right) :={z}_{n_{0}}\,.$\medskip

\noindent$(b)$ If $\overline{\mathrm{D}(A)}=\overline{B(a,R)}$ , then $%
\overline{\mathrm{D}(A)}$ satisfies the interior uniform ball condition with
$r_{0}=R$ and the same conclusions as in $\left( a\right) $ follows.
\end{example}

In the following lemma we collect basic properties of a generalized
projections $\Pi :\mathbb{H}\rightarrow $ $\overline{\mathrm{D}(A)}\,.$

\begin{lemma}
\label{lem2.4}Let $\Pi:\mathbb{H}\rightarrow\overline{\mathrm{D}(A)}$ be a
generalized projection on $\overline{\mathrm{D}(A)}.$ Then

\begin{enumerate}
\item[$\left( j\right) $] for all $x,y\in\mathbb{H}$
\begin{equation*}
\langle\Pi(x)-\Pi(y),\Pi(x)-x-\Pi(y)+y\rangle\leq\frac{1}{2}|\Pi
(x)-x-\Pi(y)+y|^{2};
\end{equation*}

\item[$\left( jj\right) $] for all $x\in\mathbb{H}$ and $a\in\overline {%
\mathrm{D}(A)}$%
\begin{equation*}
\langle\Pi(x)-a,\Pi(x)-x\rangle\leq\frac{1}{2}|\Pi(x)-x|^{2};
\end{equation*}

\item[$\left( jjj\right) $] if $x\in\mathbb{H}$ and $\overline{B\left(
a,r_{0}\right) }\subset\overline{\mathrm{D}(A)},$ with $r_{0}>0:$%
\begin{equation*}
r_{0}|\Pi(x)-x|\leq\langle a-\Pi(x),\Pi(x)-x\rangle+\frac{1}{2}|\Pi
(x)-x|^{2}.
\end{equation*}
\end{enumerate}
\end{lemma}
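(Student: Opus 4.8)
The plan is to deduce all three inequalities directly from the two defining properties of a generalized projection in Definition~\ref{gener proj}, namely $\Pi(a)=a$ for $a\in\overline{\mathrm{D}(A)}$ and nonexpansiveness $|\Pi(x)-\Pi(y)|\le|x-y|$; the natural order is $(j)\Rightarrow(jj)\Rightarrow(jjj)$. For $(j)$, set $p=\Pi(x)-\Pi(y)$ and $q=x-y$, so that $\Pi(x)-x-\Pi(y)+y=p-q$. The claimed inequality reads $\langle p,p-q\rangle\le\frac12|p-q|^2$; expanding both sides, the term $-\langle p,q\rangle$ cancels and, after subtracting $\frac12|p|^2$, this is equivalent to $\frac12|p|^2\le\frac12|q|^2$, i.e. to $|\Pi(x)-\Pi(y)|\le|x-y|$, which holds by property $(ii)$. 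Thus $(j)$ is merely a polarization-type restatement of nonexpansiveness.

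For $(jj)$, I would apply $(j)$ with $y=a$. Since $a\in\overline{\mathrm{D}(A)}$, property $(i)$ gives $\Pi(a)=a$, hence $\Pi(x)-x-\Pi(a)+a=\Pi(x)-x$, and $(j)$ becomes exactly the asserted bound $\langle\Pi(x)-a,\Pi(x)-x\rangle\le\frac12|\Pi(x)-x|^2$.

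For $(jjj)$, if $\Pi(x)=x$ both sides vanish and there is nothing to prove; otherwise put $u=\dfrac{\Pi(x)-x}{|\Pi(x)-x|}$ and apply $(jj)$ to the point $a-r_0u$, which lies in $\overline{B(a,r_0)}\subset\overline{\mathrm{D}(A)}$. This yields $\langle\Pi(x)-(a-r_0u),\Pi(x)-x\rangle\le\frac12|\Pi(x)-x|^2$; since $\langle u,\Pi(x)-x\rangle=|\Pi(x)-x|$, the left side equals $\langle\Pi(x)-a,\Pi(x)-x\rangle+r_0|\Pi(x)-x|$, and rearranging gives $r_0|\Pi(x)-x|\le\langle a-\Pi(x),\Pi(x)-x\rangle+\frac12|\Pi(x)-x|^2$, as required.

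There is no real obstacle in this lemma; the computations are routine. The one point that needs a moment's care is in $(jjj)$: one must shift $a$ \emph{towards} $x$ along the ray through $\Pi(x)$, i.e. use $a-r_0u$ rather than $a+r_0u$, so that the $r_0|\Pi(x)-x|$ contribution enters with the correct sign. (Alternatively, $(jjj)$ can be read off from inequality~(\ref{prop Anex 5_3}) of the Annexes used in the proof of Proposition~\ref{prop2.2}$(j)$, combined with $(jj)$, but the direct argument above is self-contained.)
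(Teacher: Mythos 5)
Your proof is correct and follows essentially the same route as the paper: $(j)$ is a polarization restatement of nonexpansiveness, $(jj)$ is $(j)$ with $y=a$ using $\Pi(a)=a$, and $(jjj)$ is $(jj)$ with $a$ replaced by $a-r_{0}\frac{\Pi(x)-x}{|\Pi(x)-x|}$. Your explicit treatment of the degenerate case $\Pi(x)=x$ in $(jjj)$ is a small point the paper leaves implicit.
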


\begin{proof}
$(j)$ Indeed for all $x,y\in \mathbb{H}$%
\begin{equation*}
\begin{array}{l}
|\Pi (x)-x-\Pi (y)+y|^{2}-2\langle \Pi (x)-\Pi (y),\Pi (x)-x-\Pi
(y)+y\rangle \medskip \\
=|\Pi (x)-\Pi (y)|^{2}+|x-y|^{2}\medskip \\
\quad +2\langle \Pi (x)-\Pi (y),y-x\rangle -2|\Pi (x)-\Pi (y)|^{2}-2\langle
\Pi (x)-\Pi (y),y-x\rangle \medskip \\
=|x-y|^{2}-|\Pi (x)-\Pi (y)|^{2}\geq 0.%
\end{array}%
\end{equation*}%
\noindent $(jj)$ We take $y=a$ in $(j)$.\medskip

\noindent $(jjj)$ In $(jj)$ we replace $a$ by $a-r_{0}\frac{\Pi (x)-x}{|\Pi
(x)-x|}\,.$\hfill \medskip
\end{proof}

The aim of this section is to prove the existence and uniqueness of a
solution for a Cauchy problem driven by a maximal monotone operator $A$ and
with singular input $dm_{t}$, formally written as:%
\begin{equation}
\left\{
\begin{array}{l}
dx_{t}+A\left( x_{t}\right) \left( dt\right) +dk_{t}^{d}\ni dm_{t}\;,\quad
t\in \mathbb{R}^{+}\,,\medskip \\
x_{0}=m_{0}~.%
\end{array}%
\right.  \label{formal GSP}
\end{equation}%
The basic assumptions are:%
\begin{equation}
\begin{array}{rl}
\left( i\right) & A:\mathbb{H}\rightrightarrows \mathbb{H}\text{ is a
maximal monotone operator,}\medskip \\
\left( ii\right) & \mathrm{Int}\left( \mathrm{D}\left( A\right) \right) \neq
\emptyset \text{,}\medskip \\
\left( iii\right) & m\in \mathbb{D}\left( \mathbb{R}^{+},\mathbb{H}\right)
,~m_{0}\in \overline{\mathrm{D}\left( A\right) }\text{,}\medskip \\
\left( iv\right) & \Pi :\mathbb{H}\rightarrow \mathbb{H}\text{ is a
generalized projection on }\overline{\mathrm{D}(A)}.%
\end{array}
\label{assumpt}
\end{equation}

For $y\in \mathbb{D}\left( \mathbb{R}^{+},\mathbb{H}\right) $ and $\ell \in
\mathbb{D}\left( \mathbb{R}^{+},\mathbb{H}\right) \cap \mathrm{BV}%
_{loc}\left( \mathbb{R}^{+};\mathbb{H}\right) $ we write $d\ell _{t}\in
A\left( y_{t}\right) \left( dt\right) $ if%
\begin{equation*}
\int_{s}^{t}\left\langle y_{r}-\alpha ,\,d\ell _{r}-\beta dr\right\rangle
\geq 0,\quad \forall 0\leq s\leq t,\;\;\forall \,\left( \alpha ,\beta
\right) \in \mathrm{Gr}\left( A\right) .
\end{equation*}

\begin{definition}[Generalized Skorohod problem]
\label{def2.5}We say that function $x\in \mathbb{D}\left( \mathbb{R}^{+},%
\mathbb{H}\right) $ is a solution of equation (\ref{formal GSP}) with its
jumps driven by $\Pi $ if there exists $k\in \mathbb{D}\left( \mathbb{R}^{+},%
\mathbb{H}\right) $ such that:%
\begin{equation}
\begin{array}{rl}
\left( i\right) & x_{t}\in \overline{\mathrm{D}\left( A\right) },\quad
\forall \,t\in \mathbb{R}^{+}\medskip \\
\left( ii\right) & k\in \mathrm{BV}_{loc}\left( \mathbb{R}^{+};\mathbb{H}%
\right) ,\quad k_{0}=0,\medskip \\
\left( iii\right) & x_{t}+k_{t}=m_{t},\quad \forall \text{\ }t\in \mathbb{R}%
^{+},\medskip \\
\left( iv\right) & k=k^{c}+k^{d}\,,\quad k_{t}^{d}=\sum_{0\leq s\leq
t}\Delta k_{s},\medskip \\
\left( v\right) & dk_{t}^{c}\in A\left( x_{t}\right) \left( dt\right)
,\medskip \\
\left( vi\right) & x_{t}=\Pi \left( x_{t-}+\Delta m_{t}\right) ,\quad
\forall \,t\in \mathbb{R}^{+}.%
\end{array}
\label{def sol GSP}
\end{equation}
\end{definition}

We note that formulation (\ref{def sol GSP}) justifies to call the pair $%
\left( x,k\right) $ solution of a generalized Skorokhod problem associated
to $\left( A,\Pi ;m\right) $ and to write $\left( x,k\right) =\mathcal{SP}%
\left( A,\Pi ;m\right) $.

\begin{proposition}
\label{prop-def equiv}An equivalent definition is: $\left( x,k\right) \in%
\mathbb{D}\left( \mathbb{R}^{+},\mathbb{H}\times\mathbb{H}\right) $
satisfies (\ref{def sol GSP}) with $\left( vi\right) $ replaced by%
\begin{equation*}
\begin{array}{rl}
\left( vi^{\prime}\right) & \Delta k_{s}=\left( I-\Pi\right) \left(
x_{s-}+\Delta m_{s}\right)%
\end{array}%
\end{equation*}
\end{proposition}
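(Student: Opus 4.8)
The plan is to show that, assuming conditions $(i)$–$(v)$ of (\ref{def sol GSP}) hold (these are common to both formulations), the requirements $(vi)$ and $(vi')$ are equivalent at every time $t\in\mathbb{R}^{+}$; this immediately gives the equivalence of the two definitions. Everything hinges on the constraint $(iii)$, namely $x_{t}+k_{t}=m_{t}$ for all $t$, which rigidly links the jumps of $x$ and of $k$.

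First I would fix $t\in\mathbb{R}^{+}$ and take jumps in $(iii)$. Since $x,k,m\in\mathbb{D}(\mathbb{R}^{+},\mathbb{H})$ admit left limits, $(iii)$ yields $\Delta x_{t}+\Delta k_{t}=\Delta m_{t}$, hence
\begin{equation*}
\Delta k_{t}=\Delta m_{t}-\Delta x_{t}=\Delta m_{t}-(x_{t}-x_{t-})=(x_{t-}+\Delta m_{t})-x_{t},
\end{equation*}
or equivalently $x_{t}=(x_{t-}+\Delta m_{t})-\Delta k_{t}$. This identity is unconditional, being just $(iii)$ read at a jump time.

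Now I would use it in both directions. If $(vi)$ holds, then $x_{t}=\Pi(x_{t-}+\Delta m_{t})$, so $\Delta k_{t}=(x_{t-}+\Delta m_{t})-\Pi(x_{t-}+\Delta m_{t})=(I-\Pi)(x_{t-}+\Delta m_{t})$, which is $(vi')$. Conversely, if $(vi')$ holds, substituting $\Delta k_{t}=(x_{t-}+\Delta m_{t})-\Pi(x_{t-}+\Delta m_{t})$ into $x_{t}=(x_{t-}+\Delta m_{t})-\Delta k_{t}$ gives $x_{t}=\Pi(x_{t-}+\Delta m_{t})$, which is $(vi)$. Since $\Pi$ is defined on all of $\mathbb{H}$, no difficulty arises when $x_{t-}+\Delta m_{t}\notin\overline{\mathrm{D}(A)}$; moreover $(vi')$ forces $x_{t}=\Pi(x_{t-}+\Delta m_{t})\in\Pi(\mathbb{H})\subset\overline{\mathrm{D}(A)}$, consistently with $(i)$.

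There is no genuine obstacle here: the statement is a bookkeeping identity. The only point meriting a word of care is the behaviour at times with $\Delta m_{t}=0$ — then $x_{t-}=\lim_{s\uparrow t}x_{s}\in\overline{\mathrm{D}(A)}$ (a closed set), so $\Pi(x_{t-})=x_{t-}$ and both $(vi)$ and $(vi')$ reduce to $\Delta k_{t}=0$, $x_{t}=x_{t-}$ — and at $t=0$, where $x_{0}=m_{0}$ and $k_{0}=0$, so both conditions read $m_{0}=\Pi(m_{0})$, true since $m_{0}\in\overline{\mathrm{D}(A)}$. I would simply note that these cases are already subsumed in the general computation, so that $(vi)\Leftrightarrow(vi')$ for every $t\in\mathbb{R}^{+}$ and the two definitions coincide.
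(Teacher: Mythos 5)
Your argument is correct and is essentially identical to the paper's proof: both derive $\Delta k_{t}=(x_{t-}+\Delta m_{t})-x_{t}$ from the constraint $x+k=m$ and then read off the equivalence of $(vi)$ and $(vi')$ as an algebraic identity. The additional remarks on the cases $\Delta m_{t}=0$ and $t=0$ are harmless but not needed.
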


\begin{proof}
Using the equality $x+k=m$ we infer that $\Delta k_{t}=\left( I-\Pi \right)
\left( x_{t-}+\Delta m_{t}\right) $ is equivalent to $\Delta m_{t}-\Delta
x_{t}=x_{t-}+\Delta m_{t}-\Pi \left( x_{t-}+\Delta m_{t}\right) $, that is $%
x_{t}=\Pi \left( x_{t-}+\Delta m_{t}\right) .$\hfill \medskip
\end{proof}

In the case $\Pi=\Pi_{\overline{\mathrm{D}\left( A\right) }}$ one can give a
simpler equivalent form of Definition \ref{def2.5}.

\begin{proposition}
\label{prop2.14}Let $m\in\mathbb{D}\left( \mathbb{R}^{+},\mathbb{H}\right) $%
, $m_{0}\in\overline{\mathrm{D}\left( A\right) }$. Then $(x,k)=\mathcal{SP}%
(A,\Pi_{\overline{\mathrm{D}\left( A\right) }};m)$ if and only if%
\begin{equation}
\begin{array}{rl}
\left( i\right) & x_{t}\in\overline{\mathrm{D}\left( A\right) },\text{
\thinspace}\forall\,t\in\mathbb{R}^{+}\medskip \\
\left( ii\right) & k\in\mathrm{BV}_{loc}\left( \mathbb{R}^{+};\mathbb{H}%
\right) ,\text{\ }k_{0}=0\text{,}\medskip \\
\left( iii\right) & x_{t}+k_{t}=m_{t},\text{\ }\forall\text{\ }t\in\mathbb{R}%
^{+},\medskip \\
\left( iv\right) & dk_{t}\in A\left( x_{t}\right) \left( dt\right)%
\end{array}
\label{def sol GSP 2}
\end{equation}
\end{proposition}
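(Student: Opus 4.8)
The plan is to prove the two implications of the equivalence separately, the substance lying entirely in the ``if'' part. Fix $(\alpha,\beta)\in\mathrm{Gr}(A)$ and keep in mind three facts: $\mathrm{D}(A)$ is dense in $\overline{\mathrm{D}(A)}$; the orthogonal projection is characterised by (\ref{eq2.2'}); and whenever $x+k=m$ one has $\Delta k_t=(x_{t-}+\Delta m_t)-x_t$ for every $t$. For the implication $(\ref{def sol GSP})\Rightarrow(\ref{def sol GSP 2})$: items $(i)$--$(iii)$ are common to both formulations, and for $(iv)$ I would split, for $0\le s\le t$,
\[
\int_s^t\langle x_r-\alpha,\,dk_r-\beta\,dr\rangle=\int_s^t\langle x_r-\alpha,\,dk_r^c-\beta\,dr\rangle+\sum_{s<r\le t}\langle x_r-\alpha,\Delta k_r\rangle ,
\]
where the first term is $\ge0$ by (\ref{def sol GSP})$(v)$, while each summand is $\ge0$ because (\ref{def sol GSP})$(vi)$ reads $x_r=\Pi_{\overline{\mathrm{D}(A)}}(x_{r-}+\Delta m_r)$, so (\ref{eq2.2'}), applied with $x_r-(x_{r-}+\Delta m_r)=-\Delta k_r$, gives $\langle\Delta k_r,x_r-a\rangle\ge0$ for all $a\in\overline{\mathrm{D}(A)}$, in particular for $a=\alpha$. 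Hence $dk_t\in A(x_t)(dt)$, which is $(iv)$.

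For the reverse implication $(\ref{def sol GSP 2})\Rightarrow(\ref{def sol GSP})$, items $(i)$--$(iii)$ of (\ref{def sol GSP}) are immediate and the decomposition in $(iv)$ is the definition of $k^c,k^d$, so everything reduces to extracting $(v)$ and $(vi)$ from the single relation $dk_t\in A(x_t)(dt)$. The key idea is to examine the signed measure
\[
\mu(dr):=\langle x_r-\alpha,\,dk_r\rangle-\langle x_r-\alpha,\beta\rangle\,dr ,
\]
which is finite on every $[0,T]$. By hypothesis $\mu((s,t])\ge0$ for all $0\le s\le t$, so $r\mapsto\mu((0,r])$ is non-decreasing and right-continuous on $[0,T]$; hence $\mu$ restricted to $[0,T]$ is nonnegative, and since $T$ is arbitrary, $\mu\ge0$ on $\mathbb{R}^+$. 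From this I draw two consequences. First, $\langle x_t-\alpha,\Delta k_t\rangle=\mu(\{t\})\ge0$ for every $t$ and every $(\alpha,\beta)\in\mathrm{Gr}(A)$, hence for every $\alpha\in\overline{\mathrm{D}(A)}$ by density; combined with $x_t\in\overline{\mathrm{D}(A)}$ and $\Delta k_t=(x_{t-}+\Delta m_t)-x_t$, (\ref{eq2.2'}) gives $x_t=\Pi_{\overline{\mathrm{D}(A)}}(x_{t-}+\Delta m_t)$, which is $(vi)$ (in particular $x$ can only jump where $m$ does). Second, the set $N_k=\{t:\Delta k_t\neq0\}$ is countable, hence both Lebesgue-null and $|dk^c|$-null, while the atomic part $dk^d$ is carried by $N_k$; therefore for $0\le s\le t$
\[
\int_s^t\langle x_r-\alpha,\,dk_r^c-\beta\,dr\rangle=\mu\big((s,t]\setminus N_k\big)\ge0 ,
\]
which is precisely $dk_t^c\in A(x_t)(dt)$, i.e. $(v)$. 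Thus $(x,k)=\mathcal{SP}(A,\Pi_{\overline{\mathrm{D}(A)}};m)$.

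The only delicate point is this reverse implication, and within it the step ``$\mu((s,t])\ge0$ for all $s\le t$ $\Rightarrow$ $\mu\ge0$'' together with the correct bookkeeping of the atomic part of $\int\langle x_r-\alpha,dk_r\rangle$: at a jump time the integrand must be read as the right-continuous value $x_r$, in accordance with the paper's definition of $d\ell_t\in A(y_t)(dt)$. Once $\mu\ge0$ is available, separating it into its continuous and atomic parts and identifying them with $(v)$ and $(vi)$ is routine.
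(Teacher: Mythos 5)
Your argument is correct, and while the forward implication coincides with the paper's (split $dk=dk^c+dk^d$, use (\ref{eq2.2'}) on each jump), your reverse implication takes a genuinely different route. The paper proceeds by truncation: it removes the finitely many jumps of size $>1/n$, splits $(s,t]$ at those times, invokes an auxiliary lemma asserting that positivity of $\int\langle y_r,dz_r\rangle$ over half-open intervals is equivalent to positivity over open and closed ones, deduces $\int_s^t\langle x_r-\alpha,dk_r^{(n)}-\beta\,dr\rangle\ge 0$, and passes to the limit $k^{(n)}\to k^c$; condition $(vi)$ of (\ref{def sol GSP}) is then read off from $\int_{\{t\}}\langle x_r-\alpha,dk_r-\beta\,dr\rangle=\langle x_t-\alpha,\Delta k_t\rangle\ge 0$ via (\ref{eq2.2'}). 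You instead upgrade the hypothesis once and for all to the statement that the signed measure $\mu(dr)=\langle x_r-\alpha,dk_r\rangle-\langle x_r-\alpha,\beta\rangle dr$ is nonnegative on all Borel sets (a standard consequence of nonnegativity on the $\pi$-system of half-open intervals, since $r\mapsto\mu((0,r])$ is nondecreasing and right-continuous), and then extract $(vi)$ from the atoms $\mu(\{t\})\ge 0$ and $(v)$ from $\mu((s,t]\setminus N_k)\ge 0$, using that the countable set $N_k$ of jump times is null for both $dk^c$ and Lebesgue measure while carrying all of $dk^d$. This buys you a cleaner, limit-free argument that dispenses with the truncation and the auxiliary interval lemma at the cost of one measure-theoretic regularity step, which you correctly identify as the delicate point and which does hold; the two proofs converge again in the final application of (\ref{eq2.2'}) with $\Delta k_t=(x_{t-}+\Delta m_t)-x_t$ and the density of $\mathrm{D}(A)$ in $\overline{\mathrm{D}(A)}$.
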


\begin{proof}
Assume that $(x,k)=\mathcal{SP}(A,\Pi _{\overline{\mathrm{D}\left( A\right) }%
};m)$. Then it holds $x_{t}=m_{t}-k_{t}\in \overline{\mathrm{D}\left(
A\right) }$ and $k$ is a function with locally bounded variation, $k_{0}=0$
such that%
\begin{equation*}
dk_{t}^{c}\in A\left( x_{t}\right) \left( dt\right) .
\end{equation*}%
Since $x_{r}=\Pi _{\overline{\mathrm{D}\left( A\right) }}\left(
x_{r-}+\Delta m_{r}\right) $ and, from Proposition \ref{prop-def equiv}, $%
\Delta k_{r}=\big(I-\Pi _{\overline{\mathrm{D}\left( A\right) }}\big)\left(
x_{r-}+\Delta m_{r}\right) $, we deduce using characterization (\ref{eq2.2'}%
),%
\begin{equation*}
\int_{s}^{t}\langle x_{r}-\alpha ,dk_{r}^{d}\rangle =\sum_{s<r\leq t}\langle
x_{r}-\alpha ,\Delta k_{r}\rangle \geq 0,\quad \forall \alpha \in \overline{%
\mathrm{D}\left( A\right) }.
\end{equation*}%
Consequently, for all $\left( \alpha ,\beta \right) \in \mathrm{Gr}\left(
A\right) ,$%
\begin{equation*}
\int_{s}^{t}\langle x_{r}-\alpha ,dk_{r}-\beta dr\rangle
=\int_{s}^{t}\langle x_{r}-\alpha ,dk_{r}^{c}-\beta dr\rangle
+\int_{s}^{t}\langle x_{r}-\alpha ,dk_{r}^{d}\rangle \geq 0.
\end{equation*}%
Conversely, assume that $(x,k)$ satisfies (\ref{def sol GSP 2}). Set $%
k_{t}^{\left( n\right) }=k_{t}-\sum_{s\leq t}\Delta k_{s}\mathbf{1}_{|\Delta
k_{s}|>1/n}$, $n\in \mathbb{N}$. Clearly, for all $T\geq 0$, $\updownarrow {%
\hspace{-0.1cm}}k^{\left( n\right) }{\hspace{-0.1cm}}\updownarrow _{T}{\leq 2%
}\left\updownarrow {k}\right\updownarrow {_{T}}$ and $||k^{\left( n\right)
}-k^{c}||_{T}\longrightarrow 0$, as $n\rightarrow \infty $. In every
interval $(s,t]$ there exist finite number of $u_{1}<u_{2}<...<u_{m}$ such
that $|\Delta k_{u_{i}}|>1/n$. Set $u_{0}=s$ and $u_{m+1}=t$ and observe
that, for all $0\leq s<t$ and $(\alpha ,\beta )\in \mathrm{Gr}\left(
A\right) $%
\begin{equation*}
\int_{s}^{t}\langle x_{u}-\alpha ,dk_{u}^{\left( n\right) }-\beta
\,du\rangle =\sum_{i=1}^{m+1}\int_{(u_{i-1},u_{i})}\langle x_{u}-\alpha
,dk_{u}-\beta \,du\rangle .
\end{equation*}%
Using next auxiliary result:

\begin{lemma}
Let $y,z\in \mathbb{D}\left( \mathbb{R}^{+},\mathbb{H}\right) $ such that $%
z\in \mathrm{BV}_{loc}\left( \mathbb{R}^{+};\mathbb{H}\right) $. Then the
following conditions are equivalent:%
\begin{equation*}
\begin{array}{rl}
\left( i\right) & \displaystyle\int_{(s,t]}\left\langle
y_{r},dz_{r}\right\rangle \geq 0,\quad \forall \;0\leq s<t,\medskip \\
\left( ii\right) & \displaystyle\int_{(s,t)}\left\langle
y_{r},dz_{r}\right\rangle \geq 0,\quad \forall \;0\leq s<t,\medskip \\
\left( iii\right) & \displaystyle\int_{\left[ s,t\right] }\left\langle
y_{r},dz_{r}\right\rangle \geq 0,\quad \forall \;0\leq s\leq t,%
\end{array}%
\end{equation*}
\end{lemma}

we deduce that%
\begin{equation}
\int_{s}^{t}\langle x_{r}-\alpha ,dk_{r}^{\left( n\right) }-\beta
\,dr\rangle \geq 0,\quad \forall 0\leq s<t,\quad \forall (\alpha ,\beta )\in
\mathrm{Gr}\left( A\right) .  \label{eq2.20}
\end{equation}%
and letting $n\rightarrow \infty $ we obtain $dk_{t}^{c}\in A\left(
x_{t}\right) \left( dt\right) $. For any $t\in \mathbb{R}^{+}$ we have $%
x_{t}\in \overline{\mathrm{D}\left( A\right) }$ and for all $\left( \alpha
,\beta \right) \in \mathrm{Gr}\left( A\right) $%
\begin{equation*}
0\leq \int_{\left\{ t\right\} }\langle x_{r}-\alpha ,dk_{r}-\beta
\,dr\rangle =\langle x_{t}-\alpha ,\Delta k_{t}\rangle =\langle x_{t}-\alpha
,x_{t-}+\Delta m_{t}-x_{t}\rangle .
\end{equation*}%
Then by (\ref{eq2.2'}) it follows $x_{t}=\Pi _{\overline{\mathrm{D}\left(
A\right) }}(x_{t-}+\Delta m_{t})$ which completes the proof.\hfill
\end{proof}

\begin{remark}
\label{rem2.6}Let $\left( x,k\right) =\mathcal{SP}\left( A,\Pi ;m\right) .$
We have
\begin{equation*}
\left\vert \Delta x_{t}\right\vert \leq |\Delta m_{t}|\quad \text{and}\quad
|\Delta k_{t}|\leq 2|\Delta m_{t}|,\quad t\in \mathbb{R}^{+},
\end{equation*}%
since $x_{t-}\in \mathrm{D}\left( A\right) $ and%
\begin{align*}
\Delta x_{t}& =\Pi \left( x_{t-}+\Delta m_{t}\right) -\Pi \left(
x_{t-}\right) \\
\Delta k_{t}& =\Delta m_{t}+\Pi \left( x_{t-}\right) -\Pi (x_{t-}+\Delta
m_{t}).
\end{align*}%
Consequently if $m$ is continuos, then $x$ and $k$ are continuous functions
and independent of $\Pi $ ($k^{d}=0$).
\end{remark}

\begin{remark}
\label{lem2.4'}If $\left( x,k\right) =\mathcal{SP}\left( A,\Pi;m\right) $
and $(\hat{x},\hat{k})=\mathcal{SP}\left( A,\Pi;\hat{m}\right) $ then,
taking $x:=x_{r-}+\Delta m_{r}$ and $y:=\hat{x}_{r-}+\Delta\hat{m}_{r}$ in
Lemma \ref{lem2.4}, we see that, for any $a\in\overline{\mathrm{D}(A)}$,%
\begin{equation*}
\begin{array}{rl}
\left( i\right) & \displaystyle\langle x_{r}-\hat{x}_{r},\Delta k_{r}-\Delta%
\hat{k}_{r}\rangle+\frac{1}{2}|\Delta k_{r}-\Delta\hat{k}_{r}|^{2}\geq0%
\medskip \\
\left( ii\right) & \displaystyle r_{0}|\Delta k_{r}|\leq\langle
x_{r}-a,\Delta k_{r}\rangle+\frac{1}{2}|\Delta k_{r}|^{2},%
\end{array}%
\end{equation*}
where $r_{0}\geq0$ is such that $\overline{B\left( a,r_{0}\right) }\subset%
\overline{\mathrm{D}(A)}$.
\end{remark}

Let $(x,k)=\mathcal{SP}\left( A,\Pi;m\right) $. We will use the notation $x=%
\mathcal{SP}^{(1)}\left( A,\Pi;m\right) $ and $k=\mathcal{SP}^{(2)}\left(
A,\Pi;m\right) $. If $m$ is continuous, since $x,k$ does not depend on $\Pi$%
, we will write $(x,k)=\mathcal{SP}\left( A;m\right) $ and $x=\mathcal{SP}%
^{(1)}\left( A;m\right) $, $k=\mathcal{SP}^{(2)}\left( A;m\right) $.

The following version of the Tanaka's estimate of the distance between two
solutions of the Skorokhod problem will prove be useful in what follows.

\begin{lemma}
\label{lem2.7}We assume that $m,\hat{m}\in\mathbb{D}\left( \mathbb{R}^{+},%
\mathbb{H}\right) $ with $m_{0},\hat{m}_{0}\in\overline{\mathrm{D}\left(
A\right) }$. If $(x,k)=\mathcal{SP}\left( A,\Pi;m\right) $ and $(\hat{x},%
\hat{k})=\mathcal{SP}\left( A,\Pi;\hat{m}\right) $ then

\noindent$\left( i\right) $ for all $0\leq s<t$, $s,t\in\mathbb{R}^{+}$%
\begin{equation*}
\int_{s}^{t}\langle x_{r}-\hat{x}_{r},dk_{r}-d\hat{k}_{r}\rangle+\frac{1}{2}%
\sum_{s<r\leq t}|\Delta k_{r}-\Delta\hat{k}_{r}|^{2}\geq0\;;
\end{equation*}
\noindent$\left( ii\right) $ for all $t\in\mathbb{R}^{+}$,%
\begin{equation*}
|x_{t}-\hat{x}_{t}|^{2}\leq|m_{t}-\hat{m}_{t}|^{2}-2\int_{0}^{t}\langle
m_{t}-\hat{m}_{t}-m_{s}+\hat{m}_{s},dk_{s}-d\hat{k}_{s}\rangle.
\end{equation*}
\end{lemma}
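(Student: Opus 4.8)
The plan is to prove $(i)$ first and then derive $(ii)$ from it together with the integration-by-parts formula for c\`adl\`ag BV functions. For $(i)$, I would split the interval $(s,t]$ into the continuous and the jump contributions. On the continuous side, $k^{c}$ and $\hat{k}^{c}$ satisfy $dk^{c}_{r}\in A(x_{r})(dr)$ and $d\hat{k}^{c}_{r}\in A(\hat{x}_{r})(dr)$; since for a.e.\ $r$ the points $(x_{r},\cdot)$ and $(\hat x_{r},\cdot)$ lie on $\mathrm{Gr}(A)$, the monotonicity of $A$ (used through Definition \ref{definition Annex 14}, i.e.\ the defining inequality of $d\ell_{r}\in A(y_{r})(dr)$) yields $\int_{s}^{t}\langle x_{r}-\hat x_{r},dk^{c}_{r}-d\hat{k}^{c}_{r}\rangle\geq 0$. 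On the jump side, I would use Remark \ref{lem2.4'}$(i)$: for each jump time $r$, applying Lemma \ref{lem2.4}$(j)$ with $x:=x_{r-}+\Delta m_{r}$, $y:=\hat x_{r-}+\Delta\hat m_{r}$ (so that $\Pi(x)=x_{r}$, $\Pi(y)=\hat x_{r}$, and $\Pi(x)-x=-\Delta k_{r}$, $\Pi(y)-y=-\Delta\hat k_{r}$ by Proposition \ref{prop-def equiv}), gives exactly $\langle x_{r}-\hat x_{r},\Delta k_{r}-\Delta\hat k_{r}\rangle+\tfrac12|\Delta k_{r}-\Delta\hat k_{r}|^{2}\geq 0$. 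Summing over the (countably many) jump times in $(s,t]$ and adding the continuous part gives $(i)$; one must check the jump sum is well-defined, which follows from $\sum_{s<r\leq t}|\Delta k_{r}|<\infty$ since $k\in\mathrm{BV}_{loc}$.

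For $(ii)$, set $\delta_{r}:=x_{r}-\hat x_{r}=(m_{r}-\hat m_{r})-(k_{r}-\hat k_{r})$ and $n_{r}:=m_{r}-\hat m_{r}$, $\kappa_{r}:=k_{r}-\hat k_{r}$, so $\delta=n-\kappa$ with $\delta_{0}=0$. I would apply the c\`adl\`ag integration-by-parts / It\^o formula for the square of a BV function: for $\delta\in\mathbb{D}\cap\mathrm{BV}_{loc}$,
\begin{equation*}
|\delta_{t}|^{2}=2\int_{0}^{t}\langle\delta_{r-},d\delta_{r}\rangle+\sum_{0<r\leq t}|\Delta\delta_{r}|^{2}
=2\int_{0}^{t}\langle\delta_{r},d\delta_{r}\rangle-\sum_{0<r\leq t}|\Delta\delta_{r}|^{2}.
\end{equation*}
Substituting $\delta=n-\kappa$ and expanding $\int_{0}^{t}\langle\delta_{r},d\delta_{r}\rangle=\int_{0}^{t}\langle\delta_{r},dn_{r}\rangle-\int_{0}^{t}\langle\delta_{r},d\kappa_{r}\rangle$, the term $\int_{0}^{t}\langle\delta_{r},d\kappa_{r}\rangle=\int_{0}^{t}\langle x_{r}-\hat x_{r},dk_{r}-d\hat k_{r}\rangle$ is bounded below using $(i)$ (applied on $(0,t]$), namely $\int_{0}^{t}\langle\delta_{r},d\kappa_{r}\rangle\geq-\tfrac12\sum_{0<r\leq t}|\Delta\kappa_{r}|^{2}$. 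This extra jump sum is then absorbed against the $-\sum|\Delta\delta_{r}|^{2}$ term; since $\Delta\delta_{r}=\Delta n_{r}-\Delta\kappa_{r}$, a short computation shows the leftover jump contributions combine into $-\sum_{0<r\leq t}|\Delta n_{r}|^{2}+2\sum_{0<r\leq t}\langle\delta_{r},\Delta n_{r}\rangle$ or a similar nonnegative-sign-friendly remainder, and the remaining drift integral $2\int_{0}^{t}\langle\delta_{r},dn_{r}\rangle$ is rewritten via a second integration by parts as $2\langle\delta_{t},n_{t}\rangle-2\int_{0}^{t}\langle n_{r},d\delta_{r}\rangle-\dots$, eventually producing the stated form with $-2\int_{0}^{t}\langle n_{t}-n_{s},d\kappa_{s}\rangle$.

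The bookkeeping with the jump terms is the main obstacle: one must be careful that the discrete inequality from $(i)$ includes the factor $\tfrac12$, and this has to be matched exactly against the $\sum|\Delta\delta_{r}|^{2}$ coming from the It\^o formula so that no uncontrolled positive term survives on the right-hand side. The cleanest way I would organize this is to prove the identity $|x_{t}-\hat x_{t}|^{2}=|m_{t}-\hat m_{t}|^{2}-2\int_{0}^{t}\langle m_{t}-\hat m_{t}-m_{s}+\hat m_{s},dk_{s}-d\hat k_{s}\rangle - 2\int_{0}^{t}\langle x_{s}-\hat x_{s},dk_{s}-d\hat k_{s}\rangle + \sum_{0<s\le t}|\Delta k_{s}-\Delta\hat k_{s}|^{2}$ purely by integration by parts (no monotonicity used yet), and then apply $(i)$ to the last two terms, which are exactly $\leq 0$ in combination. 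I would also note that the continuous-time integration-by-parts formula needed here is standard (e.g.\ Dellacherie--Meyer or the Annex of the paper on Stieltjes integrals), so I would cite it rather than reprove it. The convergence issues ($\mathrm{BV}_{loc}$ ensures absolute convergence of all jump sums on compacts, and the approximation $k^{(n)}\to k^{c}$ used in Proposition \ref{prop2.14} is available if a limiting argument on the continuous part is preferred) are routine and would be dispatched in one line.
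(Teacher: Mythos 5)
Your part $(i)$ is correct and is essentially the paper's own argument: the continuous parts are handled by the monotonicity of the realization $\mathcal{A}$ of $A$ on c\`adl\`ag pairs (this is Proposition \ref{prop Anex 13}, which is the statement you actually need --- the heuristic ``for a.e.\ $r$ the points lie on $\mathrm{Gr}(A)$'' does not literally make sense because the measure $dk^{c}_{r}$ need not have a density with respect to $dr$, but the passage from fixed test pairs $(z,\hat z)\in A$ to the comparison of two solutions is exactly what that proposition provides), and the jump parts are handled by Lemma \ref{lem2.4}$(j)$ via Remark \ref{lem2.4'}$(i)$, exactly as you describe.

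In part $(ii)$, however, the step you build the derivation on would fail: with $\delta:=x-\hat x$, $n:=m-\hat m$, $\kappa:=k-\hat k$, the functions $\delta$ and $n$ are only c\`adl\`ag, not of locally bounded variation, so the formula $|\delta_{t}|^{2}=2\int_{0}^{t}\langle\delta_{r},d\delta_{r}\rangle-\sum_{r\leq t}|\Delta\delta_{r}|^{2}$ and the integrals $\int_{0}^{t}\langle\delta_{r},dn_{r}\rangle$ and $\int_{0}^{t}\langle n_{r},d\delta_{r}\rangle$ appearing in your intermediate manipulations are not defined as Lebesgue--Stieltjes integrals. The paper avoids this entirely by applying Lemma \ref{lemma Annex 9} only to $\kappa$, which \emph{is} BV: $|\kappa_{t}|^{2}=2\int_{0}^{t}\langle\kappa_{s},d\kappa_{s}\rangle-\sum_{s\leq t}|\Delta\kappa_{s}|^{2}$, after which one substitutes $\kappa_{s}=n_{s}-\delta_{s}$ and uses $\langle n_{t},\kappa_{t}\rangle=\int_{0}^{t}\langle n_{t},d\kappa_{s}\rangle$, so that every integral is taken against $d\kappa$. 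Carrying this out, the correct organizing identity is
\begin{equation*}
|\delta_{t}|^{2}=|n_{t}|^{2}-2\int_{0}^{t}\langle n_{t}-n_{s},d\kappa_{s}\rangle-2\int_{0}^{t}\langle\delta_{s},d\kappa_{s}\rangle-\sum_{s\leq t}|\Delta\kappa_{s}|^{2},
\end{equation*}
with a \emph{minus} sign on the jump sum; only then does part $(i)$, i.e.\ $2\int_{0}^{t}\langle\delta_{s},d\kappa_{s}\rangle+\sum_{s\leq t}|\Delta\kappa_{s}|^{2}\geq0$, make the last two terms jointly nonpositive. With the $+$ sign you wrote, the combination is only bounded above by $2\sum_{s\leq t}|\Delta\kappa_{s}|^{2}$ and the stated inequality does not follow.
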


\begin{proof}
$(i)$ Let $0\leq s<t$ be arbitrary chosen. From Proposition \ref{prop Anex
13} from Annexes, we see that, for all c\`{a}dl\`{a}g functions $(x,k),(\hat{%
x},\hat{k})$ satisfying Definition \ref{def2.5}, we have%
\begin{equation}
\int_{s}^{t}\langle x_{r}-\hat{x}_{r},dk_{r}^{c}-d\hat{k}_{r}^{c}\rangle
\geq 0,  \label{eq2.5}
\end{equation}%
since $(x,k^{c}),(\hat{x},\hat{k}^{c})\in \mathcal{A}$ (see the Annexes,
Proposition \ref{prop Anex 13}). On the other hand, using inequality $\left(
i\right) $ from Remark \ref{lem2.4'} and also the definition of the
Lebesgue-Stieltjes integral (see the Annexes), we deduce that%
\begin{equation}
\int_{s}^{t}\langle x_{r}-\hat{x}_{r},dk_{r}^{d}-d\hat{k}_{r}^{d}\rangle +%
\frac{1}{2}\sum_{s<r\leq t}|\Delta k_{r}-\Delta \hat{k}_{r}|^{2}\geq 0,\quad
0\leq s<t,\,\,s,t\in \mathbb{R}^{+}.  \label{eq2.6}
\end{equation}%
By combining (\ref{eq2.5}) with (\ref{eq2.6}) we obtain $(i)$.

\noindent $(ii)$ By Lemma \ref{lemma Annex 9}%
\begin{equation*}
|k_{t}-\hat{k}_{t}|^{2}=-\sum_{s\leq t}|\Delta k_{s}-\Delta \hat{k}%
_{s}|^{2}+2\int_{0}^{t}\langle k_{s}-\hat{k}_{s},dk_{s}-d\hat{k}_{s}\rangle .
\end{equation*}%
Therefore, using step $(i),$%
\begin{align*}
& |x_{t}-\hat{x}_{t}|^{2}-|m_{t}-\hat{m}_{t}|^{2}+2\int_{0}^{t}\langle m_{t}-%
\hat{m}_{t}-m_{s}+\hat{m}_{s},dk_{s}-d\hat{k}_{s}\rangle \\
& =-2\int_{0}^{t}\langle x_{s}-\hat{x}_{s},dk_{s}-d\hat{k}_{s}\rangle
-\sum_{s\leq t}|\Delta k_{s}-\Delta \hat{k}_{s}|^{2}\leq 0.
\end{align*}%
\hfill \medskip
\end{proof}

The next result it is an immediate consequence of Lemma \ref{lem2.7}.

\begin{corollary}
\label{cor2.8}Under the assumptions (\ref{assumpt}), differential equation (%
\ref{formal GSP}) admits at most one solution $(x,k)=\mathcal{SP}\left(
A,\Pi ;m\right) $.
\end{corollary}

In R\u{a}\c{s}canu \cite{ra/96} (in the context of Hilbert spaces which is
considered here) and C\'{e}pa \cite{ce/98} (finite dimensional case) it is
proved that, for any continuous $m$ such that $m_{0}\in \overline{\mathrm{D}%
\left( A\right) }\,$, there exists a unique continuous solution $\left(
x,k\right) =\mathcal{SP}\left( A;m\right) $. In particular, for any constant
function $\alpha \in \overline{\mathrm{D}\left( A\right) }$ there exists a
unique solution $\mathcal{SP}\left( A;m\right) $. In fact, for $m_{t}\equiv
\alpha \in \overline{\mathrm{D}\left( A\right) }$ we are in the absolutely
continuous case and by Proposition \ref{prop 4} the solution $%
x_{t}=S_{A}\left( t\right) \alpha $ where $S_{A}\left( t\right) $ is the
nonlinear semigroup generated by $A$.

\begin{lemma}
\label{lem2.9}Let assumptions (\ref{assumpt}) be satisfied and $m\in \mathbb{%
D}\left( \mathbb{R}^{+},\mathbb{H}\right) $ be a step function of the form%
\begin{equation*}
m_{t}=\sum_{r\in\pi}m_{r}\mathbf{1}_{[r,r^{\prime})}\left( t\right) ,
\end{equation*}
where $\pi\in\mathcal{P}_{\mathbb{R}^{+}}$ and $r^{\prime}$ is the successor
of $r$ in the partition $\pi$. Then there exists a unique solution $(x,k)=%
\mathcal{SP}\left( A,\Pi;m\right) $ and the solution is given by%
\begin{equation}
(x_{t},k_{t})=\mathcal{SP}\left( A,\Pi\left( x_{r-}+\Delta m_{r}\right)
\right) _{t-r}=S_{A}\left( t-r\right) \left( \Pi\left( x_{r-}+\Delta
m_{r}\right) \right) ,  \label{eq2.7}
\end{equation}
for all $t\in\lbrack r,r^{\prime})$, $r\in\pi$.
\end{lemma}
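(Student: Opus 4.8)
The plan is to proceed by induction on the (locally finite) partition points of $\pi$, building the solution piece by piece on each interval $[r,r')$ where $m$ is constant, and gluing the pieces together. On each such interval the input is a constant function, so after fixing the correct starting value the problem reduces to the absolutely continuous (in fact constant-input) Skorokhod problem $\mathcal{SP}(A;\alpha)$, whose unique solution is the nonlinear semigroup orbit $x_t = S_A(t-r)\alpha$ by the cited results of R\u{a}\c{s}canu \cite{ra/96} / C\'epa \cite{ce/98} together with Proposition \ref{prop 4}.

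First I would set up the recursion. Enumerate the partition $\pi$ as $0=r_0 < r_1 < r_2 < \dots$, with each $r_j' = r_{j+1}$. Define $x_{0-} := m_0 \in \overline{\mathrm{D}(A)}$ (or simply start the first interval at $m_0$, which lies in $\overline{\mathrm{D}(A)}$ by assumption \eqref{assumpt}(iii)). Assuming $x$ has been constructed and is c\`adl\`ag on $[0,r_j)$ with $x_{r_j-}\in\overline{\mathrm{D}(A)}$ well defined (it is the semigroup value at the left endpoint limit), set
\begin{equation*}
a_j := \Pi\bigl(x_{r_j-}+\Delta m_{r_j}\bigr)\in\overline{\mathrm{D}(A)},
\end{equation*}
which lies in $\overline{\mathrm{D}(A)}$ because $\Pi$ maps into $\overline{\mathrm{D}(A)}$ by Definition \ref{gener proj}(i). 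Then on $[r_j,r_{j+1})$ define
\begin{equation*}
x_t := S_A(t-r_j)\,a_j,\qquad k_t := m_t - x_t = m_{r_j} - S_A(t-r_j)a_j,
\end{equation*}
using that $m_t \equiv m_{r_j}$ there. Note $x_{r_j} = S_A(0)a_j = a_j$, so condition $(vi)$ of Definition \ref{def2.5}, $x_{r_j}=\Pi(x_{r_j-}+\Delta m_{r_j})$, holds by construction. Continuity of $t\mapsto S_A(t-r_j)a_j$ on $[r_j,r_{j+1})$ and the existence of the left limit at $r_{j+1}$ (the semigroup orbit is continuous) give that $x$ and $k$ are c\`adl\`ag. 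Since the partition is locally finite, this defines $(x,k)$ on all of $\mathbb{R}^+$.

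Next I would verify the defining properties $(i)$--$(vi)$ of Definition \ref{def2.5}. Property $(i)$: $x_t = S_A(t-r_j)a_j \in \overline{\mathrm{D}(A)}$ since the semigroup of contractions maps $\overline{\mathrm{D}(A)}$ into itself. Property $(iii)$: immediate from $k_t := m_t - x_t$. Property $(ii)$: on each $[r_j,r_{j+1})$ the function $k_t = m_{r_j} - S_A(t-r_j)a_j$ equals $-(S_A(\cdot)a_j - a_j)$ up to a constant, and by Proposition \ref{prop 4} applied to the constant input $\alpha=a_j$ (with $f\equiv 0$) this is exactly $\mathcal{SP}^{(2)}(A;a_j)$ shifted, hence of locally bounded variation on that interval with the estimate $\updownarrow k \updownarrow_{[r_j,r_{j+1})} < \infty$; the jump at $r_j$ is $\Delta k_{r_j}=\Delta m_{r_j}-\Delta x_{r_j}$, which is finite, and local finiteness of $\pi$ then gives $k\in\mathrm{BV}_{loc}$ with $k_0=0$. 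For $(iv)$ and $(v)$: the continuous part $k^c$ on $(r_j,r_{j+1})$ coincides with that of the constant-input Skorokhod problem $\mathcal{SP}(A;a_j)$, so by Proposition \ref{prop 4}(d) (equivalently the definition of $dk^c_t\in A(x_t)(dt)$) the inclusion $dk^c_t\in A(x_t)(dt)$ holds on each open subinterval; since the only jumps of $k$ are at partition points and $k^d_t = \sum_{s\le t}\Delta k_s$ by construction, the splitting $k=k^c+k^d$ with $(iv)$ is satisfied, and the integral inequality defining $dk^c_t\in A(x_t)(dt)$ over an arbitrary $[s,t]$ follows by summing over the finitely many subintervals it meets (the jump contributions being absorbed into $k^d$, exactly as in the proof of Proposition \ref{prop2.14}). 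Property $(vi)$ at a generic partition point $r_j$ was built in; at non-partition points $\Delta m_t=0$ and $\Delta x_t=0$, so $(vi)$ reads $x_t=\Pi(x_t)=x_t$, true since $x_t\in\overline{\mathrm{D}(A)}$ and $\Pi$ fixes $\overline{\mathrm{D}(A)}$. Formula \eqref{eq2.7} is then precisely the statement that on $[r,r')$ the pair is $\mathcal{SP}(A;\Pi(x_{r-}+\Delta m_r))$ evaluated at time $t-r$, i.e. $(S_A(t-r)(\Pi(x_{r-}+\Delta m_r)),\,\cdot\,)$.

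Uniqueness is immediate from Corollary \ref{cor2.8}, which already gives at most one solution under \eqref{assumpt}; alternatively one sees it directly by induction, since on $[r_j,r_{j+1})$ any solution must, by Proposition \ref{prop2.14}-type reasoning, have $x$ equal to the constant-input Skorokhod solution started from the forced value $a_j=\Pi(x_{r_j-}+\Delta m_{r_j})$, which is uniquely $S_A(t-r_j)a_j$. I expect the only mildly delicate point to be the bookkeeping that the piecewise-defined $k$ genuinely has locally bounded variation across partition points and that the integral characterization $dk^c_t\in A(x_t)(dt)$ over intervals straddling several jumps decomposes correctly — but this is routine given Proposition \ref{prop 4}, Proposition \ref{prop2.14}, and the additivity of Lebesgue--Stieltjes integrals already invoked in the paper; there is no real analytic obstacle, since local finiteness of $\pi$ reduces everything to finitely many applications of the known continuous-case theory on each compact interval.
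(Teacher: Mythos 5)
Your proposal is correct and follows essentially the same route as the paper: the paper's (much terser) proof likewise constructs $(x_t,k_t)$ on each interval $[r,r')$ via the semigroup orbit started from $\Pi(x_{r-}+\Delta m_r)$ and simply asserts that the resulting pair satisfies Definition \ref{def2.5}, with uniqueness coming from Corollary \ref{cor2.8}. Your version just spells out the inductive gluing and the verification of conditions $(i)$--$(vi)$ that the paper leaves implicit.
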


\begin{proof}
Clearly%
\begin{equation*}
x_{r}=\Pi (x_{{r}-}+\Delta m_{r}),\;\forall r\in \pi ,
\end{equation*}%
and $x_{t}=m_{t}-k_{t}\in \overline{\mathrm{D}\left( A\right) }$,\thinspace\
$t\in \mathbb{R}^{+}$. Function $k$ is with locally bounded variation, $%
k_{0}=0$, such that for any $(\alpha ,\beta )\in \mathrm{Gr}(A)$%
\begin{equation*}
\int_{s}^{t}\langle x_{r}-\alpha ,dk_{r}^{c}-\beta \,du\rangle \geq 0,\quad
0\leq s<t,~s,t\in \mathbb{R}^{+}
\end{equation*}%
and the proof is complete.\hfill
\end{proof}

\begin{lemma}
\label{lem2.10}Let $(x,k)=\mathcal{SP}\left( A,\Pi ;m\right) $. Let $a\in
\mathrm{Int}\left( \mathrm{D}\left( A\right) \right) $ and $r_{0}>0$ be such
that $\overline{B\left( a,r_{0}\right) }\subset \mathrm{D}\left( A\right) .$
Let%
\begin{equation*}
A_{a,r_{0}}^{\#}:=\sup \left\{ \left\vert \hat{u}\right\vert :\hat{u}\in
Au,\;u\in \overline{B\left( a,r_{0}\right) }\right\} \leq \mu <\infty .
\end{equation*}%
Then for every $0\leq s<t$%
\begin{equation*}
r_{0}\left\updownarrow k\right\updownarrow _{\left[ s,t\right] }\leq
\int_{s}^{t}\langle x_{r}-a,dk_{r}\rangle +\frac{1}{2}\sum_{s<r\leq
t}|\Delta k_{r}|^{2}+\mu \int_{s}^{t}|x_{r}-a|dr+(t-s)r_{0}\mu \,.
\end{equation*}
\end{lemma}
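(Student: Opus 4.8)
The plan is to split the variation of $k$ into its continuous part $k^{c}$ and its jump part $k^{d}$, bound each separately, and then recombine. For the continuous part, I would use the inclusion $dk_{t}^{c}\in A(x_{t})(dt)$ together with the fact that $\overline{B(a,r_{0})}\subset \mathrm{D}(A)$, so that for every direction $v$ with $|v|\le 1$ the point $a+r_{0}v$ lies in $\overline{B(a,r_{0})}$ and carries a bounded selection $\beta_{v}\in A(a+r_{0}v)$ with $|\beta_{v}|\le \mu$. Integrating the monotonicity inequality $\int_{s}^{t}\langle x_{r}-(a+r_{0}v),\,dk_{r}^{c}-\beta_{v}\,dr\rangle\ge 0$ and taking a supremum over $v$ (realizing $v$ along the Radon–Nikodym density of $k^{c}$ with respect to its total variation) should yield, after moving the $a+r_{0}v$ and $\beta_{v}$ terms to the other side, an estimate of the shape
\begin{equation*}
r_{0}\left\updownarrow k^{c}\right\updownarrow_{[s,t]}\le \int_{s}^{t}\langle x_{r}-a,dk_{r}^{c}\rangle+\mu\int_{s}^{t}|x_{r}-a|\,dr+(t-s)r_{0}\mu.
\end{equation*}

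For the jump part, I would invoke inequality $(ii)$ of Remark \ref{lem2.4'}, namely $r_{0}|\Delta k_{r}|\le \langle x_{r}-a,\Delta k_{r}\rangle+\tfrac{1}{2}|\Delta k_{r}|^{2}$, valid because $\overline{B(a,r_{0})}\subset\overline{\mathrm{D}(A)}$. Summing this over all jumps in $(s,t]$ gives
\begin{equation*}
r_{0}\left\updownarrow k^{d}\right\updownarrow_{[s,t]}=r_{0}\sum_{s<r\le t}|\Delta k_{r}|\le \sum_{s<r\le t}\langle x_{r}-a,\Delta k_{r}\rangle+\frac{1}{2}\sum_{s<r\le t}|\Delta k_{r}|^{2}=\int_{s}^{t}\langle x_{r}-a,dk_{r}^{d}\rangle+\frac{1}{2}\sum_{s<r\le t}|\Delta k_{r}|^{2}.
\end{equation*}
Adding the two bounds and using $\left\updownarrow k\right\updownarrow_{[s,t]}\le \left\updownarrow k^{c}\right\updownarrow_{[s,t]}+\left\updownarrow k^{d}\right\updownarrow_{[s,t]}$ together with $dk_{r}=dk_{r}^{c}+dk_{r}^{d}$ produces exactly the asserted inequality.

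The main obstacle I anticipate is the rigorous treatment of the continuous part: one must make sense of "choosing $v$ along the direction of $dk^{c}$" in the Lebesgue–Stieltjes/Radon–Nikodym sense, i.e. write $dk_{r}^{c}=\theta_{r}\,d\left\updownarrow k^{c}\right\updownarrow_{r}$ with $|\theta_{r}|=1$ a.e., and verify that plugging the (measurable) selection $\beta_{\theta_{r}}\in A(a+r_{0}\theta_{r})$ into the defining monotonicity condition $dk^{c}\in A(x)(dt)$ is legitimate — this requires approximating $\theta$ by simple functions and using that the monotonicity inequality holds for each fixed $(\alpha,\beta)\in\mathrm{Gr}(A)$, then passing to the limit. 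The boundedness $A^{\#}_{a,r_{0}}\le\mu$ is what makes this limiting argument uniform. Everything else is a routine recombination, so I would keep the write-up of the jump part brief and concentrate the detail on the continuous part.
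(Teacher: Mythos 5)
Your plan is correct and follows essentially the same route as the paper: the paper also splits $\left\updownarrow k\right\updownarrow_{[s,t]}$ into $\left\updownarrow k^{c}\right\updownarrow_{[s,t]}+\left\updownarrow k^{d}\right\updownarrow_{[s,t]}$, handles the jump part exactly via Remark \ref{lem2.4'}-$(ii)$, and obtains your displayed bound for $k^{c}$ by invoking Proposition \ref{prop Anex 15}. The technical obstacle you flag is resolved there without any Radon--Nikodym density: one tests the monotonicity inequality on each subinterval $[t_{i},t_{i+1}]$ of a partition with $z=a+r_{0}v$ and a fixed $\hat{z}\in A(a+r_{0}v)$, takes the supremum over $|v|\leq 1$ of $r_{0}\langle k_{t_{i+1}}-k_{t_{i}},v\rangle$ to recover $r_{0}|k_{t_{i+1}}-k_{t_{i}}|$, and then sums over the partition.
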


\begin{proof}
Using Proposition \ref{prop Anex 15} from the Annexes we see that%
\begin{equation*}
r_{0}\left\updownarrow k^{c}\right\updownarrow _{\left[ s,t\right] }\leq
\int_{s}^{t}\langle x_{u}-a,dk_{u}^{c}\rangle +\mu
\int_{s}^{t}|x_{u}-a|du+(t-s)r_{0}\mu \,.
\end{equation*}%
From Remark \ref{lem2.4'}-$(ii)$ we deduce that%
\begin{equation*}
r_{0}\updownarrow {\hspace{-0.1cm}}k^{d}{\hspace{-0.1cm}}\updownarrow _{%
\left[ s,t\right] }\leq \int_{s}^{t}\langle x_{u}-a,dk_{u}^{d}\rangle +\frac{%
1}{2}\sum_{s<u\leq t}|\Delta k_{u}|^{2}
\end{equation*}%
and the result follows, since $\updownarrow {\hspace{-0.1cm}}k{\hspace{-0.1cm%
}}\updownarrow _{\left[ s,t\right] }\,\leq \,\updownarrow {\hspace{-0.1cm}}%
k^{c}{\hspace{-0.1cm}}\updownarrow _{\left[ s,t\right] }+\updownarrow {%
\hspace{-0.1cm}}k^{d}{\hspace{-0.1cm}}\updownarrow _{\left[ s,t\right] }.$%
\hfill
\end{proof}

\begin{theorem}[Convergence results]
\label{prop2.11}Assume that:$\smallskip $

\noindent $\left( i\right) $ assumptions (\ref{assumpt}) are satisfied;$%
\smallskip $

\noindent $\left( ii\right) $ $m,m^{\left( n\right) }\in \mathbb{D}\left(
\mathbb{R}^{+},\mathbb{H}\right) $ and $(x^{\left( n\right) },k^{\left(
n\right) })=\mathcal{SP}\left( A,\Pi ;m^{\left( n\right) }\right) $, $n\in
\mathbb{N}^{\ast }.\smallskip $

The following assertions hold true:\medskip

\noindent $\left( \mathbf{I}\right) \quad $If for every $T>0$, $\left\Vert
m^{\left( n\right) }-m\right\Vert _{T}\rightarrow 0$, then$\smallskip $

\noindent $\quad \left( j\right) $ for any $a\in \mathrm{Int}\left( \mathrm{D%
}\left( A\right) \right) $, $r_{0}>0$ such that $\overline{B\left(
a,r_{0}\right) }\subset \overline{\mathrm{D}\left( A\right) }$, for any $\mu
\geq A_{a,r_{0}}^{\#}$, there exist a constant $C=C(a,r_{0},\mu ,T,$\textbf{$%
\boldsymbol{\gamma }$}$_{m}(\cdot ,T))>0$ and $n_{0}\in \mathbb{N}^{\ast }$
such that for all $n\geq n_{0}:$%
\begin{equation}
||x^{\left( n\right) }||_{T}^{2}+\updownarrow {\hspace{-0.1cm}}k^{\left(
n\right) }{\hspace{-0.1cm}}\updownarrow _{T}\leq ||x^{\left( n\right)
}||_{T}^{2}+\updownarrow {\hspace{-0.1cm}}k^{\left( n\right) ,c}{\hspace{%
-0.1cm}}\updownarrow _{T}{+\updownarrow {\hspace{-0.1cm}}k^{\left( n\right)
,d}{\hspace{-0.1cm}}\updownarrow _{T}\leq C}(1+\left\Vert m\right\Vert
_{T}^{2});  \label{estim xk}
\end{equation}%
\noindent $\quad \left( jj\right) $ there exist $x,k\in \mathbb{D}\left(
\mathbb{R}^{+},\mathbb{H}\right) $ such that%
\begin{equation*}
||x^{\left( n\right) }-x||_{T}+||k^{\left( n\right) }-k||_{T}\rightarrow
0,\quad \text{as }n\rightarrow \infty ,
\end{equation*}%
and%
\begin{equation}
\left\Vert x\right\Vert _{T}^{2}+\left\updownarrow {k}\right\updownarrow {%
_{T}}\leq {C}(1+\left\Vert m\right\Vert _{T}^{2});  \label{estim xk1}
\end{equation}%
\noindent $\quad \left( jjj\right) $ $(x,k)=\mathcal{SP}\left( A,\Pi
;m\right) .$\medskip

\noindent $\left( \mathbf{II}\right) $ If $m^{\left( n\right)
}\longrightarrow m$ in $\mathbb{D}\left( \mathbb{R}^{+},\mathbb{H}\right) $
then there exist $x,k\in \mathbb{D}\left( \mathbb{R}^{+},\mathbb{H}\right) $
such that
\begin{equation*}
(x^{\left( n\right) },k^{\left( n\right) },m^{\left( n\right)
})\longrightarrow (x,k,m)\quad \text{in }\mathbb{D}\left( \mathbb{R}^{+},%
\mathbb{H}\times \mathbb{H\times H}\right) .
\end{equation*}%
and $(x,k)=\mathcal{SP}\left( A,\Pi ;m\right) .$\medskip

\noindent$\left( \mathbf{III}\right) $ Let $m_{0}\in\overline{\mathrm{D}%
\left( A\right) }$ and $\pi_{n}\in\mathcal{P}_{\mathbb{R}^{+}}$ be a
partition such that $\left\Vert \pi_{n}\right\Vert \rightarrow0$. If $%
m_{t}^{(n)}=\sum_{r\in\pi_{n}}m_{r}\mathbf{1}_{[r,r^{\prime})}\left(
t\right) $ denotes the $\pi_{n}-$discretization of $m$ and $(x^{\left(
n\right) },k^{\left( n\right) })=\mathcal{SP}\left( A,\Pi;m^{(n)}\right) $, $%
n\in\mathbb{N}$ then there exist $x,k\in\mathbb{D}\left( \mathbb{R}^{+},%
\mathbb{H}\right) $ such that%
\begin{equation*}
{(x^{\left( n\right) },k^{\left( n\right) },m^{(n)})\longrightarrow
(x,k,m)\quad}\text{in }\mathbb{D}\left( \mathbb{R}^{+},\mathbb{H}\times%
\mathbb{H\times H}\right)
\end{equation*}
and $(x,k)=\mathcal{SP}\left( A,\Pi;m\right) .$
\end{theorem}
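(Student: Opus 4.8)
The plan is to prove the three convergence statements $(\mathbf{I})$, $(\mathbf{II})$, $(\mathbf{III})$ in that order, since each builds on the previous one. For part $(\mathbf{I})$, I would first establish the a priori bound $(j)$. Fix $a\in\mathrm{Int}(\mathrm{D}(A))$ with $\overline{B(a,r_0)}\subset\overline{\mathrm{D}(A)}$ and $\mu\geq A_{a,r_0}^{\#}$. Apply Lemma~\ref{lem2.10} to $(x^{(n)},k^{(n)})=\mathcal{SP}(A,\Pi;m^{(n)})$ on $[0,t]$, write $x^{(n)}_r=m^{(n)}_r-k^{(n)}_r$ so that $\int_0^t\langle x^{(n)}_r-a,dk^{(n)}_r\rangle$ can be handled by integration by parts (Lemma~\ref{lemma Annex 9}-type identity) in terms of $|x^{(n)}_t-a|^2$ and $\int_0^t\langle m^{(n)}_t-m^{(n)}_r,dk^{(n)}_r\rangle$, and control the jump sum $\tfrac12\sum|\Delta k^{(n)}_r|^2$ using $|\Delta k^{(n)}_r|\leq 2|\Delta m^{(n)}_r|$ from Remark~\ref{rem2.6}. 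The modulus $\boldsymbol{\gamma}_{m}(\cdot,T)$ enters because on a fine enough partition the oscillation increments $|m^{(n)}_t-m^{(n)}_r|$ are small, which lets one absorb the variation term $\updownarrow k^{(n)}\updownarrow$ on the left; here is where one needs $\|m^{(n)}-m\|_T\to0$ to replace $\boldsymbol{\gamma}_{m^{(n)}}$ by $\boldsymbol{\gamma}_{m}$ uniformly for $n\geq n_0$. A Gronwall-type argument then yields $(j)$. For $(jj)$, I would apply Lemma~\ref{lem2.7}-$(ii)$ to the pair of solutions driven by $m^{(n)}$ and $m^{(p)}$, bounding $\|x^{(n)}-x^{(p)}\|_T$ by $\|m^{(n)}-m^{(p)}\|_T$ plus a term $\int_0^t\langle(m^{(n)}-m^{(p)})_t-(m^{(n)}-m^{(p)})_s,dk^{(n)}_s-dk^{(p)}_s\rangle$ which is dominated by $2\|m^{(n)}-m^{(p)}\|_T(\updownarrow k^{(n)}\updownarrow_T+\updownarrow k^{(p)}\updownarrow_T)$, uniformly bounded by $(j)$; hence $(x^{(n)})$ is Cauchy in the sup norm and converges to some $x$, and $k^{(n)}=m^{(n)}-x^{(n)}\to m-x=:k$ uniformly. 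The bound $(jj)$ for $(x,k)$ passes to the limit from $(j)$ by lower semicontinuity of variation under uniform convergence. For $(jjj)$, I would pass to the limit in Definition~\ref{def2.5}: $(i)$ and $(iii)$ are immediate from uniform convergence since $\overline{\mathrm{D}(A)}$ is closed; $(vi)$ follows from continuity of $\Pi$ together with convergence of $x^{(n)}_{t-}$ and $\Delta m^{(n)}_t$; and the demiclosedness-type condition $dk_t^c\in A(x_t)(dt)$ is obtained by writing the defining inequalities $\int_s^t\langle x^{(n)}_r-\alpha,dk^{(n),c}_r-\beta\,dr\rangle\geq0$ and passing to the limit using that $k^{(n),c}\to k^c$ (the continuous parts converge because the jumps are controlled by $|\Delta m^{(n)}|$) — this is the step that requires most care, essentially the closedness of the graph condition under these convergences, which I expect to invoke via the Annex propositions on the set $\mathcal{A}$.

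For part $(\mathbf{II})$, the hypothesis is convergence in the Skorokhod $J_1$ topology, so there exist time changes $\lambda_n:\mathbb{R}^+\to\mathbb{R}^+$, strictly increasing continuous bijections with $\|\lambda_n-\mathrm{id}\|_T\to0$ and $\|m^{(n)}\circ\lambda_n-m\|_T\to0$ on each $[0,T]$. The idea is to reduce to part $(\mathbf{I})$ via the time-change invariance of the Skorokhod problem: if $(x^{(n)},k^{(n)})=\mathcal{SP}(A,\Pi;m^{(n)})$ then $(x^{(n)}\circ\lambda_n,k^{(n)}\circ\lambda_n)$ solves the Skorokhod problem driven by $m^{(n)}\circ\lambda_n$ but with the Lebesgue measure $dt$ replaced by $d\lambda_n(t)$ in the condition $dk^c\in A(x)(dt)$. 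One then needs a version of part $(\mathbf{I})$ allowing a perturbed driving clock; since $\lambda_n\to\mathrm{id}$ uniformly, the a priori bounds and the Tanaka estimate are essentially unaffected, and the same Cauchy argument gives $x^{(n)}\circ\lambda_n\to x$, $k^{(n)}\circ\lambda_n\to k$ uniformly with $(x,k)=\mathcal{SP}(A,\Pi;m)$ (the limiting clock being $\mathrm{id}$). Undoing the time change, $(x^{(n)},k^{(n)},m^{(n)})\to(x,k,m)$ in $\mathbb{D}(\mathbb{R}^+,\mathbb{H}^3)$ with the \emph{same} time changes $\lambda_n$ realizing the $J_1$ convergence of all three coordinates simultaneously. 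The main subtlety here is justifying that the Tanaka-type estimate of Lemma~\ref{lem2.7} and the variation bound of Lemma~\ref{lem2.10} are robust under replacing $dr$ by $d\lambda_n(r)$; I would either prove a clock-perturbed analogue of Lemma~\ref{lem2.10} directly, or note that $(x^{(n)}\circ\lambda_n,k^{(n)}\circ\lambda_n)$ is itself a genuine Skorokhod solution after re-parametrizing, so that the $J_1$ topology statement in $(\mathbf{II})$ follows from the uniform statement in $(\mathbf{I})$ applied along the subsequence after time change.

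For part $(\mathbf{III})$, the driving functions $m^{(n)}$ are the $\pi_n$-discretizations of the fixed c\`adl\`ag $m$. It is classical that the step-function discretizations of a c\`adl\`ag function converge to it in the $J_1$ topology: one builds explicit time changes $\lambda_n$ mapping the gridpoints of $\pi_n$ to the jump times of $m$ of size exceeding $\|\pi_n\|$, which makes $\|m^{(n)}\circ\lambda_n-m\|_T\to0$ and $\|\lambda_n-\mathrm{id}\|_T\to0$. By Lemma~\ref{lem2.9} each $m^{(n)}$ admits a unique solution $(x^{(n)},k^{(n)})=\mathcal{SP}(A,\Pi;m^{(n)})$ given piecewise by the nonlinear semigroup $S_A$, so the hypotheses of part $(\mathbf{II})$ are met and the conclusion follows immediately: $(x^{(n)},k^{(n)},m^{(n)})\to(x,k,m)$ in $\mathbb{D}(\mathbb{R}^+,\mathbb{H}^3)$ with $(x,k)=\mathcal{SP}(A,\Pi;m)$. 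This also furnishes the \emph{existence} part of the whole programme — the solution of $(\ref{formal GSP})$ for general c\`adl\`ag $m$ with $m_0\in\overline{\mathrm{D}(A)}$ is obtained as this limit — while uniqueness was already recorded in Corollary~\ref{cor2.8}. I expect the genuinely laborious step to be the a priori estimate $(j)$ together with the closedness argument in $(jjj)$; the rest is a matter of assembling the Tanaka estimate, the semigroup representation on step functions, and the standard $J_1$-convergence of discretizations.
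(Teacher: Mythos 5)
Your overall architecture coincides with the paper's: the a priori bound via Lemma \ref{lem2.10} combined with the integration-by-parts identity of Lemma \ref{lemma Annex 9} and a partition along which the oscillation of $m$ stays below $r_0/2$ (this is where $\boldsymbol{\gamma}_m(\cdot,T)$ enters, through the number $N_0$ of partition points); the Cauchy property via the Tanaka estimate of Lemma \ref{lem2.7}; reduction of $(\mathbf{II})$ to $(\mathbf{I})$ by time changes; and $(\mathbf{III})$ via Proposition \ref{proposition Annex 6} and $(\mathbf{II})$. One detail of $(j)$, however, would fail as stated: you propose to control the term $\frac{1}{2}\sum_{r\leq t}|\Delta k^{(n)}_r|^2$ from Lemma \ref{lem2.10} by $|\Delta k^{(n)}_r|\leq 2|\Delta m^{(n)}_r|$. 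That is not a bound, since a c\`adl\`ag function need not have square-summable jumps (take $m$ equal to $(-1)^n/\log(n+2)$ on $[1-\frac{1}{n},1-\frac{1}{n+1})$ and $0$ on $[1,\infty)$: the jumps are of order $2/\log n$ and their squares are not summable). No control is needed: substituting $2\int\langle k^{(n)},dk^{(n)}\rangle=|k^{(n)}_t|^2+\sum|\Delta k^{(n)}_r|^2$ from Lemma \ref{lemma Annex 9} produces a $-\sum|\Delta k^{(n)}_r|^2$ that cancels the $+\frac{1}{2}\sum|\Delta k^{(n)}_r|^2$ of Lemma \ref{lem2.10}; this is exactly how the paper reaches (\ref{prop2.11_1}). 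No Gronwall argument occurs either: the variation is absorbed because the step approximation of $m^{(n)}$ along the chosen partition stays within $r_0/2$ of $m^{(n)}$, and the finite sum over partition points is handled by Young's inequality.

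The more serious gap is in $(jjj)$. You pass to the limit in $\int_s^t\langle x^{(n)}_r-\alpha,dk^{(n),c}_r-\beta\,dr\rangle\geq 0$ \emph{using that} $k^{(n),c}\to k^c$, justified by ``the jumps are controlled by $|\Delta m^{(n)}|$''. Uniform convergence $k^{(n)}\to k$ does not imply convergence of the continuous parts (pure-jump functions can converge uniformly to a continuous function), and $|\Delta k^{(n)}_r|\leq 2|\Delta m^{(n)}_r|$ gives no equi-summability of the small jumps; the claim is essentially what has to be proved. The paper avoids it with a one-sided argument: by the Helly--Bray Theorem \ref{Helly-Bray} there are $n_i\to\infty$ and $\delta_i\searrow 0$ such that $\int_s^t\mathbf{1}_{|\Delta k^{(n_i)}_r|>\delta_i}\langle x^{(n_i)}_r-z,dk^{(n_i),d}_r\rangle\to\int_s^t\langle x_r-z,dk^d_r\rangle$ uniformly, while the contribution of the jumps of size at most $\delta_i$ is bounded \emph{from below} by $-\frac{1}{2}\delta_i$ times the variation of $k^{(n_i)}$ on $[s,t]$, thanks to the inequality of Remark \ref{lem2.4'}; since the target inequality is itself one-sided, this suffices. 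You should adopt that mechanism. On the other hand, your treatment of $(\mathbf{II})$ is, if anything, more scrupulous than the paper's: the paper simply asserts that the time-changed pair is a Skorokhod solution for the time-changed input, whereas the condition $dk^c\in A(x)(dt)$ does pick up $d\lambda^{(n)}$ in place of $dt$; verifying stability of the estimates under this clock perturbation, as you propose, is the honest way to close that step.
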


\begin{proof}
$(\mathbf{I-}j)$ Let $0\leq t\leq T.$ We have%
\begin{equation*}
|k_{t}^{\left( n\right) }|^{2}=2\int_{0}^{t}\langle k_{r}^{\left( n\right)
},dk_{r}^{\left( n\right) }\rangle -\sum_{r\leq t}|\Delta k_{r}^{\left(
n\right) }|^{2}\,.
\end{equation*}%
Since $k_{t}=m_{t}-x_{t}$~,%
\begin{equation*}
|m_{t}^{\left( n\right) }-x_{t}^{\left( n\right) }|^{2}=2\int_{0}^{t}\langle
m_{r}^{\left( n\right) }-x_{r}^{\left( n\right) },dk_{r}^{\left( n\right)
}\rangle -\sum_{r\leq t}|\Delta k_{r}^{\left( n\right) }|^{2}
\end{equation*}%
and moreover%
\begin{equation*}
|m_{t}^{\left( n\right) }-x_{t}^{\left( n\right) }|^{2}+2\int_{0}^{t}\langle
x_{r}^{\left( n\right) }-a,dk_{r}^{\left( n\right) }\rangle
=2\int_{0}^{t}\langle m_{r}^{\left( n\right) }-a,dk_{r}^{\left( n\right)
}\rangle -\sum_{r\leq t}|\Delta k_{r}^{\left( n\right) }|^{2}\,.
\end{equation*}%
Using Lemma \ref{lem2.10} we obtain%
\begin{equation}
|x_{t}^{\left( n\right) }-m_{t}^{\left( n\right) }|^{2}+2r_{0}\updownarrow {%
\hspace{-0.1cm}}k^{\left( n\right) }{\hspace{-0.1cm}}\updownarrow _{t}\leq
2\mu t||x^{\left( n\right) }-a||_{t}+2r_{0}\mu t+2\int_{0}^{t}\langle
m_{r}^{\left( n\right) }-a,dk_{r}^{\left( n\right) }\rangle \,.
\label{prop2.11_1}
\end{equation}%
Since $m\in \mathbb{D}\left( \mathbb{R}^{+},\mathbb{H}\right) $, there
exists a partition $\pi \in \mathcal{P}_{\mathbb{R}^{+}}$ such that $%
\max_{r\in \pi }\mathcal{O}_{m}\left( [r,r^{\prime })\right) <r_{0}/2$ (see
Remark \ref{remark Annex 6'} in the Annexes). We can assume that $T\in \pi $
(if not, we can replace $\pi $ by $\tilde{\pi}=\pi \cup \left\{ T\right\} $
which also satisfies the above condition). Let $N_{0}=\mathrm{card}\left\{
r\in \pi :r\leq T\right\} $ . Since $||m^{\left( n\right)
}-m||_{T}\rightarrow 0$, there exists $n_{0}\geq N_{0}$ such that $%
||m^{\left( n\right) }||_{T}\leq 1+\left\Vert m\right\Vert _{T}$ and $%
\max_{r\in \pi }\mathcal{O}_{m^{\left( n\right) }}\left( [r,r^{\prime
})\right) <r_{0}/2$ for all $n\geq n_{0}.$

Let $m_{s}^{\left( n\right) ,\pi }=\sum_{r\in \pi }m_{r}^{\left( n\right) }%
\mathbf{1}_{[r,r^{\prime })}\left( s\right) ,~s\geq 0.$ We have $\left\vert
m_{s}^{\left( n\right) }-m_{s}^{\left( n\right) ,\pi }\right\vert <r_{0}/2$
for all $s\geq 0$ and $n\geq n_{0}.$%
\begin{align*}
2\int_{0}^{t}\langle m_{s}^{\left( n\right) }-a,dk_{s}\rangle &
=2\int_{0}^{t}\langle m_{s}^{\left( n\right) }-m_{s}^{\left( n\right) ,\pi
},dk_{s}\rangle +2\int_{0}^{t}\langle m_{s}^{\left( n\right) ,\pi
}-a,dk_{s}^{\left( n\right) }\rangle \\
& \leq r_{0}\updownarrow {\hspace{-0.1cm}}k^{\left( n\right) }{\hspace{-0.1cm%
}}\updownarrow _{t}+2\sum_{r\in \pi ,\;r<t}\langle m_{r}^{\left( n\right)
}-a,k_{r^{\prime }\wedge t}^{\left( n\right) }-k_{r}^{\left( n\right)
}\rangle +2\langle m_{t}^{\left( n\right) }-a,\Delta k_{t}^{\left( n\right)
}\rangle \\
& \leq r_{0}\updownarrow {\hspace{-0.1cm}}k^{\left( n\right) }{\hspace{-0.1cm%
}}\updownarrow _{t}+4N_{0}||m^{\left( n\right) }-a||_{t}\,||k^{\left(
n\right) }||_{t} \\
& \leq r_{0}\updownarrow {\hspace{-0.1cm}}k^{\left( n\right) }{\hspace{-0.1cm%
}}\updownarrow _{t}+4N_{0}||m^{\left( n\right) }-a||_{t}\,||m^{\left(
n\right) }-x^{\left( n\right) }||_{t}
\end{align*}%
Using this last estimate in (\ref{prop2.11_1}) we infer for all $t\in \left[
0,T\right] $ and $n\geq n_{0}$%
\begin{equation*}
\begin{array}{l}
|x_{t}^{\left( n\right) }-m_{t}^{\left( n\right) }|^{2}+r_{0}\updownarrow {%
\hspace{-0.1cm}}k^{\left( n\right) }{\hspace{-0.1cm}}\updownarrow _{t}\leq
2\mu T\left\Vert (x^{\left( n\right) }-m^{\left( n\right) })+(m^{\left(
n\right) }-a)\right\Vert _{T}+2r_{0}\mu T\medskip \\
\quad +4N_{0}||m^{\left( n\right) }-a||_{T}\,||x^{\left( n\right)
}-m^{\left( n\right) }||_{T}\leq \frac{1}{2}\left\Vert x^{\left( n\right)
}-m^{\left( n\right) }\right\Vert _{T}^{2}+C(1+\left\Vert m\right\Vert
_{T}^{2})%
\end{array}%
\end{equation*}%
with $C=C\left( \mu ,\left\vert a\right\vert ,T,N_{0},r_{0}\right) $ a
positive constant.

Taking into account that%
\begin{equation*}
\updownarrow {\hspace{-0.1cm}}k^{d}{\hspace{-0.1cm}}\updownarrow
_{t}=\sum_{r\leq t}\left\vert \Delta k_{r}\right\vert \leq \left\updownarrow
k\right\updownarrow _{t}\quad \text{and}\quad \left\updownarrow
k^{c}\right\updownarrow _{t}=\updownarrow {\hspace{-0.1cm}}k-k^{d}{\hspace{%
-0.1cm}}\updownarrow _{t}\leq 2\left\updownarrow k\right\updownarrow _{t}\,,
\end{equation*}%
the inequality (\ref{estim xk}) follows.

$(\mathbf{I-}jj)$ By Lemma~\ref{lem2.7} for any $n,\ell \in \mathbb{N}^{\ast
},$ $n,\ell \geq n_{0}$%
\begin{align*}
||x^{\left( n\right) }-x^{\left( \ell \right) }||_{T}^{2}& \leq ||m^{\left(
n\right) }-m^{\left( \ell \right) }||_{T}^{2}+4||m^{\left( n\right)
}-m^{\left( \ell \right) }||_{T}(\updownarrow {\hspace{-0.1cm}}k^{\left(
n\right) }{\hspace{-0.1cm}}\updownarrow _{T}+\updownarrow {\hspace{-0.1cm}}%
k^{\left( l\right) }{\hspace{-0.1cm}}\updownarrow _{T}) \\
& \leq \Big(\frac{1}{n}+\frac{1}{\ell }\Big)^{2}+8\Big(\frac{1}{n}+\frac{1}{%
\ell }\Big){C}(1+\left\Vert m\right\Vert _{T}^{2}).
\end{align*}%
Hence $x^{(n)}$ and $k^{(n)}=m^{\left( n\right) }-x^{(n)}$ are Cauchy
sequences in the spaces of c\`{a}dl\`{a}g functions on $[0,T]$. Thus there
exist functions $x,k\in \mathbb{D}\left( \mathbb{R}^{+},\mathbb{H}\right) $
such that
\begin{equation*}
||x^{\left( n\right) }-x||_{T}+||k^{\left( n\right) }-k||_{T}\rightarrow
0,\quad \text{as }n\rightarrow \infty
\end{equation*}%
and by Lemma \ref{lemma Annex 10} passing to $\liminf_{n\rightarrow +\infty
} $ in (\ref{estim xk}) we infer (\ref{estim xk1}).

$(\mathbf{I-}jjj)$ We prove now that $(x,k)$ satisfies conditions (\ref{def
sol GSP}). Since for all $t\in \left[ 0,T\right] :$ $x_{t}^{(n)}\in
\overline{\mathrm{D}\left( A\right) }$,\ $x_{t}^{\left( n\right)
}+k_{t}^{\left( n\right) }=m_{t}^{\left( n\right) }$, $x_{t}^{\left(
n\right) }=\Pi (x_{t-}^{\left( n\right) }+\Delta m_{t}^{\left( n\right) }),$
passing to limit the same conditions are satisfied by $(x,k).$ It remains to
show that $dk_{t}^{c}\in A\left( x_{t}\right) \left( dt\right) .$\newline
Let $(z,\hat{z})\in \mathrm{Gr}(A)$ be arbitrary. From Helly-Bray Theorem %
\ref{Helly-Bray} there exist a subsequence $n_{0}\leq
n_{1}<n_{2}<n_{3}<\cdots $, $n_{i}\rightarrow \infty $ and a sequence $%
\delta _{i}\searrow 0$ as $i\rightarrow \infty $ such that uniformly with
respect to $s,t\in \left[ 0,T\right] ,$ $s\leq t$%
\begin{equation*}
\int_{s}^{t}\mathbf{1}_{|\Delta k_{r}^{(n_{i})}|>\delta _{i}}\langle
x_{r}^{(n_{i})}-z,dk_{r}^{(n_{i}),d}\rangle \rightarrow \int_{s}^{t}\langle
x_{r}-z,dk_{r}^{d}\rangle ,\quad \text{as }i\rightarrow \infty .
\end{equation*}%
By Remark \ref{lem2.4'}-$(ii)$ we infer%
\begin{align*}
\int_{s}^{t}\mathbf{1}_{|\Delta k_{r}^{(n_{i})}|\leq \delta _{i}}\langle
x_{r}^{(n_{i})}-z,dk_{r}^{(n_{i}),d}\rangle & =\sum_{r\in (s,t]}\langle
x_{r}^{(n_{i})}-\alpha ,\Delta k_{r}^{(n_{i})}\rangle \mathbf{1}_{|\Delta
k_{r}^{(n_{i})}|\leq \delta _{i}} \\
& \geq -\frac{1}{2}\sum_{r\in (s,t]}|\Delta k_{r}^{(n_{i})}|^{2}\mathbf{1}%
_{|\Delta k_{r}^{(n_{i})}|\leq \delta _{i}}\, \\
& \geq -\frac{1}{2}\delta _{i}\updownarrow {\hspace{-0.1cm}}k^{\left(
n_{i}\right) }{\hspace{-0.1cm}}\updownarrow _{\left[ s,t\right] } \\
& \geq -\frac{1}{2}\delta _{i}{C}(1+\left\Vert m\right\Vert _{T}^{2}).
\end{align*}%
From (\ref{def sol GSP}-$iv$) for $(x^{(n_{i})},k^{(n_{i})})$ we have
\begin{equation}
\int_{s}^{t}\langle x_{r}^{(n_{i})}-z,dk_{r}^{\left( n_{i}\right) }-\hat{z}%
dr\rangle \geq \int_{s}^{t}\langle x_{r}^{(n_{i})}-z,dk_{r}^{\left(
n_{i}\right) ,d}\rangle  \label{eq 2.15a}
\end{equation}%
which yields
\begin{equation*}
\int_{s}^{t}\langle x_{r}^{(n_{i})}-z,dk_{r}^{\left( n_{i}\right) }-\hat{z}%
dr\rangle \geq \int_{s}^{t}\mathbf{1}_{|\Delta k_{r}^{(n_{i})}|>\delta
_{i}}\langle x_{r}^{(n_{i})}-z,dk_{r}^{(n_{i}),d}\rangle -\frac{1}{2}\delta
_{i}{C}(1+\left\Vert m\right\Vert _{T}^{2}).
\end{equation*}%
Passing to $\lim_{i\rightarrow \infty }$ in this last inequality we deduce
using Helly-Bray Theorem \ref{Helly-Bray} and (\ref{eq 2.15a}):%
\begin{equation*}
\int_{s}^{t}\langle x_{r}-z,dk_{r}-\hat{z}dr\rangle \geq \int_{s}^{t}\langle
x_{r}-z,dk_{r}^{d}\rangle
\end{equation*}%
that is%
\begin{equation*}
\int_{s}^{t}\langle x_{r}-z,dk_{r}^{c}-\hat{z}dr\rangle \geq 0,
\end{equation*}%
for all $(z,\hat{z})\in \mathrm{Gr}(A),$ $T>0$ and $0\leq s\leq t\leq T.$%
\medskip

$\left( \mathbf{II}\right) $ By the definition of convergence in the
Skorokhod topology $J_{1}$, there exists a sequence of strictly increasing
continuous changes of time $\{\lambda ^{\left( n\right) }\}$ such that $%
\lambda _{0}^{\left( n\right) }=0$, $\lambda _{\infty }^{\left( n\right)
}=+\infty $, $\sup_{t\geq 0}|\lambda _{t}^{\left( n\right) }-t|\rightarrow 0$
and $\sup_{t\in \left[ 0,T\right] }|m_{\lambda _{t}^{\left( n\right)
}}^{\left( n\right) }-m_{t}|\rightarrow 0$, for all $T\in \mathbb{R}^{+}$.
Since $(x_{\lambda ^{\left( n\right) }}^{\left( n\right) },k_{\lambda
^{\left( n\right) }}^{\left( n\right) })=\mathcal{SP}\big(A,\Pi ;m_{\lambda
^{\left( n\right) }}^{\left( n\right) }\big)$, the first part $(\mathbf{I})$
of the proof yields there exist $x,k\in \mathbb{D}\left( \mathbb{R}^{+},%
\mathbb{H}\right) $ such that for any $T\in \mathbb{R}^{+}$,
\begin{equation*}
||x_{\lambda ^{\left( n\right) }}^{\left( n\right) }-x||_{T}\rightarrow
0\quad \text{and}\quad ||k_{\lambda ^{\left( n\right) }}^{\left( n\right)
}-k||_{T}\rightarrow 0,
\end{equation*}%
and $(x,k)=\mathcal{SP}\left( A,\Pi ;m\right) .$ The proof is
complete.\medskip

$\left( \mathbf{III}\right) $ Applying Proposition \ref{proposition Annex 6}
from Annexes, we see that $m^{(n)}\longrightarrow m$ in $\mathbb{D}\left(
\mathbb{R}^{+},\mathbb{H}\right) $ and, using $\left( \mathbf{II}\right) ,$
the result follows.\hfill \medskip
\end{proof}

We state now the main result of this section:

\begin{theorem}
\label{thm2.13}Under the assumptions (\ref{assumpt}) differential equation (%
\ref{formal GSP}) has a unique solution $(x,k)=\mathcal{SP}\left( A,\Pi
;m\right) $.
\end{theorem}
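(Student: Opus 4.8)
The plan is to separate uniqueness from existence. Uniqueness is already in hand: under assumptions~(\ref{assumpt}), Corollary~\ref{cor2.8} tells us that equation~(\ref{formal GSP}) has at most one solution of the form $(x,k)=\mathcal{SP}\left(A,\Pi;m\right)$. So the entire task reduces to \emph{constructing} one solution for an arbitrary càdlàg input $m$ with $m_{0}\in\overline{\mathrm{D}(A)}$.

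For existence, I would approximate $m$ by step functions and pass to the limit. Concretely, fix a sequence of partitions $\pi_{n}\in\mathcal{P}_{\mathbb{R}^{+}}$ (so in particular $0\in\pi_{n}$) with $\left\Vert\pi_{n}\right\Vert\to 0$, and let $m^{(n)}$ be the $\pi_{n}$-discretization of $m$, that is $m_{t}^{(n)}=\sum_{r\in\pi_{n}}m_{r}\mathbf{1}_{[r,r')}(t)$, where $r'$ is the successor of $r$ in $\pi_{n}$. Since $0\in\pi_{n}$ we have $m_{0}^{(n)}=m_{0}\in\overline{\mathrm{D}(A)}$, so each $m^{(n)}$ satisfies the standing hypothesis~(\ref{assumpt})(iii), and $m^{(n)}$ is a step function of exactly the form treated in Lemma~\ref{lem2.9}. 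Hence for every $n$ there is a (unique) solution $(x^{(n)},k^{(n)})=\mathcal{SP}\left(A,\Pi;m^{(n)}\right)$, obtained recursively on the intervals $[r,r')$ of $\pi_{n}$ by running the nonlinear semigroup $S_{A}$ from the projected value $\Pi\left(x_{r-}^{(n)}+\Delta m_{r}^{(n)}\right)$, as in formula~(\ref{eq2.7}).

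Next I would invoke the convergence machinery already established. Because $m^{(n)}$ is precisely the $\pi_{n}$-discretization of $m$ with $\left\Vert\pi_{n}\right\Vert\to 0$, Theorem~\ref{prop2.11}$(\mathbf{III})$ applies directly and yields functions $x,k\in\mathbb{D}\left(\mathbb{R}^{+},\mathbb{H}\right)$ with $(x^{(n)},k^{(n)},m^{(n)})\to(x,k,m)$ in $\mathbb{D}\left(\mathbb{R}^{+},\mathbb{H}\times\mathbb{H}\times\mathbb{H}\right)$ and $(x,k)=\mathcal{SP}\left(A,\Pi;m\right)$. This is the sought solution, and combined with Corollary~\ref{cor2.8} it completes the proof.

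In this formulation the genuine difficulty has already been absorbed into Theorem~\ref{prop2.11}: the hard points there are the uniform a priori bound on $\left\Vert x^{(n)}\right\Vert_{T}$ and on the variation $\left\updownarrow k^{(n)}\right\updownarrow_{T}$, which crucially exploits assumption~(\ref{assumpt})(ii) that $\mathrm{Int}\left(\mathrm{D}(A)\right)\neq\emptyset$ through the interior-ball estimate of Lemma~\ref{lem2.10}, together with the Tanaka-type contraction of Lemma~\ref{lem2.7} to make $\{x^{(n)}\}$ and $\{k^{(n)}\}$ Cauchy in the uniform norm; and finally the verification that the uniform limit $(x,k)$ satisfies all of~(\ref{def sol GSP}) — membership in $\overline{\mathrm{D}(A)}$, the identity $x_{t}+k_{t}=m_{t}$, the jump relation $x_{t}=\Pi\left(x_{t-}+\Delta m_{t}\right)$, and the inclusion $dk_{t}^{c}\in A\left(x_{t}\right)(dt)$, the last of which requires the Helly--Bray theorem to pass the monotonicity inequality to the limit (this is exactly step $(\mathbf{I}$-$jjj)$). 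Granting those, nothing further is needed here; the proof of Theorem~\ref{thm2.13} is simply the assembly of Lemma~\ref{lem2.9}, Theorem~\ref{prop2.11}$(\mathbf{III})$ and Corollary~\ref{cor2.8}.
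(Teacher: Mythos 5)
Your proof is correct and follows essentially the same route as the paper: uniqueness via Corollary~\ref{cor2.8}, existence for step inputs via Lemma~\ref{lem2.9}, and passage to the limit via the convergence machinery of Theorem~\ref{prop2.11}. The only cosmetic difference is that the paper picks partitions adapted to the oscillation of $m$ (so that $\left\Vert m^{(n)}-m\right\Vert _{T}\leq 1/n$) and invokes part $(\mathbf{I})$ directly, whereas you take vanishing-mesh discretizations and invoke part $(\mathbf{III})$, which reduces to $(\mathbf{I})$ anyway; both yield the same conclusion.
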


\begin{proof}
Uniqueness follows from Corollary \ref{cor2.8}. Since $m\in \mathbb{D}\left(
\mathbb{R}^{+},\mathbb{H}\right) $, there exists a partition $\pi _{n}\in
\mathcal{P}_{\mathbb{R}^{+}}$ such that $\max_{r\in \pi _{n}}\mathcal{O}%
_{m}\left( [r,r^{\prime })\right) <\frac{1}{n},$ $n\in \mathbb{N}^{\ast }.$
Let $m_{s}^{\left( n\right) }=\sum_{r\in \pi _{n}}m_{r}\mathbf{1}%
_{[r,r^{\prime })}\left( s\right) ,~s\geq 0.$ Let $T>0$ be arbitrary. Then
for all $n\in \mathbb{N}^{\ast }:$
\begin{equation}
||m^{\left( n\right) }-m||_{T}\leq \frac{1}{n}\,.  \label{eq2.13}
\end{equation}%
By Lemma \ref{lem2.9} for the step function $m^{(n)}$, with $%
m_{0}^{(n)}=m_{0}\in \overline{\mathrm{D}\left( A\right) },$ there exists a
unique solution $(x^{(n)},k^{(n)})=\mathcal{SP}\left( A,\Pi ;m^{(n)}\right) $%
, $n\in \mathbb{N}^{\ast }.$ Since $\left\Vert m^{\left( n\right)
}-m\right\Vert _{T}\rightarrow 0$ , by Theorem \ref{prop2.11} there exist $%
x,k\in \mathbb{D}\left( \mathbb{R}^{+},\mathbb{H}\right) $ such that%
\begin{equation*}
||x^{\left( n\right) }-x||_{T}+||k^{\left( n\right) }-k||_{T}\rightarrow
0,\quad \text{as }n\rightarrow \infty ,
\end{equation*}%
and $(x,k)=\mathcal{SP}\left( A,\Pi ;m\right) .$\hfill
\end{proof}

\begin{remark}
\label{rem2.16}If $m$ is continuous then the solution of the Skorokhod
problem $(x,k)=\mathcal{SP}\left( A,\Pi ;m\right) $ is also continuous and
is not depending on projections $\Pi $ (see Remark \ref{rem2.6}). By Theorem %
\ref{prop2.11}-$\left( \mathbf{I}\right) $ for any projection $\Pi $, if $%
(x^{n},k^{n})=\mathcal{SP}\left( A,\Pi ;m^{n}\right) $, $n\in \mathbb{N}$
and $||m^{n}-m||_{T}\rightarrow 0$, $T\in \mathbb{R}^{+}$, then%
\begin{equation*}
||x^{n}-x||_{T}\rightarrow 0\quad \text{and}\quad ||k^{n}-k||_{T}\rightarrow
0,~T\in \mathbb{R}^{+}.
\end{equation*}
\end{remark}

\begin{example}
\textbf{1.} We consider $\mathbb{H}$ a Hilbert space with $\left\{
e_{i}\right\} _{i\in\mathbb{N}^{\ast}}$ an orthonormal basis and the set of
constraints $\mathcal{O}=\overline{\mathcal{O}}\subset\mathbb{H}$ given by%
\begin{equation*}
\overline{\mathcal{O}}=\left\{ y\in\mathbb{H}:y^{i}\geq0,~\forall i=%
\overline{1,N}\right\} .
\end{equation*}
We propose the following example of upper bidiagonal infinite dimensional
system with constraints and with singular inputs generated by c\`{a}dl\`{a}g
functions:%
\begin{equation}
\left\{
\begin{array}{l}
dx_{t}^{1}+\left( x_{t}^{1}+x_{t}^{2}\right)
dt+dk_{t}^{1}=dm_{t}^{1}\,,\medskip \\
dx_{t}^{2}+\left( x_{t}^{2}+x_{t}^{3}\right)
dt+dk_{t}^{2}=dm_{t}^{2}\,,\medskip \\
\cdots\cdots\cdots\cdots\cdots\cdots\medskip \\
dx_{t}^{i}+\left( x_{t}^{i}+x_{t}^{i+1}\right)
dt+dk_{t}^{i}=dm_{t}^{i}\,,\medskip \\
\cdots\cdots\cdots\cdots\cdots\cdots%
\end{array}
\right.  \label{exam}
\end{equation}
where $m_{t}=\sum\limits_{i\in\mathbb{N}^{\ast}}m_{t}^{i}e_{i}$ with $%
m_{t}^{i}\in\mathbb{D}\left( \mathbb{R}^{+},\mathbb{R}\right) $, $i\in%
\mathbb{N}^{\ast}$.

This system can be written in the abstract form%
\begin{equation*}
\left\{
\begin{array}{l}
dx_{t}+A\left( x_{t}\right) dt\ni dm_{t}\,,\medskip \\
x_{0}=m_{0}\,,%
\end{array}
\right.
\end{equation*}
with $A\left( x\right) =x+\hat{x}+\partial I_{\overline{\mathcal{O}}}\left(
x\right) $, where, for $x=\sum\limits_{i\in\mathbb{N}^{\ast}}x^{i}e_{i}$, $%
\hat{x}=\sum\limits_{i\in\mathbb{N}^{\ast}}x^{i+1}e_{i}.$ Clearly $\overline{%
\mathrm{D}\left( A\right) }=\overline{\mathcal{O}}$ and $A$ is a maximal
monotone operator (as a sum of the linear continuous non-negative operator $%
L\left( x\right) =x+\hat{x}$ and the maximal monotone operator $\partial I_{%
\overline{\mathcal{O}}}$).

Applying Theorem \ref{thm2.13}, we infer that there exists a unique solution
(see definition (\ref{def sol GSP 2})) $\left( x,k\right) =\mathcal{SP}%
\left( A,\Pi_{\overline{\mathcal{O}}};m\right) $. We outline that solution $%
\left( x,k\right) $ satisfies%
\begin{equation*}
\left\{
\begin{array}{l}
\displaystyle x_{t}^{i}+%
\int_{0}^{t}(x_{s}^{i}+x_{s}^{i+1})ds+k_{t}^{i}=m_{t}^{i}\,,~\forall i\in%
\mathbb{N}^{\ast},\medskip \\
\displaystyle\sum\nolimits_{j=1}^{N}\int_{s}^{t}\langle
x_{r}^{j}-a^{j},dk_{r}^{j}\rangle\leq0\,,\;\forall~a^{j}\geq0,\medskip \\
k_{t}^{i}=0\,,~\forall i>N.%
\end{array}
\right.
\end{equation*}
\noindent\textbf{2.} In the above example we can consider a generalized
projection $\Pi:\mathbb{H\rightarrow}\overline{\mathcal{O}}$,%
\begin{equation*}
\Pi\left( x\right) =\sum\limits_{i\in\mathbb{N}^{\ast}}p^{i}e_{i}\quad\text{%
with}\quad p^{i}:=(x^{i})^{+}+\left[ g((x^{i})^{-})-g\left( 0\right) \right]
^{+},
\end{equation*}
where $g:\mathbb{R}\rightarrow\mathbb{R}$ is a $1-$Lipschitz function. In
this case $\left( x,k\right) =\mathcal{SP}\left( A,\Pi;m\right) $ and we
mention%
\begin{equation*}
\left\{
\begin{array}{l}
\displaystyle x_{t}^{i}+%
\int_{0}^{t}(x_{s}^{i}+x_{s}^{i+1})ds+k_{t}^{i,c}+k_{t}^{i,d}=m_{t}^{i}\,,~%
\forall i\in\mathbb{N}^{\ast},\medskip \\
\displaystyle\sum\nolimits_{j=1}^{N}\int_{s}^{t}\langle
x_{r}^{j}-a^{j},dk_{r}^{j,c}\rangle\leq0\,\;\forall~a^{j}\geq0,\medskip \\
x_{t}=\Pi\left( x_{t-}+\Delta m_{t}\right) \,,~t>0,\medskip \\
k_{t}^{i}=0\,,~\forall i>N.%
\end{array}
\right.
\end{equation*}
\noindent\textbf{3.} If we ask to the feedback law $dk_{t}$ to produce, in a
minimal way, a boundedness of the form%
\begin{equation*}
\sum\limits_{i\in\mathbb{N}^{\ast}}(x_{t}^{i})^{2}\leq R^{2}
\end{equation*}
then we should take $A\left( x\right) =x+\hat{x}+\partial I_{\overline {%
\mathcal{O}}}\left( x\right) +\partial I_{\overline{B\left( 0,R\right) }%
}\left( x\right) $ with $\overline{\mathrm{D}\left( A\right) }=\overline{%
\mathcal{O}}\cap\overline{B\left( 0,R\right) }$.

\noindent \textbf{4.} An interacting particles system with singular input
can be modeled by the following infinite dimensional system%
\begin{equation*}
\left\{
\begin{array}{l}
dx_{t}+L\left( x_{t}\right) dt+A\left( x_{t}\right) \left( dt\right) \ni
dm_{t}\,,\medskip \\
x_{0}=m_{0}\,,%
\end{array}%
\right.
\end{equation*}%
where $L:\mathbb{H}\rightarrow \mathbb{H}$ is a linear continuous
non-negative operator and $A=\partial \varphi $ with $\varphi $ a proper
convex lower semicontinuous function given by%
\begin{equation*}
\varphi \left( x\right) =\sum\limits_{i\neq j}\psi \left( x^{j}-x^{i}\right)
\end{equation*}%
(see also the examples from Sznitman \cite{sz/91} and C\'{e}pa \cite{ce/98}).
\end{example}

\section{The penalized problem}

For all $\varepsilon >0$ and $z\in \mathbb{H}$ let us define (see the
Annexes, subsection \ref{Maximal monotone})%
\begin{equation*}
J_{\varepsilon }(z)=\big(I+\varepsilon A\big)^{-1}(z),\quad A_{\varepsilon
}(z)=\frac{1}{\varepsilon }(z-J_{\varepsilon }(z)),
\end{equation*}%
where the sequence $A_{\varepsilon }$ is the Yosida approximation of the
operator $A$. For the properties of $J_{\varepsilon }$ and $A_{\varepsilon }$
see Proposition \ref{prop Anex 4} in the Annexes.

\subsection{Approximation with unamortized jumps}

Let $m^{\varepsilon },m\in {\mathbb{D}}\left( {\mathbb{R}^{+}},{{\mathbb{H}}}%
\right) $ such that, for all $T>0$, $\left\Vert m^{\varepsilon
}-m\right\Vert _{T}\rightarrow 0$, as $\varepsilon \rightarrow 0$. We will
consider equations of the form%
\begin{equation}
x_{t}^{\varepsilon }+\int_{0}^{t}A_{\varepsilon }(x_{s}^{\varepsilon
})ds=m_{t}^{\varepsilon },~t\in {\mathbb{R}^{+}},\,\varepsilon >0.
\label{eq3.1}
\end{equation}%
Since $A_{\varepsilon }$ is Lipschitz continuous it is well known that there
exists a unique solution $x^{\varepsilon }$ of (\ref{eq3.1}). On the other
hand, $A_{\varepsilon }:\mathbb{H}\rightarrow \mathbb{H}$ is a maximal
monotone operator (is monotone and continuous) with $\overline{\mathrm{D}%
\left( A_{\varepsilon }\right) }=\mathbb{H}$. Therefore any generalized
projection $\Pi =I=\Pi _{\overline{\mathrm{D}\left( A_{\varepsilon }\right) }%
}$, where $I$ is the identity operator on $\mathbb{H}$ and from Theorem \ref%
{thm2.13} we deduce that there exists a unique solution $\left(
x^{\varepsilon },k^{\varepsilon }\right) =\mathcal{SP}\left( A_{\varepsilon
},I;m^{\varepsilon }\right) =\mathcal{SP}\left( A_{\varepsilon
};m^{\varepsilon }\right) $, where $k_{t}^{\varepsilon
}:=\int_{0}^{t}A_{\varepsilon }(x_{s}^{\varepsilon })ds.$

The following inequality holds (see Theorem \ref{prop2.11}, inequality (\ref%
{estim xk})): there exits $\varepsilon _{0}>0$ such that for all $%
0<\varepsilon \leq \varepsilon _{0},$%
\begin{equation}
\left\Vert x^{\varepsilon }\right\Vert _{T}^{2}+\left\updownarrow {k}%
^{\varepsilon }\right\updownarrow {_{T}}\leq C(1+\left\Vert m\right\Vert
_{T}^{2}),  \label{prop3.4}
\end{equation}%
since $\left\vert A_{\varepsilon }\left( u\right) \right\vert \leq
\left\vert A^{0}\left( u\right) \right\vert $, for all $u\in \mathrm{D}%
\left( A\right) $ and consequently%
\begin{equation*}
\sup \left\{ \left\vert A_{\varepsilon }u\right\vert :u\in \overline{B\left(
a,r_{0}\right) }\right\} \leq A_{a,r_{0}}^{\#}\leq \mu .
\end{equation*}

\begin{remark}
\label{lem3.2}If $m$ is a step function of the form $m_{t}=\sum_{r\in \pi
}m_{r}\mathbf{1}_{[r,r^{\prime })}\left( t\right) $, where $\pi $ is a
partition from $\mathcal{P}_{\mathbb{R}^{+}}$, then the solution $%
x^{\varepsilon }=\mathcal{SP}^{\left( 1\right) }\left( A_{\varepsilon
};m\right) $ is given by%
\begin{equation}
x_{t}^{\varepsilon }=\mathcal{SP}^{\left( 1\right) }\left( A_{\varepsilon
};x_{r-}^{\varepsilon }+\Delta m_{r}\right) _{t-r},\quad \forall t\in
\lbrack r,r^{\prime }),~r\in \pi .  \label{eq3.2}
\end{equation}
\end{remark}

\begin{lemma}
\label{lem3.3} Let $m,\hat{m}\in {\mathbb{D}}\left( {\mathbb{R}^{+}},{{%
\mathbb{H}}}\right) $. If $x^{\varepsilon }=\mathcal{SP}^{\left( 1\right)
}\left( A_{\varepsilon };m\right) $, $\hat{x}^{\varepsilon }=\mathcal{SP}%
^{\left( 1\right) }\left( A_{\varepsilon };\hat{m}\right) $ and%
\begin{equation*}
k_{t}^{\varepsilon }=\int_{0}^{t}A_{\varepsilon }(x_{s}^{\varepsilon
})ds\,,\quad \hat{k}_{t}^{\varepsilon }=\int_{0}^{t}A_{\varepsilon }(\hat{x}%
_{s}^{\varepsilon })ds,\quad t\in {\mathbb{R}^{+}},\quad \varepsilon >0,
\end{equation*}%
then for all $0\leq s<t$ and $\varepsilon >0$%
\begin{equation*}
\begin{array}{l}
|x_{t}^{\varepsilon }-\hat{x}_{t}^{\varepsilon }|^{2}-|x_{s}^{\varepsilon }-%
\hat{x}_{s}^{\varepsilon }|^{2}\leq \left[ |m_{t}-\hat{m}_{t}|^{2}-|m_{s}-%
\hat{m}_{s}|^{2}\right] \medskip \\
\displaystyle-2\int_{0}^{t}\langle m_{t}-\hat{m}_{t}-m_{r}+\hat{m}%
_{r},dk_{r}^{\varepsilon }-d\hat{k}_{r}^{\varepsilon }\rangle
+2\int_{0}^{s}\langle m_{s}-\hat{m}_{s}-m_{r}+\hat{m}_{r},dk_{r}^{%
\varepsilon }-d\hat{k}_{r}^{\varepsilon }\rangle .%
\end{array}%
\end{equation*}
\end{lemma}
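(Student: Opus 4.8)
The plan is to reduce the claimed estimate to a single monotonicity inequality, following the lines of Lemma~\ref{lem2.7} but now comparing the two times $t$ and $s$ rather than just a running time against $0$. First I would record two preliminary observations. Since $A_{\varepsilon}$ is Lipschitz and $x^{\varepsilon},\hat{x}^{\varepsilon}$ are locally bounded c\`{a}dl\`{a}g functions, the maps $k_{t}^{\varepsilon}=\int_{0}^{t}A_{\varepsilon}(x_{s}^{\varepsilon})\,ds$ and $\hat{k}_{t}^{\varepsilon}=\int_{0}^{t}A_{\varepsilon}(\hat{x}_{s}^{\varepsilon})\,ds$ are (absolutely) continuous, of locally bounded variation, with $k_{0}^{\varepsilon}=\hat{k}_{0}^{\varepsilon}=0$, $dk_{r}^{\varepsilon}=A_{\varepsilon}(x_{r}^{\varepsilon})\,dr$ and $d\hat{k}_{r}^{\varepsilon}=A_{\varepsilon}(\hat{x}_{r}^{\varepsilon})\,dr$; in particular $\ell:=k^{\varepsilon}-\hat{k}^{\varepsilon}$ is continuous with $\ell_{0}=0$. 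Moreover, by monotonicity of the Yosida approximation $A_{\varepsilon}$ (Proposition~\ref{prop Anex 4}),
\[
\int_{s}^{t}\langle x_{r}^{\varepsilon}-\hat{x}_{r}^{\varepsilon},\,dk_{r}^{\varepsilon}-d\hat{k}_{r}^{\varepsilon}\rangle=\int_{s}^{t}\langle x_{r}^{\varepsilon}-\hat{x}_{r}^{\varepsilon},\,A_{\varepsilon}(x_{r}^{\varepsilon})-A_{\varepsilon}(\hat{x}_{r}^{\varepsilon})\rangle\,dr\geq0,\qquad\forall\,0\leq s\leq t ,
\]
which is the analogue of Lemma~\ref{lem2.7}$(i)$ for the penalized problem (with no jump term, since $\ell$ has no jumps).

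Next I would carry out the algebraic reduction. From (\ref{eq3.1}) we have $x^{\varepsilon}-\hat{x}^{\varepsilon}=(m-\hat{m})-\ell$. Because $m-\hat{m}$ is merely c\`{a}dl\`{a}g, the only integrations we may perform have $d\ell$ as integrator; a direct computation using $\ell_{0}=0$ gives, for $u\in\{s,t\}$,
\[
\int_{0}^{u}\langle m_{u}-\hat{m}_{u}-m_{r}+\hat{m}_{r},\,d\ell_{r}\rangle=\langle m_{u}-\hat{m}_{u},\,\ell_{u}\rangle-\int_{0}^{u}\langle m_{r}-\hat{m}_{r},\,d\ell_{r}\rangle .
\]
Substituting this into the right-hand side of the asserted inequality, and expanding $|x_{u}^{\varepsilon}-\hat{x}_{u}^{\varepsilon}|^{2}=|(m_{u}-\hat{m}_{u})-\ell_{u}|^{2}$ on the left-hand side for $u\in\{s,t\}$, all the terms carrying $m-\hat{m}$ cancel and one finds that the asserted inequality is equivalent to
\[
|\ell_{t}|^{2}-|\ell_{s}|^{2}\;\leq\;2\int_{s}^{t}\langle m_{r}-\hat{m}_{r},\,d\ell_{r}\rangle .
\]
Finally I would apply Lemma~\ref{lemma Annex 9} to the continuous function $\ell$ (so that the jump sum drops out), obtaining $|\ell_{t}|^{2}-|\ell_{s}|^{2}=2\int_{s}^{t}\langle\ell_{r},d\ell_{r}\rangle$, and then substitute $m_{r}-\hat{m}_{r}=(x_{r}^{\varepsilon}-\hat{x}_{r}^{\varepsilon})+\ell_{r}$ in the displayed right-hand side; the equivalent inequality collapses to $0\leq\int_{s}^{t}\langle x_{r}^{\varepsilon}-\hat{x}_{r}^{\varepsilon},d\ell_{r}\rangle$, which is precisely the monotonicity inequality of the first paragraph. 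This finishes the proof.

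There is no genuine obstacle here; the argument is pure bookkeeping, and it can equivalently be obtained by running the computation in the proof of Lemma~\ref{lem2.7}$(ii)$ at the two times $t$ and $s$ separately (the jump term $\sum|\Delta k_{r}-\Delta\hat{k}_{r}|^{2}$ being absent now) and subtracting the two resulting identities. The only points requiring attention are that $m$, hence $m-\hat{m}$, is only c\`{a}dl\`{a}g and not of bounded variation, so every manipulation must be arranged with $d\ell=dk^{\varepsilon}-d\hat{k}^{\varepsilon}$ as the sole integrator; and that $\ell$ is continuous, which is what removes the jump contribution in Lemma~\ref{lemma Annex 9} and allows the monotonicity inequality to be used without a jump correction.
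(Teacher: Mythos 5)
Your proof is correct and follows essentially the same route as the paper: the paper's proof consists precisely of the monotonicity inequality $\int_s^t\langle x_r^{\varepsilon}-\hat{x}_r^{\varepsilon},A_{\varepsilon}(x_r^{\varepsilon})-A_{\varepsilon}(\hat{x}_r^{\varepsilon})\rangle\,dr\geq 0$ followed by the remark that the result then follows by the arguments of Lemma \ref{lem2.7}-$(ii)$, which is exactly the bookkeeping you carry out explicitly (with the jump sum absent because $k^{\varepsilon}-\hat{k}^{\varepsilon}$ is absolutely continuous). Your reduction to $|\ell_t|^2-|\ell_s|^2\leq 2\int_s^t\langle m_r-\hat{m}_r,d\ell_r\rangle$ and the use of Lemma \ref{lemma Annex 9} check out.
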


\begin{proof}
Since $A_{\varepsilon }$ is monotone,%
\begin{equation}
\int_{s}^{t}\langle x_{r}^{\varepsilon }-\hat{x}_{r}^{\varepsilon
},A_{\varepsilon }(x_{r}^{\varepsilon })-A_{\varepsilon }(\hat{x}%
_{r}^{\varepsilon })\rangle dr\geq 0.  \label{eq3.3}
\end{equation}%
Consequently, the result follows by the arguments from the proof of Lemma %
\ref{lem2.7}-$(ii)$.\hfill \medskip
\end{proof}

We will study the problem of convergence of $\{x^{\varepsilon}\}$. It is
well known that if $m$ is continuous and $m_{0}\in\overline{D(A)}$ then, for
all $T\geq0,$%
\begin{equation*}
\left\Vert x^{\varepsilon}-x\right\Vert _{T}\longrightarrow0,~\text{as }%
\varepsilon\rightarrow0,
\end{equation*}
where $x=\mathcal{SP}^{(1)}\left( A;m\right) $ (see, e.g., R\u{a}\c{s}canu
\cite{ra/96}). In the case where $m$ has jumps the problem is more delicate.
We start with the simplest case where $m_{t}=\alpha\in\mathbb{H}$, $t\in{%
\mathbb{R}^{+}}$.

\begin{lemma}
\label{lem3.5}Let $\varepsilon>0,$ $\alpha_{\varepsilon},\alpha\in\mathbb{H}$
and $x_{t}^{\varepsilon}=\mathcal{SP}^{\left( 1\right) }\left(
A_{\varepsilon};\alpha_{\varepsilon}\right) _{t}=S_{A_{\varepsilon}}\left(
t\right) \alpha_{\varepsilon}$ with $\alpha_{\varepsilon}\rightarrow\alpha$
as $\varepsilon\rightarrow0$.

\noindent$\left( i\right) $ If $\alpha\in\overline{\mathrm{D}\left( A\right)
}$ and $x_{t}=\mathcal{SP}^{(1)}\left( A;\alpha\right) _{t}=S_{A}\left(
t\right) \alpha$ then, for all $T\geq0$,
\begin{equation*}
\left\Vert x^{\varepsilon}-x\right\Vert _{T}\longrightarrow0,~\text{as }%
\varepsilon\rightarrow0.
\end{equation*}

\noindent$\left( ii\right) $ If $\alpha\notin\overline{\mathrm{D}\left(
A\right) }$ and $x=\mathcal{SP}^{(1)}(A;\Pi_{\overline{\mathrm{D}\left(
A\right) }}\left( \alpha\right) )$ then for any $0<\delta\leq T$,%
\begin{equation*}
\left\Vert x^{\varepsilon}-x\right\Vert _{\left[ \delta,T\right]
}\longrightarrow0,~\text{as }\varepsilon\rightarrow0.
\end{equation*}
\end{lemma}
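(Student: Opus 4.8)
The plan is the following. Assertion $(i)$ requires no work: for the constant input $m_{t}\equiv\alpha_{\varepsilon}$ one has $x_{t}^{\varepsilon}=S_{A_{\varepsilon}}(t)\alpha_{\varepsilon}$, and for $m_{t}\equiv\alpha\in\overline{\mathrm{D}(A)}$ one has $x_{t}=S_{A}(t)\alpha$, so $(i)$ is precisely the first assertion of Proposition~\ref{prop 5} applied with $\alpha^{\varepsilon}=\alpha_{\varepsilon}$. I would therefore concentrate entirely on $(ii)$, and organize its proof in three steps: (a) read off the $L^{2}$-convergence already contained in Proposition~\ref{prop 5}; (b) establish an $\varepsilon$-uniform Lipschitz bound for $x^{\varepsilon}$ on every interval $[\delta,T]$ with $\delta>0$, together with the classical Lipschitz regularity of $x$ on $[\delta,T]$; (c) upgrade the $L^{2}$-convergence to uniform convergence on $[\delta,T]$ by an elementary contradiction argument. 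I would stress that Arzel\`{a}--Ascoli is not available here, since $\mathbb{H}$ is infinite-dimensional and we have no pointwise relative compactness in $\mathbb{H}$ to exploit.

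For step (a), write $\beta:=\Pi_{\overline{\mathrm{D}(A)}}(\alpha)\in\overline{\mathrm{D}(A)}$, so that $x_{t}=S_{A}(t)\beta$; then the second assertion of Proposition~\ref{prop 5} gives $\int_{0}^{T}|x_{t}^{\varepsilon}-x_{t}|^{2}\,dt\to0$ as $\varepsilon\to0$, hence a fortiori $\int_{\delta}^{T}|x_{t}^{\varepsilon}-x_{t}|^{2}\,dt\to0$. For step (b) I would fix $v\in\mathrm{D}(A)$ and invoke the regularizing effect of a maximal monotone operator in a Hilbert space (Brezis~\cite{br/73}): for any maximal monotone $B$, any $u_{0}\in\overline{\mathrm{D}(B)}$ and any $t>0$, one has $S_{B}(t)u_{0}\in\mathrm{D}(B)$ and
\[
\big|B^{0}(S_{B}(t)u_{0})\big|\le|B^{0}(w)|+\frac{1}{t}\,|u_{0}-w|,\qquad\forall\,w\in\mathrm{D}(B).
\]
Applied with $B=A_{\varepsilon}$ (so that $\mathrm{D}(A_{\varepsilon})=\mathbb{H}$ and $A_{\varepsilon}^{0}=A_{\varepsilon}$), $w=v$, and using $|A_{\varepsilon}(v)|\le|A^{0}(v)|$, this gives $|A_{\varepsilon}(x_{t}^{\varepsilon})|\le|A^{0}(v)|+|\alpha_{\varepsilon}-v|/t$; since $\alpha_{\varepsilon}\to\alpha$, for $t\ge\delta$ and $\varepsilon$ small the right-hand side is bounded by a constant $L_{\delta}$ independent of $\varepsilon$. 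Because $\frac{d}{dt}x_{t}^{\varepsilon}=-A_{\varepsilon}(x_{t}^{\varepsilon})$, it follows that $\{x^{\varepsilon}\}$ is uniformly $L_{\delta}$-Lipschitz on $[\delta,T]$ for all small $\varepsilon$; the same inequality applied with $B=A$ and $u_{0}=\beta$ shows that $x$ is Lipschitz on $[\delta,T]$ as well.

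For step (c) I would argue by contradiction. If $\Vert x^{\varepsilon}-x\Vert_{[\delta,T]}$ does not tend to $0$, pick $\eta>0$, $\varepsilon_{n}\downarrow0$ and $t_{n}\in[\delta,T]$ with $|x_{t_{n}}^{\varepsilon_{n}}-x_{t_{n}}|\ge\eta$; if $L$ denotes a common Lipschitz constant of $x^{\varepsilon_{n}}-x$ on $[\delta,T]$, then $|x_{r}^{\varepsilon_{n}}-x_{r}|\ge\eta/2$ for all $r\in[\delta,T]$ with $|r-t_{n}|\le\eta/(2L)$, a set which for each $n$ contains a subinterval of $[\delta,T]$ of length at least $\min\{\eta/(2L),(T-\delta)/2\}>0$. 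Hence $\int_{\delta}^{T}|x_{r}^{\varepsilon_{n}}-x_{r}|^{2}\,dr$ is bounded below by a positive constant, contradicting step (a); therefore $\Vert x^{\varepsilon}-x\Vert_{[\delta,T]}\to0$, which is $(ii)$.

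I expect step (b) to be the main obstacle. Since $A_{\varepsilon}$ is only $\varepsilon^{-1}$-Lipschitz, its flow $S_{A_{\varepsilon}}$ carries a priori no $\varepsilon$-uniform regularity, and the whole point is that the Hilbert-space regularizing effect of the contraction semigroup generated by $A_{\varepsilon}$ yields a bound on $|A_{\varepsilon}(S_{A_{\varepsilon}}(t)\alpha_{\varepsilon})|$ that stays bounded, uniformly in $\varepsilon$, once $t$ is bounded away from $0$. Steps (a) and (c) are routine.
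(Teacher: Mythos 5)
Part $(i)$ and your steps (a) and (c) are fine, but step (b) --- which you yourself identify as the crux --- rests on an inequality that is not available here. The estimate $\big|B^{0}(S_{B}(t)u_{0})\big|\le|B^{0}(w)|+|u_{0}-w|/t$ is Brezis' \emph{effet r\'{e}gularisant}, and it is a theorem about subdifferential operators $B=\partial\varphi$; it is false for general maximal monotone operators. A two--dimensional rotation already kills it: for $B(x,y)=(-y,x)$ on $\mathbb{R}^{2}$ (linear, monotone, Lipschitz, maximal), $S_{B}(t)$ is a rotation, so taking $w=0$ the estimate would read $|u_{0}|\le|u_{0}|/t$, absurd for $t>1$; the Yosida approximation of this $B$ behaves the same way up to an $O(\varepsilon)$ damping. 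Since the paper's $A$ is an arbitrary maximal monotone operator, $A_{\varepsilon}$ is not a gradient, and the uniform-in-$\varepsilon$ Lipschitz bound on $[\delta,T]$ does not follow as you claim. (The same objection applies to your assertion that $x=S_{A}(\cdot)\beta$ is Lipschitz on $[\delta,T]$, though there mere continuity of $x$ would suffice for your step (c).)

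The bound you want is nevertheless true and can be obtained from material already in the paper: $t\mapsto|A_{\varepsilon}(x_{t}^{\varepsilon})|=\big|\tfrac{d}{dt}x_{t}^{\varepsilon}\big|$ is non-increasing along the autonomous flow (a direct consequence of the contraction property), and the interior condition gives, via (\ref{prop3.4}), $\int_{0}^{\delta}|A_{\varepsilon}(x_{s}^{\varepsilon})|\,ds\le C$ uniformly for small $\varepsilon$; hence $|A_{\varepsilon}(x_{t}^{\varepsilon})|\le|A_{\varepsilon}(x_{\delta}^{\varepsilon})|\le C/\delta$ for $t\ge\delta$. With that substitution your argument closes. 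Note, though, that the paper's own proof of $(ii)$ avoids Lipschitz bounds altogether: it uses the $L^{2}$-convergence of Proposition \ref{prop 5} only to produce a single time $t_{0}\in(0,\delta]$ with $x_{t_{0}}^{\varepsilon}\to x_{t_{0}}$, introduces the auxiliary solution $\hat{x}^{\varepsilon}=S_{A_{\varepsilon}}(\cdot)\,\Pi_{\overline{\mathrm{D}(A)}}(\alpha)$, propagates forward via the monotonicity inequality of Lemma \ref{lem3.3} (which for constant inputs gives $|x_{t}^{\varepsilon}-\hat{x}_{t}^{\varepsilon}|\le|x_{t_{0}}^{\varepsilon}-\hat{x}_{t_{0}}^{\varepsilon}|$ for $t\ge t_{0}$), and concludes by applying part $(i)$ to $\hat{x}^{\varepsilon}$.
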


\begin{proof}
$(i)$ We have%
\begin{align*}
\left\vert x_{t}^{\varepsilon }-x_{t}\right\vert & =\left\vert
S_{A_{\varepsilon }}\left( t\right) \alpha _{\varepsilon }-S_{A}\left(
t\right) \alpha \right\vert \\
& \leq \left\vert \alpha _{\varepsilon }-\alpha \right\vert +\left\vert
S_{A_{\varepsilon }}\left( t\right) \alpha -S_{A}\left( t\right) \alpha
\right\vert .
\end{align*}%
The result follows using Proposition \ref{prop Barbu/84}.

$(ii)$ By Proposition \ref{prop 5} we know that for all $T>0$, $%
x^{\varepsilon }$ tends to $x$ in $L^{2}([0,T])$, as $\varepsilon
\rightarrow 0$. Hence $x_{\varepsilon }$ tends almost every where to $x$
with respect to the Lebesgue measure. Consequently, for any $\delta >0$
there is $t_{0}\in (0,\delta ]$ such that $x_{t_{0}}^{\varepsilon
}\rightarrow x_{t_{0}}$. Let $\hat{x}^{\varepsilon }=\mathcal{SP}^{\left(
1\right) }(A_{\varepsilon };\Pi _{\overline{\mathrm{D}\left( A\right) }%
}\left( \alpha \right) )$, $\varepsilon >0$. By Lemma \ref{lem3.3}%
\begin{equation*}
|x_{t}^{\varepsilon }-\hat{x}_{t}^{\varepsilon }|^{2}\leq
|x_{t_{0}}^{\varepsilon }-\hat{x}_{t_{0}}^{\varepsilon }|^{2},~t\geq
t_{0},~\varepsilon >0.
\end{equation*}%
Finally, from $(i)$, $\left\Vert \hat{x}^{\varepsilon }-x\right\Vert
_{T}\longrightarrow 0$, as $\varepsilon \rightarrow 0$, which completes the
proof.\hfill
\end{proof}

\begin{theorem}
\label{thm3.6}Assume that $m^{\varepsilon},m\in{\mathbb{D}}\left( {\mathbb{R}%
^{+}},{{\mathbb{H}}}\right) $, $m_{0}\in\overline{D(A)}$ and let $x=\mathcal{%
SP}^{(1)}(A,\Pi_{\overline{\mathrm{D}\left( A\right) }};m)$ and $%
x^{\varepsilon}=\mathcal{SP}^{\left( 1\right) }(A_{\varepsilon
};m^{\varepsilon})$ for each $\varepsilon>0$.

If for all $T>0,$%
\begin{equation*}
\left\Vert {m^{\varepsilon}-m}\right\Vert _{T}{\longrightarrow0},~\text{as }%
\varepsilon\rightarrow0,
\end{equation*}
then

\noindent$\left( j\right) $ for any $T>0$,
\begin{equation}
\lim_{\varepsilon\rightarrow0}\left\vert |x^{\varepsilon}-\bar{x}\right\vert
|_{T}=0,\text{ where }\bar{x}_{t}=x_{t}\mathbf{1}_{\left\vert \Delta
m_{t}\right\vert =0}+\left( x_{t-}+\Delta m_{t}\right) \mathbf{1}%
_{\left\vert \Delta m_{t}\right\vert >0}  \label{thm3.6_1}
\end{equation}
and consequently,%
\begin{equation*}
\lim_{\varepsilon\rightarrow0}\Big(\sup_{t\in\left[ 0,T\right] }\left\vert
x_{t-}^{\varepsilon}-x_{t-}\right\vert \Big)=0\,;
\end{equation*}
\noindent$\left( jj\right) $ for all $T>0,$
\begin{equation*}
\lim_{\varepsilon\rightarrow0}J_{\varepsilon}(x_{t}^{\varepsilon})=\Pi_{%
\overline{D(A)}}(x_{t-}+\Delta m_{t})=x_{t}\;,~\text{uniformly for }t\in%
\left[ 0,T\right] ;
\end{equation*}
\noindent$\left( jjj\right) $ for any sequence $t_{\varepsilon}\searrow t$
we have $x_{t_{\varepsilon}}^{\varepsilon}\longrightarrow x_{t}$ and for $%
t_{\varepsilon}\nearrow t$ we have $x_{t_{\varepsilon}}^{\varepsilon
}\longrightarrow x_{t-}\,.$
\end{theorem}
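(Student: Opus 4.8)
The plan is to strip off the auxiliary sequence $m^{\varepsilon}$, then to treat first a step function $m$ --- where $x^{\varepsilon}$ and $x$ have explicit semigroup representations --- and only afterwards a general c\`{a}dl\`{a}g $m$, by discretization. The two workhorses are the Tanaka-type estimate of Lemma \ref{lem3.3} (Lipschitz-type dependence of $x^{\varepsilon}$ on the driving signal, once the total variation of the penalization term is controlled) and the semigroup comparison of Lemma \ref{lem3.5}. The delicate phenomenon throughout is that near a jump time $r$ with $x_{r-}+\Delta m_{r}\notin\overline{\mathrm{D}(A)}$ the penalized trajectory $x^{\varepsilon}$ performs a fast transition, on the time scale $\varepsilon$, from the un-projected value $x_{r-}+\Delta m_{r}$ down to $\Pi_{\overline{\mathrm{D}(A)}}(x_{r-}+\Delta m_{r})$; this is exactly why convergence to $\bar{x}$ can be only pointwise (uniform away from such $r$), and it is matched by the strength of Proposition \ref{prop 5} and Lemma \ref{lem3.5}.

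First I would reduce to $m^{\varepsilon}=m$. Put $\tilde{x}^{\varepsilon}=\mathcal{SP}^{(1)}(A_{\varepsilon};m)$ and $\tilde{k}^{\varepsilon}_{t}=\int_{0}^{t}A_{\varepsilon}(\tilde{x}^{\varepsilon}_{s})\,ds$. Lemma \ref{lem3.3} with $s=0$ gives, for $t\le T$,
\begin{equation*}
|x^{\varepsilon}_{t}-\tilde{x}^{\varepsilon}_{t}|^{2}\le\|m^{\varepsilon}-m\|_{T}^{2}+4\|m^{\varepsilon}-m\|_{T}\big(\updownarrow\! k^{\varepsilon}\!\updownarrow_{T}+\updownarrow\!\tilde{k}^{\varepsilon}\!\updownarrow_{T}\big),
\end{equation*}
and both variations are $\le C(1+\|m\|_{T}^{2})$ for $\varepsilon$ small by (\ref{prop3.4}); hence $\|x^{\varepsilon}-\tilde{x}^{\varepsilon}\|_{T}\to0$, and it suffices to prove $(j)$--$(jjj)$ with $m^{\varepsilon}$ replaced by $m$, which I assume henceforth.

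Next I would treat the step function case: for $m=\sum_{r\in\pi}m_{r}\mathbf{1}_{[r,r^{\prime})}$, Remark \ref{lem3.2} and Lemma \ref{lem2.9} give on each interval $[r,r^{\prime})$
\begin{equation*}
x^{\varepsilon}_{t}=S_{A_{\varepsilon}}(t-r)\big(x^{\varepsilon}_{r-}+\Delta m_{r}\big),\qquad x_{t}=S_{A}(t-r)\big(\Pi_{\overline{\mathrm{D}(A)}}(x_{r-}+\Delta m_{r})\big).
\end{equation*}
The core is an induction over the finitely many points of $\pi$ in $[0,T]$, using Lemma \ref{lem3.5}: if $x_{r-}+\Delta m_{r}\in\overline{\mathrm{D}(A)}$ then $\|x^{\varepsilon}-x\|_{[r,r^{\prime})}\to0$; if $x_{r-}+\Delta m_{r}\notin\overline{\mathrm{D}(A)}$ then $\|x^{\varepsilon}-x\|_{[r+\delta,r^{\prime})}\to0$ for every $\delta>0$, while at the jump point itself $x^{\varepsilon}_{r}=x^{\varepsilon}_{r-}+\Delta m_{r}\to x_{r-}+\Delta m_{r}=\bar{x}_{r}$. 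In either case $x^{\varepsilon}_{r^{\prime}-}\to x_{r^{\prime}-}$, so the induction propagates; this yields the pointwise convergence $x^{\varepsilon}_{t}\to\bar{x}_{t}$ in $(j)$, the statement for the left limits, and $(jjj)$ (the point being that $t_{\varepsilon}\to t$ eventually stays at positive distance from every jump time different from $t$).

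Finally, for general $m$, take nested partitions $\pi_{n}$ with $\max_{r\in\pi_{n}}\mathcal{O}_{m}([r,r^{\prime}))<1/n$, let $m^{(n)}$ be the $\pi_{n}$-discretization (so $\|m^{(n)}-m\|_{T}\le1/n$), and put $x^{(n)}=\mathcal{SP}^{(1)}(A,\Pi_{\overline{\mathrm{D}(A)}};m^{(n)})$, $x^{\varepsilon,(n)}=\mathcal{SP}^{(1)}(A_{\varepsilon};m^{(n)})$. By Theorem \ref{prop2.11}$(\mathbf{I})$ (as in the proof of Theorem \ref{thm2.13}) $\|x^{(n)}-x\|_{T}\to0$, while Lemma \ref{lem3.3} gives $\|x^{\varepsilon}-x^{\varepsilon,(n)}\|_{T}^{2}\le n^{-2}+4n^{-1}\big(\updownarrow\! k^{\varepsilon}\!\updownarrow_{T}+\updownarrow\! k^{\varepsilon,(n)}\!\updownarrow_{T}\big)$, where the variation $\updownarrow\! k^{\varepsilon,(n)}\!\updownarrow_{T}$ is bounded \emph{uniformly in $\varepsilon$ and $n$}. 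This last point is the technical heart: rerunning the proof of Theorem \ref{prop2.11}$(\mathbf{I}\text{-}j)$ for $A_{\varepsilon}$ (on a ball $\overline{B(a,r_{0})}\subset\mathrm{D}(A)$), the constant there involves $A_{\varepsilon}$ only through $\sup\{|A_{\varepsilon}u|:u\in\overline{B(a,r_{0})}\}\le A_{a,r_{0}}^{\#}$ --- by $|A_{\varepsilon}u|\le|A^{0}u|$ --- hence is $\varepsilon$-free, and it depends on the c\`{a}dl\`{a}g modulus of $m$ itself, not of $m^{(n)}$. Consequently $\sup_{0<\varepsilon\le\varepsilon_{0}}\|x^{\varepsilon}-x^{\varepsilon,(n)}\|_{T}\to0$ as $n\to\infty$. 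Combining this with $\bar{x}^{(n)}_{t}\to\bar{x}_{t}$ for fixed $t$ (immediate at continuity points of $m$, and at a jump time $t$ --- a partition point for $n$ large --- from $x^{(n)}_{t-}\to x_{t-}$ and $\Delta m^{(n)}_{t}\to\Delta m_{t}$), with the step function case applied to $m^{(n)}$, and with the triangle inequality $|x^{\varepsilon}_{t}-\bar{x}_{t}|\le|x^{\varepsilon}_{t}-x^{\varepsilon,(n)}_{t}|+|x^{\varepsilon,(n)}_{t}-\bar{x}^{(n)}_{t}|+|\bar{x}^{(n)}_{t}-\bar{x}_{t}|$, I would obtain $(j)$ for general $m$; then $(jj)$ follows from $(j)$ because $J_{\varepsilon}(z_{\varepsilon})\to\Pi_{\overline{\mathrm{D}(A)}}(z)$ whenever $z_{\varepsilon}\to z$ in $\mathbb{H}$ (Proposition \ref{prop Anex 4}) and $\Pi_{\overline{\mathrm{D}(A)}}(\bar{x}_{t})=\Pi_{\overline{\mathrm{D}(A)}}(x_{t-}+\Delta m_{t})=x_{t}$, and $(jjj)$ transfers the same way. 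I expect the uniform bound on $\updownarrow\! k^{\varepsilon,(n)}\!\updownarrow_{T}$ --- reconciling the fast post-jump transients of the penalized equation with a refining partition --- to be the main obstacle.
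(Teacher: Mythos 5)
Your route is essentially the paper's own: the same three pillars (the stability estimate of Lemma \ref{lem3.3} combined with an $(\varepsilon,n)$-uniform bound on $\left\updownarrow k^{\varepsilon,(n)}\right\updownarrow_{T}$ obtained by rerunning the proof of Theorem \ref{prop2.11}$(\mathbf{I}\text{-}j)$ for $A_{\varepsilon}$, justified exactly as you say by $|A_{\varepsilon}u|\leq|A^{0}u|$ and by the fact that the constant sees only the c\`adl\`ag modulus of $m$; the semigroup comparison of Lemma \ref{lem3.5} on each subinterval of the discretization; and Theorem \ref{prop2.11} to pass from $x^{(n)}$ to $x$), assembled by the same triangle inequality and the same iterated limit $\varepsilon\rightarrow0$ then $n\rightarrow\infty$. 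You even single out the same step, the uniform variation bound, as the technical heart. The only organizational difference is that you first replace $m^{\varepsilon}$ by $m$ via Lemma \ref{lem3.3} and then discretize $m$, whereas the paper discretizes $m$ and $m^{\varepsilon}$ simultaneously along the partition $\pi_{n}$ adapted to $m$; this is immaterial.

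The one place where you and the paper part ways is the mode of convergence in $(j)$. You deliberately claim only pointwise convergence to $\bar{x}$ (uniform away from jump times at which $x_{t-}+\Delta m_{t}$ leaves $\overline{\mathrm{D}(A)}$), while (\ref{thm3.6_1}) asserts $\sup_{t\in[0,T]}|x_{t}^{\varepsilon}-\bar{x}_{t}|\rightarrow0$. The paper reaches the stronger form by declaring the convergence in its display (\ref{eq3.6}) to be uniform on $[0,T]$; but on an interval $(r,r^{\prime})$ following a jump with $\alpha:=x_{r-}+\Delta m_{r}\notin\overline{\mathrm{D}(A)}$, Lemma \ref{lem3.5}$(ii)$ gives uniformity only on $[r+\delta,r^{\prime})$, and $\sup_{t\in(r,r^{\prime}]}|x_{t}^{\varepsilon}-\bar{x}_{t}|$ in fact cannot vanish: for $t-r\ll\varepsilon$ the penalized trajectory is still near $\alpha$ while $\bar{x}_{t}$ is already near $\Pi_{\overline{\mathrm{D}(A)}}(\alpha)$. (Take $\mathbb{H}=\mathbb{R}$, $A=\partial I_{(-\infty,0]}$, $m=\mathbf{1}_{[1,\infty)}$: then $x^{\varepsilon}_{1+s}=e^{-s/\varepsilon}$, $\bar{x}_{1+s}=0$ for $s>0$, so the supremum over $t\in(1,2]$ equals $1$ for every $\varepsilon$; the same example defeats $(jjj)$ along $t_{\varepsilon}=t+\varepsilon^{2}$.) So your caution is warranted rather than a defect: the sup-norm form of $(j)$ is not reachable by this argument, and the gap lies in the source's step (\ref{eq3.6}), not in your proposal. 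Everything else you propose --- the induction over partition points, the control of $\bar{x}^{(n)}_{t}\rightarrow\bar{x}_{t}$, and $(jj)$ via Proposition \ref{prop Anex 4}$(jj)$ --- matches what is actually provable and what the paper actually does.
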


\begin{proof}
\noindent $\left( j\right) $ Since $m\in \mathbb{D}\left( \mathbb{R}^{+},%
\mathbb{H}\right) $, there exists a partition $\pi _{n}\in \mathcal{P}_{%
\mathbb{R}^{+}}$ such that $\max_{r\in \pi _{n}}\mathcal{O}_{m}\left(
[r,r^{\prime })\right) <1/n$ (see Remark \ref{remark Annex 6'}). Let $%
m_{t}^{\left( n\right) }=\sum_{r\in \pi _{n}}m_{r}\mathbf{1}_{[r,r^{\prime
})}\left( t\right) $ and $m_{t}^{\varepsilon ,\left( n\right) }=\sum_{r\in
\pi _{n}}m_{r}^{\varepsilon }\mathbf{1}_{[r,r^{\prime })}\left( t\right) $.
We have%
\begin{equation*}
||m-m^{\left( n\right) }||_{T}<\frac{1}{n}\text{ and }||m^{\varepsilon
,\left( n\right) }-m^{\left( n\right) }||_{T}\leq ||m^{\varepsilon }-m||_{T}
\end{equation*}%
and therefore%
\begin{equation*}
||m^{\varepsilon ,\left( n\right) }-m||_{T}\leq \left\Vert m^{\varepsilon
}-m\right\Vert _{T}+\frac{1}{n}\quad \text{and}\quad ||m^{\varepsilon
,\left( n\right) }-m^{\varepsilon }||_{T}\leq 2\left\Vert m^{\varepsilon
}-m\right\Vert _{T}+\frac{1}{n}\,.
\end{equation*}%
Let $x^{\varepsilon ,(n)}=\mathcal{SP}^{\left( 1\right) }(A_{\varepsilon
};m^{\varepsilon ,(n)})$ and $(x^{(n)},k^{(n)})=\mathcal{SP}(A;m^{(n)})$, $%
n\in {\mathbb{N}}$, $\varepsilon >0$. From Convergence Theorem \ref{prop2.11}
it follows%
\begin{equation}
||x^{(n)}-x||_{T}+||k^{(n)}-k||_{T}\rightarrow 0,\;\text{as }n\rightarrow
\infty ,  \label{thm3.6_2}
\end{equation}%
where $(x,k)=\mathcal{SP}(A;m).$

By Lemma \ref{lem3.3} (for $s=0$) we obtain%
\begin{equation}
|x_{t}^{\varepsilon }-x_{t}^{\varepsilon ,(n)}|^{2}\leq |m_{t}^{\varepsilon
}-m_{t}^{\varepsilon ,(n)}|^{2}+4||m^{\varepsilon }-m^{\varepsilon
,(n)}||_{t}(\left\updownarrow {k}^{\varepsilon }\right\updownarrow {_{t}}%
+\updownarrow {\hspace{-0.1cm}}k^{\varepsilon ,\left( n\right) }{\hspace{%
-0.1cm}}\updownarrow _{t}),  \label{thm3.6_3}
\end{equation}%
where%
\begin{equation*}
k^{\varepsilon ,(n)}=\int_{0}^{\cdot }A_{\varepsilon }(x_{s}^{\varepsilon
,(n)})ds,\quad k^{\varepsilon }=\int_{0}^{\cdot }A_{\varepsilon
}(x_{s}^{\varepsilon })ds,~n\in {\mathbb{N}},\;\varepsilon >0.
\end{equation*}%
Since%
\begin{align*}
||{m}^{\varepsilon ,\left( n\right) }||_{T}& \leq \frac{1}{n}%
+||m^{\varepsilon }-m||_{T}+||m||_{T}, \\
|{m}_{t}^{\varepsilon ,\left( n\right) }-{m}_{s}^{\varepsilon ,\left(
n\right) }|& \leq 2||m^{\varepsilon }-m||_{T}+\frac{2}{n}+|{m}_{t}-{m}_{s}|,
\end{align*}%
it is easy to see from the proof of Theorem \ref{prop2.11}, $\left( \mathbf{I%
}-j\right) $ that, there exist $\varepsilon _{0}>0$ and $n_{0}\in \mathbb{N}%
^{\ast }$ such that, for all $0<\varepsilon \leq \varepsilon _{0}$ and $%
n\geq n_{0},$%
\begin{equation*}
||x^{\varepsilon ,\left( n\right) }||_{T}^{2}+\updownarrow {\hspace{-0.1cm}{k%
}^{\varepsilon ,\left( n\right) }\hspace{-0.1cm}}\updownarrow {_{T}}\leq
C(1+\left\Vert m\right\Vert _{T}^{2}).
\end{equation*}%
From this inequality and (\ref{prop3.4}) we deduce that variations $%
\left\updownarrow {k}^{\varepsilon }\right\updownarrow {_{T}}$, $%
\updownarrow {\hspace{-0.1cm}{k}^{\varepsilon ,\left( i\right) }\hspace{%
-0.1cm}}\updownarrow _{T}$ are bounded for any $T\in {\mathbb{R}^{+},}$
hence from (\ref{thm3.6_3}) we obtain%
\begin{equation*}
|x_{t}^{\varepsilon }-x_{t}^{\varepsilon ,(n)}|^{2}\leq C\left( \left\Vert
m\right\Vert _{T}\right) \Big(||m^{\varepsilon
}-m||_{T}^{2}+||m^{\varepsilon }-m||_{T}+\frac{1}{n}\Big),~\forall t\in %
\left[ 0,T\right] .
\end{equation*}%
Taking into account the definitions we have for all $r\in \pi _{n},$%
\begin{align*}
x_{t}^{\varepsilon ,(n)}& =S_{A_{\varepsilon }}\left( t-r\right)
x_{r}^{\varepsilon ,(n)}\;,~\forall t\in (r,r^{\prime }), \\
x_{r}^{\varepsilon ,(n)}& =x_{r-}^{\varepsilon ,(n)}+\Delta
m_{r}^{\varepsilon ,\left( n\right) }
\end{align*}%
and%
\begin{align*}
x_{t}^{(n)}& =S_{A}\left( t-r\right) x_{r}^{(n)}\;,~\forall t\in \left(
r,r^{\prime }\right) , \\
x_{r}^{(n)}& =\Pi _{\overline{\mathrm{D}\left( A\right) }}\big(%
x_{r-}^{(n)}+\Delta m_{r}^{\left( n\right) }\big).
\end{align*}%
By Lemma \ref{lem3.5} it is clear that for any $r\in \pi _{n}$%
\begin{equation}
\lim_{\varepsilon \rightarrow 0}x_{t}^{\varepsilon ,(n)}=\left\{
\begin{array}{ll}
x_{t}^{(n)}, & \text{if }t\in \left( r,r^{\prime }\right) ,\medskip \\
x_{r-}^{(n)}+\Delta m_{r}^{(n)}, & \text{if }t=r%
\end{array}%
\right.  \label{eq3.6}
\end{equation}%
(and the convergence is uniformly with respect to $t\in \left[ 0,T\right] $,
for all $T>0$).

If $t\in \left( r,r^{\prime }\right) ,$%
\begin{equation*}
\left\vert x_{t}^{\varepsilon }-x_{t}\right\vert \leq |x_{t}^{\varepsilon
}-x_{t}^{\varepsilon ,(n)}|+|x_{t}^{\varepsilon ,(n)}-x_{t}^{\left( n\right)
}|+|x_{t}^{(n)}-x_{t}|,
\end{equation*}%
and then%
\begin{equation*}
\limsup_{\varepsilon \rightarrow 0}\Big(\max_{\substack{ r\in \pi _{n}  \\ %
r\leq T}}\sup_{t\in \left( r,r^{\prime }\right) }\left\vert
x_{t}^{\varepsilon }-x_{t}\right\vert \Big)\leq C\frac{1}{n}%
+||x^{(n)}-x||_{T}\,,\quad \forall n\in \mathbb{N}^{\ast }.
\end{equation*}%
If $t=r,$%
\begin{equation*}
\left\vert x_{r}^{\varepsilon }-x_{r-}-\Delta m_{r}\right\vert \leq
|x_{r}^{\varepsilon }-x_{r}^{\varepsilon ,(n)}|+|x_{r}^{\varepsilon
,(n)}-x_{r-}^{(n)}-\Delta m_{r}^{(n)}|+|x_{r-}^{(n)}+\Delta
m_{r}^{(n)}-x_{r-}-\Delta m_{r}|
\end{equation*}%
then%
\begin{equation*}
\limsup_{\varepsilon \rightarrow 0}\Big(\max_{\substack{ r\in \pi _{n}  \\ %
r\leq T}}\left\vert x_{r}^{\varepsilon }-x_{r-}-\Delta m_{r}\right\vert \Big)%
\leq C\frac{1}{n}+||x^{(n)}-x||_{T}\,,~\forall n\in \mathbb{N}^{\ast }.
\end{equation*}%
Let $\bar{x}_{t}:=x_{t}\mathbf{1}_{\left\vert \Delta m_{t}\right\vert
=0}+\left( x_{t-}+\Delta m_{t}\right) \mathbf{1}_{\left\vert \Delta
m_{t}\right\vert >0}$. Consequently%
\begin{equation*}
\sup_{t\in \left[ 0,T\right] }\left\vert x_{t}^{\varepsilon }-\bar{x}%
_{t}\right\vert \leq \max_{\substack{ r\in \pi _{n}  \\ r\leq T}}\left\vert
x_{r}^{\varepsilon }-x_{r-}-\Delta m_{r}\right\vert +\max_{\substack{ r\in
\pi _{n}  \\ r\leq T}}\sup_{t\in \left( r,r^{\prime }\right) }\left\vert
x_{t\wedge T}^{\varepsilon }-x_{t\wedge T}\right\vert
\end{equation*}%
and we get%
\begin{equation*}
\limsup_{\varepsilon \rightarrow 0}\left\vert |x^{\varepsilon }-\bar{x}%
\right\vert |_{T}\leq \delta _{n},\quad \forall n\in \mathbb{N}^{\ast },
\end{equation*}%
where $\delta _{n}\rightarrow 0$, as $n\rightarrow \infty $.

We recall that $x=\mathcal{SP}^{(1)}(A,\Pi _{\overline{\mathrm{D}\left(
A\right) }};m)$ satisfies, for all $t\geq 0,$%
\begin{equation*}
\left\vert \Delta x_{t}\right\vert \leq \left\vert \Delta m_{t}\right\vert
\quad \text{and}\quad x_{t}=\Pi _{\overline{\mathrm{D}\left( A\right) }%
}\left( x_{t-}+\Delta m_{t}\right) \in \overline{\mathrm{D}\left( A\right) }%
\,.
\end{equation*}%
\noindent $(jj)$ Since $\lim_{\varepsilon \rightarrow 0}x_{t}^{\varepsilon }=%
\bar{x}_{t}$, by Proposition \ref{prop Anex 4}-$\left( jj\right) $, we have
uniformly on every interval $\left[ 0,T\right] $,%
\begin{equation*}
J_{\varepsilon }(x_{t}^{\varepsilon })\rightarrow \Pi _{\overline{D(A)}%
}(x_{t-}+\Delta m_{t})=x_{t},\quad t\in \left[ 0,T\right] .
\end{equation*}%
\noindent $\left( jjj\right) $ Is a immediate consequence of the uniform
convergence (\ref{thm3.6_1}).\hfill
\end{proof}

\begin{theorem}
\label{thm3.7}Assume that $m,m^{\varepsilon }\in {\mathbb{D}}\left( {\mathbb{%
R}^{+}},{{\mathbb{H}}}\right) $, $\varepsilon >0$, $m_{0}\in \overline{D(A)}$
and let $x^{\varepsilon }=\mathcal{SP}^{\left( 1\right) }\left(
A_{\varepsilon };m^{\varepsilon }\right) $. If {$m$}${^{\varepsilon
}\longrightarrow }${$m$} in ${\mathbb{D}}\left( {\mathbb{R}^{+}},{{\mathbb{H}%
}}\right) $, as $\varepsilon \rightarrow 0$, then\medskip

\noindent $\left( j\right) $ for any $t\in {\mathbb{R}^{+}}$, $%
x_{t}^{\varepsilon }\longrightarrow x_{t},~$provided that $\Delta m_{t}=0$
and there exist $C>0$ and $\varepsilon _{0}>0$ such that for all $%
0<\varepsilon \leq \varepsilon _{0},$%
\begin{equation}
\int_{0}^{t}|A_{\varepsilon }(x_{s}^{\varepsilon })|ds\leq C<+\infty \,.
\label{thm3.7_2}
\end{equation}

\noindent $\left( jj\right) $ ${(J_{\varepsilon }(x^{\varepsilon
}),m^{\varepsilon })\longrightarrow (x,m)}$ in ${\mathbb{D}}\left( {\mathbb{R%
}^{+}},{{\mathbb{H}}}\times {{\mathbb{H}}}\right) .$\medskip

\noindent$\left( jjj\right) $ if $m$ is continuous and $x=\mathcal{SP}%
^{(1)}(A;m)$ then%
\begin{equation*}
||x^{\varepsilon}-x||_{T}\longrightarrow0,\quad T\in{\mathbb{R}^{+}}.
\end{equation*}
\end{theorem}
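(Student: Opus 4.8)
The three items are of increasing difficulty and I would treat them in the order $(jjj)$, $(jj)$, $(j)$. For $(jjj)$: when the limit $m$ is continuous, convergence in $\mathbb{D}\left( \mathbb{R}^{+},\mathbb{H}\right) $ is equivalent to $\left\Vert m^{\varepsilon}-m\right\Vert _{T}\to 0$ for every $T$; by Remark \ref{rem2.16} the solution $x=\mathcal{SP}^{(1)}(A,\Pi _{\overline{\mathrm{D}(A)}};m)$ is then continuous and independent of the projection, so $x=\mathcal{SP}^{(1)}(A;m)$ and $\bar{x}=x$ in Theorem \ref{thm3.6}$(j)$, which gives $\left\Vert x^{\varepsilon}-x\right\Vert _{T}\to 0$.

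For $(jj)$ my plan is a time-change reduction to Theorem \ref{thm3.6}. Fix $T>0$. Since $m^{\varepsilon}\to m$ in $\mathbb{D}\left( \mathbb{R}^{+},\mathbb{H}\right) $ I would choose strictly increasing continuous time changes $\lambda ^{\varepsilon}$, $\lambda _{0}^{\varepsilon}=0$, which moreover may be taken bi-Lipschitz with $\gamma _{\varepsilon}:=\sup _{0\leq s<t}\big|\log \frac{\lambda _{t}^{\varepsilon}-\lambda _{s}^{\varepsilon}}{t-s}\big|\to 0$ (so that $e^{-\gamma _{\varepsilon}}\,du\leq d\lambda _{u}^{\varepsilon}\leq e^{\gamma _{\varepsilon}}\,du$, in particular $\sup _{t}|\lambda _{t}^{\varepsilon}-t|\to 0$), and with $\left\Vert m^{\varepsilon}\circ \lambda ^{\varepsilon}-m\right\Vert _{T}\to 0$ for every $T$. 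A change of variable $s=\lambda _{u}^{\varepsilon}$ in the integral equation for $x^{\varepsilon}$ shows that $y^{\varepsilon}:=x^{\varepsilon}\circ \lambda ^{\varepsilon}$ solves $y_{t}^{\varepsilon}+\int _{0}^{t}A_{\varepsilon}(y_{u}^{\varepsilon})\,d\lambda _{u}^{\varepsilon}=(m^{\varepsilon}\circ \lambda ^{\varepsilon})_{t}$, which is \textit{not} the standard penalized equation. Put $z^{\varepsilon}:=\mathcal{SP}^{(1)}(A_{\varepsilon};m^{\varepsilon}\circ \lambda ^{\varepsilon})$; since $m^{\varepsilon}\circ \lambda ^{\varepsilon}\to m$ uniformly on compacts and $m_{0}\in \overline{\mathrm{D}(A)}$, Theorem \ref{thm3.6}$(jj)$ yields $\left\Vert J_{\varepsilon}(z^{\varepsilon})-x\right\Vert _{T}\to 0$ with $x=\mathcal{SP}^{(1)}(A,\Pi _{\overline{\mathrm{D}(A)}};m)$. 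Now $w^{\varepsilon}:=y^{\varepsilon}-z^{\varepsilon}$ is continuous (the two share the jumps of $m^{\varepsilon}\circ \lambda ^{\varepsilon}$) and of bounded variation, so the chain rule for $|w^{\varepsilon}|^{2}$ together with the monotonicity of $A_{\varepsilon}$ gives
\[
|w_{t}^{\varepsilon}|^{2}\;\leq \;-2\int _{0}^{t}\langle w_{u}^{\varepsilon},A_{\varepsilon}(y_{u}^{\varepsilon})\rangle \,(d\lambda _{u}^{\varepsilon}-du)\;\leq \;2\,(e^{\gamma _{\varepsilon}}-1)\,\left\Vert w^{\varepsilon}\right\Vert _{t}\int _{0}^{t}|A_{\varepsilon}(y_{u}^{\varepsilon})|\,du .
\]
By the a priori estimate (the argument of Theorem \ref{prop2.11}$\left( \mathbf{I}\right) $, resp.\ Lemma \ref{lem2.10}, applied to $y^{\varepsilon}$ with the clock $\lambda ^{\varepsilon}$, using the $J_{1}$-control of the c\`{a}dl\`{a}g modulus of $m^{\varepsilon}$) one has $\int _{0}^{t}|A_{\varepsilon}(y_{u}^{\varepsilon})|\,du\leq e^{\gamma _{\varepsilon}}\left\updownarrow {k^{\varepsilon}}\right\updownarrow {_{[0,\lambda _{t}^{\varepsilon}]}}\leq C_{T}$, hence $\left\Vert w^{\varepsilon}\right\Vert _{T}\leq C_{T}(e^{\gamma _{\varepsilon}}-1)\to 0$. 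As $J_{\varepsilon}$ is nonexpansive, $\left\Vert J_{\varepsilon}(x^{\varepsilon})\circ \lambda ^{\varepsilon}-x\right\Vert _{T}\leq \left\Vert w^{\varepsilon}\right\Vert _{T}+\left\Vert J_{\varepsilon}(z^{\varepsilon})-x\right\Vert _{T}\to 0$; combined with $\left\Vert m^{\varepsilon}\circ \lambda ^{\varepsilon}-m\right\Vert _{T}\to 0$ (same $\lambda ^{\varepsilon}$) and $\sup _{t}|\lambda _{t}^{\varepsilon}-t|\to 0$, this is exactly $(J_{\varepsilon}(x^{\varepsilon}),m^{\varepsilon})\to (x,m)$ in $\mathbb{D}\left( \mathbb{R}^{+},\mathbb{H}\times \mathbb{H}\right) $.

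For $(j)$ I would derive the pointwise statement from $(jj)$ and the extra hypothesis. Fix $t$ with $\Delta m_{t}=0$; then $\Delta x_{t}=0$ by Remark \ref{rem2.6}, so $t$ is a continuity point of $m$ and of $k:=m-x$, and $m_{t}^{\varepsilon}\to m_{t}$; it thus suffices to show $k_{t}^{\varepsilon}:=\int _{0}^{t}A_{\varepsilon}(x_{s}^{\varepsilon})\,ds\to k_{t}$. The bound $\int _{0}^{t}|A_{\varepsilon}(x_{s}^{\varepsilon})|\,ds\leq C$ makes $\{k^{\varepsilon}|_{[0,t]}\}$ of uniformly bounded variation, so along any $\varepsilon _{n}\to 0$ there is a subsequence with $k^{\varepsilon _{n}}\to \kappa $ pointwise on $[0,t]$ for a c\`{a}dl\`{a}g $\kappa $ of bounded variation (Helly, Theorem \ref{Helly-Bray}); moreover $\int _{0}^{t}|\varepsilon _{n}A_{\varepsilon _{n}}(x_{s}^{\varepsilon _{n}})|\,ds\leq \varepsilon _{n}C\to 0$, i.e.\ $x^{\varepsilon _{n}}-J_{\varepsilon _{n}}(x^{\varepsilon _{n}})\to 0$ in $L^{1}([0,t];\mathbb{H})$ and hence a.e.\ along a further subsequence. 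Combining this with $J_{\varepsilon _{n}}(x_{s}^{\varepsilon _{n}})\to x_{s}$ at continuity points of $x$ (from $(jj)$) and with $x_{s}^{\varepsilon _{n}}=m_{s}^{\varepsilon _{n}}-k_{s}^{\varepsilon _{n}}\to m_{s}-\kappa _{s}$ at continuity points of $m$ and $\kappa $, one gets $\kappa _{s}=m_{s}-x_{s}=k_{s}$ for a.e.\ $s\in [0,t]$, whence $\kappa =k$ on $[0,t]$ (both c\`{a}dl\`{a}g and of bounded variation). Since $t$ is a continuity point of $k$, $k_{t}^{\varepsilon _{n}}\to k_{t}$; the limit being independent of the subsequence (uniqueness, Corollary \ref{cor2.8}), $k_{t}^{\varepsilon}\to k_{t}$, so $x_{t}^{\varepsilon}=m_{t}^{\varepsilon}-k_{t}^{\varepsilon}\to x_{t}$. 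The remaining assertions $(jj)$–$(jjj)$ of the statement (the $J_{\varepsilon}$ version and the one–sided limits) then follow exactly as in Theorem \ref{thm3.6}.

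The step I expect to be the main obstacle is the estimate in $(jj)$: it closes only because the $J_{1}$ time changes can be chosen bi-Lipschitz with constants tending to $1$, so that $d\lambda ^{\varepsilon}-du$ is controlled \textit{as a measure} by $(e^{\gamma _{\varepsilon}}-1)\,du$ — this is precisely what lets the monotonicity inequality absorb the clock–distortion term, whereas the weaker form $\sup _{t}|\lambda _{t}^{\varepsilon}-t|\to 0$ alone does not suffice (near a jump of $m$ the processes $y^{\varepsilon}$ and $z^{\varepsilon}$ overshoot $\overline{\mathrm{D}(A)}$ and relax at different rates). One must also check that the a priori bound on $\int _{0}^{t}|A_{\varepsilon}(x_{s}^{\varepsilon})|\,ds$ survives the passage from uniform to $J_{1}$ convergence, which rests on the $J_{1}$–equicontinuity in the c\`{a}dl\`{a}g modulus of $\{m^{\varepsilon}\}$ feeding into Lemma \ref{lem2.10}.
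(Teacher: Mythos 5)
Your overall skeleton is the paper's: reduce $J_{1}$ convergence to uniform convergence by a time change $\lambda ^{\varepsilon }$ and then invoke Theorem \ref{thm3.6}, with $(jjj)$ handled exactly as in the paper via Remark \ref{remark Annex 6''}. Where you genuinely diverge is in $(jj)$: the paper simply asserts $(x_{\lambda ^{\varepsilon }}^{\varepsilon },k_{\lambda ^{\varepsilon }}^{\varepsilon })=\mathcal{SP}(A_{\varepsilon };m_{\lambda ^{\varepsilon }}^{\varepsilon })$ and applies Theorem \ref{thm3.6} to the time-changed pair, whereas you correctly note that $y^{\varepsilon }=x^{\varepsilon }\circ \lambda ^{\varepsilon }$ solves the penalized equation with the distorted clock $d\lambda ^{\varepsilon }$ rather than $dt$, and you repair this by comparing with the genuine solution $z^{\varepsilon }=\mathcal{SP}^{(1)}(A_{\varepsilon };m^{\varepsilon }\circ \lambda ^{\varepsilon })$ through the monotonicity estimate for $w^{\varepsilon }=y^{\varepsilon }-z^{\varepsilon }$, using bi-Lipschitz time changes with $\gamma _{\varepsilon }\rightarrow 0$ (available from the metric $d^{1}$). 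That estimate closes as you claim ($w^{\varepsilon }$ is continuous and of bounded variation, the monotone term has the right sign against $du$, and the clock-distortion term is controlled by $(e^{\gamma _{\varepsilon }}-1)\int_{0}^{t}|A_{\varepsilon }(y_{u}^{\varepsilon })|\,du$), and it buys a rigorous justification of a step the paper leaves implicit. Note also that the bound (\ref{thm3.7_2}) is a \emph{conclusion} of $(j)$ in the paper (obtained from (\ref{prop3.4}) applied on $[0,\lambda _{T+1}^{\varepsilon }]$), not a hypothesis as you read it; your own inequality $\int_{0}^{t}|A_{\varepsilon }(y_{u}^{\varepsilon })|\,du\leq e^{\gamma _{\varepsilon }}\left\updownarrow k^{\varepsilon }\right\updownarrow _{[0,\lambda _{t}^{\varepsilon }]}\leq C_{T}$ supplies it, so only the logical bookkeeping needs adjusting.

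The one step that would fail is the Helly selection argument in your $(j)$. In an infinite-dimensional Hilbert space, uniform boundedness of the variations of $\{k^{\varepsilon _{n}}\}$ does \emph{not} yield a subsequence converging pointwise in norm (take $k_{s}^{n}=e_{n}\mathbf{1}_{[1/2,\infty )}(s)$ with $\{e_{n}\}$ orthonormal: bounded, of variation $1$, with no strongly convergent subsequence at $s=1/2$), and the paper's Theorem \ref{Helly-Bray} is not a selection theorem — it \emph{assumes} $\left\Vert k^{n}-k\right\Vert _{T}\rightarrow 0$ as a hypothesis. Fortunately the detour is unnecessary: your $(jj)$ construction already gives $\left\Vert x^{\varepsilon }\circ \lambda ^{\varepsilon }-\bar{x}\right\Vert _{T}\leq \left\Vert w^{\varepsilon }\right\Vert _{T}+\left\Vert z^{\varepsilon }-\bar{x}\right\Vert _{T}\rightarrow 0$ (Theorem \ref{thm3.6}-$(j)$ applied to $z^{\varepsilon }$), and at a point $t$ with $\Delta m_{t}=0$ one has $\bar{x}_{t}=x_{t}$ with $\bar{x}$ continuous at $t$, so evaluating at $s_{\varepsilon }=(\lambda ^{\varepsilon })^{-1}(t)\rightarrow t$ gives $x_{t}^{\varepsilon }=y_{s_{\varepsilon }}^{\varepsilon }\rightarrow x_{t}$ directly. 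This is essentially the paper's route for $(j)$, and it replaces your compactness argument with no extra work.
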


\begin{proof}
$\left( j\right) $ Since {$m$}${^{\varepsilon }\longrightarrow }${$m$} in ${%
\mathbb{D}}\left( {\mathbb{R}^{+}},{{\mathbb{H}}}\right) $, there exist $%
\lambda ^{\varepsilon }:\mathbb{R}^{+}\rightarrow \mathbb{R}^{+}$, a
strictly increasing continuous changes of time such that $\lambda
_{0}^{\varepsilon }=0$, $\lambda _{\infty }^{\varepsilon }=+\infty $, $%
\sup_{t\geq 0}|\lambda _{t}^{\varepsilon }-t|\rightarrow 0$ and $%
||m_{\lambda ^{\varepsilon }}^{\varepsilon }-m||_{T}\rightarrow 0$ as $%
\varepsilon \rightarrow 0$, for all $T\in {\mathbb{R}^{+}}$. We have $%
(x_{\lambda ^{\varepsilon }}^{\varepsilon },k_{\lambda ^{\varepsilon
}}^{\varepsilon })=\mathcal{SP}(A_{\varepsilon };m_{\lambda ^{\varepsilon
}}^{\varepsilon })$, where $k_{u}^{\varepsilon }=\int_{0}^{u}A_{\varepsilon
}\left( x_{s}^{\varepsilon }\right) ds$. Let $T>0$ be arbitrary fixed and $%
t\in \left[ 0,T\right] $. Then there exits $\varepsilon _{0}>0$ such that $%
\lambda _{T+1}^{\varepsilon }\geq T$ and, by (\ref{prop3.4}),%
\begin{equation*}
\int_{0}^{t}|A_{\varepsilon }(x_{s}^{\varepsilon })|ds\leq \int_{0}^{\lambda
_{T+1}^{\varepsilon }}|A_{\varepsilon }(x_{s}^{\varepsilon })|ds\leq C\left(
1+||m||_{T+1}\right) ,\;\forall 0<\varepsilon \leq \varepsilon _{0},
\end{equation*}%
which is (\ref{thm3.7_2}). Moreover, by Theorem \ref{thm3.6}, the
convergence follows.

\noindent Conclusion $\left( jj\right) $ follows from the assumption {$m$}${%
^{\varepsilon }\longrightarrow }${$m$} in ${\mathbb{D}}\left( {\mathbb{R}^{+}%
},{{\mathbb{H}}}\right) $ and from Theorem \ref{thm3.6}-$\left( jj\right) $.

\noindent $\left( jjj\right) $ By Remark \ref{remark Annex 6''} we deduce $%
||m^{\varepsilon }-m||_{T}\rightarrow 0$ as $\varepsilon \rightarrow 0$,
which implies $(iii)$, via Theorem \ref{thm3.6}.\hfill
\end{proof}

\begin{remark}
\label{rem3.8}Note that Theorem \ref{thm3.7}-$(j)$ implies that $%
x^{\varepsilon}$ tends to $x$ in the $S$ topology introduced by Jakubowski
in \cite{ja/97}.
\end{remark}

\subsection{Approximation with amortized large jumps}

If in the first case we considered an approximation with free jumps, this
time we define an approximation absorbing the too large jumps with the help
of a generalized projection.

Let $m\in {\mathbb{D}}\left( {\mathbb{R}^{+}},{{\mathbb{H}}}\right) $, $%
m_{0}\in \overline{D(A)}$. We will consider for $0<\varepsilon \leq 1$ the
approximating equation of the form%
\begin{equation}
\begin{array}{rl}
\left( i\right) & x^{\varepsilon }\in {\mathbb{D}}\left( {\mathbb{R}^{+}},{{%
\mathbb{H}}}\right) ,\medskip \\
\left( ii\right) & k^{\varepsilon }\in {\mathbb{D}}\left( {\mathbb{R}^{+}},{{%
\mathbb{H}}}\right) \cap \mathrm{BV}_{loc}\left( \mathbb{R}^{+};\mathbb{H}%
\right) ,\quad k_{0}^{\varepsilon }=0\text{,}\medskip \\
\left( iii\right) & x_{t}^{\varepsilon }+k_{t}^{\varepsilon }=m_{t},\quad
\forall \text{\ }t\geq 0,\medskip \\
\left( iv\right) & k^{\varepsilon }=k^{\varepsilon ,c}+k^{\varepsilon ,d},%
\text{\ }k_{t}^{\varepsilon ,d}=\sum_{0\leq s\leq t}\Delta
k_{s}^{\varepsilon },\medskip \\
\left( v\right) & \displaystyle k_{t}^{\varepsilon
,c}:=\int_{0}^{t}A_{\varepsilon }(x_{s}^{\varepsilon })ds,\quad \forall
\text{\ }t\geq 0,\medskip \\
\left( vi\right) & x_{t}^{\varepsilon }=(x_{t-}^{\varepsilon }+\Delta m_{t})%
\mathbf{1}_{|\Delta m_{t}|\leq \varepsilon }+\Pi (x_{t-}^{\varepsilon
}+\Delta m_{t})\mathbf{1}_{|\Delta m_{t}|>\varepsilon },\quad \forall \text{%
\ }t\geq 0.%
\end{array}
\label{eq3.8}
\end{equation}%
Clearly%
\begin{equation*}
\Delta k_{t}^{\varepsilon }=\Delta k_{t}^{\varepsilon
,d}=\{x_{t-}^{\varepsilon }+\Delta m_{t}-\Pi (x_{t-}^{\varepsilon }+\Delta
m_{t})\}\mathbf{1}_{|\Delta m_{t}|>\varepsilon }~.
\end{equation*}

We will call $x^{\varepsilon }$ the solution of the Yosida problem
associated with the projection $\Pi $ and we will use the notation $\left(
x^{\varepsilon },k^{\varepsilon }\right) =\mathcal{YP}(A_{\varepsilon },\Pi
;m)$, $\varepsilon >0$. Set $t_{0}=0$, $t_{k+1}=\inf \{t>t_{k}:|\Delta
m_{t}|>\varepsilon \}$, $k\in {\mathbb{N}}$ and observe that on every
interval $[t_{k},t_{k+1})$, $x^{\varepsilon }$ satisfies the equation%
\begin{equation*}
x_{t}^{\varepsilon }+\int_{t_{k}}^{t}A_{\varepsilon }(x_{s}^{\varepsilon
})ds=\Pi (x_{t_{k}-}^{\varepsilon }+\Delta m_{t_{k}})+m_{t}-m_{t_{k}}\,,
\end{equation*}%
which has a solution from Theorem \ref{thm2.13} since $A_{\varepsilon }$ is
maximal monotone operator.

The solution is%
\begin{equation}
x_{t}^{\varepsilon }=\left\{
\begin{array}{ll}
\mathcal{SP}^{\left( 1\right) }(A_{\varepsilon };m)_{t}, & t\in \lbrack
0,t_{1}),\medskip \\
\mathcal{SP}^{\left( 1\right) }(A_{\varepsilon };\Pi (x_{t_{{k}-}}+\Delta
m_{t_{k}})+m_{t_{k}+\cdot }-m_{t_{k}})_{t-t_{k}}, & t\in \lbrack
t_{k},t_{k+1}),\,k\in {\mathbb{N}}.%
\end{array}%
\right.  \label{eq3.9}
\end{equation}%
We can now state a simmilar result as in Remark \ref{lem3.2}.

\begin{lemma}
\label{lem3.9}If $m$ is a step function of the form $m_{t}=\sum_{r\in%
\pi}m_{r}\mathbf{1}_{[r,r^{\prime})}\left( t\right) $, where $\pi\in
\mathcal{P}_{\mathbb{R}^{+}}$ is a partition and $m_{0}\in\overline{D(A)}$,
then $x^{\varepsilon}=\mathcal{YP}(A_{\varepsilon},\Pi;m)$ is given by: for
all $r\in\pi$ and $t\in\lbrack r,r^{\prime}),$%
\begin{equation}
x_{t}^{\varepsilon}=\left\{
\begin{array}{ll}
\mathcal{YP}(A_{\varepsilon};x_{r-}+\Delta m_{r})_{t-r}\,, & \text{if }%
|\Delta m_{r}|\leq\varepsilon,\medskip \\
\mathcal{YP}(A_{\varepsilon};\Pi(x_{r-}+\Delta m_{r}))_{t-r}\,, & \text{if }%
|\Delta m_{r}|>\varepsilon.%
\end{array}
\right.  \label{eq3.10}
\end{equation}
\end{lemma}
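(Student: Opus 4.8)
The plan is to mimic the proof of Lemma~\ref{lem2.9} (and its Yosida analogue Remark~\ref{lem3.2}): exploit the piecewise-constant structure of $m$ together with uniqueness for the Yosida problem $\mathcal{YP}(A_{\varepsilon},\Pi;\cdot)$. First I would record that when the input is a \emph{constant} $\alpha\in\mathbb{H}$ the problem (\ref{eq3.8}) has no jump part, so $\mathcal{YP}(A_{\varepsilon},\Pi;\alpha)$ does not depend on $\Pi$ and coincides with $\mathcal{SP}^{(1)}(A_{\varepsilon};\alpha)_{t}=S_{A_{\varepsilon}}(t)\alpha$; this is the object abbreviated $\mathcal{YP}(A_{\varepsilon};\alpha)$ in the statement, and it is continuous in $t$ with $S_{A_{\varepsilon}}(0)=I$.

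Next, fix $r\in\pi$ and $t\in[r,r')$. Since $m\equiv m_{r}$ on $[r,r')$, condition $(vi)$ of (\ref{eq3.8}) at the point $r$ reads
\[
x_{r}^{\varepsilon}=(x_{r-}^{\varepsilon}+\Delta m_{r})\mathbf{1}_{|\Delta m_{r}|\le\varepsilon}+\Pi(x_{r-}^{\varepsilon}+\Delta m_{r})\mathbf{1}_{|\Delta m_{r}|>\varepsilon},
\]
while on the open interval $(r,r')$ the function $m$ has no jumps, so conditions $(iii)$--$(v)$ reduce to $x_{t}^{\varepsilon}+\int_{r}^{t}A_{\varepsilon}(x_{s}^{\varepsilon})\,ds=x_{r}^{\varepsilon}$. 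By the first step this forces $x_{t}^{\varepsilon}=S_{A_{\varepsilon}}(t-r)x_{r}^{\varepsilon}=\mathcal{YP}(A_{\varepsilon};x_{r}^{\varepsilon})_{t-r}$ on all of $[r,r')$; plugging in the two possible values of $x_{r}^{\varepsilon}$ given above yields exactly formula (\ref{eq3.10}).

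To close the argument I would verify conversely that the function defined piecewise by the right-hand side of (\ref{eq3.10}) satisfies all of $(i)$--$(vi)$ in (\ref{eq3.8}): $(i)$, $(ii)$, $(iv)$, $(v)$ are immediate from the properties of $S_{A_{\varepsilon}}$, $(iii)$ is obtained by setting $k^{\varepsilon}:=m-x^{\varepsilon}$, and $(vi)$ holds at each $r\in\pi$ by construction of the restart value (the path being continuous on each $(r,r')$ and jumping only at partition points). Uniqueness then identifies this function with $x^{\varepsilon}=\mathcal{YP}(A_{\varepsilon},\Pi;m)$: between two consecutive large-jump times the trajectory solves an ordinary Skorokhod problem for the maximal monotone operator $A_{\varepsilon}$, whose domain is all of $\mathbb{H}$, which is unique by Corollary~\ref{cor2.8}/Theorem~\ref{thm2.13}, so the whole path, assembled by concatenation as in (\ref{eq3.9}), is unique.

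There is no genuine obstacle here; the only bookkeeping point needing attention is the interplay between the partition $\pi$ and the large-jump times $t_{k}$ appearing in (\ref{eq3.9}). Every $t_{k}$ is a point of $\pi$ with $|\Delta m_{t_{k}}|>\varepsilon$, and at a point $r\in\pi$ with $|\Delta m_{r}|\le\varepsilon$ the restart value $x_{r}^{\varepsilon}=x_{r-}^{\varepsilon}+\Delta m_{r}$ is precisely the free jump allowed by $\mathcal{SP}^{(1)}(A_{\varepsilon};\cdot)$, since $\Pi_{\overline{\mathrm{D}(A_{\varepsilon})}}=I$; matching the left limits of successive pieces at the partition points uses only continuity of $t\mapsto S_{A_{\varepsilon}}(t)y$ at $t=0$. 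Once these identifications are made, (\ref{eq3.10}) is immediate.
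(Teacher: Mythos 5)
Your argument is correct and follows exactly the route the paper intends: the paper states Lemma~\ref{lem3.9} without proof as an immediate consequence of the concatenation formula (\ref{eq3.9}) and the step-function representation of Remark~\ref{lem3.2}, which is precisely the combination of condition $(vi)$ of (\ref{eq3.8}) at partition points, the semigroup identity $x_{t}^{\varepsilon}=S_{A_{\varepsilon}}(t-r)x_{r}^{\varepsilon}$ on $(r,r')$, and uniqueness via Theorem~\ref{thm2.13} applied to $A_{\varepsilon}$ (whose closed domain is all of $\mathbb{H}$, so $\Pi=I$ there) that you spell out. No gaps.
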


\begin{proposition}
\label{prop3.10}Let $m,\hat{m}\in{\mathbb{D}}\left( {\mathbb{R}^{+}},{{%
\mathbb{H}}}\right) $ and $m_{0},\hat{m}_{0}\in\overline{D(A)}$.

If $\left( x^{\varepsilon },k^{\varepsilon }\right) =\mathcal{YP}%
(A_{\varepsilon },\Pi ;m)$ and $(\hat{x}^{\varepsilon },\hat{k}^{\varepsilon
})=\mathcal{YP}(A_{\varepsilon },\Pi ;\hat{m})$ then\medskip

\noindent$\left( i\right) $ for any $t\in{\mathbb{R}^{+}}$ and $%
\varepsilon>0 $
\begin{equation*}
|x_{t}^{\varepsilon}-\hat{x}_{t}^{\varepsilon}|^{2}\leq|m_{t}-\hat{m}%
_{t}|^{2}-2\int_{0}^{t}\langle m_{t}-\hat{m}_{t}-m_{s}+\hat{m}%
_{s},dk_{s}^{\varepsilon}-d\hat{k}_{s}^{\varepsilon}\rangle.
\end{equation*}

\noindent$\left( ii\right) $ for any $a\in\mathrm{Int}\left( \mathrm{D}%
\left( A\right) \right) $ and $T\in{\mathbb{R}^{+}}$ there exist $%
\varepsilon_{0}>0$ and $C>0$ such that for any $0<\varepsilon\leq
\varepsilon_{0},$%
\begin{equation*}
||x^{\varepsilon}||_{T}^{2}+\left\updownarrow {k}^{\varepsilon}\right%
\updownarrow {_{T}}\leq C(1+||m||_{T}^{2}).
\end{equation*}
\end{proposition}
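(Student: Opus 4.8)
Statement $(i)$ is the Yosida analogue of the Tanaka-type estimate of Lemma~\ref{lem2.7}-$(ii)$ (compare Lemma~\ref{lem3.3}), and $(ii)$ is the analogue of the a~priori bound obtained, in the proof of Theorem~\ref{prop2.11}-$(\mathbf{I}-j)$, from Lemma~\ref{lem2.10}; the plan is to transcribe those two arguments, verifying that the selective amortization rule $(vi)$ of~(\ref{eq3.8}) causes no trouble.

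For $(i)$, I would set $X:=x^{\varepsilon}-\hat{x}^{\varepsilon}$, $M:=m-\hat{m}$, $K:=k^{\varepsilon}-\hat{k}^{\varepsilon}$, so that $X=M-K$, $X_{0}=M_{0}$, $K_{0}=0$ and $K\in\mathrm{BV}_{loc}(\mathbb{R}^{+};\mathbb{H})$, and apply the c\`{a}dl\`{a}g product rule (Lemma~\ref{lemma Annex 9}) to $K$ together with $|X_{t}|^{2}=|M_{t}-K_{t}|^{2}$ to obtain $|X_{t}|^{2}=|M_{t}|^{2}-2\int_{0}^{t}\langle M_{t}-M_{s},dK_{s}\rangle-2\int_{0}^{t}\langle X_{s},dK_{s}\rangle-\sum_{s\leq t}|\Delta K_{s}|^{2}$; then $(i)$ is equivalent to $\int_{0}^{t}\langle x_{s}^{\varepsilon}-\hat{x}_{s}^{\varepsilon},dk_{s}^{\varepsilon}-d\hat{k}_{s}^{\varepsilon}\rangle+\frac{1}{2}\sum_{s\leq t}|\Delta k_{s}^{\varepsilon}-\Delta\hat{k}_{s}^{\varepsilon}|^{2}\geq0$. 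Splitting $k^{\varepsilon}=k^{\varepsilon,c}+k^{\varepsilon,d}$, the continuous contribution equals $\int_{0}^{t}\langle x_{s}^{\varepsilon}-\hat{x}_{s}^{\varepsilon},A_{\varepsilon}(x_{s}^{\varepsilon})-A_{\varepsilon}(\hat{x}_{s}^{\varepsilon})\rangle\,ds\geq0$ by monotonicity of $A_{\varepsilon}$, as in~(\ref{eq3.3}); for the jump contribution the natural route is to show that each summand $\langle x_{s}^{\varepsilon}-\hat{x}_{s}^{\varepsilon},\Delta k_{s}^{\varepsilon}-\Delta\hat{k}_{s}^{\varepsilon}\rangle+\frac{1}{2}|\Delta k_{s}^{\varepsilon}-\Delta\hat{k}_{s}^{\varepsilon}|^{2}$ is $\geq0$ via Remark~\ref{lem2.4'}-$(i)$ (that is, Lemma~\ref{lem2.4}-$(j)$) applied to $x:=x_{s-}^{\varepsilon}+\Delta m_{s}$ and $y:=\hat{x}_{s-}^{\varepsilon}+\Delta\hat{m}_{s}$, distinguishing the subcases in which $|\Delta m_{s}|$ (respectively $|\Delta\hat{m}_{s}|$) is $\leq\varepsilon$ or $>\varepsilon$: in the second case the new value is $\Pi$ of the argument and $\Delta k_{s}^{\varepsilon}$ is the corresponding projection residual, in the first $\Delta k_{s}^{\varepsilon}=0$, and one uses that both $\Pi$ and the identity are nonexpansive and that $\Pi$ fixes $\overline{\mathrm{D}(A)}$.

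For $(ii)$, I would fix $a\in\mathrm{Int}(\mathrm{D}(A))$, $r_{0}>0$ with $\overline{B(a,r_{0})}\subset\mathrm{D}(A)$, and $\mu:=A_{a,r_{0}}^{\#}<\infty$, and first establish a Lemma~\ref{lem2.10}-type bound for $(x^{\varepsilon},k^{\varepsilon})$. Its continuous part satisfies $dk_{t}^{\varepsilon,c}=A_{\varepsilon}(x_{t}^{\varepsilon})\,dt\in A_{\varepsilon}(x_{t}^{\varepsilon})(dt)$, with $|A_{\varepsilon}u|\leq|A^{0}u|\leq\mu$ on $\overline{B(a,r_{0})}$, so Proposition~\ref{prop Anex 15} applies to $A_{\varepsilon}$ word for word as in Lemma~\ref{lem2.10}; its jump part $k^{\varepsilon,d}$ charges only the ``large'' jump times of $m$, at which $x_{r}^{\varepsilon}=\Pi(x_{r-}^{\varepsilon}+\Delta m_{r})\in\overline{\mathrm{D}(A)}$, so Remark~\ref{lem2.4'}-$(ii)$ (Lemma~\ref{lem2.4}-$(jjj)$) gives $r_{0}|\Delta k_{r}^{\varepsilon}|\leq\langle x_{r}^{\varepsilon}-a,\Delta k_{r}^{\varepsilon}\rangle+\frac{1}{2}|\Delta k_{r}^{\varepsilon}|^{2}$; summing yields $r_{0}\left\updownarrow k^{\varepsilon}\right\updownarrow_{t}\leq\int_{0}^{t}\langle x_{r}^{\varepsilon}-a,dk_{r}^{\varepsilon}\rangle+\frac{1}{2}\sum_{r\leq t}|\Delta k_{r}^{\varepsilon}|^{2}+\mu\int_{0}^{t}|x_{r}^{\varepsilon}-a|\,dr+tr_{0}\mu$. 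Next, from $|k_{t}^{\varepsilon}|^{2}=2\int_{0}^{t}\langle k_{r}^{\varepsilon},dk_{r}^{\varepsilon}\rangle-\sum_{r\leq t}|\Delta k_{r}^{\varepsilon}|^{2}$ and $k^{\varepsilon}=m-x^{\varepsilon}$ one gets $|m_{t}-x_{t}^{\varepsilon}|^{2}+2\int_{0}^{t}\langle x_{r}^{\varepsilon}-a,dk_{r}^{\varepsilon}\rangle=2\int_{0}^{t}\langle m_{r}-a,dk_{r}^{\varepsilon}\rangle-\sum_{r\leq t}|\Delta k_{r}^{\varepsilon}|^{2}$, and adding twice the preceding display the squared-jump sums cancel, leaving $|m_{t}-x_{t}^{\varepsilon}|^{2}+2r_{0}\left\updownarrow k^{\varepsilon}\right\updownarrow_{t}\leq2\int_{0}^{t}\langle m_{r}-a,dk_{r}^{\varepsilon}\rangle+2\mu t\|x^{\varepsilon}-a\|_{t}+2tr_{0}\mu$. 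Finally I would estimate $2\int_{0}^{t}\langle m_{r}-a,dk_{r}^{\varepsilon}\rangle$ exactly as in the proof of Theorem~\ref{prop2.11}-$(\mathbf{I}-j)$ — choose a partition $\pi$ of $[0,T]$ with $\max_{r\in\pi}\mathcal{O}_{m}([r,r^{\prime}))<r_{0}/2$, replace $m$ by its $\pi$-step function (error $\leq\frac{r_{0}}{2}\left\updownarrow k^{\varepsilon}\right\updownarrow_{t}$) and sum by parts, getting a majorant $r_{0}\left\updownarrow k^{\varepsilon}\right\updownarrow_{t}+4N_{0}\|m-a\|_{t}\,\|m-x^{\varepsilon}\|_{t}$ with $N_{0}=\mathrm{card}\{r\in\pi:r\leq T\}$ — and then, using $\|k^{\varepsilon}\|_{t}=\|m-x^{\varepsilon}\|_{t}$ and absorbing $\frac{1}{2}\|x^{\varepsilon}-m\|_{T}^{2}$ on the left, conclude $\|x^{\varepsilon}\|_{T}^{2}+\left\updownarrow k^{\varepsilon}\right\updownarrow_{T}\leq C(1+\|m\|_{T}^{2})$ for $0<\varepsilon\leq\varepsilon_{0}$, with $C=C(a,r_{0},\mu,T,N_{0})$; since $|A_{\varepsilon}u|\leq|A^{0}u|$ the constant $\mu$ is independent of $\varepsilon$, so one may take $\varepsilon_{0}=1$.

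The step I expect to be the main obstacle is the jump term in $(i)$: it is the only place where the asymmetric rule $(vi)$ of~(\ref{eq3.8}) — small jumps left free, large ones projected by $\Pi$ — has to be reconciled, at a common jump time of $m$ and $\hat m$, with the comparison of two distinct inputs, and each subcase (small or large jump of $m$ against small or large jump of $\hat m$) must be treated separately from the smooth monotonicity part. Everything else is a direct adaptation of arguments already available for $\mathcal{SP}(A,\Pi;\cdot)$ and for the Yosida regularization.
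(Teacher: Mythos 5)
Your overall strategy is the paper's own: for $(i)$, reduce via Lemma \ref{lemma Annex 9} to the inequality $\int_0^t\langle x^\varepsilon_s-\hat x^\varepsilon_s,\,dk^\varepsilon_s-d\hat k^\varepsilon_s\rangle+\frac12\sum_{s\le t}|\Delta k^\varepsilon_s-\Delta\hat k^\varepsilon_s|^2\ge 0$, treat the continuous part by monotonicity of $A_\varepsilon$ as in (\ref{eq3.3}) and the jump part through Lemma \ref{lem2.4}; for $(ii)$, combine a Lemma \ref{lem2.10}--type variation bound with the partition argument of Theorem \ref{prop2.11}-$(\mathbf{I}-j)$. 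Part $(ii)$ of your argument is complete and matches the paper: there the comparison point $a$ lies in $\mathrm{Int}(\mathrm{D}(A))$, so Lemma \ref{lem2.4}-$(jjj)$ applies to the projected jumps and the unprojected jumps contribute nothing to $k^{\varepsilon,d}$.

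The gap is in the jump step of $(i)$, exactly at the mixed case you single out, and the fix you sketch does not close it. Write $u=x^\varepsilon_{s-}+\Delta m_s$, $v=\hat x^\varepsilon_{s-}+\Delta\hat m_s$ and let $\Phi,\Psi\in\{I,\Pi\}$ be the two update maps prescribed by (\ref{eq3.8})-$(vi)$; the summand you need to be nonnegative is, by the same computation as in the proof of Lemma \ref{lem2.4}-$(j)$, exactly $\frac12\big(|u-v|^2-|\Phi(u)-\Psi(v)|^2\big)$. When $\Phi=\Psi$ this is $\ge 0$ by nonexpansiveness, but in the mixed case $\Phi=\Pi$, $\Psi=I$ it requires $|\Pi(u)-v|\le|u-v|$, which is \emph{not} implied by the nonexpansiveness of $\Pi$ and of $I$ taken separately: it holds when $v\in\overline{\mathrm{D}(A)}$ (this is Lemma \ref{lem2.4}-$(jj)$), whereas the Yosida iterate $v=\hat x^\varepsilon_{s-}+\Delta\hat m_s$ has no reason to lie in $\overline{\mathrm{D}(A)}$. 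Already for $\mathbb{H}=\mathbb{R}$, $\overline{\mathrm{D}(A)}=(-\infty,0]$, $u=1$, $v=2$ one has $|\Pi(u)-v|=2>1=|u-v|$, and $\hat x^\varepsilon$ can be driven arbitrarily far outside the domain by an accumulation of jumps of size $\le\varepsilon$, so this configuration is realizable and the per-jump inequality genuinely fails there; some additional argument (or control on the distance of $\hat x^\varepsilon$ from $\overline{\mathrm{D}(A)}$) is indispensable. You should know that the paper's own proof is no more detailed at this point -- it simply invokes Remark \ref{lem2.4'}-$(i)$, which is stated and proved for $\mathcal{SP}(A,\Pi;\cdot)$, where \emph{both} post-jump states are images under $\Pi$ -- so you have correctly located the delicate step, but neither your appeal to nonexpansiveness nor the paper's citation actually disposes of it.
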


\begin{proof}
$(i)$ By (\ref{eq3.3})%
\begin{equation*}
\int_{0}^{t}\langle x_{s}^{\varepsilon }-\hat{x}_{s}^{\varepsilon
},dk_{s}^{\varepsilon ,c}-d\hat{k}_{s}^{\varepsilon ,c}\rangle \geq 0,~t\in {%
\mathbb{R}^{+}}.
\end{equation*}%
Using Remark \ref{lem2.4'}-$\left( i\right) $, it follows that%
\begin{equation*}
\int_{0}^{t}\langle x_{s}^{\varepsilon }-\hat{x}_{s}^{\varepsilon
},dk_{s}^{\varepsilon ,d}-d\hat{k}_{s}^{\varepsilon ,d}\rangle +\frac{1}{2}%
\sum_{s\leq t}|\Delta k_{s}^{\varepsilon }-\Delta \hat{k}_{s}^{\varepsilon
}|^{2}\geq 0,~t\in {\mathbb{R}^{+}}.
\end{equation*}%
Consequently,%
\begin{equation*}
\int_{0}^{t}\langle x_{s}^{\varepsilon }-\hat{x}_{s}^{\varepsilon
},dk_{s}^{\varepsilon }-d\hat{k}_{s}^{\varepsilon }\rangle +\frac{1}{2}%
\sum_{s\leq t}|\Delta k_{s}^{\varepsilon }-\Delta \hat{k}_{s}^{\varepsilon
}|^{2}\geq 0,\quad t\in {\mathbb{R}^{+}}.
\end{equation*}%
Using the same arguments as in the proof of Lemma \ref{lem2.7}-$(ii)$ we
obtain $(i)$.

\noindent $(ii)$ It is sufficient to use (\ref{prop3.4}), to observe that,
as in Remark \ref{lem2.4'}-$\left( ii\right) $,%
\begin{equation*}
r_{0}|\Delta k_{s}^{\varepsilon }|\leq \langle x_{s}^{\varepsilon }-a,\Delta
k_{s}^{\varepsilon }\rangle +\frac{1}{2}|\Delta k_{s}^{\varepsilon
}|^{2},~s\in {\mathbb{R}^{+}}
\end{equation*}%
and to follow the proof of Theorem \ref{prop2.11}.\hfill
\end{proof}

\begin{theorem}
\label{thm3.11}Assume that $m^{\varepsilon }\in {\mathbb{D}}\left( {\mathbb{R%
}^{+}},{{\mathbb{H}}}\right) $, $m_{0}^{\varepsilon }\in \overline{D(A)}$, $%
\left( x^{\varepsilon },k^{\varepsilon }\right) =\mathcal{YP}(A_{\varepsilon
},\Pi ;m^{\varepsilon })$, $\varepsilon >0$ and $\left( x,k\right) =\mathcal{%
SP}(A,\Pi ;m)$.\medskip

\noindent $\left( j\right) $ If $||m^{\varepsilon }-m||_{T}\longrightarrow 0$%
, $T\in {\mathbb{R}^{+},}$ then
\begin{equation*}
||x^{\varepsilon }-x||_{T}\longrightarrow 0,~T\in {\mathbb{R}^{+}}.
\end{equation*}

\noindent $\left( jj\right) $ If $m^{\varepsilon }\longrightarrow m$ in ${%
\mathbb{D}}\left( {\mathbb{R}^{+}},{{\mathbb{H}}}\right) ,$ then%
\begin{equation*}
(x^{\varepsilon },m^{\varepsilon })\longrightarrow (x,m)\quad \text{in }{%
\mathbb{D}}\left( {\mathbb{R}^{+}},{{\mathbb{H}}}\times \mathbb{H}\right) .
\end{equation*}
\end{theorem}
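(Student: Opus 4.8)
The plan is to establish $(j)$ by a discretisation argument that parallels the proofs of Theorem~\ref{prop2.11} and Theorem~\ref{thm3.6}, and then to deduce $(jj)$ from $(j)$ by a time--change reduction as in Theorem~\ref{thm3.7}. Fix $T>0$. For each $n\in\mathbb{N}^{\ast}$ I would choose a partition $\pi_{n}\in\mathcal{P}_{\mathbb{R}^{+}}$ with $\max_{r\in\pi_{n}}\mathcal{O}_{m}([r,r'))<1/n$, let $m^{(n)}$ be the $\pi_{n}$--discretisation of $m$ and $m^{\varepsilon,(n)}$ the $\pi_{n}$--discretisation of $m^{\varepsilon}$, and put $(x^{(n)},k^{(n)})=\mathcal{SP}(A,\Pi;m^{(n)})$ and $(x^{\varepsilon,(n)},k^{\varepsilon,(n)})=\mathcal{YP}(A_{\varepsilon},\Pi;m^{\varepsilon,(n)})$, whose explicit semigroup forms are provided by Lemma~\ref{lem2.9} and Lemma~\ref{lem3.9}. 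The estimate would be organised through
\[
\|x^{\varepsilon}-x\|_{T}\le\|x^{\varepsilon}-x^{\varepsilon,(n)}\|_{T}+\|x^{\varepsilon,(n)}-x^{(n)}\|_{T}+\|x^{(n)}-x\|_{T},
\]
letting $\varepsilon\to0$ first with $n$ fixed, and then $n\to\infty$.

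For the first term I would use the Tanaka--type estimate of Proposition~\ref{prop3.10}$(i)$ for the pair $(m^{\varepsilon},m^{\varepsilon,(n)})$ together with the a priori bounds of Proposition~\ref{prop3.10}$(ii)$ on $\left\updownarrow k^{\varepsilon}\right\updownarrow_{T}$ and $\left\updownarrow k^{\varepsilon,(n)}\right\updownarrow_{T}$; these are uniform in small $\varepsilon$ and in $n$ if one runs the argument of Theorem~\ref{prop2.11}$(\mathbf{I}-j)$ with a single auxiliary partition adapted to $m$. Since $\|m^{\varepsilon}-m^{\varepsilon,(n)}\|_{T}\le 2\|m^{\varepsilon}-m\|_{T}+1/n$, this should give
\[
\|x^{\varepsilon}-x^{\varepsilon,(n)}\|_{T}^{2}\le C\Big(\|m^{\varepsilon}-m\|_{T}^{2}+\|m^{\varepsilon}-m\|_{T}+\tfrac{1}{n}\Big)
\]
with $C$ independent of $\varepsilon$ and $n$. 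The third term tends to $0$ as $n\to\infty$ directly by Theorem~\ref{prop2.11}$(\mathbf{III})$, since $m^{(n)}$ is the $\pi_{n}$--discretisation of $m$.

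The core of the proof is the middle term, and this is where I expect the main obstacle: $\mathcal{SP}(A,\Pi;m^{(n)})$ amortises \emph{every} partition--point increment through $\Pi$, whereas $\mathcal{YP}(A_{\varepsilon},\Pi;m^{\varepsilon,(n)})$ leaves increments of size $\le\varepsilon$ free, so one must check that these (artificial) unamortised jumps do not accumulate. For $n$ fixed I would prove $\|x^{\varepsilon,(n)}-x^{(n)}\|_{T}\to0$ as $\varepsilon\to0$ by induction over the finitely many points of $\pi_{n}\cap[0,T]$. On each interval $[r,r')$ the two processes evolve by $S_{A_{\varepsilon}}(\cdot-r)$, resp.\ $S_{A}(\cdot-r)$, started at $x^{\varepsilon,(n)}_{r}$, resp.\ $x^{(n)}_{r}=\Pi(x^{(n)}_{r-}+\Delta m^{(n)}_{r})\in\overline{\mathrm{D}(A)}$ (Lemmas~\ref{lem2.9} and \ref{lem3.9}). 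Because $\Delta m^{\varepsilon,(n)}_{r}\to\Delta m^{(n)}_{r}$ as $\varepsilon\to0$, there are two cases at a point $r$: if $\Delta m^{(n)}_{r}\ne0$ then $|\Delta m^{\varepsilon,(n)}_{r}|>\varepsilon$ for $\varepsilon$ small, so by (\ref{eq3.10}) the Yosida problem also applies $\Pi$ at $r$ and $x^{\varepsilon,(n)}_{r}=\Pi(x^{\varepsilon,(n)}_{r-}+\Delta m^{\varepsilon,(n)}_{r})\to\Pi(x^{(n)}_{r-}+\Delta m^{(n)}_{r})=x^{(n)}_{r}$; if $\Delta m^{(n)}_{r}=0$ then $x^{(n)}_{r}=x^{(n)}_{r-}$ while $|x^{\varepsilon,(n)}_{r}-x^{\varepsilon,(n)}_{r-}|\le 2|\Delta m^{\varepsilon,(n)}_{r}|\to0$ whether or not $\Pi$ is applied (as in Remark~\ref{rem2.6}), so again $x^{\varepsilon,(n)}_{r}\to x^{(n)}_{r}$. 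In both cases the limit lies in $\overline{\mathrm{D}(A)}$, so Lemma~\ref{lem3.5}$(i)$ propagates the convergence uniformly over $[r,r')$ up to the left--limit at $r'$, and the induction closes (the base step on $[0,r_{1})$ uses $m^{\varepsilon}_{0}\to m_{0}\in\overline{\mathrm{D}(A)}$ and Lemma~\ref{lem3.5}$(i)$). The point that makes the accumulation harmless is that $\pi_{n}\cap[0,T]$ is finite for fixed $n$ and both $S_{A_{\varepsilon}}(\cdot)$ and $\Pi$ are nonexpansive, so the total discrepancy produced at unamortised points is $O\big(\varepsilon\,\#(\pi_{n}\cap[0,T])\big)\to0$. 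Combining the three bounds gives $\limsup_{\varepsilon\to0}\|x^{\varepsilon}-x\|_{T}\le\|x^{(n)}-x\|_{T}+C/n$ for every $n$, hence $(j)$.

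Finally, $(jj)$ should follow from $(j)$ by the time--change reduction of Theorem~\ref{prop2.11}$(\mathbf{II})$ and Theorem~\ref{thm3.7}$(j)$: when $m^{\varepsilon}\to m$ in $\mathbb{D}(\mathbb{R}^{+},\mathbb{H})$ one takes strictly increasing continuous changes of time $\lambda^{\varepsilon}$ with $\lambda^{\varepsilon}_{0}=0$, $\lambda^{\varepsilon}_{\infty}=+\infty$, $\sup_{t\ge0}|\lambda^{\varepsilon}_{t}-t|\to0$ and $\|m^{\varepsilon}_{\lambda^{\varepsilon}}-m\|_{T}\to0$ for all $T$; since $(x^{\varepsilon}_{\lambda^{\varepsilon}},k^{\varepsilon}_{\lambda^{\varepsilon}})=\mathcal{YP}(A_{\varepsilon},\Pi;m^{\varepsilon}_{\lambda^{\varepsilon}})$ (the continuous part being handled as in Theorem~\ref{thm3.7}), part $(j)$ yields $\|x^{\varepsilon}_{\lambda^{\varepsilon}}-x\|_{T}\to0$, and together with $\sup_{t\ge0}|\lambda^{\varepsilon}_{t}-t|\to0$ this is exactly $(x^{\varepsilon},m^{\varepsilon})\to(x,m)$ in $\mathbb{D}(\mathbb{R}^{+},\mathbb{H}\times\mathbb{H})$.
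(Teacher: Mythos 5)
The proposal is correct and follows essentially the same route as the paper: the same three-term discretisation decomposition borrowed from the proof of Theorem~\ref{thm3.6}, with Proposition~\ref{prop3.10} supplying the Tanaka estimate and the uniform variation bounds, Lemmas~\ref{lem2.9}, \ref{lem3.9} and \ref{lem3.5} handling the fixed-$n$ convergence interval by interval, and the usual time-change reduction for the $J_{1}$ statement. Your case analysis at the partition points (eventually amortised when $\Delta m^{(n)}_{r}\neq 0$, negligible when $\Delta m^{(n)}_{r}=0$) is exactly the detail the paper leaves implicit in its reference to the proof of Theorem~\ref{thm3.6}.
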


\begin{proof}
We use the notation from the proof of Theorem \ref{thm3.6}. Let $%
(x^{(n)},k^{(n)})=\mathcal{SP}(A,\Pi ;m^{(n)})$, $(x^{\varepsilon
,(n)},k^{\varepsilon ,(n)})=\mathcal{YP}(A_{\varepsilon },\Pi
;m^{\varepsilon ,(n)})$, $n\in {\mathbb{N}}$, $\varepsilon >0$. Following
the same step as in the proof of Theorem \ref{thm3.6} we deduce that, for
all $T\in {\mathbb{R}^{+}},~n\in {\mathbb{N}}$,%
\begin{equation*}
\lim_{\varepsilon \rightarrow 0}||x^{\varepsilon
,(n)}-x^{(n)}||_{T}\rightarrow 0.
\end{equation*}%
Using the same estimates for the differences $m^{\left( n\right) }-m$ and $%
m^{\varepsilon ,\left( n\right) }-m^{\left( n\right) }$ and the arguments
used in the proof of Theorem \ref{thm3.6}-$(j)$ the conclusion $\left(
j\right) $ follows and, in addition, $(jj)$.\hfill
\end{proof}

\begin{corollary}
\label{cor3.13}Let $m\in{\mathbb{D}}\left( {\mathbb{R}^{+}},{{\mathbb{H}}}%
\right) $, $m_{0}\in\overline{D(A)}$ and let $\pi_{\varepsilon}\in\mathcal{P}%
_{\mathbb{R}^{+}}$ be a sequence of partitions. If $m_{t}^{(\varepsilon)}=%
\sum_{r\in\pi_{\varepsilon}}m_{r}\mathbf{1}_{[r,r^{\prime})}\left( t\right) $%
, for $\varepsilon>0$, denotes the sequence of discretizations of $m$ and $%
(x^{(\varepsilon)},k^{\left( \varepsilon\right) })=\mathcal{YP}%
(A_{\varepsilon},\Pi;m^{(\varepsilon)})$, $\varepsilon>0$ then%
\begin{equation*}
(x^{(\varepsilon)},m^{(\varepsilon)})\longrightarrow(x,m)~\text{in }{\mathbb{%
D}}({\mathbb{R}^{+}},\mathbb{H\times H}),
\end{equation*}
where $(x,k)=\mathcal{SP}(A,\Pi;m)$.

Moreover, if $||m^{(\varepsilon)}-m||_{T}\longrightarrow0$, $T\in {\mathbb{R}%
^{+}}$ then
\begin{equation*}
||x^{(\varepsilon)}-x||_{T}\longrightarrow0,~T\in{\mathbb{R}^{+}}.
\end{equation*}
\end{corollary}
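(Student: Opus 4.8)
The plan is to deduce the corollary directly from Theorem~\ref{thm3.11} by regarding the family of $\pi_{\varepsilon}$-discretizations $\{m^{(\varepsilon)}\}$ as a particular approximating sequence for $m$ in the sense of that theorem. First I would record the elementary facts about $m^{(\varepsilon)}$: it is a step function of the form covered by Lemma~\ref{lem3.9}, and (taking, as one may, $0\in\pi_{\varepsilon}$ for every $\varepsilon$) one has $m_{0}^{(\varepsilon)}=m_{0}\in\overline{\mathrm{D}(A)}$, so that $\left(x^{(\varepsilon)},k^{(\varepsilon)}\right)=\mathcal{YP}(A_{\varepsilon},\Pi;m^{(\varepsilon)})$ is well defined and the uniform stability bounds of Proposition~\ref{prop3.10}-$(ii)$ hold for all small $\varepsilon$.

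Next I would invoke the Annexes: by Proposition~\ref{proposition Annex 6}, since the meshes $\|\pi_{\varepsilon}\|\to 0$ as $\varepsilon\to 0$, the discretizations converge, $m^{(\varepsilon)}\longrightarrow m$ in $\mathbb{D}(\mathbb{R}^{+},\mathbb{H})$. Applying Theorem~\ref{thm3.11}-$(jj)$ with the approximating sequence $m^{\varepsilon}:=m^{(\varepsilon)}$ then yields
\[
(x^{(\varepsilon)},m^{(\varepsilon)})\longrightarrow(x,m)\quad\text{in }\mathbb{D}(\mathbb{R}^{+},\mathbb{H}\times\mathbb{H}),
\]
where $(x,k)=\mathcal{SP}(A,\Pi;m)$, which is the first assertion. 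For the ``moreover'' part, under the additional hypothesis $\|m^{(\varepsilon)}-m\|_{T}\to 0$ for every $T\in\mathbb{R}^{+}$, the same substitution into Theorem~\ref{thm3.11}-$(j)$ gives $\|x^{(\varepsilon)}-x\|_{T}\to 0$ for every $T$.

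I do not expect a genuine obstacle here: the corollary is essentially the specialization of Theorem~\ref{thm3.11} to the canonical approximating family, so the ``hard'' analytic work has already been carried out in Theorems~\ref{thm3.6} and~\ref{thm3.11}. The only points requiring a line of justification are that the $\pi_{\varepsilon}$-discretizations converge to $m$ in the Skorokhod topology $J_{1}$ --- the standard fact isolated in Proposition~\ref{proposition Annex 6}, already exploited in Theorem~\ref{prop2.11}-$(\mathbf{III})$ --- and that the initial datum is preserved, $m_{0}^{(\varepsilon)}=m_{0}$. If $\|\pi_{\varepsilon}\|\to 0$ is not a standing assumption it has to be added, since otherwise $m^{(\varepsilon)}$ need not approach $m$ in any topology and the stated limit would fail.
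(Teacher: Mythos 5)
Your proof is correct and follows the paper's own argument exactly: the paper likewise notes that $m^{(\varepsilon)}\to m$ in $\mathbb{D}(\mathbb{R}^{+},\mathbb{H})$ (via Proposition \ref{proposition Annex 6}, which tacitly requires $\Vert\pi_{\varepsilon}\Vert\to 0$) and then applies Theorem \ref{thm3.11}. Your closing remark that $\Vert\pi_{\varepsilon}\Vert\to 0$ must be understood as a standing hypothesis is a fair observation about the statement, not a gap in the argument.
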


\begin{proof}
It is sufficient to remark that $m^{(\varepsilon )}\longrightarrow m$ in ${%
\mathbb{D}}\left( {\mathbb{R}^{+}},{{\mathbb{H}}}\right) $ and to apply
Theorem \ref{thm3.11}.\hfill
\end{proof}

\section{Annexes}

\subsection{Maximal monotone operators on Hilbert spaces}

\label{Maximal monotone}Let $\mathbb{H}$ be a separable real Hilbert space
with the inner product denoted by $\left\langle \cdot ,\cdot \right\rangle $
and the induced norm denoted by $\left\vert \cdot \right\vert $. A
multivalued operator $A:\mathbb{H}\rightrightarrows \mathbb{H}$ (a
point-to-set operator $A:\mathbb{H}\rightarrow 2^{\mathbb{H}}$) will be seen
also as a subset of $\mathbb{H}\times \mathbb{H}$. In fact, we formally
identify the multivalued operator $A:\mathbb{H}\rightrightarrows \mathbb{H}$
with its graph $\mathrm{Gr}\left( A\right) =\left\{ \left( x,y\right) \in
\mathbb{H}\times \mathbb{H}:y\in A\left( x\right) \right\} .$ We denote by $%
\mathrm{D}(A):=\left\{ x\in \mathbb{H}:\,Ax\neq \emptyset \right\} $ the
domain of $A$. We define $A^{-1}:\mathbb{H}\rightrightarrows \mathbb{H}$ to
be the point-to-set operator given by: $x\in A^{-1}\left( y\right) $ if $%
y\in A\left( x\right) $.

We recall some definitions:

\begin{itemize}
\item $A:\mathbb{H}\rightrightarrows\mathbb{H}$ is monotone if $\left\langle
v-y,u-x\right\rangle \geq0,$ for all $\,\left( x,y\right) ,\left( u,v\right)
\in A.$

\item $A:\mathbb{H}\rightrightarrows \mathbb{H}$ is a maximal monotone
operator if $A$ is a monotone operator and it is\ maximal in the set of
monotone operators: that is,
\begin{equation*}
\left\langle v-y,u-x\right\rangle \geq 0,\;\forall \left( x,y\right) \in
A\quad \Rightarrow \quad \left( u,v\right) \in A.
\end{equation*}
\end{itemize}

\begin{proposition}
\label{prop Anex 2}Let $A:\mathbb{H}\rightrightarrows\mathbb{H}$ be a
maximal monotone operator. Then

\noindent $\left( j\right) $ $\overline{\mathrm{D}\left( A\right) }$ is a
convex subset of $\mathbb{H}.$

\noindent $\left( jj\right) $ $Ax$ is a convex closed subset of $\mathbb{H}$%
, for all $x\in \mathrm{D}\left( A\right) .$

\noindent $\left( jjj\right) $ $A$ is locally bounded on $\mathrm{Int}\left(
\mathrm{D}\left( A\right) \right) $ that is: for every $u_{0}\in \mathrm{Int}%
\left( \mathrm{D}\left( A\right) \right) $ there exists $r_{0}>0$ such that%
\begin{equation*}
\overline{B\left( u_{0},r_{0}\right) }:=\left\{ u_{0}+r_{0}v:\left\vert
v\right\vert \leq 1\right\} \subset \mathrm{D}\left( A\right)
\end{equation*}%
and%
\begin{equation*}
A_{u_{0},r_{0}}^{\#}:=\sup \left\{ \left\vert \hat{u}\right\vert :\hat{u}\in
A\left( u_{0}+r_{0}v\right) ,\;\left\vert v\right\vert \leq 1\right\}
<\infty .
\end{equation*}
\end{proposition}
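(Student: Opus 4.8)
The three assertions are classical; here is the route I would take, treating them from easiest to hardest.

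For $(jj)$, the key point is that maximality turns $Ax$ into an intersection of closed half-spaces. If $y\in Ax$, then monotonicity gives $\langle y-v,x-u\rangle\ge 0$ for every $(u,v)\in\mathrm{Gr}(A)$; conversely, by the very definition of maximality, any $y$ satisfying this family of inequalities must already belong to $Ax$. Hence $Ax=\bigcap_{(u,v)\in\mathrm{Gr}(A)}\{y\in\mathbb{H}:\langle y,x-u\rangle\ge\langle v,x-u\rangle\}$, an intersection of closed convex sets, so $Ax$ is closed and convex.

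For $(j)$, I would first show that a convex combination $x_t=(1-t)x_0+tx_1$, $t\in[0,1]$, of two points $x_0,x_1\in\mathrm{D}(A)$ lies in $\overline{\mathrm{D}(A)}$. Fix $y_0\in Ax_0$, $y_1\in Ax_1$, and write $J_\varepsilon x_t=x_t-\varepsilon A_\varepsilon x_t$ with $A_\varepsilon x_t\in A(J_\varepsilon x_t)$. Testing the monotonicity inequality between $(J_\varepsilon x_t,A_\varepsilon x_t)$ and each of $(x_0,y_0)$, $(x_1,y_1)$, taking the convex combination of the two with weights $1-t$ and $t$, and using $(1-t)(x_t-x_0)+t(x_t-x_1)=0$, one obtains $\varepsilon|A_\varepsilon x_t|^2\le\varepsilon\langle(1-t)y_0+ty_1,A_\varepsilon x_t\rangle+t(1-t)\langle y_1-y_0,x_1-x_0\rangle$; dividing by $\varepsilon$ and solving the resulting quadratic inequality for $|A_\varepsilon x_t|$ gives $|A_\varepsilon x_t|=O(\varepsilon^{-1/2})$, hence $|J_\varepsilon x_t-x_t|=\varepsilon|A_\varepsilon x_t|\to 0$. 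Since $J_\varepsilon x_t\in\mathrm{D}(A)$ for every $\varepsilon>0$, this forces $x_t\in\overline{\mathrm{D}(A)}$. A routine density argument — convex combinations of points in $\overline{\mathrm{D}(A)}$ are limits of convex combinations of points in $\mathrm{D}(A)$, and $\overline{\mathrm{D}(A)}$ is closed — then upgrades this to convexity of $\overline{\mathrm{D}(A)}$.

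Part $(jjj)$ is the real work, and I expect the local boundedness to be the main obstacle. I would argue by contradiction: if $A$ fails to be locally bounded at $u_0\in\mathrm{Int}(\mathrm{D}(A))$, choose $(x_n,y_n)\in\mathrm{Gr}(A)$ with $x_n\to u_0$ and $|y_n|\to\infty$, and introduce $\varphi(z)=\sup_{n\ge 1}\dfrac{\langle y_n,z-x_n\rangle}{1+|y_n|\,|x_n-u_0|}$. As a supremum of continuous affine maps, $\varphi$ is convex and lower semicontinuous. Using monotonicity between $(x_n,y_n)$ and an arbitrary $(z,w)\in\mathrm{Gr}(A)$ one checks that $\varphi(z)<\infty$ for every $z$ in a ball $B(u_0,\rho)\subset\mathrm{D}(A)$; a convex function finite on an open set is locally bounded there, so $\varphi\le K$ on some $\overline{B(u_0,\rho')}$. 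Evaluating $\varphi$ at $z_n=u_0+\rho'\,y_n/|y_n|$ and using $\langle y_n,z_n-x_n\rangle\ge\rho'|y_n|-|y_n|\,|x_n-u_0|$ forces $\varphi(z_n)\to+\infty$ — one splits into the case where $|y_n|\,|x_n-u_0|$ stays bounded and the case where it does not, both giving divergence — which contradicts $\varphi\le K$. This proves local boundedness; taking $r_0$ small enough that $\overline{B(u_0,r_0)}$ sits inside both $\mathrm{D}(A)$ and the neighborhood of boundedness then yields $\overline{B(u_0,r_0)}\subset\mathrm{D}(A)$ and $A_{u_0,r_0}^{\#}<\infty$. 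The only delicate steps in the write-up are the finiteness of $\varphi$ near $u_0$ and the two-case estimate at the end; everything else is bookkeeping.
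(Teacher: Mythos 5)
Your three arguments are correct. Note that the paper itself offers no proof of this proposition: it is stated in the Annexes as a recall of classical facts, with the reader directed to Barbu \cite{ba/76, ba/10} and Brezis \cite{br/73} for proofs of this kind of property. What you have written is precisely the standard textbook route — $(jj)$ via the representation of $Ax$ as an intersection of closed half-spaces coming from maximality, $(j)$ via the resolvent estimate $\varepsilon|A_\varepsilon x_t|\le \varepsilon C_1+\sqrt{\varepsilon\, t(1-t)\langle y_1-y_0,x_1-x_0\rangle}\to 0$ at a convex combination $x_t$ of domain points, and $(jjj)$ via the auxiliary convex lower semicontinuous function $\varphi$ together with the Baire-category fact that a convex l.s.c.\ function finite on an open subset of a complete space is locally bounded there — so there is nothing to reconcile with the paper's (absent) proof.
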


The maximal monotone operator $A$ is uniquely defined by its principal
section $A^{0}x:=\Pi_{Ax}\left( 0\right) $ in the following sense: if $%
\left( x,y\right) \in\overline{\mathrm{D}\left( A\right) }\times\mathbb{H}$
such that%
\begin{equation*}
\left\langle y-A^{0}u,x-u\right\rangle \geq0,\text{ for all }u\in \mathrm{D}%
\left( A\right)
\end{equation*}
then $\left( x,y\right) \in A.$

For each $\varepsilon >0$ the operators%
\begin{equation*}
J_{\varepsilon }x=(I+\varepsilon A)^{-1}(x)\text{ and }A_{\varepsilon
}\left( x\right) =\frac{1}{\varepsilon }(x-J_{\varepsilon }x),
\end{equation*}%
from $\mathbb{H}$ to $\mathbb{H}$ are single-valued. The operator $%
A_{\varepsilon }$ is called Yosida's approximation of the maximal monotone
operator $A.\;$In \cite{ba/76, ba/10} and \cite{br/73} can be found the
proofs of the following properties.

\begin{proposition}
\label{prop Anex 4}Let $A:\mathbb{H}\rightrightarrows\mathbb{H}$ be a
maximal monotone operator. Then:

\noindent $\left( j\right) $ for all $\varepsilon >0$ and for all$\;x,y\in
\mathbb{H}$%
\begin{equation*}
\begin{array}{rl}
\left( i\right) & \left( J_{\varepsilon }x,A_{\varepsilon }x\right) \in
A,\medskip \\
\left( ii\right) & \left\vert J_{\varepsilon }x-J_{\varepsilon }y\right\vert
\leq \left\vert x-y\right\vert ,\medskip \\
\left( iii\right) & \left\vert A_{\varepsilon }x-A_{\varepsilon
}y\right\vert \leq \dfrac{1}{\varepsilon }\left\vert x-y\right\vert ,\medskip
\\
\left( iv\right) & A_{\varepsilon }:\mathbb{H}\rightarrow \mathbb{H}\quad
\text{is a maximal monotone operator;}%
\end{array}%
\end{equation*}

\noindent $\left( jj\right) $ if $x_{\varepsilon },x\in \mathbb{H}$ and $%
\lim\limits_{\varepsilon \searrow 0}x_{\varepsilon }=x$, then $%
\lim\limits_{\varepsilon \searrow 0}J_{\varepsilon }x_{\varepsilon }=\Pi _{%
\overline{\mathrm{D}\left( A\right) }}\left( x\right) ,\;$for all $x\in
\mathbb{H}$, where $\Pi _{\overline{\mathrm{D}\left( A\right) }}\left(
x\right) $ is the orthogonal projection of $x$ on $\overline{\mathrm{D}%
\left( A\right) }\,;$

\noindent $\left( jjj\right) $ $\lim\limits_{\varepsilon \searrow
0}A_{\varepsilon }x=A^{0}x\in Ax,\;$for all $x\in \mathrm{D}\left( A\right)
\,;$

\noindent $\left( jv\right) $ $\left\vert A_{\varepsilon }x\right\vert $ is
monotone decreasing in $\varepsilon >0$, and when $\varepsilon \searrow 0$%
\begin{equation*}
\left\vert A_{\varepsilon }\left( x\right) \right\vert \nearrow \left\{
\begin{array}{ll}
\left\vert A^{0}\left( x\right) \right\vert , & \text{if}~x\in \mathrm{D}%
\left( A\right) ,\medskip \\
+\infty , & \text{if}~x\notin \mathrm{D}\left( A\right) \,;%
\end{array}%
\right.
\end{equation*}

\noindent$\left( v\right) $ For all $x,y\in\mathbb{H}$%
\begin{equation*}
\langle
x-y,A_{\varepsilon}(x)-A_{\varepsilon}(y)\rangle\geq\varepsilon\left(
|A_{\varepsilon}(x)|^{2}+|A_{\varepsilon}(y)|^{2}-2\langle A_{\varepsilon
}(x),A_{\varepsilon}(y)\rangle\right) \geq0.
\end{equation*}
\end{proposition}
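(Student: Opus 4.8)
The plan is to exploit three elementary facts repeatedly and then treat the two convergence statements $(jj)$ and $(jjj)$ separately. The first fact is that maximal monotonicity of $A$ is equivalent, by Minty's theorem, to the surjectivity of $I+\varepsilon A$ for every $\varepsilon>0$, while mere monotonicity makes $I+\varepsilon A$ injective; hence $J_\varepsilon$ is a well-defined single-valued map $\mathbb{H}\to\mathbb{H}$ and, by construction, $x=J_\varepsilon x+\varepsilon A_\varepsilon x$ with $A_\varepsilon x\in A(J_\varepsilon x)$, which is $(j)(i)$. The second is the decomposition $x-y=(J_\varepsilon x-J_\varepsilon y)+\varepsilon(A_\varepsilon x-A_\varepsilon y)$ together with the monotonicity inequality $\langle A_\varepsilon x-A_\varepsilon y,\,J_\varepsilon x-J_\varepsilon y\rangle\ge0$ for $A$. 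Pairing these gives the firm-nonexpansiveness bound $|J_\varepsilon x-J_\varepsilon y|^2\le\langle x-y,\,J_\varepsilon x-J_\varepsilon y\rangle$; with Cauchy--Schwarz this yields $(j)(ii)$, and expanding $|A_\varepsilon x-A_\varepsilon y|^2=\varepsilon^{-2}\bigl(|x-y|^2-2\langle x-y,\,J_\varepsilon x-J_\varepsilon y\rangle+|J_\varepsilon x-J_\varepsilon y|^2\bigr)$ and inserting the same bound controls the right-hand side by $\varepsilon^{-2}\bigl(|x-y|^2-|J_\varepsilon x-J_\varepsilon y|^2\bigr)\le\varepsilon^{-2}|x-y|^2$, which is $(j)(iii)$. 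Finally, $\langle x-y,A_\varepsilon x-A_\varepsilon y\rangle=\langle J_\varepsilon x-J_\varepsilon y,\,A_\varepsilon x-A_\varepsilon y\rangle+\varepsilon|A_\varepsilon x-A_\varepsilon y|^2\ge\varepsilon|A_\varepsilon x-A_\varepsilon y|^2=\varepsilon\bigl(|A_\varepsilon x|^2+|A_\varepsilon y|^2-2\langle A_\varepsilon x,A_\varepsilon y\rangle\bigr)\ge0$ is exactly $(v)$, and since $A_\varepsilon$ is then monotone, Lipschitz and everywhere defined it is maximal monotone, giving $(j)(iv)$.

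For the two limits I would first record the auxiliary bound $|A_\varepsilon a|\le|A^0a|$ for $a\in\mathrm{D}(A)$: applying monotonicity of $A$ to $(J_\varepsilon a,A_\varepsilon a)$ and $(a,A^0a)$ and using $J_\varepsilon a-a=-\varepsilon A_\varepsilon a$ gives $|A_\varepsilon a|^2\le\langle A^0a,A_\varepsilon a\rangle$, hence $|A_\varepsilon a|\le|A^0a|$ and $|J_\varepsilon a-a|\le\varepsilon|A^0a|\to0$. For $(jjj)$, fix $x\in\mathrm{D}(A)$: then $\{A_\varepsilon x\}$ is bounded and $J_\varepsilon x\to x$, so any weak cluster point $g$ of $A_\varepsilon x$ along a sequence $\varepsilon_n\downarrow0$ satisfies $g\in Ax$ by the weak--strong closedness of the graph of the maximal monotone operator $A$; weak lower semicontinuity of the norm gives $|g|\le|A^0x|$, and uniqueness of the minimal-norm element of the closed convex set $Ax$ forces $g=A^0x$, so $A_\varepsilon x\rightharpoonup A^0x$, which together with $|A_\varepsilon x|\to|A^0x|$ gives strong convergence. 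Part $(jv)$ is a comparison argument: monotonicity of $A$ on $(J_\varepsilon x,A_\varepsilon x)$ and $(J_\mu x,A_\mu x)$ with $J_\varepsilon x-J_\mu x=\mu A_\mu x-\varepsilon A_\varepsilon x$ gives $(\mu+\varepsilon)\langle A_\varepsilon x,A_\mu x\rangle\ge\varepsilon|A_\varepsilon x|^2+\mu|A_\mu x|^2$, and Cauchy--Schwarz turns this, with $a:=|A_\varepsilon x|$ and $b:=|A_\mu x|$, into $(a-b)(\varepsilon a-\mu b)\le0$, which for $0<\mu\le\varepsilon$ is only possible if $a\le b$; so $\varepsilon\mapsto|A_\varepsilon x|$ is nonincreasing, and its limit as $\varepsilon\downarrow0$ equals $|A^0x|$ by $(jjj)$ when $x\in\mathrm{D}(A)$ and is $+\infty$ otherwise, since a finite limit would bound $A_\varepsilon x$, force $J_\varepsilon x\to x$, and — by weak--strong closedness of the graph — contradict $x\notin\mathrm{D}(A)$.

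The delicate part is $(jj)$. Let $x_\varepsilon\to x$ and fix $a\in\mathrm{D}(A)$; nonexpansiveness of $J_\varepsilon$ and $|J_\varepsilon a-a|\le\varepsilon|A^0a|$ make $\{J_\varepsilon x_\varepsilon\}$ bounded. Writing $|x_\varepsilon-J_\varepsilon x_\varepsilon|^2=\langle x_\varepsilon-J_\varepsilon x_\varepsilon,\,x_\varepsilon-a\rangle-\varepsilon\langle A_\varepsilon x_\varepsilon,\,J_\varepsilon x_\varepsilon-a\rangle$ and using $\langle A_\varepsilon x_\varepsilon-A^0a,\,J_\varepsilon x_\varepsilon-a\rangle\ge0$ together with the boundedness of $J_\varepsilon x_\varepsilon$ gives $\limsup_{\varepsilon\to0}|x_\varepsilon-J_\varepsilon x_\varepsilon|\le|x-a|$ for every $a\in\mathrm{D}(A)$, hence $\limsup_{\varepsilon\to0}|x_\varepsilon-J_\varepsilon x_\varepsilon|\le\mathrm{dist}(x,\overline{\mathrm{D}(A)})$; the reverse inequality is immediate since $J_\varepsilon x_\varepsilon\in\overline{\mathrm{D}(A)}$ and the distance function is continuous, so $|x_\varepsilon-J_\varepsilon x_\varepsilon|\to|x-\Pi_{\overline{\mathrm{D}(A)}}(x)|$. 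Any weak cluster point $\xi$ of $J_\varepsilon x_\varepsilon$ lies in the weakly closed convex set $\overline{\mathrm{D}(A)}$ and, by weak lower semicontinuity of the norm, satisfies $|x-\xi|\le|x-\Pi_{\overline{\mathrm{D}(A)}}(x)|$; uniqueness of the nearest point forces $\xi=\Pi_{\overline{\mathrm{D}(A)}}(x)$, so $J_\varepsilon x_\varepsilon\rightharpoonup\Pi_{\overline{\mathrm{D}(A)}}(x)$, and the convergence of the norms $|J_\varepsilon x_\varepsilon-x_\varepsilon|$ promotes this to strong convergence. The main obstacle is precisely this step: everything else reduces to formal manipulation of the identity $x=J_\varepsilon x+\varepsilon A_\varepsilon x$ and the monotonicity of $A$, whereas here one must first identify $|x_\varepsilon-J_\varepsilon x_\varepsilon|\to\mathrm{dist}(x,\overline{\mathrm{D}(A)})$ and then combine the variational characterisation of the orthogonal projection with the Hilbert-space fact that weak convergence plus convergence of norms implies strong convergence.
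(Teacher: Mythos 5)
Your proof is correct. The paper itself gives no proof of Proposition \ref{prop Anex 4} --- it simply refers the reader to Barbu \cite{ba/76, ba/10} and Brezis \cite{br/73} --- and your argument is essentially the standard one found there: the resolvent identity $x=J_{\varepsilon}x+\varepsilon A_{\varepsilon}x$ with $A_{\varepsilon}x\in A(J_{\varepsilon}x)$ plus monotonicity for $(j)$ and $(v)$, the bound $|A_{\varepsilon}a|\leq|A^{0}a|$ and demiclosedness of the graph for $(jjj)$ and $(jv)$, and for $(jj)$ the identification $|x_{\varepsilon}-J_{\varepsilon}x_{\varepsilon}|\rightarrow\mathrm{dist}(x,\overline{\mathrm{D}(A)})$ followed by weak convergence plus convergence of norms. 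All the individual steps check out, including the two points that require care: the passage from $(a-b)(\varepsilon a-\mu b)\leq 0$ to monotonicity of $\varepsilon\mapsto|A_{\varepsilon}x|$, and the upgrade from weak to strong convergence of $J_{\varepsilon}x_{\varepsilon}$ via the uniqueness of the nearest point in the closed convex set $\overline{\mathrm{D}(A)}$.
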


\begin{proposition}
\label{prop Anex 5}Let $A:\mathbb{H}\rightrightarrows \mathbb{H}$ be a
maximal monotone operator. Let $r_{0}\geq 0$ and $\overline{B\left(
a,r_{0}\right) }\subset \mathrm{D}\left( A\right) .$ Then for all $x\in
\mathrm{D}\left( A\right) $, $\hat{x}\in A\left( x\right) $ and $\hat{u}\in
A\left( a+r_{0}u\right) ,$ where $u=0,$ if $\hat{x}=0$ and $u=\frac{\hat{x}}{%
\left\vert \hat{x}\right\vert }$ if $\hat{x}\neq 0,$ the following
inequalities hold:%
\begin{equation}
\begin{array}{rl}
\left( j\right) & r_{0}\left\vert \hat{x}\right\vert \leq \left\langle \hat{x%
},x-a\right\rangle +\left\vert \hat{u}\right\vert \left\vert x-a\right\vert
+r_{0}\left\vert \hat{u}\right\vert ,\medskip \\
\left( jj\right) & r_{0}\left\vert \hat{x}\right\vert \leq \left\langle \hat{%
x},x-a\right\rangle +A_{x_{0},r_{0}}^{\#}\left\vert x-a\right\vert
+r_{0}A_{x_{0},r_{0}}^{\#}\,,%
\end{array}
\label{prop Anex 5_1}
\end{equation}%
where $A_{x_{0},r_{0}}^{\#}:=\sup \left\{ |\hat{u}|:\hat{u}\in
A(a+r_{0}v),\;|v|\leq 1\right\} $.

\noindent Moreover for all $\varepsilon \in (0,1]$ and $z\in \mathbb{H}:$%
\begin{equation}
\begin{array}{rl}
\left( jjj\right) & r_{0}\left\vert A_{\varepsilon }z\right\vert \leq
\left\langle A_{\varepsilon }z,z-a\right\rangle +\left\vert \hat{u}%
\right\vert \left\vert z-a\right\vert +\left( \left\vert A^{0}\left(
a\right) \right\vert +r_{0}\right) \left\vert \hat{u}\right\vert ,\medskip
\\
\left( jv\right) & r_{0}\left\vert A_{\varepsilon }z\right\vert \leq
\left\langle A_{\varepsilon }z,z-a\right\rangle
+A_{x_{0},r_{0}}^{\#}\left\vert z-a\right\vert +\left( \left\vert
A^{0}\left( a\right) \right\vert +r_{0}\right) A_{x_{0},r_{0}}^{\#}\,.%
\end{array}
\label{prop Anex 5_2}
\end{equation}
\end{proposition}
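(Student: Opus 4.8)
The plan is to obtain all four inequalities from the monotonicity of $A$, combined with the identity $\varepsilon A_{\varepsilon}z=z-J_{\varepsilon}z$ and the elementary properties of $J_{\varepsilon},A_{\varepsilon}$ listed in Proposition \ref{prop Anex 4}. Inequalities $(j)$ and $(jjj)$ carry the content; $(jj)$ and $(jv)$ then follow immediately by bounding $|\hat{u}|$ by $A_{x_{0},r_{0}}^{\#}$, since in both cases $|u|\leq 1$ and $\hat{u}\in A(a+r_{0}u)$.

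For $(j)$, if $\hat{x}=0$ the right-hand side is nonnegative and there is nothing to prove, so assume $\hat{x}\neq 0$ and set $u=\hat{x}/|\hat{x}|$; then $a+r_{0}u\in\overline{B(a,r_{0})}\subset\mathrm{D}(A)$, so $A(a+r_{0}u)\neq\emptyset$ and we may pick $\hat{u}\in A(a+r_{0}u)$. Monotonicity of $A$ applied to the pairs $(x,\hat{x})$ and $(a+r_{0}u,\hat{u})$ gives $\langle\hat{x}-\hat{u},\,x-a-r_{0}u\rangle\geq 0$; expanding, using $\langle\hat{x},u\rangle=|\hat{x}|$, and estimating $-\langle\hat{u},x-a\rangle\leq|\hat{u}|\,|x-a|$ and $\langle\hat{u},u\rangle\leq|\hat{u}|$ yields $(j)$ at once.

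For $(jjj)$, I would apply $(j)$ to the pair $(J_{\varepsilon}z,A_{\varepsilon}z)$, which lies in $\mathrm{Gr}(A)$ by Proposition \ref{prop Anex 4}$(j)$ (the case $A_{\varepsilon}z=0$ being trivial); the corresponding direction is $u=A_{\varepsilon}z/|A_{\varepsilon}z|$ and $\hat{u}\in A(a+r_{0}u)$. This gives $r_{0}|A_{\varepsilon}z|\leq\langle A_{\varepsilon}z,J_{\varepsilon}z-a\rangle+|\hat{u}|\,|J_{\varepsilon}z-a|+r_{0}|\hat{u}|$, and it remains to pass from $J_{\varepsilon}z$ to $z$. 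From $J_{\varepsilon}z-z=-\varepsilon A_{\varepsilon}z$ one gets $\langle A_{\varepsilon}z,J_{\varepsilon}z-a\rangle=\langle A_{\varepsilon}z,z-a\rangle-\varepsilon|A_{\varepsilon}z|^{2}\leq\langle A_{\varepsilon}z,z-a\rangle$; and by nonexpansiveness of $J_{\varepsilon}$ together with $|J_{\varepsilon}a-a|=\varepsilon|A_{\varepsilon}a|\leq\varepsilon|A^{0}(a)|\leq|A^{0}(a)|$ (here $\varepsilon\leq 1$ is used) one gets $|J_{\varepsilon}z-a|\leq|z-a|+|A^{0}(a)|$. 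Substituting both estimates yields $(jjj)$, and $(jv)$ follows from it exactly as $(jj)$ did from $(j)$.

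The one delicate point is this last step: $|J_{\varepsilon}z-a|$ must be controlled via the triangle inequality $|J_{\varepsilon}z-J_{\varepsilon}a|+|J_{\varepsilon}a-a|$, not by writing $J_{\varepsilon}z-a=(z-a)-\varepsilon A_{\varepsilon}z$ inside the norm, which would leave an uncontrolled term $\varepsilon|\hat{u}|\,|A_{\varepsilon}z|$ on the right. Everything else is a routine expansion of inner products.
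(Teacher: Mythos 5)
Your proof is correct and follows essentially the same route as the paper: monotonicity of $A$ applied to the pairs $(x,\hat{x})$ and $(a+r_{0}u,\hat{u})$ for $(j)$, then $(j)$ applied to $(J_{\varepsilon}z,A_{\varepsilon}z)\in\mathrm{Gr}(A)$ for $(jjj)$, discarding $-\varepsilon|A_{\varepsilon}z|^{2}$ and bounding $|J_{\varepsilon}z-a|$ by $|J_{\varepsilon}z-J_{\varepsilon}a|+|J_{\varepsilon}a-a|\leq|z-a|+|A^{0}(a)|$. The "delicate point" you flag is handled by the paper exactly as you propose, via the triangle inequality through $J_{\varepsilon}a$.
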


\begin{proof}
By monotonicity of $A$ we have
\begin{align*}
r_{0}\left\langle \hat{x},u\right\rangle & \leq r_{0}\left\langle \hat{x}%
,u\right\rangle +\left\langle \hat{x}-\hat{u},x-\left( a+r_{0}u\right)
\right\rangle =\left\langle \hat{x},x-a\right\rangle -\left\langle \hat{u}%
,x-a\right\rangle +r_{0}\left\langle \hat{u},u\right\rangle \\
& \leq \left\langle \hat{x},x-a\right\rangle +\left\vert \hat{u}\right\vert
\left\vert x-a\right\vert +r_{0}\left\vert \hat{u}\right\vert ~,
\end{align*}%
that is (\ref{prop Anex 5_1}). Now since $A_{\varepsilon }\left( x\right)
\in A\left( J_{\varepsilon }\left( x\right) \right) $,%
\begin{align*}
r_{0}\left\vert A_{\varepsilon }x\right\vert & \leq \left\langle
A_{\varepsilon }x,J_{\varepsilon }\left( x\right) -a\right\rangle
+\left\vert \hat{u}\right\vert \left\vert J_{\varepsilon }\left( x\right)
-a\right\vert +r_{0}\left\vert \hat{u}\right\vert \\
& \leq \left\langle A_{\varepsilon }x,x-a\right\rangle +\left\vert \hat{u}%
\right\vert \left[ \left\vert J_{\varepsilon }\left( x\right)
-J_{\varepsilon }\left( a\right) \right\vert +\left\vert J_{\varepsilon
}\left( a\right) -a\right\vert \right] +r_{0}\left\vert \hat{u}\right\vert \\
& \leq \left\langle A_{\varepsilon }x,x-a\right\rangle +\left\vert \hat{u}%
\right\vert \left\vert x-a\right\vert +\left\vert \hat{u}\right\vert
\left\vert A^{0}\left( a\right) \right\vert +r_{0}\left\vert \hat{u}%
\right\vert ~.
\end{align*}%
\hfill \medskip
\end{proof}

If $\varphi :\mathbb{H}\rightarrow (-\infty ,+\infty ]$ is a proper convex,
lower semicontinuous function then the sub{\footnotesize \-}di{\footnotesize %
\-}fferential operator $A=\partial \varphi :\mathbb{H}\rightrightarrows
\mathbb{H}$ defined by
\begin{equation*}
\partial \varphi (x):=\{y\in \mathbb{H}:\langle \hat{x},z-x\rangle +\varphi
(x)\leq \varphi (z),\text{ for all }\,z\in \mathbb{H}\}
\end{equation*}%
is a maximal monotone operator on $\mathbb{H}$. In this case $A_{\varepsilon
}\left( x\right) =\nabla \varphi _{\varepsilon }\left( x\right) $ and $%
J_{\varepsilon }\left( x\right) =x-\varepsilon \nabla \varphi _{\varepsilon
}\left( x\right) ,$ where $\varphi _{\varepsilon }$ is the Moreau-Yosida
regularization of $\varphi $%
\begin{equation*}
\varphi _{\varepsilon }\left( x\right) :=\inf \left\{ \frac{\left\vert
x-z\right\vert ^{2}}{2\varepsilon }+\varphi \left( z\right) :z\in \mathbb{H}%
\right\} .
\end{equation*}%
If $\overline{D}$ is a nonempty closed convex subset of $\mathbb{H},$ then
the convexity indicator function $I_{\overline{D}}:\mathbb{H}\rightarrow
(-\infty ,+\infty ],$%
\begin{equation*}
I_{\overline{D}}\left( x\right) :=\left\{
\begin{array}{ll}
0, & \text{if }x\in \overline{D}, \\
+\infty , & \text{if }x\in \mathbb{H}\setminus \overline{D},%
\end{array}%
\right.
\end{equation*}%
is a proper convex lower semicontinuous and%
\begin{equation*}
\partial {I}_{D}(x)=\left\{
\begin{array}{ll}
\mathcal{N}_{\overline{D}}(x)=\{\nu :\langle \nu ,z-x\rangle \leq 0,\text{
for all }\,z\in \overline{D}\}, & \text{if }x\in \overline{D},\medskip \\
\emptyset , & \text{if }x\notin \overline{D},%
\end{array}%
\right.
\end{equation*}%
where $\mathcal{N}_{D}(x)=\left\{ 0\right\} $ if $x\in \mathrm{Int}\left(
\overline{D}\right) $ and $\mathcal{N}_{D}(x)$ is the closed external normal
cone to $\overline{D}$ if $x\in \mathrm{Bd}\left( \overline{D}\right) $.
Associated to the maximal operator $A=\partial {I}_{D}$ we have%
\begin{equation*}
J_{\varepsilon }\left( x\right) =\Pi _{\overline{D}}\left( x\right) \quad
\text{and}\quad A_{\varepsilon }\left( x\right) =\frac{1}{\varepsilon }%
\left( x-\Pi _{\overline{D}}\left( x\right) \right) \in \partial {I}%
_{D}\left( \Pi _{\overline{D}}\left( x\right) \right) .
\end{equation*}%
Let $r_{0}\geq 0$ and $\overline{B\left( a,r_{0}\right) }\subset \overline{D}%
\,.$ By Proposition \ref{prop Anex 5}, for all $z\in \mathbb{H},$%
\begin{equation}
r_{0}\left\vert z-\Pi _{\overline{D}}\left( z\right) \right\vert \leq
\left\langle z-\Pi _{\overline{D}}\left( z\right) ,\Pi _{\overline{D}}\left(
z\right) -a\right\rangle =-\left\vert z-\Pi _{\overline{D}}\left( z\right)
\right\vert ^{2}+\left\langle z-\Pi _{\overline{D}}\left( z\right)
,z-a\right\rangle ,  \label{prop Anex 5_3}
\end{equation}%
since $\hat{u}=0\in \partial {I}_{D}\left( a+r_{0}u\right) $.

\subsection{Skorohod space}

Let $\left( \mathbb{H},\left\langle \cdot ,\cdot \right\rangle \right) $ be
a separable real Hilbert space with the induced norm $\left\vert \cdot
\right\vert .$

A function $x:\mathbb{R}^{+}\rightarrow \mathbb{H}$ is a c\`{a}dl\`{a}g
function if for every $t\in \mathbb{R}^{+}$ the left limit $%
x_{t-}:=\lim\limits_{s\nearrow t}x_{s}$ and the right limit $%
x_{t+}:=\lim\limits_{s\searrow t}x_{s}$ exist in $\mathbb{H}$ and $%
x_{t+}=x_{t}$ for all $t\geq 0;$ by convention $x_{0-}=x_{0}\,.$

We denote by $\mathbb{D}\left( \mathbb{R}^{+},\mathbb{H}\right) $ the set of
c\`{a}dl\`{a}g functions $x:\mathbb{R}^{+}\rightarrow \mathbb{H}$ and $%
\mathbb{D}\left( \left[ 0,T\right] ,\mathbb{H}\right) \subset \mathbb{D}%
\left( \mathbb{R}^{+},\mathbb{H}\right) $ the subspace of paths $x$ that
stop at the time $T,$ that is $x\in \mathbb{D}\left( \left[ 0,T\right] ,%
\mathbb{H}\right) $ if $x\in \mathbb{D}\left( \mathbb{R}^{+},\mathbb{H}%
\right) $ and $x_{t}=x_{t}^{T}:=x_{t\wedge T}\,,$ for all $t\geq 0.$ The
spaces of continuous functions will be denoted by $\mathcal{C}\left( \mathbb{%
R}^{+},\mathbb{H}\right) $ and $\mathcal{C}\left( \left[ 0,T\right] ,\mathbb{%
H}\right) $ respectively.

We say that $\pi =\left\{ t_{0},t_{1},t_{2},\ldots \right\} $ is a partition
of $\mathbb{R}^{+}$ if%
\begin{equation*}
0=t_{0}<t_{1}<t_{2}<\,\ldots \quad \text{and}\quad t_{n}\rightarrow +\infty .
\end{equation*}%
Let $\pi $ be a partition and $r\in \pi $. We denote by $r^{\prime }$ the
successor of $r$ in the partition $\pi $, i.e.%
\begin{equation}
\text{if }r=t_{i}\text{ then }r^{\prime }:=t_{i+1}.  \label{def r,r'}
\end{equation}%
Write%
\begin{align*}
\left\Vert \pi \right\Vert & :=\sup \left\{ r^{\prime }-r:r\in \pi \right\}
\\
\mathrm{mesh}\left( \pi \right) & :=\inf \left\{ r^{\prime }-r:r\in \pi
\right\} .
\end{align*}%
The set of all partitions of $\mathbb{R}^{+}$ will be denoted by $\mathcal{P}%
_{\mathbb{R}^{+}}\,.$

Given a function $x:\mathbb{R}^{+}\rightarrow\mathbb{H}$ we define

\begin{itemize}
\item the supremum norm by $\left\Vert x\right\Vert _{T}=\sup_{t\in \left[
0,T\right] }\left\vert x_{t}\right\vert \quad $and$\quad \left\Vert
x\right\Vert _{\infty }=\sup_{t\geq 0}\left\vert x_{t}\right\vert \,;$%
\medskip

\item the oscillation of $x$ on a set $F\subset \mathbb{R}^{+}$ by $\mathcal{%
O}_{x}\left( F\right) =\omega _{x}\left( F\right) =\sup_{t,s\in F}\left\vert
x_{t}-x_{s}\right\vert \,;$\medskip

\item the modulus of continuity $\boldsymbol{\upmu}_{x}:\mathbb{R}%
^{+}\rightarrow \mathbb{R}^{+}$ by $\boldsymbol{\upmu}_{x}\left( \varepsilon
\right) =\boldsymbol{\upmu}\left( \varepsilon ;x\right) =\sup_{t\in \mathbb{R%
}^{+}}\mathcal{O}_{x}\left( \left[ t,t+\varepsilon \right] \right) \,;$%
\medskip

\item the c\`{a}dl\`{a}g modulus by \textbf{$\boldsymbol{\gamma }$}$%
_{x}\left( \varepsilon \right) =\omega _{x}^{\prime }\left( \varepsilon
\right) :=\inf_{\pi }\left\{ \max_{r\in \pi }\mathcal{O}_{x}\left(
[r,r^{\prime })\right) :\mathrm{mesh}\left( \pi \right) >\varepsilon
\right\} $, where $\pi \in \mathcal{P}_{\mathbb{R}^{+}}$ is a partition an
\textbf{$\boldsymbol{\gamma }$}$_{x}\left( \varepsilon ,T\right) =\omega
_{x}^{\prime }\left( \varepsilon ,T\right) :=\mathbf{\gamma }_{x^{T}}\left(
\varepsilon \right) .$
\end{itemize}

\begin{remark}
\label{remark Annex 6'}We mention that

\noindent $1.$ function $x:\mathbb{R}^{+}\rightarrow \mathbb{H}$ is
continuous on $\left[ 0,T\right] $ if and only if $\lim\limits_{\varepsilon
\rightarrow 0}\boldsymbol{\upmu}(\varepsilon ;x^{T})=0\,;$

\noindent $2.$ function $x$ is c\`{a}dl\`{a}g on $\left[ 0,T\right] $ if and
only if $\lim_{\varepsilon \rightarrow 0}$\textbf{$\boldsymbol{\gamma }$}$%
_{x}\left( \varepsilon ,T\right) =0\,;$

\noindent $3.$ \textbf{$\boldsymbol{\gamma }$}$_{x}\left( \varepsilon
\right) \leq \boldsymbol{\upmu}_{x}\left( 2\varepsilon \right) \,;$

\noindent $4.$ if $x\in \mathbb{D}\left( \left[ 0,T\right] ,\mathbb{H}%
\right) ,$ then for each $\delta >0$ there exists a partition $\pi \in
\mathcal{P}_{\mathbb{R}^{+}}$ such that $\max_{r\in \pi }\mathcal{O}%
_{x}\left( [r,r^{\prime })\right) <\delta $ and consequently:

$\left( a\right) $ there exists a sequence of partitions $\pi _{n}\in
\mathcal{P}_{\mathbb{R}^{+}}$ such that $\displaystyle\max_{r\in \pi _{n}}%
\mathcal{O}_{x}\left( [r,r^{\prime })\right) <\frac{1}{n}\,.$ Therefore $x$
can be uniformly approximate by simple functions, constant on intervals:%
\begin{equation*}
x_{t}^{n}=\sum_{r\in \pi _{n}}x_{r}\mathbf{1}_{[r,r^{\prime })}\left(
t\right) ,\quad t\geq 0
\end{equation*}%
and $\left\Vert x^{n}-x\right\Vert _{T}\rightarrow 0$ as $\left\Vert \pi
_{n}\right\Vert \rightarrow 0\,;$

$\left( b\right) $ for each $\delta >0$ there exist a finite number of
points $t\in \left[ 0,T\right] $ such that%
\begin{equation*}
\left\vert x_{t}-x_{t-}\right\vert \geq \delta \,;
\end{equation*}

$\left( c\right) $ $\left\Vert x\right\Vert _{T}<\infty $ and the closure of
$\left\{ x_{t}:t\in \left[ 0,T\right] \right\} $ is compact.
\end{remark}

Let $\Lambda $ the collection of the strictly increasing functions $\lambda :%
\left[ 0,T\right] \rightarrow \left[ 0,T\right] $ such that $\lambda \left(
0\right) =0$ and $\lambda \left( T\right) =T$ (the space of time scale
transformations). The Skorohod topology on $\mathbb{D}\left( \left[ 0,T%
\right] ,\mathbb{H}\right) $ it the topology defined by one of the two
topologically equivalent metrics: for all $x,y\in \mathbb{D}\left( \left[ 0,T%
\right] ,\mathbb{H}\right) ,$%
\begin{align*}
d^{0}\left( x,y\right) & =\inf_{\lambda \in \Lambda }\left( \left\Vert
x-y\circ \lambda \right\Vert _{T}\vee \left\Vert \lambda -I\right\Vert
_{T}\right) ,\quad \text{and} \\
d^{1}\left( x,y\right) & =\inf_{\lambda \in \Lambda }\Big(\left\Vert
x-y\circ \lambda \right\Vert _{T}\vee \sup_{s<t}\Big|\ln \Big(\frac{\lambda
\left( t\right) -\lambda \left( s\right) }{t-s}\Big)\Big|\Big),
\end{align*}%
where $a\vee b:=\max \left\{ a,b\right\} $ and $I:\mathbb{R}^{+}\rightarrow
\mathbb{R}^{+}$ is the identity function.

\begin{remark}
\label{remark Annex 6''}We have:

\noindent $1.\;d^{0}\left( x,y\right) \leq \left\Vert x-y\right\Vert _{T}\,,$
for all $x,y\in \mathbb{D}\left( \left[ 0,T\right] ,\mathbb{H}\right) \,;$

\noindent $2.\;\left( \mathbb{D}\left( \left[ 0,T\right] ,\mathbb{H}\right)
,d^{0}\right) $ is a separable metric space, but not complete;

\noindent $3.\;\left( \mathbb{D}\left( \left[ 0,T\right] ,\mathbb{H}\right)
,d^{1}\right) $ is separable complete metric space (Polish space);

\noindent $4.\;x^{n}\rightarrow x$ in $\mathbb{D}\left( \left[ 0,T\right] ,%
\mathbb{H}\right) $ if and only if there exists $\lambda _{n}\in \Lambda $
such that%
\begin{equation*}
\left\Vert \lambda _{n}-I\right\Vert _{T}+\left\Vert x^{n}\circ \lambda
_{n}-x\right\Vert _{T}\rightarrow 0,\text{ as }n\rightarrow \infty \,;
\end{equation*}

\noindent $5.\;$if $x^{n}\rightarrow x$ in $\mathbb{D}\left( \left[ 0,T%
\right] ,\mathbb{H}\right) $, then $x_{n}(t)\rightarrow x\left( t\right) $
holds for continuity points $t$ of $x;$ moreover if $x$ is continuous, then $%
\left\Vert x^{n}-x\right\Vert _{T}\rightarrow 0\,;$

\noindent $6.\;$if $x^{n}\rightarrow x$ in $\mathbb{D}\left( \left[ 0,T%
\right] ,\mathbb{H}\right) $, then%
\begin{equation*}
\sup_{n\in \mathbb{N}}\left\Vert x^{n}\right\Vert _{T}<\infty \quad \text{and%
}\quad \lim_{\varepsilon \rightarrow 0}\sup_{n\in \mathbb{N}}\mathbf{%
\boldsymbol{\gamma }}_{x^{n}}\left( \varepsilon ,T\right) <\infty \,;
\end{equation*}

\noindent $7.\;\mathbb{D}\left( \mathbb{R}^{+},\mathbb{H}\right) $ is a
Polish space under the metric%
\begin{equation*}
d\left( x,y\right) =\sum_{n\in \mathbb{N}}\frac{1}{2^{n}}\Big(1\wedge
d^{1}\left( x^{n},y^{n}\right) \Big),\quad x,y\in \mathbb{D}\left( \mathbb{R}%
^{+},\mathbb{H}\right) ~.
\end{equation*}%
The topology $\tau $ generated by $d$ on $\mathbb{D}\left( \mathbb{R}^{+},%
\mathbb{H}\right) $ is called the Skorohod topology (and is denoted by $%
J_{1} $) and coincides on $\mathcal{C}\left( \mathbb{R}^{+},\mathbb{H}%
\right) $ with the topology of the uniform convergence on bounded intervals.
Also the Skorohod topology coincides on $\mathbb{D}\left( \left[ 0,T\right] ,%
\mathbb{H}\right) \subset \mathbb{D}\left( \mathbb{R}^{+},\mathbb{H}\right) $
with the topology generated of $d^{0}$ or $d^{1}.$
\end{remark}

Using \cite[Chapter 3, Proposition 6.5]{et-ku/86} it is easy to see that:

\begin{proposition}
\label{proposition Annex 6}Let $m\in\mathbb{D}\left( \mathbb{R}^{+},\mathbb{H%
}\right) $ and $\pi_{n}\in\mathcal{P}_{\mathbb{R}^{+}}\,$, $n\in\mathbb{N}$,
be a partition such that $\left\Vert \pi_{n}\right\Vert \rightarrow0$. If%
\begin{equation*}
m_{t}^{n}:=\sum_{r\in\pi_{n}}m_{r}\mathbf{1}_{[r,r^{\prime})}\left( t\right)
,
\end{equation*}
then%
\begin{equation*}
m^{n}\longrightarrow m\text{ in }\mathbb{D}\left( \mathbb{R}^{+},\mathbb{H}%
\right) .
\end{equation*}
\end{proposition}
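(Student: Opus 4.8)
The plan is to reduce the statement to a compact time interval and then exhibit, for all large $n$, an explicit time change aligning the (finitely many) large jumps of $m$ with the corresponding jumps of the step approximation. Write $\kappa_{n}(t):=\max\{r\in\pi_{n}:r\le t\}$, so that $m_{t}^{n}=m_{\kappa_{n}(t)}$ is the left-endpoint step approximation of $m$: it agrees with $m$ on $\pi_{n}$, is constant on each $[r,r')$ with $r\in\pi_{n}$, and $0\le t-\kappa_{n}(t)\le\|\pi_{n}\|\to 0$. By the definition of the metric $d$ on $\mathbb{D}(\mathbb{R}^{+},\mathbb{H})$ together with \cite[Chapter 3, Proposition 6.5]{et-ku/86}, it suffices to prove $m^{n}\to m$ in $\mathbb{D}([0,T],\mathbb{H})$ for every $T$ in the (co-countable, hence dense) set of continuity points of $m$; in view of Remark~\ref{remark Annex 6''}, item $4$, this amounts to producing, for all large $n$, a time change $\lambda_{n}\in\Lambda$ on $[0,T]$ with $\|\lambda_{n}-I\|_{T}\to 0$ and $\|m^{n}\circ\lambda_{n}-m\|_{T}\to 0$.

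Fix such a $T$ and fix $\eta>0$. By Remark~\ref{remark Annex 6'}, item $4$, choose a partition $0=u_{0}<\dots<u_{p}=T$ with $\mathcal{O}_{m}([u_{i-1},u_{i}))<\eta$ for every $i$. Then any jump of $m$ on $[0,T]$ of size $\ge\eta$ must occur at one of the $u_{i}$ (a jump of size $\ge\eta$ strictly inside some $[u_{i-1},u_{i})$ would force $\mathcal{O}_{m}([u_{i-1},u_{i}))\ge\eta$), so there are only finitely many such ``large jumps'', at times $s_{1}<\dots<s_{K}$, and $s_{K}<T$ since $m$ is continuous at $T$. For $n$ with $\|\pi_{n}\|<\tfrac{1}{2}\min_{i}(u_{i}-u_{i-1})$: each $s_{j}$ lies in a single interval $[r_{j},r_{j}')$ of $\pi_{n}$, no two $s_{j}$ share an interval, and near $s_{j}$ the only non-$O(\eta)$ jump of $m^{n}$ is one of size $\Delta m_{s_{j}}+O(\eta)$ located at a partition point $\rho_{j}$ with $|\rho_{j}-s_{j}|\le\|\pi_{n}\|$ (indeed $m_{\rho_{j}}-m_{(\rho_{j})_{-}}=\Delta m_{s_{j}}+(\text{oscillation terms over intervals of length }\le\|\pi_{n}\|\text{ with no large jump})$, where $(\rho_{j})_{-}$ is the predecessor of $\rho_{j}$ in $\pi_{n}$; one takes $\rho_{j}=r_{j}$ if $s_{j}=r_{j}$ and $\rho_{j}=r_{j}'$ otherwise). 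Let $\lambda_{n}$ be the continuous, strictly increasing, piecewise-linear bijection of $[0,T]$ with $\lambda_{n}(0)=0$, $\lambda_{n}(T)=T$ and $\lambda_{n}(s_{j})=\rho_{j}$ for $j=1,\dots,K$; since there are only $K=K(\eta)$ interpolation nodes and $|\rho_{j}-s_{j}|\le\|\pi_{n}\|$, we get $\|\lambda_{n}-I\|_{T}\le\|\pi_{n}\|\to 0$.

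It remains to bound $\|m^{n}\circ\lambda_{n}-m\|_{T}$. Put $\rho(t):=\kappa_{n}(\lambda_{n}(t))$, so that $m^{n}(\lambda_{n}(t))=m_{\rho(t)}$ and $|\rho(t)-t|\le 2\|\pi_{n}\|$. For $n$ large, $\rho(t)$ and $t$ lie in a common interval $[u_{i-1},u_{i})$ unless both lie within distance $2\|\pi_{n}\|$ of the same $s_{j}$; in the former case $|m_{\rho(t)}-m_{t}|<3\eta$ by the oscillation bound, while in the latter case the normalisation $\lambda_{n}(s_{j})=\rho_{j}$ makes the jump of $m^{n}\circ\lambda_{n}$ near $t=s_{j}$ equal $\Delta m_{s_{j}}$ up to $O(\eta)$ on both sides (for $t>s_{j}$ close, $m^{n}(\lambda_{n}(t))=m_{\kappa_{n}(\lambda_{n}(t))}\approx m_{s_{j}}$; for $t<s_{j}$ close, $m^{n}(\lambda_{n}(t))=m_{(\rho_{j})_{-}}\approx m_{s_{j}-}$), so $|m_{\rho(t)}-m_{t}|=O(\eta)$ there too. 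Hence $\limsup_{n}\|m^{n}\circ\lambda_{n}-m\|_{T}\le C\eta$ with $C$ absolute; letting $\eta\downarrow 0$ (equivalently, running the construction with $\eta=\eta_{n}\downarrow 0$ and patching over $[0,R_{n}]$, $R_{n}\uparrow\infty$, to obtain a single sequence $\{\lambda_{n}\}$) completes the proof on each $[0,T]$, hence in $\mathbb{D}(\mathbb{R}^{+},\mathbb{H})$.

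The delicate step is the penultimate one: one must check, uniformly in $t\in[0,T]$, that $\rho(t)$ and $t$ never straddle a large jump $s_{j}$ unless both are within $O(\|\pi_{n}\|)$ of it, which is exactly what the separation $\|\pi_{n}\|<\tfrac{1}{2}\min_{i}(u_{i}-u_{i-1})$ and the estimate $\|\lambda_{n}-I\|_{T}\le\|\pi_{n}\|$ guarantee once $n$ is large; everything else reduces to the c\`{a}dl\`{a}g oscillation estimate of Remark~\ref{remark Annex 6'} and elementary properties of linear interpolation. The reduction to continuity points $T$ of $m$ is what makes the endpoint behaviour in $\mathbb{D}(\mathbb{R}^{+},\mathbb{H})$ harmless.
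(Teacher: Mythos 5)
Your argument is correct, but it takes a genuinely different route from the paper's: the paper gives no construction at all and disposes of the statement in one line by invoking the sequential criterion for $J_{1}$-convergence of \cite[Chapter 3, Proposition 6.5]{et-ku/86} (convergence is tested on the values $m^{n}_{t_{n}}$ along sequences $t_{n}\rightarrow t$, which for the left-endpoint discretization $m^{n}_{t}=m_{\kappa_{n}(t)}$ follows from $\kappa_{n}(t_{n})\rightarrow t$ with $\kappa_{n}(t_{n})\leq t_{n}$ and the existence of one-sided limits of $m$). You instead exhibit the time changes $\lambda_{n}$ demanded by the very definition of the topology, aligning the finitely many $\eta$-large jumps of $m$ with the corresponding jumps of $m^{n}$ and controlling everything else by the c\`{a}dl\`{a}g oscillation bound of Remark~\ref{remark Annex 6'}; this is self-contained and makes visible exactly why $J_{1}$, rather than the uniform topology, is the natural one here. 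The price is length and a few spots of loose bookkeeping, none of which is a gap: for $t<s_{j}$ close to $s_{j}$ one only has $\rho(t)\leq(\rho_{j})_{-}$ rather than equality, but any such $\rho(t)$ still lies in $[s_{j}-4\Vert\pi_{n}\Vert,s_{j})$ and the oscillation bound applies; your dichotomy ``common interval $[u_{i-1},u_{i})$ or both points near some $s_{j}$'' silently absorbs the case where $\rho(t)$ and $t$ straddle a $u_{i}$ carrying only a small jump, which is exactly what your $3\eta$ (rather than $\eta$) bound covers; the reduction to continuity points $T$ of $m$ uses the standard equivalence between convergence in $\left( \mathbb{D}\left( \mathbb{R}^{+},\mathbb{H}\right),d\right)$ and convergence of the restrictions to $[0,T]$ for such $T$, which is not literally item $7$ of Remark~\ref{remark Annex 6''} but is classical; and the final passage $\eta\downarrow 0$ is cleanest if phrased as $\limsup_{n}d^{0}(m^{n},m)\leq C\eta$ for every $\eta>0$, which makes the patching of the $\eta$-dependent families $\{\lambda_{n}\}$ into a single sequence unnecessary.
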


\subsection{Bounded variation functions}

Let $\left[ a,b\right] $ be a closed interval from $\mathbb{R}$ and $%
\mathcal{P}_{\left[ a,b\right] }$ be the set of the partitions%
\begin{equation*}
\pi =\left\{ a=t_{0}<t_{1}<\cdots <t_{n}=b\right\} ,~n\in \mathbb{N}^{\ast }.
\end{equation*}%
We denote by $||\pi ||=\sup \left\{ t_{i+1}-t_{i}:0\leq i\leq n-1\right\} .$

We define the variation of a function $k:\left[ a,b\right] \rightarrow
\mathbb{H}$ corresponding to the partition $\pi \in \mathcal{P}_{\left[ a,b%
\right] }$ by%
\begin{equation*}
V_{\pi }\left( k\right) :=\sum_{i=0}^{n_{\pi }-1}\left\vert
k_{t_{i+1}}-k_{t_{i}}\right\vert
\end{equation*}%
and the total variation of $k$ on $\left[ a,b\right] $ by%
\begin{equation*}
\left\updownarrow k\right\updownarrow _{\left[ a,b\right] }=\sup_{\pi \in
\mathcal{P}_{\left[ a,b\right] }}V_{\pi }\left( k\right) =\sup \left\{
\sum_{i=0}^{n_{\pi }-1}\left\vert k_{t_{i+1}}-k_{t_{i}}\right\vert :\pi \in
\mathcal{P}_{\left[ a,b\right] }\right\} .
\end{equation*}%
If $\left[ a,b\right] =\left[ 0,T\right] ,$ then $\left\updownarrow
k\right\updownarrow _{T}:=\left\updownarrow k\right\updownarrow _{\left[ 0,T%
\right] }\,.$

\begin{proposition}
If $k\in \mathbb{D}\left( \left[ 0,T\right] ,\mathbb{H}\right) $ and $%
\overline{\pi }_{N}\in \mathcal{P}_{\left[ 0,T\right] }$ is given by%
\begin{equation*}
\overline{\pi }_{N}:\left\{ 0=\frac{0}{2^{N}}T<\frac{1}{2^{N}}T<\cdots <%
\frac{2^{N}-1}{2^{N}}T<\frac{2^{N}}{2^{N}}T=T\right\} ,
\end{equation*}%
then%
\begin{equation*}
V_{\overline{\pi }_{N}}\left( k\right) \nearrow \left\updownarrow
k\right\updownarrow _{T}\,,\quad \text{as }N\rightarrow \infty .
\end{equation*}
\end{proposition}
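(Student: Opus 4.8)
The plan is to establish the two halves of the monotone-convergence statement separately: first that $\bigl(V_{\overline{\pi}_{N}}(k)\bigr)_{N}$ is nondecreasing and bounded above by $\left\updownarrow k\right\updownarrow_{T}$, and then that its limit cannot be strictly smaller than $\left\updownarrow k\right\updownarrow_{T}$. For the first half, note that $\overline{\pi}_{N+1}$ is obtained from $\overline{\pi}_{N}$ by inserting the midpoints $\tfrac{2j+1}{2^{N+1}}T$, $j=0,\dots,2^{N}-1$; since $|k_{v}-k_{s}|\le|k_{v}-k_{u}|+|k_{u}-k_{s}|$ for any $s<u<v$, inserting a point into a partition can only increase the corresponding variation sum, so $V_{\overline{\pi}_{N}}(k)\le V_{\overline{\pi}_{N+1}}(k)$. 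Also $V_{\overline{\pi}_{N}}(k)\le\left\updownarrow k\right\updownarrow_{T}$ for every $N$ directly from the definition of the total variation. Hence $L:=\lim_{N\to\infty}V_{\overline{\pi}_{N}}(k)$ exists in $[0,+\infty]$ and $L\le\left\updownarrow k\right\updownarrow_{T}$.

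For the reverse inequality, fix an arbitrary partition $\pi=\{0=s_{0}<s_{1}<\dots<s_{m}=T\}$ of $[0,T]$ and, for each $i\in\{0,\dots,m\}$ and each $N$, let $b_{i}^{N}$ denote the smallest dyadic point of level $N$ with $b_{i}^{N}\ge s_{i}$. Then $b_{0}^{N}=0$, $b_{m}^{N}=T$, and $s_{i}\le b_{i}^{N}<s_{i}+T2^{-N}$, so $b_{i}^{N}\searrow s_{i}$ as $N\to\infty$; by the right continuity of $k$ this gives $k_{b_{i}^{N}}\to k_{s_{i}}$. As soon as $T2^{-N}<\min_{1\le i\le m}(s_{i}-s_{i-1})$, the points $b_{0}^{N}<b_{1}^{N}<\dots<b_{m}^{N}$ are distinct and form a subfamily of $\overline{\pi}_{N}$ containing its endpoints, so grouping consecutive increments of $V_{\overline{\pi}_{N}}(k)$ and using the triangle inequality yields
\begin{equation*}
V_{\overline{\pi}_{N}}(k)\ \ge\ \sum_{i=1}^{m}\bigl|k_{b_{i}^{N}}-k_{b_{i-1}^{N}}\bigr|\,.
\end{equation*}
Letting $N\to\infty$ gives $L\ge\sum_{i=1}^{m}|k_{s_{i}}-k_{s_{i-1}}|=V_{\pi}(k)$, and taking the supremum over all partitions $\pi$ gives $L\ge\left\updownarrow k\right\updownarrow_{T}$. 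Together with the bound from the first paragraph this proves $V_{\overline{\pi}_{N}}(k)\nearrow\left\updownarrow k\right\updownarrow_{T}$; in the case $\left\updownarrow k\right\updownarrow_{T}=+\infty$ the same estimate, applied to partitions $\pi$ with $V_{\pi}(k)>M$, shows $L\ge M$ for every $M$, i.e. $L=+\infty$.

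The only genuinely delicate step is that the dyadic grids $\overline{\pi}_{N}$ need not contain the nodes of a near-optimal partition $\pi$; this is exactly where the c\`{a}dl\`{a}g structure enters, through approximating each $s_{i}$ \emph{from the right} by dyadic points and invoking right continuity of $k$ (approximation from the left would instead pick up the left limits $k_{s_{i}-}$ and need not recover $V_{\pi}(k)$). The remaining steps are routine triangle-inequality bookkeeping for variation sums.
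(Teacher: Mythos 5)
Your proof is correct and follows essentially the same route as the paper: both arguments approximate the nodes of an arbitrary partition from the right by dyadic points of level $N$ and invoke the right continuity of $k$ to recover $V_{\pi}(k)$ in the limit, the only cosmetic difference being that you bound $V_{\overline{\pi}_{N}}(k)$ from below by the sum over the approximating dyadic nodes, while the paper bounds $V_{\pi}(k)$ from above by $V_{\overline{\pi}_{N}}(k)$ plus error terms that vanish. No gaps.
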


\begin{proof}
Clearly $V_{\overline{\pi }_{N}}\left( k\right) $ is increasing with respect
to $N$ and $V_{\overline{\pi }_{N}}\left( k\right) \leq \left\updownarrow
k\right\updownarrow _{T}\,.$ Let $\pi \in \mathcal{P}_{\left[ 0,T\right] }$
be arbitrary chosen, $\pi =\left\{ 0=t_{0}<t_{1}<\cdots <t_{n_{\pi
}}=T\right\} $ and let $j_{i,N}$ be the integer smallest integer grater or
equal to $\frac{t_{i}}{T}2^{N}.$ Then $j_{i,N}\frac{T}{2^{N}}\geq t_{i}$ and
$\lim_{N\rightarrow \infty }\left( j_{i,N}\frac{T}{2^{N}}\right) =t_{i}.$ We
have%
\begin{equation*}
\begin{array}{l}
\displaystyle V_{\pi }\left( k\right) =\sum\limits_{i=0}^{n_{\pi
}-1}\left\vert k_{t_{i+1}}-k_{t_{i}}\right\vert \medskip \\
\displaystyle\leq \sum\limits_{i=0}^{n_{\pi }-1}\left[ \Big|%
k_{t_{i+1}}-k_{j_{i+1,N}\frac{T}{2^{N}}}\Big|+\Big|k_{j_{i+1,N}\frac{T}{2^{N}%
}}-k_{j_{i,N}\frac{T}{2^{N}}}\Big|+\Big|k_{j_{i,N}\frac{T}{2^{N}}}-k_{t_{i}}%
\Big|\right] \medskip \\
\displaystyle\leq 2\sum\limits_{i=0}^{n_{\pi }}\Big|k_{j_{i,N}\frac{T}{2^{N}}%
}-k_{t_{i}}\Big|+V_{\overline{\pi }_{N}}\left( k\right)%
\end{array}%
\end{equation*}%
and passing to the limit for $N\rightarrow \infty $ we obtain%
\begin{equation*}
V_{\pi }\left( k\right) \leq \lim_{N\rightarrow \infty }V_{\overline{\pi }%
_{N}}\left( k\right) \leq \left\updownarrow k\right\updownarrow _{T},\quad
\forall \pi \in \mathcal{P}_{\left[ a,b\right] }\,.
\end{equation*}%
Hence $\lim\limits_{N\rightarrow \infty }V_{\overline{\pi }_{N}}\left(
k\right) =\left\updownarrow k\right\updownarrow _{T}\,.$\hfill
\end{proof}

\begin{definition}
A function $k:\left[ a,b\right] \rightarrow \mathbb{H}$ has bounded
variation on $\left[ a,b\right] $ if $\left\updownarrow k\right\updownarrow
_{\left[ a,b\right] }<\infty .$ The space of bounded variation functions on $%
\left[ a,b\right] $ will be denoted by $\mathrm{BV}\left( \left[ a,b\right] ;%
\mathbb{H}\right) .$ By $\mathrm{BV}_{loc}\left( \mathbb{R}^{+};\mathbb{H}%
\right) $ we denote the space of the functions $k:\mathbb{R}^{+}\rightarrow
\mathbb{H}$ such that $\left\updownarrow k\right\updownarrow _{T}<\infty $
for all $T>0.$
\end{definition}

Let $k\in \mathbb{D}\left( \mathbb{R}^{+},\mathbb{H}\right) \cap \mathrm{BV}%
_{loc}\left( \mathbb{R}^{+};\mathbb{H}\right) .$ The function $k$ has the
decomposition $k_{t}=k_{t}^{c}+k_{t}^{d}$ where%
\begin{equation*}
k_{t}^{d}=\sum_{0\leq s\leq t}\Delta k_{s},\quad \text{with }\Delta
k_{s}:=k_{s}-k_{s-}
\end{equation*}%
is a pure jump function and $k_{t}^{c}:=k_{t}-k_{t}^{d}$ is a continuous
function. The series which define $k_{t}^{d}$ is convergent since $%
\sum_{0\leq s\leq t}\left\vert \Delta k_{s}\right\vert \leq
\left\updownarrow k\right\updownarrow _{t}<\infty $.

If $x\in \mathbb{D}\left( \mathbb{R}^{+},\mathbb{H}\right) $ and $k\in
\mathbb{D}\left( \mathbb{R}^{+},\mathbb{H}\right) \cap \mathrm{BV}%
_{loc}\left( \mathbb{R}^{+};\mathbb{H}\right) ,$ we define%
\begin{equation*}
\left[ x,k\right] _{t}:=\sum_{0\leq s\leq t}\left\langle \Delta x_{s},\Delta
k_{s}\right\rangle
\end{equation*}%
Remark that the above series is well defined since%
\begin{equation*}
\big|\left[ x,k\right] _{t}\big|=\Big|\sum_{0\leq s\leq t}\left\langle
\Delta x_{s},\Delta k_{s}\right\rangle \Big|\leq \sum_{0\leq s\leq
t}\left\vert \Delta x_{s}\right\vert \left\vert \Delta k_{s}\right\vert \leq
2\big(\sup_{0\leq s\leq t}\left\vert x_{s}\right\vert \big)\left\updownarrow
k\right\updownarrow _{t}\,.
\end{equation*}%
We recall now some results due to \cite{ge/38}. If $k\in \mathbb{D}\left(
\mathbb{R}^{+},\mathbb{H}\right) \cap \mathrm{BV}_{loc}\left( \mathbb{R}^{+};%
\mathbb{H}\right) $ then there exists a unique $\mathbb{H}$-valued, $\sigma $%
-finite measure $\mu _{k}:\mathcal{B}_{\mathbb{R}^{+}}\rightarrow \mathbb{H}$
such that%
\begin{equation*}
\mu _{k}\left( (s,t]\right) =k_{t}-k_{s},\quad \text{for all }0\leq s<t.
\end{equation*}%
and the total variation measure is uniquely defined by%
\begin{equation*}
\left\vert \mu _{k}\right\vert \left( (s,t]\right) =\left\updownarrow
k\right\updownarrow _{t}-\left\updownarrow k\right\updownarrow _{s}.
\end{equation*}%
The Lebesgue-Stieltjes integral on $(s,t]$%
\begin{align*}
\int_{s}^{t}\left\langle x_{r},dk_{r}\right\rangle &
:=\int_{(s,t]}\left\langle x_{r},\mu _{k}\left( dr\right) \right\rangle
=\int_{(s,t]}\left\langle x_{r},\mu _{k^{c}}\left( dr\right) \right\rangle
+\int_{(s,t]}\left\langle x_{r},\mu _{k^{d}}\left( dr\right) \right\rangle \\
& =\int_{(s,t]}\left\langle x_{r},\mu _{k^{c}}\left( dr\right) \right\rangle
+\sum_{s<r\leq t}\left\langle x_{r},\Delta k_{r}\right\rangle
\end{align*}%
is defined for all Borel measurable function such that $\displaystyle%
\int_{s}^{t}\left\vert x_{r}\right\vert d\left\updownarrow
k\right\updownarrow _{r}:=\int_{(s,t]}\left\vert x_{r}\right\vert \left\vert
\mu _{k}\right\vert \left( dr\right) <\infty $, and in this case%
\begin{equation*}
\left\vert \int_{s}^{t}\left\langle x_{r},dk_{r}\right\rangle \right\vert
\leq \int_{s}^{t}\left\vert x_{r}\right\vert d\left\updownarrow
k\right\updownarrow _{r}\leq \big(\sup_{s<r\leq t}\left\vert
x_{r}\right\vert \big)\left( \left\updownarrow k\right\updownarrow
_{t}-\left\updownarrow k\right\updownarrow _{s}\right) .
\end{equation*}%
We add that the Lebesgue-Stieltjes integral on $\left\{ t\right\} $%
\begin{equation*}
\int_{\left\{ t\right\} }\left\langle x_{r},dk_{r}\right\rangle
:=\left\langle x_{t},\Delta k_{t}\right\rangle .
\end{equation*}

Let $\pi _{n}\in \mathcal{P}_{\mathbb{R}^{+}}$ be a sequence of partitions
such that $||\pi _{n}||\rightarrow 0$ as $n\rightarrow \infty .$ Let us
denote%
\begin{equation*}
\left\lfloor r\right\rfloor _{n}=\left\lfloor r\right\rfloor _{\pi
_{n}}:=\max \left\{ s\in \pi _{n}:s<r\right\} \quad \text{and}\quad
\left\lceil r\right\rceil _{n}=\left\lceil r\right\rceil _{\pi _{n}}:=\min
\left\{ s\in \pi _{n}:s\geq r\right\} .
\end{equation*}%
Then for all $x\in \mathbb{D}\left( \mathbb{R}^{+},\mathbb{H}\right) $ and $%
r\geq 0$ we have $x_{\left\lfloor r\right\rfloor _{n}}\rightarrow x_{r-}$
and $x_{\left\lceil r\right\rceil _{n}}\rightarrow x_{r}\,.$ By the Lebesgue
dominated convergence theorem as $n\rightarrow \infty $%
\begin{align*}
\sum_{s\in \pi _{n}}\left\langle x_{s\wedge t},k_{s^{\prime }\wedge
t}-k_{s\wedge t}\right\rangle & =\int_{0}^{t}\langle x_{\left\lfloor
r\right\rfloor _{n}},dk_{r}\rangle \;\rightarrow \;\int_{0}^{t}\left\langle
x_{r-},dk_{r}\right\rangle , \\
\sum_{s\in \pi _{n}\,}\left\langle x_{s^{\prime }\wedge t},k_{s^{\prime
}\wedge t}-k_{s\wedge t}\right\rangle & =\int_{0}^{t}\langle x_{\left\lceil
r\right\rceil _{n}\wedge t},dk_{r}\rangle \;\rightarrow
\;\int_{0}^{t}\left\langle x_{r},dk_{r}\right\rangle ,
\end{align*}%
and%
\begin{align}
\int_{0}^{t}\left\langle x_{r},dk_{r}\right\rangle &
=\int_{0}^{t}\left\langle x_{r-},dk_{r}\right\rangle +\left[ x,k\right] _{t}
\label{connection betw int} \\
& =\lim_{||\pi _{n}||\rightarrow 0}\sum_{s\in \pi _{n}\,}\left\langle
x_{s\wedge t},k_{s^{\prime }\wedge t}-k_{s\wedge t}\right\rangle +\left[ x,k%
\right] _{t}~.  \notag
\end{align}%
Remark that, if $k\in \mathbb{D}\left( \mathbb{R}^{+},\mathbb{H}\right) \cap
\mathrm{BV}_{loc}\left( \mathbb{R}^{+};\mathbb{H}\right) $, then%
\begin{align*}
\left\vert k_{t}\right\vert ^{2}-\left\vert k_{0}\right\vert ^{2}&
=\sum_{s\in \pi _{n}\,}\left\langle k_{s^{\prime }\wedge t},k_{s^{\prime
}\wedge t}-k_{s\wedge t}\right\rangle +\sum_{s\in \pi _{n}\,}\left\langle
k_{s\wedge t},k_{s^{\prime }\wedge t}-k_{s\wedge t}\right\rangle \\
& \rightarrow \int_{0}^{t}\left\langle x_{r},dk_{r}\right\rangle
+\int_{0}^{t}\left\langle x_{r-},dk_{r}\right\rangle
=2\int_{0}^{t}\left\langle k_{r},dk_{r}\right\rangle -\left[ k,k\right] _{t}
\end{align*}%
Therefore we proved

\begin{lemma}
\label{lemma Annex 9}If $k\in\mathbb{D}\left( \mathbb{R}^{+},\mathbb{H}%
\right) \cap\mathrm{BV}_{loc}\left( \mathbb{R}^{+};\mathbb{H}\right) $ then%
\begin{equation*}
\int_{0}^{t}\left\langle k_{r},dk_{r}\right\rangle =\frac{1}{2}\left\vert
k_{t}\right\vert ^{2}-\frac{1}{2}\left\vert k_{0}\right\vert ^{2}+\frac{1}{2}%
\left[ k,k\right] _{t}~.
\end{equation*}
\end{lemma}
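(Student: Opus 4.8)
The plan is to reduce the identity to a telescoping-sum computation along a sequence of partitions whose mesh tends to $0$, in the same spirit as the construction of the Lebesgue--Stieltjes integral recalled just above. First I would fix $t>0$ and a sequence $\pi_n\in\mathcal{P}_{\mathbb{R}^+}$ with $\|\pi_n\|\to 0$, and write the telescoping identity
\begin{equation*}
|k_t|^2-|k_0|^2=\sum_{s\in\pi_n}\big(|k_{s'\wedge t}|^2-|k_{s\wedge t}|^2\big).
\end{equation*}
For each summand I would apply the elementary polarization $|a|^2-|b|^2=\langle a,a-b\rangle+\langle b,a-b\rangle$ with $a=k_{s'\wedge t}$ and $b=k_{s\wedge t}$, obtaining
\begin{equation*}
|k_t|^2-|k_0|^2=\sum_{s\in\pi_n}\langle k_{s'\wedge t},k_{s'\wedge t}-k_{s\wedge t}\rangle+\sum_{s\in\pi_n}\langle k_{s\wedge t},k_{s'\wedge t}-k_{s\wedge t}\rangle.
\end{equation*}

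Next I would pass to the limit $n\to\infty$. Since $k\in\mathbb{D}(\mathbb{R}^+,\mathbb{H})\cap\mathrm{BV}_{loc}(\mathbb{R}^+;\mathbb{H})$, the evaluation points satisfy $k_{\lceil r\rceil_n\wedge t}\to k_r$ and $k_{\lfloor r\rfloor_n}\to k_{r-}$ pointwise on $(0,t]$, and both are bounded in norm by $\|k\|_t$; hence by the dominated convergence theorem with respect to the finite total-variation measure $|\mu_k|$ on $(0,t]$, the first sum converges to $\int_0^t\langle k_r,dk_r\rangle$ and the second to $\int_0^t\langle k_{r-},dk_r\rangle$. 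This yields
\begin{equation*}
|k_t|^2-|k_0|^2=\int_0^t\langle k_r,dk_r\rangle+\int_0^t\langle k_{r-},dk_r\rangle.
\end{equation*}

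Finally I would invoke the relation $\int_0^t\langle k_r,dk_r\rangle=\int_0^t\langle k_{r-},dk_r\rangle+[k,k]_t$ already recorded in (\ref{connection betw int}) (which is legitimate since $\mu_k=\mu_{k^c}+\mu_{k^d}$ and $\sum_{s\le t}|\Delta k_s|^2\le 2\|k\|_t\left\updownarrow k\right\updownarrow_t<\infty$, so $[k,k]_t$ is well defined and finite). Substituting $\int_0^t\langle k_{r-},dk_r\rangle=\int_0^t\langle k_r,dk_r\rangle-[k,k]_t$ into the displayed equality gives $|k_t|^2-|k_0|^2=2\int_0^t\langle k_r,dk_r\rangle-[k,k]_t$, and dividing by $2$ and rearranging produces the claimed formula. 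The only genuinely delicate point is the interchange of limit and integration in the two Riemann--Stieltjes sums, which is precisely where the c\`adl\`ag regularity (for pointwise convergence of the endpoint evaluations to $k_r$ and $k_{r-}$) and the local bounded-variation hypothesis (to supply an integrable dominating function) are both needed; the remaining steps are purely algebraic bookkeeping.
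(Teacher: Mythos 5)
Your proof is correct and follows essentially the same route as the paper: the paper likewise obtains the identity by telescoping $|k_t|^2-|k_0|^2$ over a partition, splitting each increment via the polarization $|a|^2-|b|^2=\langle a,a-b\rangle+\langle b,a-b\rangle$, passing to the limit by dominated convergence to get $\int_0^t\langle k_r,dk_r\rangle+\int_0^t\langle k_{r-},dk_r\rangle$, and then substituting the relation (\ref{connection betw int}) between the two integrals. No substantive difference to report.
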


\begin{lemma}
\label{lemma Annex 10}Let $k:\mathbb{R}^{+}\rightarrow \mathbb{H}$ and $%
k^{n}\in \mathbb{D}\left( \mathbb{R}^{+},\mathbb{H}\right) \cap \mathrm{BV}%
_{loc}\left( \mathbb{R}^{+};\mathbb{H}\right) $. If for all $T\geq 0,$%
\begin{equation*}
\left\Vert k_{t}^{n}-k_{t}\right\Vert _{T}\rightarrow 0\quad \text{and}\quad
\sup\limits_{n\in \mathbb{N}^{\ast }}\left\updownarrow
k^{n}\right\updownarrow _{T}=M<+\infty ,
\end{equation*}%
then $k\in \mathbb{D}\left( \mathbb{R}^{+},\mathbb{H}\right) \cap \mathrm{BV}%
\left( \left[ 0,T\right] ;\mathbb{D}\left( \mathbb{R}^{+},\mathbb{H}\right)
\right) $ and $\left\updownarrow k\right\updownarrow _{T}\leq M.$
\end{lemma}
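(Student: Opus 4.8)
The plan is to establish two facts: that $k$ is a c\`adl\`ag function, and that its total variation on every interval $[0,T]$ is bounded by $M$. Both are classical, the first by a uniform-approximation argument and the second by lower semicontinuity of the total variation.

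First I would check the c\`adl\`ag property. Since $\left\Vert k^{n}-k\right\Vert _{T}\rightarrow 0$ for every $T>0$, the convergence is uniform on each compact interval, and each $k^{n}\in \mathbb{D}\left( \mathbb{R}^{+},\mathbb{H}\right) $. Fix $t\geq 0$ and $T>t$. For $s,s'$ in a one-sided neighbourhood of $t$ inside $[0,T]$ we have
\[
|k_{s}-k_{s'}|\leq |k_{s}-k^{n}_{s}|+|k^{n}_{s}-k^{n}_{s'}|+|k^{n}_{s'}-k_{s'}|\leq 2\left\Vert k^{n}-k\right\Vert _{T}+|k^{n}_{s}-k^{n}_{s'}|.
\]
Given $\varepsilon >0$, choose $n$ so that $2\left\Vert k^{n}-k\right\Vert _{T}<\varepsilon /2$; since $k^{n}$ is right-continuous at $t$ and has a left limit at $t$, the middle term is $<\varepsilon /2$ for $s,s'$ sufficiently close to $t$ on the appropriate side. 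Hence the Cauchy criterion gives $k_{t+}=k_{t}$ and the existence of $k_{t-}$, so $k\in \mathbb{D}\left( \mathbb{R}^{+},\mathbb{H}\right) $.

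Next, for the variation bound I would use that, for a fixed finite partition, $V_{\pi }$ depends only on finitely many point evaluations and is therefore continuous under pointwise convergence. Fix $T>0$ and an arbitrary $\pi =\{0=t_{0}<t_{1}<\dots <t_{r}=T\}\in \mathcal{P}_{[0,T]}$. Then, using $k^{n}_{t_{i}}\rightarrow k_{t_{i}}$,
\[
V_{\pi }(k)=\sum_{i=0}^{r-1}\left\vert k_{t_{i+1}}-k_{t_{i}}\right\vert =\lim_{n\rightarrow \infty }\sum_{i=0}^{r-1}\left\vert k^{n}_{t_{i+1}}-k^{n}_{t_{i}}\right\vert =\lim_{n\rightarrow \infty }V_{\pi }(k^{n})\leq \sup_{n\in \mathbb{N}^{\ast }}\left\updownarrow k^{n}\right\updownarrow _{T}=M.
\]
Taking the supremum over all $\pi \in \mathcal{P}_{[0,T]}$ yields $\left\updownarrow k\right\updownarrow _{T}\leq M<\infty $ for every $T>0$, so $k\in \mathrm{BV}_{loc}\left( \mathbb{R}^{+};\mathbb{H}\right) $ with the asserted bound.

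I do not expect a genuine obstacle: both steps are routine. The only point needing a little care is the first — that a locally uniform limit of c\`adl\`ag functions is again c\`adl\`ag — but since being c\`adl\`ag is a local condition at each $t$, the $\varepsilon /3$ estimate above settles it; the variation bound is then an immediate consequence of lower semicontinuity of $\left\updownarrow \cdot \right\updownarrow _{T}$ under pointwise convergence.
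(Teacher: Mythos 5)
Your proof is correct and follows essentially the same route as the paper: uniform convergence on compact intervals plus the triangle inequality for the c\`adl\`ag property, and convergence of $V_{\pi }(k^{n})\rightarrow V_{\pi }(k)$ over a fixed finite partition together with $V_{\pi }(k^{n})\leq \left\updownarrow k^{n}\right\updownarrow _{T}\leq M$ for the variation bound. If anything, your version is slightly more complete, since you explicitly verify the existence of left limits, which the paper's proof (which only checks $k_{t+}=k_{t}$) leaves implicit.
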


\begin{proof}
Let $0<\varepsilon \leq 1.$ Then%
\begin{equation*}
\left\vert k_{t+\varepsilon }-k_{t}\right\vert \leq 2\left\Vert
k-k^{n}\right\Vert _{t+1}+\left\vert k_{t+\varepsilon
}^{n}-k_{t}^{n}\right\vert
\end{equation*}%
and therefore%
\begin{equation*}
\limsup_{\varepsilon \searrow 0}\left\vert k_{t+\varepsilon
}-k_{t}\right\vert \leq 2\left\Vert k-k^{n}\right\Vert _{t+1},\quad \text{%
for all }n\in \mathbb{N}^{\ast }.
\end{equation*}%
Hence $k_{t+}=k_{t}\,.$ Let a sequence $\pi _{N}\in \mathcal{P}_{\left[ 0,T%
\right] }$ such that $V_{\pi _{N}}\left( k\right) \nearrow \left\updownarrow
k\right\updownarrow _{T}\quad $as $N\rightarrow \infty .$ From the
definition of $\left\updownarrow \cdot \right\updownarrow _{T}$ we have $%
V_{\pi _{N}}\left( k^{n}\right) \leq \left\updownarrow
k^{n}\right\updownarrow _{T}\leq M.$ Since $k_{t}^{n}\rightarrow k_{t}$ for
all $t\in \left[ 0,T\right] $, $V_{\pi _{N}}\left( k^{n}\right) \rightarrow
V_{\pi _{N}}\left( k\right) .$ Hence $V_{\pi _{N}}\left( k\right) \leq M$,
for all $N\in \mathbb{N}^{\ast }$ and passing to the limit as $N\rightarrow
\infty $ we obtain $\left\updownarrow k\right\updownarrow _{T}\leq M.$\hfill
\end{proof}

\begin{theorem}[Helly-Bray]
\label{Helly-Bray} Let $n\in \mathbb{N}^{\ast }$, $x^{n},x,k\in \mathbb{D}%
\left( \mathbb{R}^{+},\mathbb{H}\right) $ and $k^{n}\in \mathbb{D}\left(
\mathbb{R}^{+},\mathbb{H}\right) \cap \mathrm{BV}_{loc}\left( \mathbb{R}^{+};%
\mathbb{H}\right) $, such that for all $T$%
\begin{equation*}
\begin{array}{rl}
\left( i\right) & \left\Vert x^{n}-x\right\Vert _{T}\rightarrow 0,\text{ as }%
n\rightarrow \infty ,\medskip \\
\left( ii\right) & \left\Vert k^{n}-k\right\Vert _{T}\rightarrow 0,\text{ as
}n\rightarrow \infty ,\quad \text{and}\medskip \\
\left( iii\right) & \sup\limits_{n\in \mathbb{N}^{\ast }}\left\updownarrow
k^{n}\right\updownarrow _{T}=M<+\infty .%
\end{array}%
\end{equation*}%
Then $k\in \mathbb{D}\left( \mathbb{R}^{+},\mathbb{H}\right) \cap \mathrm{BV}%
\left( \left[ 0,T\right] ;\mathbb{R}^{d}\right) ,$ $\left\updownarrow
k\right\updownarrow _{T}\leq M$ and uniformly with respect to $s,t\in \left[
0,T\right] ,$ $s\leq t,$%
\begin{equation}
\int_{s}^{t}\left\langle x_{r}^{n},dk_{r}^{n}\right\rangle \rightarrow
\int_{s}^{t}\left\langle x_{r},dk_{r}\right\rangle ,\;\text{as }n\rightarrow
\infty .  \label{HB1}
\end{equation}%
Moreover,%
\begin{equation}
\int_{s}^{t}\left\vert x_{r}\right\vert d\left\updownarrow
k\right\updownarrow _{r}\leq \liminf\limits_{n\rightarrow +\infty
}\int_{s}^{t}\left\vert x_{r}^{n}\right\vert d\left\updownarrow
k^{n}\right\updownarrow _{r}\,,\text{ for all }0\leq s\leq t\leq T
\label{HB2}
\end{equation}%
and there exist a a subsequence $n_{i}\rightarrow \infty $ and a sequence $%
\delta _{i}\searrow 0$ as $i\rightarrow \infty $ such that uniformly with
respect to $s,t\in \left[ 0,T\right] ,$ $s\leq t:$%
\begin{equation}
\int_{s}^{t}\mathbf{1}_{\left\vert \Delta k_{r}^{n_{i}}\right\vert >\delta
_{i}}\big\langle x_{r}^{n_{i}},dk_{r}^{d,n_{i}}\big\rangle\rightarrow
\int_{s}^{t}\big\langle x_{r},dk_{r}^{d}\big\rangle,\;\text{as }i\rightarrow
\infty .  \label{HB3}
\end{equation}
\end{theorem}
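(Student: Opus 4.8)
The plan is to treat the four assertions in order, bootstrapping each from the previous ones. The first assertion --- that $k\in\mathbb{D}\left(\mathbb{R}^{+},\mathbb{H}\right)\cap\mathrm{BV}$ with $\left\updownarrow k\right\updownarrow_{T}\leq M$ --- is exactly Lemma \ref{lemma Annex 10} applied to the present $k,k^{n}$, so nothing new is needed there. For (\ref{HB1}) I would run the classical Helly--Bray argument in two pieces. Writing
\[
\int_{s}^{t}\left\langle x_{r}^{n},dk_{r}^{n}\right\rangle-\int_{s}^{t}\left\langle x_{r},dk_{r}\right\rangle=\int_{s}^{t}\left\langle x_{r}^{n}-x_{r},dk_{r}^{n}\right\rangle+\Big(\int_{s}^{t}\left\langle x_{r},dk_{r}^{n}\right\rangle-\int_{s}^{t}\left\langle x_{r},dk_{r}\right\rangle\Big),
\]
the first term is bounded in modulus by $\left\Vert x^{n}-x\right\Vert_{T}\left\updownarrow k^{n}\right\updownarrow_{T}\leq M\left\Vert x^{n}-x\right\Vert_{T}\to0$ uniformly in $s,t$, by $(i)$ and $(iii)$. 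For the second term I freeze the integrand $x$ and approximate it, via Remark \ref{remark Annex 6'}, by a simple function $\xi=\sum_{j}c_{j}\mathbf{1}_{[a_{j-1},a_{j})}$ with $\left\Vert x-\xi\right\Vert_{T}<\varepsilon$; then $|\int_{s}^{t}\langle x_{r}-\xi_{r},dk_{r}^{n}\rangle|\leq\varepsilon M$ and the same with $k$ in place of $k^{n}$, while $\int_{s}^{t}\langle\xi_{r},dk_{r}^{n}\rangle$ is a fixed finite linear combination of values of $k^{n}$ and of its left limits at the points of $\{s,t\}\cup\{a_{j}\}$, hence tends to $\int_{s}^{t}\langle\xi_{r},dk_{r}\rangle$ uniformly in $s,t$ (using that $\left\Vert k^{n}-k\right\Vert_{T}\to0$ forces $k_{\cdot-}^{n}\to k_{\cdot-}$ uniformly too). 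Letting $\varepsilon\downarrow0$ gives (\ref{HB1}).

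For (\ref{HB2}), since $\big||x_{r}^{n}|-|x_{r}|\big|\leq\left\Vert x^{n}-x\right\Vert_{T}$ and $\left\updownarrow k^{n}\right\updownarrow_{T}\leq M$, it is enough to prove $\int_{s}^{t}|x_{r}|\,d\left\updownarrow k\right\updownarrow_{r}\leq\liminf_{n}\int_{s}^{t}|x_{r}|\,d\left\updownarrow k^{n}\right\updownarrow_{r}$ with the fixed integrand $|x|$. Writing $\mu_{k}=h\,|\mu_{k}|$ with $|h_{r}|=1$ $|\mu_{k}|$-a.e., one has $\int_{s}^{t}|x_{r}|\,d\left\updownarrow k\right\updownarrow_{r}=\int_{(s,t]}\langle|x_{r}|h_{r},\mu_{k}(dr)\rangle$; given $\varepsilon>0$, approximate $|x|\,h$ in $L^{1}(|\mu_{k}|)$ by a step $\mathbb{H}$-valued function $\phi\in\mathbb{D}$ with $|\phi_{r}|\leq|x_{r}|$ (approximate $|x|$ from below by a step function and multiply by a normalized step approximation of $h$). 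Applying (\ref{HB1}) to $\phi$ and using $\int_{s}^{t}\langle\phi_{r},dk_{r}^{n}\rangle\leq\int_{s}^{t}|\phi_{r}|\,d\left\updownarrow k^{n}\right\updownarrow_{r}\leq\int_{s}^{t}|x_{r}|\,d\left\updownarrow k^{n}\right\updownarrow_{r}$ yields
\[
\int_{s}^{t}|x_{r}|\,d\left\updownarrow k\right\updownarrow_{r}-\varepsilon\leq\int_{(s,t]}\langle\phi_{r},\mu_{k}(dr)\rangle=\lim_{n}\int_{s}^{t}\langle\phi_{r},dk_{r}^{n}\rangle\leq\liminf_{n}\int_{s}^{t}|x_{r}|\,d\left\updownarrow k^{n}\right\updownarrow_{r},
\]
and $\varepsilon\downarrow0$ closes it.

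The heart of the matter is (\ref{HB3}). The decisive point is that sup-norm convergence locks the jumps in place: for every $r\in(0,T]$,
\[
|\Delta k_{r}^{n}-\Delta k_{r}|\leq|k_{r}^{n}-k_{r}|+|k_{r-}^{n}-k_{r-}|\leq2\left\Vert k^{n}-k\right\Vert_{T}=:2\varepsilon_{n}\longrightarrow0.
\]
Consequently, once $2\varepsilon_{n}<\delta$, every jump of $k^{n}$ of size $>\delta$ sits at a jump point of $k$, and $\left\updownarrow k^{n}\right\updownarrow_{T}\leq M$ leaves at most $M/\delta$ of them. Fix $m$, pick $\eta_{m}\downarrow0$ off the countable set of jump sizes of $k$, let $R_{m}=\{r\in(0,T]:|\Delta k_{r}|>\eta_{m}\}$ (finite) and $\sigma_{m}=\sum_{0<|\Delta k_{r}|\leq\eta_{m}}|\Delta k_{r}|$, so $\sigma_{m}\to0$. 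With $\delta=\eta_{m}/2$ and $n$ so large that $2\varepsilon_{n}<\eta_{m}/2$, we have $R_{m}\subset\{r:|\Delta k_{r}^{n}|>\delta\}$ and any other jump of $k^{n}$ of size $>\delta$ lies at a jump point $r$ of $k$ with $0<|\Delta k_{r}|\leq\eta_{m}$. Splitting $\sum_{s<r\leq t}\mathbf{1}_{|\Delta k_{r}^{n}|>\delta}\langle x_{r}^{n},\Delta k_{r}^{n}\rangle$ accordingly, the $R_{m}$-part is a fixed finite sum converging (by $(i)$ and the jump estimate) uniformly in $s,t$ to $\sum_{r\in R_{m},\,s<r\leq t}\langle x_{r},\Delta k_{r}\rangle$, while the rest is bounded by $\left\Vert x^{n}\right\Vert_{T}\big(\sigma_{m}+(2M/\eta_{m})\cdot2\varepsilon_{n}\big)$, whose $\limsup_{n}$ is $\leq\left\Vert x\right\Vert_{T}\sigma_{m}$; comparing with $\int_{s}^{t}\langle x_{r},dk_{r}^{d}\rangle=\sum_{r\in R_{m},\,s<r\leq t}\langle x_{r},\Delta k_{r}\rangle+\sum_{r\notin R_{m},\,s<r\leq t}\langle x_{r},\Delta k_{r}\rangle$ (tail $\leq\left\Vert x\right\Vert_{T}\sigma_{m}$ in modulus) gives
\[
\limsup_{n\to\infty}\ \sup_{0\leq s\leq t\leq T}\Big|\int_{s}^{t}\mathbf{1}_{|\Delta k_{r}^{n}|>\eta_{m}/2}\langle x_{r}^{n},dk_{r}^{d,n}\rangle-\int_{s}^{t}\langle x_{r},dk_{r}^{d}\rangle\Big|\leq2\left\Vert x\right\Vert_{T}\sigma_{m}.
\]
A diagonal extraction over $m$ then yields indices $n_{1}<n_{2}<\cdots$ and thresholds $\delta_{i}=\eta_{m_{i}}/2\searrow0$ along which the left side of (\ref{HB3}) converges to its right side uniformly in $s,t$.

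I expect essentially all the difficulty to lie in (\ref{HB3}), and within it in controlling those jumps of $k^{n}$ that exceed the moving threshold $\delta_{i}$ without being among the ``large'' jumps $R_{m}$ of $k$. It is precisely here that hypothesis $(iii)$ (which caps their number by $M/\delta_{i}$) and the jump estimate $|\Delta k_{r}^{n}-\Delta k_{r}|\leq2\varepsilon_{n}$ are indispensable, and matching the two thresholds $\eta_{m}$ and $\delta_{i}$ so that a single diagonal sequence works is the one place where genuine care is required; everything else is standard measure-theoretic bookkeeping.
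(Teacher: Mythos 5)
Your proof is correct and follows the same three-step architecture as the paper's (first (\ref{HB1}), then (\ref{HB2}) deduced from it, then (\ref{HB3})); for (\ref{HB1}) the two arguments are essentially identical: approximate the integrand by a step function subordinate to a partition with small oscillation and control the resulting error terms by $M\Vert x^{n}-x\Vert _{T}$, $M\varepsilon $ and a finite sum of increments of $k^{n}-k$. The local differences lie in the other two parts. For (\ref{HB2}) the paper tests $dk$ against continuous $\alpha $ with $\Vert \alpha \Vert _{T}\leq 1$ and recovers the total-variation integral as a supremum over such $\alpha $, while you work with the polar decomposition $\mu _{k}=h\,|\mu _{k}|$ and an $L^{1}(|\mu _{k}|)$ step approximation of $|x|h$ dominated by $|x|$; both are standard lower-semicontinuity devices and both are complete (your truncation remark is what guarantees $|\phi _{r}|\leq |x_{r}|$, and the reduction to the fixed integrand $|x|$ via $\Vert x^{n}-x\Vert _{T}M$ is the same in both). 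For (\ref{HB3}) the shared core is the choice of thresholds off the countable set of jump sizes of $k$, the finiteness of the set of large jumps, and the estimate $|\Delta k_{r}^{n}-\Delta k_{r}|\leq 2\Vert k^{n}-k\Vert _{T}$; the paper then shows that the symmetric difference of the index sets $\{|\Delta k_{r}^{n}|>\delta _{i}\}$ and $\{|\Delta k_{r}|>\delta _{i}\}$ eventually contributes nothing by playing $\delta _{i}$ against $\delta _{i+1}$ (their term $D_{i,n}=0$ for $n$ large), whereas you retain the extra jumps of $k^{n}$ above the threshold and bound their aggregate contribution by $\sigma _{m}+(2M/\eta _{m})\cdot 2\varepsilon _{n}$, using hypothesis $(iii)$ to cap their number by $M/\delta $. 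Your version trades the paper's exact identification of jump locations for a quantitative tail estimate; both close with a diagonal extraction, and uniformity in $s,t$ survives in your argument because every bound you invoke is independent of $s$ and $t$.
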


\begin{proof}
From Lemma \ref{lemma Annex 10} we deduce that $k\in \mathbb{D}\left(
\mathbb{R}^{+},\mathbb{H}\right) \cap \mathrm{BV}\left( \left[ 0,T\right] ;%
\mathbb{R}^{d}\right) $ and $\left\updownarrow k\right\updownarrow _{T}\leq
M $.$\smallskip $

\noindent \textbf{Step 1.} Let $\varepsilon >0$ and $\pi _{\varepsilon }\in
\mathcal{P}_{\mathbb{R}^{+}}$ be such that $\max_{r\in \pi _{\varepsilon }}%
\mathcal{O}_{x}\left( [r,r^{\prime })\right) <\varepsilon .$ We denote by $%
x_{u}^{\varepsilon }=\sum_{r\in \pi _{\varepsilon }}x_{r}\mathbf{1}%
_{[r,r^{\prime })}\left( u\right) ,$ for any $u\geq 0.$ We have $\left\vert
x_{u}-x_{u}^{\varepsilon }\right\vert <\varepsilon $ for all $u\geq 0\ $ and
consequently%
\begin{equation*}
\begin{array}{l}
\displaystyle\Big|\int_{s}^{t}\left\langle x_{r}^{n},dk_{r}^{n}\right\rangle
-\int_{s}^{t}\left\langle x_{r},dk_{r}\right\rangle \Big|\medskip \\
\displaystyle\leq \Big|\int_{s}^{t}\left\langle
x_{r}^{n}-x_{r},dk_{r}^{n}\right\rangle \Big|+\Big|\int_{s}^{t}\left\langle
x_{r}-x_{r}^{\varepsilon },dk_{r}^{n}-dk_{r}\right\rangle \Big|+\Big|%
\int_{0}^{t}\left\langle x_{r}^{\varepsilon },dk_{r}^{n}-dk_{r}\right\rangle %
\Big|\medskip \\
\displaystyle\leq \left\Vert x^{n}-x\right\Vert _{T}~\left\updownarrow
k^{n}\right\updownarrow _{T}+2M\varepsilon +2\left\Vert x\right\Vert
_{T}\left\Vert k^{n}-k\right\Vert _{T}\,\mathrm{card}\left\{ r\in \pi
_{\varepsilon }:r\leq T\right\} ,%
\end{array}%
\end{equation*}%
because%
\begin{equation*}
\int_{0}^{t}\left\langle x_{r}^{\varepsilon },dk_{r}\right\rangle
=\sum_{r\in \pi _{\varepsilon }}\left\langle x_{r\wedge t},k_{r^{\prime
}\wedge t}-k_{r\wedge t}\right\rangle .
\end{equation*}%
Hence
\begin{equation*}
\limsup_{n\rightarrow \infty }\left[ \sup_{0\leq s\leq t\leq T}\Big|%
\int_{s}^{t}\left\langle x_{r}^{n},dk_{r}^{n}\right\rangle
-\int_{s}^{t}\left\langle x_{r},dk_{r}\right\rangle \Big|\right] \leq
2M\varepsilon ,\quad \forall \varepsilon >0.
\end{equation*}%
which yields (\ref{HB1}).$\smallskip $

\noindent \textbf{Step 2. }Let $\alpha \in C\left( \left[ 0,T\right] ;%
\mathbb{R}^{d}\right) $ such that $\left\Vert \alpha \right\Vert _{T}\leq 1.$
Then%
\begin{equation*}
\int_{s}^{t}\left\vert x_{r}\right\vert \left\langle \alpha
_{r},dk_{r}\right\rangle =\lim_{n\rightarrow \infty }\int_{s}^{t}\left\vert
x_{r}^{n}\right\vert \left\langle \alpha _{r},dk_{r}^{n}\right\rangle \leq
\liminf_{n\rightarrow +\infty }\int_{s}^{t}\left\vert x_{r}^{n}\right\vert
d\left\updownarrow k^{n}\right\updownarrow _{r}
\end{equation*}%
and passing to $\sup_{\left\Vert \alpha \right\Vert _{T}\leq 1}$ we get (\ref%
{HB2}).$\smallskip $

\noindent \textbf{Step 3. }Recall that the set $\left\{ \left\vert \Delta
k_{r}\right\vert :r\in \left[ 0,T\right] \right\} $ is at most countable.
Let $\delta \notin \left\{ \left\vert \Delta k_{r}\right\vert :r\in \left[
0,T\right] \right\} .$ Since%
\begin{equation*}
\left\vert \int_{s}^{t}\mathbf{1}_{\left\vert \Delta k_{r}\right\vert
>\delta }\langle x_{r},dk_{r}^{d}\rangle -\int_{s}^{t}\langle
x_{r},dk_{r}^{d}\rangle \right\vert \leq \int_{0}^{T}\mathbf{1}_{\left\vert
\Delta k_{r}\right\vert \leq \delta }\left\vert x_{r}\right\vert
d\updownarrow {\hspace{-0.1cm}}k^{d}{\hspace{-0.1cm}}\updownarrow _{r}\,,
\end{equation*}%
by the Lebesgue dominated convergence theorem there exists a sequence $%
\delta _{i}\searrow 0,$ $\delta _{i}\notin \left\{ \left\vert \Delta
k_{r}\right\vert :r\in \left[ 0,T\right] \right\} $ such that%
\begin{equation*}
\sup_{0\leq s\leq t\leq T}\left\vert \int_{s}^{t}\mathbf{1}_{\left\vert
\Delta k_{r}\right\vert >\delta _{i}}\langle x_{r},dk_{r}^{d}\rangle
-\int_{s}^{t}\langle x_{r},dk_{r}^{d}\rangle \right\vert <\frac{1}{2i},\quad
\text{for all }i\in \mathbb{N}.
\end{equation*}%
Note that for each $i$ fixed the set $Q_{i}=\left\{ r\geq 0:\left\vert
\Delta k_{r}\right\vert >\delta _{i}\right\} $ is finite and consequently,
using the uniform convergence of $k^{n},$ there exists $\ell _{i}\in \mathbb{%
N}$ such that $\left\vert \Delta k_{r}^{n}\right\vert >\delta _{i}$ for all $%
n\geq \ell _{i}$ and for all $r\in Q_{i}$.

We have%
\begin{equation*}
\Big|\int_{s}^{t}\mathbf{1}_{\left\vert \Delta k_{r}^{n}\right\vert >\delta
_{i}}\langle x_{r}^{n},dk_{r}^{d,n}\rangle -\int_{s}^{t}\langle
x_{r},dk_{r}^{d}\rangle \Big|\leq D_{i,n}+E_{i,n}+F_{i}
\end{equation*}%
with%
\begin{align*}
D_{i,n}& =\Big|\int_{s}^{t}\left( \mathbf{1}_{\left\vert \Delta
k_{r}^{n}\right\vert >\delta _{i}}-\mathbf{1}_{\left\vert \Delta
k_{r}\right\vert >\delta _{i}}\right) \langle x_{r}^{n},dk_{r}^{d,n}\rangle %
\Big|, \\
E_{i,n}& =\Big|\int_{s}^{t}\mathbf{1}_{\left\vert \Delta k_{r}\right\vert
>\delta _{i}}\langle x_{r}^{n},dk_{r}^{d,n}\rangle -\int_{s}^{t}\mathbf{1}%
_{\left\vert \Delta k_{r}\right\vert >\delta _{i}}\langle
x_{r},dk_{r}^{d}\rangle \Big|,\quad \text{and} \\
F_{i}& =\Big|\int_{s}^{t}\mathbf{1}_{\left\vert \Delta k_{r}\right\vert
>\delta _{i}}\langle x_{r},dk_{r}^{d}\rangle -\int_{s}^{t}\langle
x_{r},dk_{r}^{d}\rangle \Big|\leq \frac{1}{2i}\,.
\end{align*}%
Since%
\begin{align*}
\left\vert \mathbf{1}_{\left\vert \Delta k_{r}^{n}\right\vert >\delta _{i}}-%
\mathbf{1}_{\left\vert \Delta k_{r}\right\vert >\delta _{i}}\right\vert &
\leq \mathbf{1}_{\left\vert \Delta k_{r}^{n}-\Delta k_{r}\right\vert >\delta
_{i}-\delta _{i+1}}+\mathbf{1}_{\left\vert \Delta k_{r}\right\vert >\delta
_{i+1}}\left\vert \mathbf{1}_{\left\vert \Delta k_{r}^{n}\right\vert >\delta
_{i}}-\mathbf{1}_{\left\vert \Delta k_{r}\right\vert >\delta _{i}}\right\vert
\\
& \leq \mathbf{1}_{2\left\Vert k^{n}-k\right\Vert _{T}\geq \delta
_{i}-\delta _{i+1}}+\mathbf{1}_{\left\vert \Delta k_{r}\right\vert >\delta
_{i+1}}\left\vert \mathbf{1}_{\left\vert \Delta k_{r}^{n}\right\vert >\delta
_{i}}-\mathbf{1}_{\left\vert \Delta k_{r}\right\vert >\delta
_{i}}\right\vert ,
\end{align*}%
we deduce%
\begin{align*}
D_{i,n}& \leq \left\Vert x^{n}\right\Vert _{T}\left\updownarrow
k^{n}\right\updownarrow _{T}\mathbf{1}_{2\left\Vert k^{n}-k\right\Vert
_{T}\geq \delta _{i}-\delta _{i+1}}+\sum_{r\in Q_{i+1}}\left\vert \mathbf{1}%
_{\left\vert \Delta k_{r}^{n}\right\vert >\delta _{i}}-\mathbf{1}%
_{\left\vert \Delta k_{r}\right\vert >\delta _{i}}\right\vert \left\langle
x_{r}^{n},\Delta k_{r}^{n}\right\rangle \\
& \leq \Big[\mathbf{1}_{2\left\Vert k^{n}-k\right\Vert _{T}\geq \delta
_{i}-\delta _{i+1}}+\sum_{r\in Q_{i+1}}\left\vert \mathbf{1}_{\left\vert
\Delta k_{r}^{n}\right\vert >\delta _{i}}-\mathbf{1}_{\left\vert \Delta
k_{r}\right\vert >\delta _{i}}\right\vert \Big]\left\Vert x^{n}\right\Vert
_{T}\left\updownarrow k^{n}\right\updownarrow _{T}
\end{align*}%
and%
\begin{align*}
E_{i,n}& \leq \sum_{r\in Q_{i}}\left\vert \left\langle x_{r}^{n},\Delta
k_{r}^{n}\right\rangle -\left\langle x_{r},\Delta k_{r}\right\rangle
\right\vert \\
& \leq \left\Vert x^{n}-x\right\Vert _{T}\left\updownarrow
k^{n}\right\updownarrow _{T}+2\left\Vert x\right\Vert _{T}\left\Vert
k^{n}-k\right\Vert _{T}\mathrm{card}\left( Q_{i}\right) .
\end{align*}%
It follows there exists $n_{i}\geq \ell _{i}$ such that
\begin{equation*}
D_{i,n}=0\text{ and }E_{i,n}<\frac{1}{2i}\;\;\text{for all }n\geq n_{i}~.
\end{equation*}%
Hence%
\begin{equation*}
\Big|\int_{s}^{t}\mathbf{1}_{\left\vert \Delta k_{r}^{n_{i}}\right\vert
>\delta _{i}}\langle x_{r}^{n_{i}},dk_{r}^{d,n_{i}}\rangle
-\int_{s}^{t}\langle x_{r},dk_{r}^{d}\rangle \Big|<\frac{1}{i},\quad \text{%
for all }i\in \mathbb{N}^{\ast }.
\end{equation*}%
\hfill \medskip
\end{proof}

We also presents other auxiliary results used throughout the paper.

\begin{proposition}
\label{prop Anex 13}Let $A:\mathbb{H}\rightrightarrows \mathbb{H}$ be a
maximal monotone operator and $\mathcal{A}:\mathbb{D}\left( \mathbb{R}^{+},%
\mathbb{H}\right) \rightrightarrows \mathbb{D}\left( \mathbb{R}^{+},\mathbb{H%
}\right) \cap \mathrm{BV}\left( \left[ 0,T\right] ;\mathbb{D}\left( \mathbb{R%
}^{+},\mathbb{H}\right) \right) $ be defined by: $\left( x,k\right) \in
\mathcal{A\quad }if\quad x\in \mathbb{D}\left( \mathbb{R}^{+},\mathbb{H}%
\right) ,\;\;k\in \mathbb{D}\left( \mathbb{R}^{+},\mathbb{H}\right) \cap
\mathrm{BV}\left( \left[ 0,T\right] ;\mathbb{D}\left( \mathbb{R}^{+},\mathbb{%
H}\right) \right) $ and%
\begin{equation}
\int_{s}^{t}\left\langle x_{r}-z,dk_{r}-\hat{z}dr\right\rangle \geq 0,\quad
\forall \,\left( z,\hat{z}\right) \in A,\;\forall \,0\leq s\leq t.
\label{prop Anex 13_1}
\end{equation}%
Then relation (\ref{prop Anex 13_1}) is equivalent to the following one: for
all $u,\hat{u}\in \mathbb{D}\left( \mathbb{R}^{+},\mathbb{H}\right) $ such
that $\left( u_{r},\hat{u}_{r}\right) \in A,$ for any $r\geq 0$, we have%
\begin{equation}
\int\nolimits_{s}^{t}\left\langle x_{r}-u_{r},dk_{r}-\hat{u}%
_{r}dr\right\rangle \geq 0,\quad \forall \,0\leq s\leq t.
\label{prop Anex 13_2}
\end{equation}%
In addition $\mathcal{A}$ is a monotone operator, i.e. for all $\left(
x,k\right) ,$ $\left( y,\ell \right) \in \mathcal{A},$%
\begin{equation*}
\int_{s}^{t}\left\langle x_{r}-y_{r},dk_{r}-d\ell _{r}\right\rangle \geq
0,\quad \forall \,0\leq s\leq t
\end{equation*}%
and $\mathcal{A}$ is maximal in the set of monotone operators.
\end{proposition}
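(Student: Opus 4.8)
The plan is to establish, in order, the equivalence of (\ref{prop Anex 13_1}) and (\ref{prop Anex 13_2}), the monotonicity of $\mathcal{A}$, and finally its maximality, each being used in the next.

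\textbf{Equivalence.} That (\ref{prop Anex 13_2}) implies (\ref{prop Anex 13_1}) is immediate: specialize to the constant pair $u_{r}\equiv z$, $\hat{u}_{r}\equiv \hat{z}$. For the converse, fix a pair $(u,\hat{u})$ of c\`{a}dl\`{a}g functions with $(u_{r},\hat{u}_{r})\in A$ for all $r$, fix $0\le s\le t$, and take partitions $\pi _{n}\in \mathcal{P}_{\mathbb{R}^{+}}$ with $\|\pi _{n}\|\to 0$ and $s,t\in \pi _{n}$. Put $u_{r}^{n}:=u_{\lceil r\rceil _{n}}$, $\hat{u}_{r}^{n}:=\hat{u}_{\lceil r\rceil _{n}}$; then $(u_{r}^{n},\hat{u}_{r}^{n})\in A$, and $u_{r}^{n}\to u_{r}$, $\hat{u}_{r}^{n}\to \hat{u}_{r}$ for every $r$ by right-continuity (as recalled before Lemma \ref{lemma Annex 9}). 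Applying (\ref{prop Anex 13_1}) on each subinterval $(p,p']$ of consecutive points of $\pi _{n}$ inside $[s,t]$ with the constant pair $(u_{p'},\hat{u}_{p'})\in A$ and summing yields $\int_{s}^{t}\langle x_{r}-u_{r}^{n},dk_{r}-\hat{u}_{r}^{n}\,dr\rangle \ge 0$. Since $|\mu _{k}|$ is finite on $[s,t]$, the functions $u^{n},\hat{u}^{n}$ are uniformly bounded (c\`{a}dl\`{a}g functions are bounded on compacts, Remark \ref{remark Annex 6'}) and converge pointwise, dominated convergence applied to both the $\mu _{k}$-integral and the Lebesgue integral gives (\ref{prop Anex 13_2}) in the limit.

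\textbf{Monotonicity.} Let $(x,k),(y,\ell )\in \mathcal{A}$ and write Lebesgue decompositions $\mu _{k}=g_{k}(r)\,dr+\mu _{k}^{s}(dr)$, $\mu _{\ell }=g_{\ell }(r)\,dr+\mu _{\ell }^{s}(dr)$ with polar forms $\mu _{k}^{s}=\theta _{k}\,|\mu _{k}^{s}|$, $\mu _{\ell }^{s}=\theta _{\ell }\,|\mu _{\ell }^{s}|$. By (\ref{prop Anex 13_1}) the signed Radon measure $\langle x_{r}-z,\mu _{k}(dr)\rangle -\langle x_{r}-z,\hat{z}\rangle \,dr$ is nonnegative on every interval $(s,t]$, hence is a positive measure, for each $(z,\hat{z})\in A$. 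Differentiating it with respect to Lebesgue measure (using $\frac{1}{h}\int_{r}^{r+h}x_{u}\,du\to x_{r}$ by right-continuity, and that the singular part differentiates to $0$ a.e.), then using separability of $\mathrm{Gr}(A)\subset \mathbb{H}\times \mathbb{H}$ to pass from ``for each $(z,\hat{z})$, a.e.\ $r$'' to ``a.e.\ $r$, for all $(z,\hat{z})$'', and finally the maximality of $A$, one obtains $(x_{r},g_{k}(r))\in A$ for a.e.\ $r$; in particular $x_{r}\in \overline{\mathrm{D}(A)}$ for a.e.\ $r$, hence for every $r$ by right-continuity of $x$ and closedness of $\overline{\mathrm{D}(A)}$. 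Restricting the same positive measure to the Lebesgue-singular set gives $\langle x_{r}-z,\theta _{k}(r)\rangle \ge 0$ for $|\mu _{k}^{s}|$-a.e.\ $r$ and all $z\in \mathrm{D}(A)$, hence, by density, all $z\in \overline{\mathrm{D}(A)}$. The same statements hold with $(y,\ell )$ in place of $(x,k)$. Taking $z=y_{r}$, respectively $z=x_{r}$, in these inequalities gives $\langle x_{r}-y_{r},\mu _{k}^{s}(dr)\rangle \ge 0$ and $\langle y_{r}-x_{r},\mu _{\ell }^{s}(dr)\rangle \ge 0$, while $\langle x_{r}-y_{r},g_{k}(r)-g_{\ell }(r)\rangle \ge 0$ a.e.\ by monotonicity of $A$; summing the three contributions yields $\int_{s}^{t}\langle x_{r}-y_{r},dk_{r}-d\ell _{r}\rangle \ge 0$ for all $0\le s\le t$.

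\textbf{Maximality, and the hard part.} For $(z,\hat{z})\in A$ the pair $y_{r}\equiv z$, $\ell _{r}:=\hat{z}\,r$ belongs to $\mathcal{A}$: condition (\ref{prop Anex 13_1}) for it reduces to $(t-s)\langle z-z',\hat{z}-\hat{z}'\rangle \ge 0$ for $(z',\hat{z}')\in A$, which is the monotonicity of $A$. Hence, if a pair $(x,k)$ satisfies $\int_{s}^{t}\langle x_{r}-y_{r},dk_{r}-d\ell _{r}\rangle \ge 0$ for every $(y,\ell )\in \mathcal{A}$ and all $0\le s\le t$, then testing against these particular pairs recovers precisely $\int_{s}^{t}\langle x_{r}-z,dk_{r}-\hat{z}\,dr\rangle \ge 0$ for all $(z,\hat{z})\in A$, i.e.\ $(x,k)\in \mathcal{A}$; this is the asserted maximality. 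The equivalence and the maximality are essentially formal; the genuine obstacle is the monotonicity step, where the singular (continuous and atomic) part of $dk$ and $d\ell$ must be treated separately and one cannot substitute $z=y_{r}$ into a pointwise inequality before first upgrading it—via separability of $\mathrm{Gr}(A)$—to hold for all $z\in \overline{\mathrm{D}(A)}$ simultaneously, an upgrade which itself forces one to extract $x_{r},y_{r}\in \overline{\mathrm{D}(A)}$ for every $r$ from (\ref{prop Anex 13_1}) by Lebesgue differentiation and the maximality of $A$.
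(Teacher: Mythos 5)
Your proof is correct, and your equivalence and maximality steps follow essentially the paper's own route: the piecewise-constant approximation $u_{\lceil r\rceil _{n}}$ plus dominated convergence for the first, and testing against the pairs $y_{r}\equiv z$, $\ell _{r}=\hat{z}\,r$ for the last (your reading $\ell _{r}=\hat{z}\,r$ is the correct one; the paper's ``$(x_{t},k_{t})=(z,\hat{z})$'' must be understood that way, since a constant $k$ would not even satisfy (\ref{prop Anex 13_1})). The monotonicity step, however, is genuinely different. The paper exploits the just-proved equivalence and inserts into (\ref{prop Anex 13_2}) the single test pair $u_{r}=J_{\varepsilon }\bigl(\frac{x_{r}+y_{r}}{2}\bigr)$, $\hat{u}_{r}=A_{\varepsilon }\bigl(\frac{x_{r}+y_{r}}{2}\bigr)$; adding the two resulting inequalities makes the $\hat{u}_{r}\,dr$ terms cancel, leaving $0\leq \frac{1}{2}\int_{s}^{t}\langle y_{r}-x_{r},d\ell _{r}-dk_{r}\rangle +\varepsilon \int_{s}^{t}\langle A_{\varepsilon }(\tfrac{x_{r}+y_{r}}{2}),d\ell _{r}+dk_{r}\rangle $, and one lets $\varepsilon \searrow 0$ using $\varepsilon A_{\varepsilon }(u)\rightarrow 0$ on $\overline{\mathrm{D}(A)}$ together with the bound $\varepsilon |A_{\varepsilon }(u)|\leq |u-a|+\varepsilon |A^{0}(a)|$. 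That argument is much shorter and needs no measure decomposition, but it silently relies on $x_{r},y_{r}\in \overline{\mathrm{D}(A)}$ (and convexity of $\overline{\mathrm{D}(A)}$) to kill the error term, since in general $\varepsilon A_{\varepsilon }(u)\rightarrow u-\Pi _{\overline{\mathrm{D}(A)}}(u)$. Your route --- Lebesgue decomposition of $\mu _{k},\mu _{\ell }$, differentiation of the positive measures $\langle x_{\cdot }-z,\mu _{k}(d\cdot )-\hat{z}\,d\cdot \rangle $, the separability upgrade from ``each $(z,\hat{z})$, a.e.\ $r$'' to ``a.e.\ $r$, all $(z,\hat{z})$'', maximality of $A$ to conclude $(x_{r},g_{k}(r))\in A$ a.e., and separate positivity of the singular parts against every $z\in \overline{\mathrm{D}(A)}$ before substituting $z=y_{r}$ --- is considerably heavier, but it is sound and actually derives $x_{r},y_{r}\in \overline{\mathrm{D}(A)}$ from (\ref{prop Anex 13_1}) rather than assuming it, and yields the extra information that the absolutely continuous density of $dk$ is a selection of $A(x_{\cdot })$. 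In short: the paper's Yosida-midpoint trick buys brevity at the cost of an implicit domain hypothesis; your decomposition buys self-containedness and more structural information at the cost of the measure-theoretic machinery.
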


\begin{proof}
In order to obtain the implication (\ref{prop Anex 13_1})$\,\Rightarrow \,$(%
\ref{prop Anex 13_2}) let $u,\hat{u}\in \mathbb{D}\left( \mathbb{R}^{+},%
\mathbb{H}\right) $ such that $\left( u_{r},\hat{u}_{r}\right) \in A,\,$for
any $r\geq 0.$ Let $\pi _{n}$ be the partition $0<\frac{1}{n}<\frac{2}{n}%
<\cdots $ and%
\begin{equation*}
\left\lceil r\right\rceil _{n}=\frac{\min \left\{ j\in \mathbb{N}:r\leq
j\right\} }{n}\,.
\end{equation*}%
Therefore%
\begin{equation*}
\int\nolimits_{s}^{t}\left\langle x_{r}-u_{r},dk_{r}-\hat{u}%
_{r}dr\right\rangle =\lim_{n\rightarrow \infty }\int\nolimits_{s}^{t}\langle
x_{r}-u_{\left\lceil r\right\rceil _{n}},dk_{r}-\hat{u}_{\left\lceil
r\right\rceil _{n}}dr\rangle \geq 0.
\end{equation*}

The implication (\ref{prop Anex 13_2})$\,\Rightarrow \,$(\ref{prop Anex 13_1}%
) is obtained by taking $u_{r}=z$ and $\hat{u}_{r}=\hat{z}$.$\smallskip $

Let now $\left( x,k\right) ,(y,\ell )\in \mathcal{A}$ be arbitrary but
fixed. Then for all $u,\hat{u}\in \mathbb{D}\left( \mathbb{R}^{+},\mathbb{H}%
\right) $ such that $\left( u_{r},\hat{u}_{r}\right) \in A,$ for all $r\geq
0 $ we have for all $0\leq s\leq t,$
\begin{equation*}
\int\nolimits_{s}^{t}\left\langle y_{r}-u_{r},d\ell _{r}-\hat{u}%
_{r}dr\right\rangle \geq 0\quad \text{and}\quad
\int\nolimits_{s}^{t}\left\langle x_{r}-u_{r},dk_{r}-\hat{u}%
_{r}dr\right\rangle \geq 0.
\end{equation*}%
We take%
\begin{equation*}
u_{r}=J_{\varepsilon }\Big(\frac{x_{r}+y_{r}}{2}\Big)=\frac{x_{r}+y_{r}}{2}%
-\varepsilon A_{\varepsilon }\Big(\frac{x_{r}+y_{r}}{2}\Big)
\end{equation*}%
and $\hat{u}_{r}=A_{\varepsilon }\left( \frac{x_{r}+y_{r}}{2}\right) $.
Since $A$ is a maximal operator on $\mathbb{H}$, $\overline{D(A)}$ is convex
and $\lim\limits_{\varepsilon \rightarrow 0}\varepsilon A_{\varepsilon
}\left( u\right) \rightarrow 0,$ for all $u\in \overline{D(A)}$. Also for
all $a\in \mathrm{D}\left( A\right) $
\begin{equation*}
\varepsilon \left\vert A_{\varepsilon }\left( u\right) \right\vert \leq
\varepsilon \left\vert A_{\varepsilon }\left( u\right) -A_{\varepsilon
}\left( a\right) \right\vert +\varepsilon \left\vert A_{\varepsilon }\left(
a\right) \right\vert \leq \left\vert u-a\right\vert +\varepsilon \left\vert
A^{0}\left( a\right) \right\vert .
\end{equation*}%
Adding member by member the inequalities we obtain%
\begin{equation*}
0\leq \dfrac{1}{2}\int\nolimits_{s}^{t}\left\langle y_{r}-x_{r},d\ell
_{r}-dk_{r}\right\rangle +\varepsilon \int\nolimits_{s}^{t}\big\langle %
A_{\varepsilon }\Big(\frac{x_{r}+y_{r}}{2}\Big),d\ell _{r}+dk_{r}\big\rangle.
\end{equation*}%
Passing to $\lim_{\varepsilon \searrow 0}$ we obtain $\int\nolimits_{s}^{t}%
\left\langle y_{r}-x_{r},d\ell _{r}-dk_{r}\right\rangle \geq 0.\smallskip $

$\mathcal{A}$ is a maximal monotone operator since if $\left( y,\ell \right)
\in \mathbb{D}\left( \mathbb{R}^{+},\mathbb{H}\right) \times \left[ \mathbb{D%
}\left( \mathbb{R}^{+},\mathbb{H}\right) \cap \mathrm{BV}_{loc}\left(
\mathbb{R}^{+};\mathbb{H}\right) \right] $ satisfies
\begin{equation*}
\int\nolimits_{s}^{t}\left\langle y_{r}-x_{r},d\ell _{r}-dk_{r}\right\rangle
\geq 0,\quad \forall \,\left( x,k\right) \in \mathcal{A},
\end{equation*}%
then this last inequality is satisfied for all $\left( x,k\right) $ of the
form $\left( x_{t},k_{t}\right) =\left( z,\hat{z}\right) ,$ where $\left( z,%
\hat{z}\right) \in A,$ and consequently, from the definition of $\mathcal{A}%
, $ $\left( y,\ell \right) \in \mathcal{A}$ . The proof is complete
now.\hfill
\end{proof}

\begin{definition}
\label{definition Annex 14}We write $dk_{t}\in A\left( x_{t}\right) \left(
dt\right) $ if%
\begin{equation*}
\begin{array}{ll}
\left( a_{1}\right) & x\in \mathbb{D}\left( \mathbb{R}^{+},\mathbb{H}\right)
\text{ and }x_{t}\in \overline{\mathrm{D}(A)}\text{ for all }t\geq 0\medskip
\\
\left( a_{2}\right) & k\in \mathbb{D}\left( \mathbb{R}^{+},\mathbb{H}\right)
\cap \mathrm{BV}_{loc}\left( \mathbb{R}^{+};\mathbb{H}\right) ,\quad
k_{0}=0,\medskip \\
\left( a_{3}\right) & \left\langle x_{t}-u,\,dk_{t}-\hat{u}dt\right\rangle
\geq 0,\quad \text{on }\mathbb{R}^{+},\;\;\forall \,\left( u,\hat{u}\right)
\in A.%
\end{array}%
\end{equation*}
\end{definition}

\begin{proposition}
\label{prop Anex 15}Let $A\subset \mathbb{H}\times \mathbb{H}$ be a maximal
subset and $\mathcal{A}$ be the realization of $A$ on $\mathbb{D}\left(
\mathbb{R}^{+},\mathbb{H}\right) \times \left[ \mathbb{D}\left( \mathbb{R}%
^{+},\mathbb{H}\right) \cap \mathrm{BV}_{loc}\left( \mathbb{R}^{+};\mathbb{R}%
^{d}\right) \right] $ defined by (\ref{prop Anex 13_1}). Assume that $%
\mathrm{Int}\left( \mathrm{D}\left( A\right) \right) \neq \emptyset .$ Let $%
a\in \mathrm{Int}\left( \mathrm{D}\left( A\right) \right) $ and $r_{0}>0$ be
such that $\overline{B\left( a,r_{0}\right) }=\left\{ u\in \mathbb{H}%
:\left\vert u-a\right\vert \leq r_{0}\right\} \subset \mathrm{D}\left(
A\right) .$ Then%
\begin{equation*}
A_{a,r_{0}}^{\#}:=\sup \left\{ \left\vert \hat{u}\right\vert :\hat{u}\in
Au,\;u\in \overline{B\left( a,r_{0}\right) }\right\} <\infty
\end{equation*}%
and for all $\left( x,k\right) \in \mathcal{A},$%
\begin{equation}
r_{0}d\left\updownarrow k\right\updownarrow _{t}\leq \left\langle
x_{t}-a,dk_{t}\right\rangle +A_{a,r_{0}}^{\#}\left( \left\vert
x_{t}-a\right\vert +r_{0}\right) dt  \label{bvintdom}
\end{equation}%
as signed measure on $\mathbb{R}^{+}$.

Moreover for all $0\leq s\leq t$, $y\in \mathbb{D}\left( \mathbb{R}^{+},%
\mathbb{H}\right) $ and $0<\varepsilon \leq 1:$
\begin{equation}
\displaystyle r_{0}\int_{s}^{t}\left\vert A_{\varepsilon }y_{r}\right\vert
dr\leq \int_{s}^{t}\left\langle y_{r}-a,A_{\varepsilon }y_{r}\right\rangle
dr+A_{a,r_{0}}^{\#}\int_{s}^{t}\left[ \left\vert y_{r}-a\right\vert
+\left\vert A^{0}\left( a\right) \right\vert +r_{0}\right] dr\,.
\label{bvintdom-e}
\end{equation}
\end{proposition}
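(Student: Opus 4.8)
The plan is to read off both inequalities from the defining relation (\ref{prop Anex 13_1}) of $\mathcal{A}$ together with the local boundedness of $A$ near the interior point $a$. The finiteness of $A_{a,r_{0}}^{\#}$ is exactly the local boundedness of a maximal monotone operator on the interior of its domain, i.e. Proposition \ref{prop Anex 2}-$(jjj)$ (possibly after shrinking $r_{0}$ so that $A_{a,r_{0}}^{\#}<\infty$, as that proposition guarantees). The ``moreover'' part (\ref{bvintdom-e}) will then come out by integrating a pointwise estimate: since $A_{\varepsilon}y_{r}\in A(J_{\varepsilon}y_{r})$ for every $r$, Proposition \ref{prop Anex 5} applied with $z=y_{r}$ and $u=A_{\varepsilon}y_{r}/|A_{\varepsilon}y_{r}|$ (or $u=0$), together with $|\hat u|\le A_{a,r_{0}}^{\#}$ because $a+r_{0}u\in\overline{B\left(a,r_{0}\right)}$, gives
\begin{equation*}
r_{0}\left\vert A_{\varepsilon}y_{r}\right\vert\leq\left\langle y_{r}-a,A_{\varepsilon}y_{r}\right\rangle+A_{a,r_{0}}^{\#}\left(\left\vert y_{r}-a\right\vert+\left\vert A^{0}\left(a\right)\right\vert+r_{0}\right),\qquad r\geq0;
\end{equation*}
as $r\mapsto A_{\varepsilon}y_{r}$ is c\`{a}dl\`{a}g, integrating this over $\left[s,t\right]$ produces (\ref{bvintdom-e}).

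The substance of the statement is (\ref{bvintdom}). The key remark I would build on is that for \emph{every} $w\in\mathbb{H}$ with $\left\vert w\right\vert\le1$ one has $a+r_{0}w\in\overline{B\left(a,r_{0}\right)}\subset\mathrm{D}\left(A\right)$, so one may choose $\hat u\in A\left(a+r_{0}w\right)$ with $\left\vert\hat u\right\vert\le A_{a,r_{0}}^{\#}$. Substituting $\left(z,\hat z\right)=\left(a+r_{0}w,\hat u\right)$ into (\ref{prop Anex 13_1}), splitting the $dk$--part as $\int_{s}^{t}\langle x_{r}-a,dk_{r}\rangle-r_{0}\int_{s}^{t}\langle w,dk_{r}\rangle$ (legitimate since $w$ is constant) and bounding the $dr$--part by $\left\vert x_{r}-a-r_{0}w\right\vert\left\vert\hat u\right\vert\le A_{a,r_{0}}^{\#}\left(\left\vert x_{r}-a\right\vert+r_{0}\right)$, I obtain for all $0\le s\le t$ and all $w$ in the closed unit ball of $\mathbb{H}$
\begin{equation*}
r_{0}\int_{s}^{t}\langle w,dk_{r}\rangle\le\int_{s}^{t}\langle x_{r}-a,dk_{r}\rangle+A_{a,r_{0}}^{\#}\int_{s}^{t}\left(\left\vert x_{r}-a\right\vert+r_{0}\right)dr.\tag{$\star$}
\end{equation*}

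Finally I would turn $(\star)$ into a bound on the variation measure by a partition argument. Fix $0\le s<t$ and a partition $s=t_{0}<t_{1}<\cdots<t_{N}=t$, and on the $i$-th cell take $w_{i}$ to be the unit vector pointing along $k_{t_{i+1}}-k_{t_{i}}$ (and $w_{i}=0$ if that increment vanishes), so that $\left\vert k_{t_{i+1}}-k_{t_{i}}\right\vert=\langle w_{i},k_{t_{i+1}}-k_{t_{i}}\rangle=\int_{t_{i}}^{t_{i+1}}\langle w_{i},dk_{r}\rangle$ since $w_{i}$ is constant. Applying $(\star)$ on each $\left[t_{i},t_{i+1}\right]$ with $w=w_{i}$ and summing over $i$, the right-hand sides add up to $\int_{s}^{t}\langle x_{r}-a,dk_{r}\rangle+A_{a,r_{0}}^{\#}\int_{s}^{t}\left(\left\vert x_{r}-a\right\vert+r_{0}\right)dr$, which is independent of the partition, while the left-hand side is $r_{0}\sum_{i}\left\vert k_{t_{i+1}}-k_{t_{i}}\right\vert$; passing to the supremum over partitions of $\left[s,t\right]$ replaces it by $r_{0}\left\updownarrow k\right\updownarrow_{[s,t]}=r_{0}\int_{s}^{t}d\left\updownarrow k\right\updownarrow_{r}$, and as $0\le s\le t$ are arbitrary this is precisely (\ref{bvintdom}) read as an inequality between signed measures on $\mathbb{R}^{+}$. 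The one step I expect to need care is exactly this last one --- getting from the scalar inequalities $(\star)$, valid for each fixed test direction, to control of the variation \emph{measure} of $k$; the device that makes it work is adapting the test direction $w_{i}$ to the increment of $k$ on each cell of the partition and then optimizing over partitions, and one must keep the half-open Lebesgue--Stieltjes intervals $(t_{i},t_{i+1}]$ (and the atomic convention $\int_{\{t\}}\langle x_{r},dk_{r}\rangle=\langle x_{t},\Delta k_{t}\rangle$) consistent throughout. Everything else reduces to the Cauchy--Schwarz inequality and Propositions \ref{prop Anex 2} and \ref{prop Anex 5}.
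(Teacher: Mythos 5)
Your argument for (\ref{bvintdom}) is essentially the paper's own: test (\ref{prop Anex 13_1}) with $z=a+r_{0}v$, $\left\vert v\right\vert \leq 1$, on each cell of a partition, bound the $dr$--part by $A_{a,r_{0}}^{\#}\left( \left\vert x_{r}-a\right\vert +r_{0}\right) $, and pass to the supremum over partitions (the paper takes the supremum over $v$ in the unit ball on each cell, you pick the optimal direction $w_{i}$ adapted to the increment --- the same step). The only divergence is in (\ref{bvintdom-e}), where the paper applies the just-proved (\ref{bvintdom}) to the pair $\left( J_{\varepsilon }y,\int_{0}^{\cdot }A_{\varepsilon }y_{r}dr\right) \in \mathcal{A}$ while you integrate the pointwise inequality of Proposition \ref{prop Anex 5}; both are immediate and equivalent.
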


\begin{proof}
Since $A$ is locally bounded on $\mathrm{Int}\left( \mathrm{D}\left(
A\right) \right) $, for $a\in \mathrm{Int}\left( \mathrm{D}\left( A\right)
\right) ,$ there exist $r_{0}>0$ such that $a+r_{0}v\in \mathrm{Int}\left(
\mathrm{D}\left( A\right) \right) $ for all $\left\vert v\right\vert \leq 1$
and%
\begin{equation*}
A_{a,r_{0}}^{\#}:=\sup \left\{ \left\vert \hat{z}\right\vert :\hat{z}\in
Az,\;z\in \overline{B\left( a,r_{0}\right) }\right\} <\infty .
\end{equation*}%
Let $0\leq s=t_{0}<t_{1}<...<t_{n}=t\leq T$ such that $\max_{i}\left(
t_{i+1}-t_{i}\right) =\delta _{n}\rightarrow 0.$ We put in (\ref{prop Anex
13_1}) $z=a+r_{0}v.$ Then%
\begin{equation*}
\int_{t_{i}}^{t_{i+1}}\left\langle x_{r}-\left( a+r_{0}v\right) ,dk_{r}-\hat{%
z}dr\right\rangle \geq 0,\quad \forall \,\left\vert v\right\vert \leq
1,\;\forall \,0\leq s\leq t\leq T
\end{equation*}%
and we get%
\begin{equation*}
r_{0}\left\langle k_{t_{i+1}}-k_{t_{i}},v\right\rangle \leq
\int_{t_{i}}^{t_{i+1}}\left\langle x_{r}-a,dk_{r}\right\rangle
+A_{a,r_{0}}^{\#}\int_{t_{i}}^{t_{i+1}}\left\vert x_{r}-a\right\vert
dr+r_{0}A_{a,r_{0}}^{\#}\left( t_{i+1}-t_{i}\right) ,
\end{equation*}%
for all $\left\vert v\right\vert \leq 1.$

Hence%
\begin{equation*}
r_{0}\left\vert k_{t_{i+1}}-k_{t_{i}}\right\vert \leq
\int_{t_{i}}^{t_{i+1}}\left\langle x_{r}-a,dk_{r}\right\rangle
+A_{a,r_{0}}^{\#}\int_{t_{i}}^{t_{i+1}}\left\vert x_{r}-a\right\vert
dr+r_{0}A_{a,r_{0}}^{\#}\left( t_{i+1}-t_{i}\right)
\end{equation*}%
and adding member by member for $i=\overline{0,n-1}$ the inequality
\begin{equation*}
r_{0}\sum_{i=0}^{n-1}\left\vert k_{t_{i+1}}-k_{t_{i}}\right\vert \leq
\int_{s}^{t}\left\langle x_{t}-a,dk_{t}\right\rangle
+A_{a,r_{0}}^{\#}\int_{s}^{t}\left\vert x_{r}-a\right\vert dr+\left(
t-s\right) r_{0}A_{a,r_{0}}^{\#}
\end{equation*}%
holds and clearly (\ref{bvintdom}) follows.

Since $\left( J_{\varepsilon }y,\int_{0}^{\cdot }A_{\varepsilon
}y_{r}dr\right) \in \mathcal{A}$, from (\ref{bvintdom}) we easily obtain (%
\ref{bvintdom-e}).\hfill \medskip
\end{proof}

\noindent \textbf{Acknowledgement.} \textit{The work of authors L.M. and
A.R. was supported by the projects ERC-Like, no. 1ERC/02.07.2012 and IDEAS
no. 241/05.10.2011. The work of author L.S. was supported by Polish NCN,
grant no. 2012/07/B/ST1/03508.}

\end{document}